\colorlet{symbols}{black} 
\tikzset{
	dot/.style={circle,fill=symbols,draw=symbols,inner sep=0pt,minimum size=0.4pt},
	basic/.style={draw=symbols},
	>=stealth,
	}
\renewcommand{\d}{\partial} 
\newcommand{\ov}{\overline}
\renewcommand{\Re}{{\rm Re}\,}
\renewcommand{\Im}{{\rm Im}\,}
\newcommand{\loc}{{\rm loc}\,}
\renewcommand{\sup}{{\rm sup}\,}
\newcommand{\Supp}{{\rm Supp}\,}
\newcommand{\sign}{\text{sign}}
\def\Tc{T_{\hbox{\tiny c}}}
\def\sechL{\sech_{\hbox{\tiny\tiny{L}}}}
\def\cH{\mathcal{H}}
\def\C{\mathop{\mathbb C\kern 0pt}\nolimits}
\def\N{\mathop{\mathbb N\kern 0pt}\nolimits}
\def\R{\mathop{\mathbb R\kern 0pt}\nolimits}
\def\Z{\mathop{\mathbb Z\kern 0pt}\nolimits}
\def\bq{\mathop{\mathbf q\kern 0pt}\nolimits}
\def\bpsi{\mathop{\mathbf{\Psi}\kern 0pt}\nolimits}
\def\da{\,{\rm d}a\,}
\def\dt{\,{\rm d}t\,}
\def\dy{\,{\rm d}y\,}
\def\dz{\,{\rm d}z\,}
\def\dx{\,{\rm d}x\,}   
\def\dm{\,{\rm d}m\,}  
\def\dxi{\,{\rm d}\xi\,} 
\def\dt'{\,{\rm d}t'\,}
\def\dtau{\,{\rm d}\tau\,}
\def\ds'{\,{\rm d}s'\,}
\def\sech{\,{\rm sech}\,}
\def\tanh{\,{\rm tanh}\,}
\def\cC{\mathcal{C}}
\def\cE{\mathcal{E}}
\def\Ic{\mathcal{I}_{\hbox{\small cut}}}
\def\cM{\mathcal{M}}
\def\cU{\mathcal{U}}
\def\cV{\mathcal{V}}
\def\cP{\mathcal{P}}
\def\cR{\mathcal{R}}
\def\cS{\mathcal{S}}
\newtheorem{defi}{Definition}[section]
\newtheorem{thm}{Theorem}[section]
\newtheorem{lem}{Lemma}[section]
\newtheorem{rmk}{Remark}[section]
\newtheorem{prop}{Proposition}[section]
\title[Conserved energies for the Gross-Pitaevskii equation]
{Conserved energies for the one dimensional Gross-Pitaevskii equation} 
 \author[H. Koch]{Herbert Koch}
\address [H. Koch]%
 {Mathematical Institute, University Bonn, Endenicher Allee 60, 53115 Bonn, Germany}
\email{koch@math.uni-bonn.de}
 \author[X. Liao]{Xian Liao}
\address [X. Liao]%
 {Institute of Analysis, Karlsruhe Institute for Technology, Englerstrasse 2, 76131 Karlsruhe, Germany}
\email{xian.liao@kit.edu} 
\date{}
\begin{document} 
\maketitle
\begin{abstract}
We prove the global-in-time well-posedness of the one dimensional Gross-Pitaevskii equation in the energy space, which is a complete metric space equipped with a newly introduced metric and with the energy norm describing the $H^s$ regularities of the solutions.
We establish  a family of conserved energies for the one dimensional Gross-Pitaevskii equation,  such that the energy norms of the solutions are conserved globally in time. 
This family of energies is also conserved by the complex modified Korteweg-de Vries flow.
\end{abstract}

\noindent{\sl Keywords:} Gross-Pitaevskii equation, modified Korteweg-de Vries equation, well-posedness, transmission coefficient,  conserved energies, metric space

\vspace{.1cm}

\noindent{\sl AMS Subject Classification (2010):} 35Q55, 37K10\

\tableofcontents

\section{Introduction}
We consider the one dimensional Gross-Pitaevskii equation
\begin{equation}\label{GP}
i\d_t q+\d_{xx}q=2q(|q|^2-1), 
\end{equation}
where $(t,x)\in\R^2$ denote the time and space variables  and $q=q(t,x):\R\times\R\mapsto\C$ denotes the unknown complex-valued wave function.

It was used by E.P. Gross \cite{Gross} and L.P. Pitaevskii \cite{Pitaevskii} to describe the oscillations of a Bose gas at zero temperature.
In nonlinear optics, the equation \eqref{GP} models the propagation of a monochromatic wave in a defocusing medium and in particular the dark/black solitons with $|q|=1$ at infinity arise as solutions of \eqref{GP}. 
See the review paper \cite{KLD} for more physical interpretations. 

The Gross-Pitaevskii equation \eqref{GP} can be viewed as the defocusing cubic nonlinear Schr\"odinger equation (NLS), but with a nonstandard boundary condition at infinity: $|q|\rightarrow 1$ as $|x|\rightarrow \infty$. 
This nonzero boundary condition brings a substantial difference between \eqref{GP} and (NLS) (for which we assume zero boundary condition at infinity): For example, the  former equation has soliton solutions (e.g. the black soliton solution $q(t,x)=\tanh(x)$) while the latter  equation possesses  scattering phenomenon. 
One will see below that the solution space for the Gross-Pitaevskii equation \eqref{GP} is much  more delicate and we will derive a family of conserved energies which describe all  the $H^s$, $s>\frac 12$ regularities of the solutions in a nonstandard way.

\smallbreak

The equation \eqref{GP} can be viewed as a Hamiltonian evolutionary equation associated to the Ginzburg-Landau energy
\begin{equation}\label{cEGL}
\cE_{GL}(q)=\frac{1}{2}\int_{\R} \bigl((|q|^2-1)^2+ |\d_x q|^2 \bigr)\dx,
\end{equation}
with respect to the symplectic form
$\omega(u,v)=\Im \int_{\R} u\bar v \dx.$ 
P.E. Zhidkov \cite{Zhidkov87} proved the local-in-time well-posedness of the Gross-Pitaevskii equation  \eqref{GP}   in the so-called Zhidkov's space $Z^k$, $k=1,2,\cdots$, which is the closure of the space 
$\{q\in C^k(\R)\cap L^\infty(\R)\,|\, \d_x q\in H^{k-1}(\R)\}$ for the norm
{\small\begin{equation}\label{Zhidkovk}
\|q\|_{Z^k}=\|q\|_{L^\infty}+\sum_{1\leq l\leq k} \|\d_x^l q\|_{L^2},
\end{equation}}
and in particular when $k=1$, under the  initial finite-energy assumption $\cE_{GL}(q_0)<\infty$, the finite-energy solution exists globally in time.
See also \cite{BGS, BGSS, Gallo, Gallo07, Zhidkov87,Zhidkov} for more results in the $n$-dimensional case, with $Z^k=Z^k(\R)$ above replaced by  $Z^k(\R^n)$, $k>\frac n2$.  
In dimension $n=2$ or $3$,  P. G\'erard  \cite{Gerard06} showed the global-in-time well-posedness of \eqref{GP} in the energy space $Y^1=\{ q\in H^1_{\loc}(\R^n): |q|^2-1\in L^2(\R^n), \, \nabla q\in L^2(\R^n) \}$, endowed with the metric distance
{\small\begin{align*}
&d_{Y^1}(p,q)=\|p-q\|_{Z^1+H^1}+\bigl\||p|^2-|q|^2\bigr\|_{L^2},
\\
&\hbox{ with }\|u\|_{A+B}=\inf\{\|u_1\|_{A}+\|u_2\|_{B}\,|\, u=u_1+u_2,
\, u_1\in A,\, u_2\in B\},
\end{align*}}
and more topological properties of this complete metric space $Y^1$ can be found    in   \cite{Gerard08}.
Particular attention has been paid to show the existence (or non-existence) of the travelling wave solutions in  \cite{BS, Chiron, Gravejat, Maris} and there is also  rich literature contributed to their stability or instability issues: See   \cite{BGSS08, BGSS15, Lin} and the references therein.
Most authors in the study of the stability issues adopt the following metric distance  in the energy space: 
{\small\begin{align*}
&d_E(p,q)=\|p-q\|_{L^2(\{x\in\R^n:|x|\leq 1\})}+\||p|^2-|q|^2\|_{L^2(\R^n)}+\|\nabla p-\nabla q\|_{L^2(\R^n)}.
\end{align*}}
In higher dimensional case $n\geq 4$,
\cite{GNT} (see \cite{GNT09} for the results when $n=2,3$) established the scattering theory for the Gross-Pitaevskii equation with the initial data of form $1+\varphi$ and $\varphi\in H^s$, $s\geq \frac n2-1$ sufficiently small.

In this paper we take the general \emph{energy spaces}  as follows 
\begin{equation}\label{Xs}
X^s= \bigl\{ q\in H^s_{\loc}(\R)\,:\, |q|^2-1\in H^{s-1}(\R), \quad \d_x q\in H^{s-1}(\R) 
\bigr\}/ {\mathbb{S}^1},
\quad s\geq 0,
\end{equation}
where $\mathbb{S}^1$ denotes the unit circle, i.e. we identify functions which differ by a multiplcative constant  of modulus $1$. 
Recall that for $s\in\R$, the Sobolev space $H^s(\R)$ consists of tempered distributions $f$ with finite $H^s(\R)$-norm which is  defined as follows:
 \begin{equation*} 
 \|f\|_{H^s(\R)}
=\Bigl(\int_{\R} (1+\xi^2)^s |\hat{f}(\xi)|^2\dxi\Bigr)^{\frac12},
\end{equation*} 
where $\widehat{f}(\xi)$ denotes the Fourier transform of $f(x)$.
We endow the set of functions $X^s$ with the following metric $d^s(\cdot,\cdot)$
\footnote{If $p,q\in X^s$, then we indeed have   $\bigl\||p|^2-|q|^2\bigr\|_{H^{s-1}(\R)} \leq cd^s(p,q)$ by \eqref{Es,ds} below.
}:
 \begin{equation}\label{ds}\begin{split}
&d^s(p,q)=
\Bigl(\int _{ \R}
 \inf_{|\lambda|=1} \bigl\|\sech( \cdot-y) (\lambda p-q)\bigr\|_{H^{s}(\R)}^2 \dy
 \Bigr)^{\frac 12}, 
\end{split}\end{equation}
 where  $\sech(x)=\frac{2}{e^{x}+e^{-x}}$
\footnote{We can take any other strictly positive smooth function which decays fast at infinity instead of $\sech(x)$.} and we will prove in Section \ref{sec:metric} the following theorem:
\begin{thm}\label{thm:metric}
Let $X^s$, $d^s(\cdot,\cdot)$, $s\geq 0$ be defined in \eqref{Xs} and \eqref{ds}. Then 
the space $(X^s, d^s(\cdot,\cdot))$ is a complete metric space, with 
  the following topological properties:
  \begin{itemize}
\item The subset  $\{q\,|\, q-1\in C^\infty_0(\R)\}$
is dense in $X^s$ and hence $(X^s, d^s(\cdot,\cdot))$ is separable. 

\item Any ball  $B_r^s(q)=\{p\in X^s\, |\, d^s(p,q)< r\}$, $r\in\R^+$, $q\in X^s$, in $X^s$ is contractible.
\item Any set $\{ q\in H^s_\loc(\R):  \Vert \d_x q \Vert_{H^{s-1}} + \Vert |q|^2-1 \Vert_{H^{s-1}} < C\}$ is contained in some ball $B^s_r(1)$ with $r$ depending on $C$.
\item Any closed  ball $\overline{B_r^s(q)}$  in $X^s$, $s>0$ is weakly sequentially compact.
\item There is an analytic structure on $X^s$ (see Theorem \ref{thm:analytic} for details).
\end{itemize}  
\end{thm}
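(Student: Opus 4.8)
The plan is to verify the metric axioms first, then to prove the quantitative estimate bounding $d^s(q,1)$ by the energy $E(q):=\Vert\d_x q\Vert_{H^{s-1}}+\Vert|q|^2-1\Vert_{H^{s-1}}$ (which gives both the finiteness of $d^s$ and the bullet on bounded-energy sets), and finally to read off the remaining topological properties. Throughout, abbreviate $g_y=\sech(\cdot-y)$ and $e_y(p,q)=\inf_{|\lambda|=1}\Vert g_y(\lambda p-q)\Vert_{H^s}$, so that $d^s(p,q)^2=\int_\R e_y(p,q)^2\dy$, and use two elementary facts: the $H^s$-norm is invariant under multiplication by a unimodular constant, and the Fubini-type bound $\int_\R\Vert g_y h\Vert_{H^s}^2\dy\lesssim\Vert h\Vert_{H^s}^2$ (in fact $\simeq$, the cross terms produced by the Leibniz rule integrating to zero). \emph{Metric axioms and completeness.} Unimodular invariance shows $e_y$, hence $d^s$, descends to the $\mathbb S^1$-quotient and is symmetric (using $\lambda p-q=\lambda(p-\bar\lambda q)$). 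Since $\mathbb S^1$ is compact and $\lambda\mapsto\Vert g_y(\lambda p-q)\Vert_{H^s}$ is continuous, the infimum $e_y(p,q)$ is attained at some $\lambda_y$, and $e_y(q,r)$ at some $\mu_y$; then $\mu_y\lambda_y$ is admissible for $(p,r)$, so $e_y(p,r)\le e_y(p,q)+e_y(q,r)$ pointwise in $y$, and Minkowski's inequality in $L^2(\dy)$ upgrades this to the triangle inequality for $d^s$ (that $e_y(p,q)<\infty$, so these manipulations make sense, follows from the uniformly-local bound in the next paragraph). If $d^s(p,q)=0$, then $e_y(p,q)=0$ for a.e.\ $y$; since $g_y=\sech(\cdot-y)$ is strictly positive everywhere, $g_y(\lambda_y p-q)=0$ forces $\lambda_y p=q$ a.e.\ on $\R$, and $\lambda_y$ is therefore the same unimodular constant for a.e.\ $y$ (it equals the a.e.-constant value of $q/p$ on the positive-measure set $\{p\ne0\}$; note $p\not\equiv0$ since constants are not in $H^{s-1}$), so $p=q$ in $X^s$. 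For completeness, given a $d^s$-Cauchy sequence, one uses that $H^s(\R)/\mathbb S^1$ is complete (quotient of a Hilbert space by an isometric compact group action) to obtain convergence in $L^2(\R;H^s/\mathbb S^1)$ to some $y\mapsto F(y)$, and then realises $F$ as $y\mapsto[g_y p]$ for a unique $p\in X^s$ by gluing local lifts along the overlap cocycle of optimal phases; that $p\in X^s$ and $d^s(p_n,p)\to0$ follow from lower semicontinuity of the $H^{s-1}$-norms under $H^s_\loc$-convergence and from Fatou.

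\emph{The energy estimate.} The heart of the matter is to show $d^s(q,1)\le r(C)$ whenever $E(q)\le C$; since $E(q)<\infty$ for every $q\in X^s$, this simultaneously gives $d^s(p,q)\le d^s(p,1)+d^s(1,q)<\infty$ and the bounded-energy bullet. I would split $d^s(q,1)^2=\int_{|y|\le R}e_y(q,1)^2\dy+\int_{|y|>R}e_y(q,1)^2\dy$. On the bounded part one needs a uniformly-local Sobolev estimate $\sup_y\Vert g_y(q-c)\Vert_{H^s}\le F(C)$ for a suitable unimodular constant $c$: the high frequencies of $q$ are controlled globally by $\d_x q\in H^{s-1}$, the low-frequency part on a unit interval is controlled using $|q|^2-1\in H^{s-1}$ together with an interpolation and $L^\infty$ argument, and these are patched. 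For the tail one uses the local polar representation $q=e^{i\varphi}|q|$, valid on $\{|q|>\tfrac12\}$, a set which (again because $\d_x q,|q|^2-1\in H^{s-1}$) contains a half-line near $+\infty$ and near $-\infty$; there $\d_x\varphi$ and $|q|-1$ are governed by the $H^{s-1}$-tails of $\d_x q$ and $|q|^2-1$, which yields $\int_{|y|>R}e_y(q,1)^2\dy\to0$ uniformly on $\{E(q)\le C\}$ as $R\to\infty$. Choosing $R$ large and $c$ equal to the relevant far-field phase finishes the bound; the footnote's inequality $\Vert|p|^2-|q|^2\Vert_{H^{s-1}}\lesssim d^s(p,q)$ is obtained the same way, using the lower bound in the Fubini identity.

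\emph{Density, separability, contractibility.} For density of $\{q:q-1\in C^\infty_0\}$: the tail analysis above lets one replace $q\in X^s$ by $\tilde q\in X^s$ with $\tilde q\equiv1$ outside a compact set and $d^s(q,\tilde q)<\varepsilon$ (multiply $q$ by the appropriate unimodular constant near each end and glue with a partition of unity); then, with $\phi_\delta$ an approximate identity, $\tilde q*\phi_\delta-1\in C^\infty_0$ and $d^s(\tilde q*\phi_\delta,\tilde q)^2\lesssim\Vert\tilde q*\phi_\delta-\tilde q\Vert_{H^s}^2\to0$, and separability follows since such perturbations of $1$ are approximated in every $H^s$-norm (hence in $d^s$) by a fixed countable family. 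Contractibility of $B^s_r(q)$ is, I expect, the most delicate point: a naive linear homotopy $(1-t)p+tq$ leaves $X^s$ in general, since $|(1-t)p+tq|^2-1$ need not lie in $H^{s-1}$ when $p$ and $q$ have mismatched far-field phases, so one must first deform $p$ (continuously in $p\in B^s_r(q)$) to a configuration agreeing with $q$ near $\pm\infty$, using the polar representation near infinity, and only then interpolate toward $q$ in the phase-synchronised representation, where the identity $|(1-t)p+tq|^2-1=(1-t)^2(|p|^2-1)+t^2(|q|^2-1)+2t(1-t)(\Re(p\bar q)-1)$ keeps the path in $X^s$ and convexity, pointwise in $y$, of the distance to $q$ keeps it in the ball. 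The continuity and distance control of this first step — where the $\mathbb S^1$-quotient and the possible winding of the phase enter — is exactly where the analytic structure near $q$ is used.

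\emph{Weak sequential compactness and analytic structure.} For $s>0$, weak sequential compactness of $\overline{B^s_r(q)}$ follows from the uniformly-local $H^s$-bound: choose representatives of a sequence in the ball bounded in $H^s$ on every bounded interval, extract by Rellich's theorem and a diagonal argument a subsequence converging in $H^{s'}_\loc$ for every $0\le s'<s$ and weakly in $H^s_\loc$, and use lower semicontinuity of $\Vert\d_x\cdot\Vert_{H^{s-1}}$, $\Vert|\cdot|^2-1\Vert_{H^{s-1}}$ and, via Fatou, of $d^s(\cdot,q)$ to place the limit in $\overline{B^s_r(q)}$ — this local-weak convergence after phase normalisation being precisely the notion of weak convergence on $X^s$. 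The analytic structure is obtained from the charts sending $q$, near a base point, to its phase-normalised local representative valued in a fixed Hilbert space built from weighted $H^s$ and $H^{s-1}$ norms, the transition maps being compositions of translations, unimodular multiplications and the real-analytic map $q\mapsto|q|^2-1$; details are deferred to Theorem~\ref{thm:analytic}. In summary, the two pressure points are the energy estimate $d^s(q,1)\le r(E(q))$, which is what makes $d^s$ a finite metric and underlies the uniformly-local bounds used everywhere, and the fact that the phase of $q\in X^s$ is only locally defined and only tame away from its (bounded) zero set and near infinity, so that every genuinely global construction — completeness, the contraction, the atlas — has to be assembled by gluing along a cocycle of optimal phases.
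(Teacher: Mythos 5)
Your overall architecture is the same as the paper's: a quantitative bound of $d^s(q,1)$ by the energy (which gives finiteness of the metric and the bounded-energy bullet), cut-and-glue of far-field phases for density, Rellich plus lower semicontinuity for weak compactness, and charts for the analytic structure. The metric axioms, the pointwise triangle inequality for $e_y$, and Step 6 are essentially the paper's Steps 1--3 and 6. However, there is a genuine gap at the technical heart of your argument: you base the tail estimate, the density construction, and the contraction on the pointwise polar representation $q=e^{i\varphi}|q|$ on $\{|q|>\tfrac12\}$, claiming this set contains half-lines near $\pm\infty$. The theorem covers all $s\ge 0$, and for $s=0$ (indeed for any $s$ with $s-1<0$) the hypotheses $\d_x q\in H^{s-1}$, $|q|^2-1\in H^{s-1}$ carry no pointwise information: $q$ is merely $L^2_{\loc}$, the set $\{|q|>\tfrac12\}$ is only defined up to null sets and need not contain any interval, and $|q|$ can dip to $0$ on narrow intervals arbitrarily far out while $|q|^2-1$ stays small in a negative-order Sobolev norm. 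So the phase $\varphi$ you want to synchronize at infinity need not exist in any usable sense. The paper's substitute for your polar decomposition is the averaging device of Lemma \ref{l:bound}: one controls $\bigl|\,|\kappa^{-1}\int\sech^2(x-y)q\,\dx|-1\bigr|$ and the local mean oscillation of $q$ (inequality \eqref{wD}) by $\Vert \eta(\cdot-y)\langle D\rangle^{-1}q_x\Vert_{L^2}+\Vert\eta(\cdot-y)\langle D\rangle^{-1}(|q|^2-1)\Vert_{L^2}$, quantities whose squares integrate in $y$ to $(E^s(q))^2$; the "far-field phase" is then read off from the weighted averages $\hat u_\pm=\int\eta'(x\mp R)q\,\dx$ rather than from pointwise values of $q$. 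Without some version of this averaged phase/modulus control your route does not close for $s\le\tfrac12$.

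A secondary error: you assert that $\int_{|y|>R}e_y(q,1)^2\dy\to0$ \emph{uniformly} on $\{E(q)\le C\}$ as $R\to\infty$. This is false by translation invariance of the energy: translating $q$ pushes all of the $y$-mass of the integrand beyond any fixed $R$. What is true, and all that is needed, is a bound $\int_{\R}e_y(q,1)^2\dy\le F(C)$ uniform over the energy ball (the paper in fact proves the linear bound $d^s(1,q)\le cE^s(q)$ in Lemma \ref{metricspace}, by splitting $y$ according to whether the averaged quantities above exceed a threshold $\delta$, not according to $|y|\le R$). Relatedly, your completeness argument leans on "lower semicontinuity of the $H^{s-1}$-norms" to place the limit in $X^s$, whereas the paper uses the second inequality of Lemma \ref{metricspace}, $E^s(q)\le E^s(p)+c(1+E^s(p))^{1/2}d^s(p,q)+c(d^s(p,q))^2$, which you would also need to prove; for $s>0$ your route works, but it is this lemma that makes the $s=0$ case uniform. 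Finally, your contractibility sketch again requires the phase synchronization at infinity and so inherits the same low-regularity obstruction; the paper instead obtains the local structure of balls from the biLipschitz chart of Theorem \ref{thm:analytic}, whose construction is once more built on averaged inner products $\langle\sechL(\cdot-y)p,\sechL(\cdot-y)q\rangle_{H^s}$ rather than pointwise phases.
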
  
\smallbreak

In the following we will define the  solution of the Gross-Pitaevskii equation \eqref{GP}.  The initial data  $q_0\in X^s$ has a representative $\tilde q_0$. 
A solution $q(t,\cdot)\in X^s$ with $t\in I$ the time interval will be the projections  of some function in  $t$: $ \tilde q(t,\cdot) \in X^s$. We define the notion of a solution in terms of the representative. 
\begin{defi}[Solutions]\label{def}
  We call $q\in \mathcal{C}(I; X^s)$, $s\geq0$  to be a   solution of the Gross-Pitaevskii equation \eqref{GP} with the initial data $q|_{t=0}=q_0\in X^s$ on the  open time interval  $I\ni 0$, if there is $\tilde q : I \to H^s_{loc}$ which  
 satisfies    that
\begin{equation}\label{WS}
 I \ni t \to  \tilde  q(t)-\tilde q(0) \in L^2, 
\end{equation}
is weakly continuous   and  
\begin{equation}\label{Strichartz}   
\Vert \tilde q(\cdot) - \tilde q_{0,\varepsilon} \Vert_{L^4([a,b]  \times \R)} \leq C,  
 \end{equation}
for some regularized initial data $\tilde q_{0,\varepsilon}$ of $\tilde q(0)$ and all $0\in [a,b] \subset I$,
such that the equation \eqref{GP} holds in the distributional sense on $I\times\R$
and $\tilde q(t)$ projects to $q(t)$.
  
\end{defi}

We have the following  well-posedness results.
\begin{thm}\label{thm:gwp}
 Let $s\geq 0$.  The Gross-Pitaevskii equation \eqref{GP} is
 loally-in-time well-posed in the metric space $(X^s, d^s)$ in the
 following sense: For any initial data $q_0\in X^s$, there exists a
 positive time $\bar t\in (0,\infty)$ and a unique local-in-time
 solution $q\in \cC((-\bar t, \bar t); X^s)$ of \eqref{GP} and for any
 $t\in (0,\bar t)$, the Gross-Pitaevskii flow map $X^s\ni q_0\mapsto
 q\in \cC([-t, t]; X^s)$ is 
 continuous.  Let $s>\frac12 $,
 then the above holds for all $\bar t\in\R^+$ and hence the
 Gross-Pitaevskii equation \eqref{GP} is globally-in-time well-posed
 in the metric space $(X^s, d^s)$.
\end{thm}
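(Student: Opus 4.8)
The plan is to establish local well-posedness for every $s\ge 0$ from a uniform a priori bound together with a Lipschitz-type estimate for the flow in the metric $d^s$, and to upgrade to global well-posedness for $s>\tfrac12$ by means of the conserved energies. By the density statement of Theorem \ref{thm:metric} it suffices to treat initial data with $q_0-1\in C^\infty_0(\R)$ and to pass to the limit; for such data $\cE_{GL}(q_0)<\infty$, so, combining Zhidkov's theory \cite{Zhidkov87} and the conservation of $\cE_{GL}$ with local $H^k$ theory for the perturbation $q-1$ and persistence of regularity, one obtains a global solution with $q(t)-1\in H^k(\R)$ for every $k$ and all $t$, on which all computations below are rigorous. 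The a priori bound asserts that there is $\bar t=\bar t(C)>0$ so that $N_s(q_0):=\|\d_x q_0\|_{H^{s-1}}+\||q_0|^2-1\|_{H^{s-1}}\le C$ forces $q(t)$ to stay in a fixed ball $B_r^s(1)$ of $X^s$ for $|t|\le\bar t$; the conversion between $N_s$ and $d^s$ is the third item of Theorem \ref{thm:metric}, and the bound itself comes from an energy estimate — testing the equations satisfied by $\d_x q$ and by $|q|^2-1$ against $(1-\d_{xx})^{s-1}$ of themselves, the nonlinear contributions are cubic and absorbed by $H^{s-1}$ product and commutator inequalities, so $N_s(q(t))^2$ obeys $\tfrac{d}{dt}N_s^2\lesssim P(N_s)$ with $P$ polynomial, which closes on a short interval. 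For $s>\tfrac12$ the conserved energies constructed later in the paper control $N_s(q(t))$ by a quantity independent of $t$, so $\bar t$ may be taken uniform in time; for $0\le s\le\tfrac12$ no such control is available — this is precisely the threshold below which the transmission-coefficient energies cease to dominate the norm — and one obtains only local well-posedness.

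Next comes the difference estimate. If $q^{(1)},q^{(2)}$ solve \eqref{GP} in the sense of Definition \ref{def} with the same data, one may match representatives so that $w:=\tilde q^{(1)}-\tilde q^{(2)}$ satisfies $w(0)=0$; by \eqref{WS}, $w$ is weakly continuous into $L^2$, and it solves $i\d_t w+\d_{xx}w=F$ with $F$ a sum of terms, each linear in $w$ or $\bar w$ and quadratic in $q^{(1)},q^{(2)}$ and their conjugates. Writing $q^{(i)}=1+r^{(i)}$, the Strichartz bound \eqref{Strichartz} gives $r^{(i)},w\in L^4_{t,x}$ locally in space, and together with the one dimensional $L^4_{t,x}$ Strichartz estimate this closes a contraction for $\|w\|_{L^4_{t,x}}+\|w\|_{L^\infty_tL^2_x}$ on intervals short enough that the relevant $L^4$ norms are small, forcing $w\equiv 0$ and hence uniqueness. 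Running the same estimate for $\sech(\cdot-y)\,w$ in $H^s$ and integrating over $y$ — the commutators $[\sech(\cdot-y),\d_{xx}]$ producing errors summable in $y$ by the decay of $\sech$ — shows that the smooth approximants of a given $q_0$ form a Cauchy sequence in $\cC([-t,t];X^s)$; the limit $q$ is checked to be a solution in the sense of Definition \ref{def} (the distributional equation, \eqref{WS} and the bound \eqref{Strichartz} all pass to the limit), and the same difference estimate, applied between a solution and the approximants, gives continuity of the flow map $q_0\mapsto q$ in $d^s$. Strong time continuity $t\mapsto q(t)\in X^s$ is inherited from the smooth approximants, or for $s>\tfrac12$ obtained from \eqref{WS} together with conservation of the energy norm by a localized weak-plus-norm-convergence argument; iterating the local theory on consecutive intervals of length $\bar t$ then yields the global statement for $s>\tfrac12$.

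\emph{The main obstacle} is the mismatch between the globally translation-invariant objects in play — the Schr\"odinger propagator and the conservation laws — and the local-in-space metric $d^s$ in which the theorem is phrased: every estimate has to be carried out against the weights $\sech(\cdot-y)$ with commutator errors controlled uniformly in $y$, and $N_s$ must be shown to be two-sided comparable to $d^s$ in a way compatible with the cubic nonlinearity, all the way down to $s=0$, where neither an $L^\infty$ nor a Sobolev bound on $q$ is available and the uniqueness and continuous-dependence estimates must rely solely on the $L^4_{t,x}$ information built into Definition \ref{def}. Establishing these endpoint versions of the a priori and difference estimates, and reconciling the global conservation laws with the local metric, is where the real work concentrates.
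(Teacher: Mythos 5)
Your global-in-time argument for $s>\tfrac12$ (conservation of the transmission-coefficient energies, smallness of $\tfrac1{\tau_0}\|\bq\|_{l^2_{\tau_0}DU^2}$ for $\tau_0$ large depending on $E^s(q_0)$, then a continuation argument) is exactly the paper's route, and your use of density of $1+C^\infty_0$ plus a difference estimate to define solutions and get continuity of the flow map is also in the right spirit. The problem is the foundation you build the local theory on.

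You derive the uniform existence time and the a priori bound from a quasilinear energy estimate: testing the evolution equations for $\d_x q$ and $|q|^2-1$ against $(1-\d_{xx})^{s-1}$ of themselves and claiming the cubic terms are "absorbed by $H^{s-1}$ product and commutator inequalities" to get $\tfrac{d}{dt}N_s^2\lesssim P(N_s)$. This does not close for small $s$, and in particular not at $s=0$, which the theorem must cover. Concretely, $\d_t(|q|^2-1)=-2\,\d_x\Im(q_x\bar q)$, so the estimate requires controlling $\|q_x\bar q\|_{H^{s}}$ (equivalently $\|\d_x\Im(q_x\bar q)\|_{H^{s-1}}$) by $N_s$; for $s-1<0$ the product of $q_x\in H^{s-1}$ with $\bar q$, which is merely locally bounded in $H^s_{\loc}$ with no global Sobolev control, is not estimated by $N_s$ at all — at $s=0$ one would need $q_x\bar q\in L^2$ knowing only $q_x\in H^{-1}$. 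The same issue kills the commutator estimates: $H^{s-1}$ is not an algebra below $s=\tfrac32$, and below $s=1$ the "cubic terms" are products of negative-regularity distributions. Your own closing paragraph concedes that at $s=0$ "neither an $L^\infty$ nor a Sobolev bound on $q$ is available", but the a priori bound you rely on presupposes exactly such control. The paper avoids this entirely: it never estimates $N_s(q(t))$ directly for the local theory. Instead it regularizes the data, sets $b=\tilde q-\tilde q_{0,\varepsilon}$, observes that $b$ solves a semilinear Schr\"odinger equation with $L^2$ (resp.\ $H^s$) data and with coefficients built from the smooth bounded profile $\tilde q_{0,\varepsilon}$, and runs a subcritical Strichartz contraction in $L^\infty_tL^2_x\cap L^8_tL^4_x\cap L^6_{t,x}$; the existence time and the bound on $\|b\|$ come out of the contraction, with constants depending only on $E^s(q_0)$ and $\varepsilon$, and the $H^s$ theory for $0<s<1$ is obtained by applying the same contraction to finite differences (Besov norms), not by an energy inequality. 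You do invoke $L^4_{t,x}$ Strichartz, but only for the uniqueness/difference step; it is equally indispensable for existence and for the uniform time, and without it your scheme has no valid source for the claim that $N_s(q_0)\le C$ forces the solution to persist on a time interval depending only on $C$.

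A secondary, smaller point: in the difference estimate you describe the source $F$ as linear in $w$; the actual difference equation contains terms quadratic and cubic in $w$ (e.g.\ $2|w|^2w$, $4\tilde q_2|w|^2$), which is harmless for uniqueness since $w(0)=0$ and the $L^4_{t,x}$ norms are small on short intervals, but should be stated. Also, continuity of the flow map in $d^s$ is obtained in the paper not by commuting $\sech(\cdot-y)$ through the equation but by combining Lipschitz continuity of the $b$-flow in $H^s$ with the continuity properties of the maps $q\mapsto q_\varepsilon$ and $q\mapsto q-q_\varepsilon$ between $X^s$ and $H^s$ (Lemma \ref{lem:pq}); your commutator route would need the same uniform-in-$y$ Strichartz machinery and is considerably more delicate than you indicate.
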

\begin{rmk}
Compared with  the distance function $d^s$ introduced for the \emph{nonlinear} energy space $X^s$  here, the   Zhidkov's norm $\|\cdot\|_{Z^k}$ or the metric $d_{Y^1}$  is more rigid   and the subset  $\{v\,|\, v-1\in \cS(\R)\}$ is not dense in $Z^k$ or $Y^1$.
The known global well-posedness result in $Z^1$  does not cover the above global well-posedness result in $X^1$. 
\end{rmk}

\medbreak

The equation \eqref{GP} is completely integrable   by means of the  inverse scattering method. 
According to the seminal  paper by Zakharov-Shabat \cite{ZS73},  the equation \eqref{GP}
can be viewed as the compatibility condition for the two systems
\begin{equation}\label{LaxPair}\begin{split}
&  u_x = \left( \begin{matrix}
 -i\lambda &  q \\  \bar q & i \lambda 
 \end{matrix} \right) u ,
 \\
 & u_t=i\left(\begin{matrix}
 -2\lambda^2- (|q|^2-1) & -2i\lambda q+ \d_x q
 \\
 - 2i\lambda \ov q- \d_x \ov q & 2\lambda^2+ (|q|^2-1)
 \end{matrix}\right) u,
 \end{split}\end{equation}
 where  $u:\R\times\R\mapsto \C^2$ is the unknown vector and $\lambda\in\C$ can be viewed as  parameter. More precisely, if we fix $\lambda \in C$ then the \eqref{GP} is the compatibility condition. On the other hand, if \eqref{GP} holds then the compatibility condition is satisfied for all complex numbers $\lambda$. 
The first system in \eqref{LaxPair} can be written in the form of a spectral problem $Lu=\lambda u$   of the  so-called Lax operator 
\begin{equation}\label{LaxOp}
L=\left(\begin{matrix}
i\d_x&-iq \\ i\ov q&-i\d_x
\end{matrix}\right),
\end{equation} 
and correspondingly the second system of \eqref{LaxPair} reads  as a differential operator as follows   (by eliminating $\lambda$ using the relation $\lambda u=Lu$)
\begin{equation*}
P=i\left(\begin{matrix}
2\partial_x^2-(|q|^2-1)&-q\partial_x-\partial_x q
 \\
  \bar q\partial_x+\partial_x\bar q & -2\partial_x^2+(|q|^2-1)
\end{matrix}\right).
\end{equation*}

A formal calculation shows that  $q(t,x)$ solves the equation \eqref{GP} if and only if there holds the operator evolution equation $L_t=[P; L]:=PL-LP$, i.e. the two operators $(L,P)$ form the so-called Lax-pair, which {\textit{formally}} implies the invariance of the spectra of $L$ by time evolution.
Indeed, let the skewadjoint operator $P$ generate a unitary family of evolution operators $U(t',t)$, then 
\[ L(t)    =  U^*(t',t)  L(t') U(t',t)  \]
  and $L(t)$ and $L(t')$ are similar. 
The inverse scattering transform relates the evolution of the Gross-Pitaevskii flow to the study of the spectral and scattering property of the Lax operator $L$.
In the classical framework where $q-1$ is Schwartz function,
 the self-adjoint operator $L$ has essential spectrum $(-\infty, -1]\cup [1,\infty)$ and  at most countably many \emph{simple real} eigenvalues $\{\lambda_m\}$  on $(-1,1)$.
See   \cite{AKNS, CJ, DZ, DPVV, FT, GZ, ZS73} for more discussions between the potential $q$ and the spectral information of $L$.  

It is interesting to notice that the \emph{complex} defocusing  modified Korteweg-de Vries equation
\begin{equation}\label{mKdV}
\psi_t+\psi_{xxx}-6|\psi|^2\psi_x=0, \quad \psi:\R\times\R\to \C,
\end{equation}
possesses also a Lax-pair structure and
shares   the same Lax operator \eqref{LaxOp}: $L_{\hbox{\tiny mKdV}}=\left(\begin{matrix}
i\d_x&-i\psi \\ i\ov \psi&-i\d_x
\end{matrix}\right)$  as the Gross-Pitaevskii equation, although the corresponding matrix/operator $P$ reads differently as
{\small\begin{equation}\label{PmKdV}\begin{split} 
P_{\hbox{\tiny mKdV}}=\left(\begin{matrix}
-4i\lambda^3-2i\lambda|\psi|^2+(\ov\psi\psi_x-\psi\ov\psi_x)
 & 4\lambda^2\psi+2i\lambda\psi_x-\psi_{xx}+2|\psi|^2\psi
 \\
 &
 \\
4\lambda^2\ov\psi-2i\lambda\ov\psi_x-\ov\psi_{xx}+2|\psi|^2\ov\psi
 &  4i\lambda^3+2i\lambda|\psi|^2-(\ov\psi\psi_x-\psi\ov\psi_x)
 \end{matrix}\right).
 \end{split}\end{equation}}



In this paper we focus on  the first system in \eqref{LaxPair}, i.e. the spectral problem $Lu=\lambda u$ for the Lax operator $L$. It is not hard to see that $L$ is selfadjoint. We study in particular  the \emph{time-independent} transmission coefficient   $T^{-1}(\lambda)$ associated to it. 
For the cubic nonlinear Schr\"odinger equation case, Koch-Tataru \cite{KT} (see also \cite{KVZ}) made use of the corresponding invariant transmission coefficient to establish a family of   conserved energies which are equivalent to the $H^s$, $s>-\frac 12$-norms of the solutions and hence all the $H^s$-regularities are preserved a priori for regular initial data. 
We will adopt the idea in \cite{KT} to formulate the conserved energies for the Gross-Pitaevskii equation \eqref{GP} and the defocusing modified Korteweg-de Vries equation \eqref{mKdV}.

The first obstacles on the way are  the   mass $\cM$ and momentum $\cP$:
\begin{equation}\label{MP}
\cM=\int_{\R} (|q|^2-1)\dx,
\quad \cP=\Im\int_{\R} q\d_x \bar q\dx,
\end{equation} 
which are  only well-defined  under more integrability  assumptions on $|q|^2-1, \d_x q$,  rather than the mere $L^2$-type boundedness assumptions for $q\in X^s$.
In the classical setting where $(q-1)$ is a Schwartz function,  we have the following expansion for the logarithm of the transmission coefficient (see \cite{FT}): 
There exist   countably many \emph{real} numbers $\{\cH^{n}\}_{n\geq 0}$   such that for any $k\geq 1$,  
  \begin{equation}\label{lnT:FT}
 \begin{split}
&\ln T^{-1}(\lambda)
=i\sum_{l=0}^{k-1}  \cH^{l}(2z)^{-l-1}+(\ln T^{-1}(\lambda))^{\geq k+1},
  \quad \Im\lambda>0,
\\
&
 \hbox{ with } |(\ln T^{-1}( \lambda))^{\geq k+1}|=O(|\lambda|^{-k-1})
\hbox{ as }|\lambda|\rightarrow\infty,
 \end{split}
 \end{equation}
 where $(\lambda,z)$ stays on the upper sheet of a Riemann surface
  $\{(\lambda,z)\in\C^2|\lambda^2-z^2=1, \,\Im z>0\}$ (see Subsection \ref{subss:Riemann} for more details).
  We also have the corresponding expansion for $\ln T^{-1}(\lambda)$ as $|\lambda|\rightarrow\infty$ for $\Im\lambda<0$,  by use of the symmetry 
$\ln T^{-1}({\lambda})=\ov{\ln T^{-1}(\ov{\lambda})}$.
  The first three coefficients   $\cH^{0}, \cH^{1}, \cH^{2}$ in \eqref{lnT:FT} are   the conserved mass, momentum  and energy (see \eqref{cEGL}) for the Gross-Pitaevskii equation \eqref{GP} (and hence also for the mKdV \eqref{mKdV}) respectively:
  $$
\cH^{0}=\cM, \quad \cH^{1}=\cP, \quad \cH^{2}=2\cE_{GL}=\int_{\R}\bigl((|q|^2-1)^2+ |\d_x q|^2 \bigr)\dx,
  $$
 and the fourth conserved Hamiltonian $\cH^{3}$ reads (see also Remark \ref{rmk:a4})
 $$
 \cH^{3}=\Im \int_{\R}\bigl( \d_x q\,\d_{xx}\bar q+3(|q|^2-1)q\d_x\bar q\bigr)\dx-\cP.
 $$
 The momentum $\cP$ is not defined on $X^s$ for any $s\ge 0$ and 
hence in our $L^2$-framework $q\in X^s$ we have to consider the \textit{renormalised} transmission coefficient $\Tc^{-1}(\lambda)$ which will be $T^{-1}(\lambda)$ modulo the mass and momentum (see Theorem \ref{thm:Tc} below for more details).

In constrast to the Nonlinear Schr\"odinger equation  we cannot scale solutions of the Gross-Pitaevskii equation   because of the boundary condition at infinity.
Hence there is no scaling invariance property for the Gross-Pitaevskii equation and   it does not suffice to consider small data.
In order to handle the  large energy case we  introduce the frequence-rescaled Sobolev norm  $H^s_\tau(\R)$, $\tau\geq 2$ which is equivalent to $H^s(\R)$-norm as follows
\footnote{
$\tau^{-s}\|f\|_{H^s_\tau}$ is the semiclassical Sobolev norm $\bigl( \int_{\R}(1+(\hbar\xi)^2)^s|\hat f(\xi)|^2\dxi\bigr)^{1/2}$, $\hbar=\tau^{-1}$.
We always take $\tau\geq 2$   in the frequency-rescaled Sobolev norms, in order to avoid the possible zeros on $(-1,1)$ of the transmission coefficient in the formulation of the conserved energies (see \eqref{cElarge} below).}
 \begin{equation}\label{Sobolev}
 \|f\|_{H^s_\tau(\R)}^2
=\int_{\R} (\tau^2+\xi^2)^s |\hat{f}(\xi)|^2\dxi.
\end{equation} 
For any $q\in X^s$,   we introduce the following notation
\begin{equation}\label{bq}
\bq:=(|q|^2-1, \d_x q), \hbox{ with }\|\bq\|_{H^s_\tau(\R)}^2=\||q|^2-1\|_{H^s_\tau(\R)}^2+\|\d_x q\|_{H^s_\tau(\R)}^2,
\end{equation}
and we  define the associated \emph{energy} $E^{s}_{\tau}(q)$  as  
 \begin{equation}\label{Es}
\begin{split}  
 &E^{s}_{\tau}(q) 
 := \|\bq\|_{H^{s-1}_\tau(\R)},
\end{split}
\end{equation} 
which describes the $H^s$-regularity of $q$ and in particular when $\tau=2$ we denote simply
\begin{equation}\label{Es2}
E^{s}(q):=E^{s}_{2}(q).
\end{equation}
We also introduce the Banach space $l^2_\tau DU^2=DU^2+\tau^{\frac12}L^2\supset H^{s-1}$, $s>\frac12$ (which can be viewed as a replacement of $H^{-\frac12}$ and see Subsection \ref{subs:norm} below for more details) and the norm
\begin{equation*}\begin{split}
\|\bq\|_{l^2_\tau DU^2}= \Bigl( \bigl\| |q|^2-1\bigr\|_{l^2_\tau DU^2}^2+\bigl\| \d_x q\bigr\|_{l^2_\tau DU^2}^2\Bigr)^{\frac12} .
\end{split}\end{equation*}
It is straightforward to check that the $H^s_\tau(\R)$-norm and the $l^2_\tau DU^2(\R)$-norm have the following scaling invariance property:
  \begin{equation}\label{ftau}\begin{split}
 & \|f\|_{H^s_\tau(\R)}=\tau^{s+\frac12}\|f_\tau\|_{H^s(\R)},
\quad \|f\|_{l^2_\tau DU^2(\R)}=\|f_\tau\|_{l^2_1 DU^2(\R)},
 \quad f_\tau=\frac1\tau f(\frac{\cdot}{\tau}).
 \end{split} \end{equation}

We establish a family of conserved energy functionals $(\cE^{s}_{\tau})_{\tau\geq 2}$ as follows:
 \begin{thm}\label{thm}  
Let $s>\frac12$.
There exist  a constant $C\geq2$ (depending only on $s$) and  a family of 
analytic energy functionals $(\cE^{s}_{\tau})_{\tau\geq 2}: X^s\mapsto [0,\infty)$, such that
\begin{itemize} 
\item 
 $\cE^{s}_{\tau}(q)$  is equivalent to $(E^{s}_{\tau}(q))^2$ in the following sense:
\begin{equation}\label{Equivalence}\begin{split}
&|\cE^{s}_{\tau}(q)-(E^{s}_{\tau}(q))^2|
\leq  \frac{C}{\tau} \|\bq\|_{l^2_\tau DU^2}  (E^{s}_{\tau}(q))^2,
\\
&\hbox{ if } q\in X^s\hbox{ such that }\frac{1}{\tau}\|\bq\|_{l^2_\tau DU^2}<\frac{1}{2C},
\end{split}\end{equation}   

\item 
$\cE^{s}_{\tau}(\cdot)$, $\tau\geq 2$ is conserved by    the one-dimensional Gross-Pitaevskii flow \eqref{GP}.
\end{itemize} 
Correspondingly,  for any initial data $q_0\in X^s$, there exists $\tau_0\geq C$  depending only on $E^{s}(q_0)$ such that   the unique solution $q\in \cC(\R; X^s)$ (given in Theorem \ref{thm:gwp})  of the Gross-Pitaevskii equation \eqref{GP}   satisfies  the following energy conservation law:
 \begin{equation}\label{ConsEnergy}\begin{split}
&E^{s}_{\tau_0}( q(t)) \leq  2E^{s}_{\tau_0}( q_0),
\quad  \frac{1}{\tau_0}\|\bq(t)\|_{l^2_{\tau_0} DU^2}<\frac1{2C},  \quad \forall t\in\R.
\end{split}\end{equation}
 \end{thm}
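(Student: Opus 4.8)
The plan is to build the energy functionals $\cE^{s}_{\tau}$ directly from the renormalised transmission coefficient $\Tc^{-1}(\lambda)$ of Theorem~\ref{thm:Tc}, evaluated along the curve $\lambda$ in the upper sheet of the Riemann surface corresponding to $z = i\tau/2$ (equivalently $\lambda = \sqrt{1 - \tau^2/4}$ with the appropriate branch), and then to extract the $H^s_\tau$-sized piece by an integral-in-$\tau$ averaging, mimicking the construction in \cite{KT}. Concretely, one writes $\cE^{s}_{\tau}(q)$ as a suitable multiple of $\Re$ or $\Im$ of $\ln\Tc^{-1}$ at the rescaled spectral parameter, possibly combined with a weight $(\tau^2+\xi^2)^{s-1}$-type Fourier multiplier applied before resummation. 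The first step is to make precise the perturbative expansion of $\ln\Tc^{-1}$ in powers of $\bq$: the quadratic term is exactly (a constant times) $\|\bq\|_{H^{s-1}_\tau}^2 = (E^{s}_{\tau}(q))^2$ once the $\tau$-averaging and the Fourier weight are chosen correctly, so that the linear term is absent by the renormalisation (this is precisely why the mass and momentum had to be subtracted) and the remaining terms are cubic and higher.

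Second, I would prove the quantitative equivalence \eqref{Equivalence}. This reduces to estimating the sum of the cubic-and-higher terms in the expansion of $\cE^{s}_{\tau}$ by $\tfrac{C}{\tau}\|\bq\|_{l^2_\tau DU^2}(E^{s}_{\tau}(q))^2$. Each term of order $n\ge 3$ is a multilinear expression in $\bq$ integrated against kernels coming from the resolvent of the free Lax operator at spectral parameter of size $\tau$; the key multilinear estimate is of the schematic form $\|\prod \bq\| \lesssim \tau^{-(n-2)}\|\bq\|_{l^2_\tau DU^2}^{n-2}\|\bq\|_{H^{s-1}_\tau}^2$, using the algebra-type and bilinear properties of the $DU^2$/$l^2_\tau DU^2$ spaces (Subsection~\ref{subs:norm}) together with the decay of the free resolvent kernel. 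Summing the geometric series in $\tfrac1\tau\|\bq\|_{l^2_\tau DU^2} < \tfrac{1}{2C}$ gives convergence and the stated bound; analyticity of $q\mapsto \cE^{s}_{\tau}(q)$ on the corresponding open subset of $X^s$ follows from the analytic structure of Theorem~\ref{thm:analytic} and the local uniform convergence of the series.

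Third, conservation in time. Since $\Tc^{-1}(\lambda)$ is, up to the explicitly time-dependent (in fact here time-\emph{independent}, after renormalisation) mass/momentum factors, invariant under the Gross-Pitaevskii flow — because $L$ evolves by conjugation with a unitary family, as recalled after \eqref{LaxOp} — each coefficient in its asymptotic expansion, and hence the whole regularised object and the $\tau$-averaged functional $\cE^{s}_{\tau}$, is conserved. I would first establish this for Schwartz data $q-1 \in \cS(\R)$, where all the classical identities \eqref{lnT:FT} and the Lax-pair computation are rigorous, and then pass to general $q_0\in X^s$ by the density of $\{q : q-1\in C^\infty_0\}$ (Theorem~\ref{thm:metric}), the continuity of the flow map (Theorem~\ref{thm:gwp}), and the continuity of $\cE^{s}_{\tau}$ in $d^s$ that follows from step two. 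Finally, \eqref{ConsEnergy}: choose $\tau_0 \ge C$ large enough, depending only on $E^{s}(q_0)$, so that $\tfrac{1}{\tau_0}\|\bq_0\|_{l^2_{\tau_0}DU^2} < \tfrac{1}{4C}$ (possible since $\|\bq_0\|_{l^2_{\tau_0}DU^2}$ grows slower in $\tau_0$ than the threshold $\tfrac{\tau_0}{4C}$, by the embedding $H^{s-1}\hookrightarrow l^2_\tau DU^2$ and the $\tau$-scaling); then \eqref{Equivalence} gives $\tfrac12 (E^{s}_{\tau_0}(q_0))^2 \le \cE^{s}_{\tau_0}(q_0) \le \tfrac32 (E^{s}_{\tau_0}(q_0))^2$, conservation propagates this to all $t$, and a continuity/bootstrap argument in $t$ keeps $\tfrac{1}{\tau_0}\|\bq(t)\|_{l^2_{\tau_0}DU^2}$ below $\tfrac{1}{2C}$, yielding $E^{s}_{\tau_0}(q(t)) \le 2 E^{s}_{\tau_0}(q_0)$.

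The main obstacle I expect is step two: getting the multilinear estimates for the higher-order terms with the correct power of $\tau^{-1}$ and in the right function spaces, uniformly in $\tau\ge 2$. This is where the introduction of $l^2_\tau DU^2$ as a substitute for $H^{-1/2}$ is essential — the naive $H^{-1/2}$ estimate is scaling-critical and fails to close — and the bookkeeping of how each free resolvent factor at frequency $\sim\tau$ contributes a gain of $\tau^{-1}$, while two of the $\bq$ factors must be measured in $H^{s-1}_\tau$ and the rest in $l^2_\tau DU^2$, is the technical heart of the argument. The second delicate point is the passage to the limit from Schwartz to $X^s$ data for the conservation law, which requires that both $\Tc^{-1}$ and the flow depend continuously on the data in the relevant topologies; this leans on Theorems~\ref{thm:metric}, \ref{thm:gwp} and \ref{thm:Tc} but still needs care near possible eigenvalues in $(-1,1)$, which is exactly why $\tau\ge 2$ is imposed so that the spectral parameter stays away from the band $[-1,1]$.
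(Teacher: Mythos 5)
Your proposal follows essentially the same route as the paper: the functionals $\cE^s_\tau$ are built from $\ln\Tc^{-1}$ on the imaginary axis $z=i\tau/2$ via a $(\tau^2-\tau'^2)^{s-1}$-weighted integral in $\tau$ (the trace-formula construction of Theorem \ref{thm:Es}), the quadratic term is identified with $(E^s_\tau)^2$, the cubic-and-higher terms are controlled by multilinear $l^2_\tau DU^2$ estimates summed as a geometric series, conservation is obtained for Schwartz data via the Lax pair and transferred to $X^s$ by density and continuity of the flow, and the global bound \eqref{ConsEnergy} follows from the choice of $\tau_0$ via Lemma \ref{lem:ctau} and a bootstrap. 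This matches the paper's assembly of Theorems \ref{thm:lwp}, \ref{thm:Tc} and \ref{thm:Es}, including the correct identification of the two technical pressure points (the uniform-in-$\tau$ multilinear bounds and the density/continuity passage).
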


 \begin{rmk}\label{rmk:E}
\begin{enumerate}[1)]

\item One can find  the precise definition and the trace formula of the  energies $\cE^{s}_{\tau}(q)$ in Theorem \ref{thm:Es}. 

For example, 
we have the following trace formula for the conserved energies $\cE^{s}_{\tau}(q)$ when $s=n\geq1$ is an integer (recalling  $\cH^{l}$ in \eqref{lnT:FT})
 \begin{align*}
&\cE^{n}_{\tau}(q)=\sum_{l=0}^{n-1}\tau^{2(n-1-l)}\begin{pmatrix}n-1\\l \end{pmatrix}\cH^{2l+2},
\\
&\cH^{2l+2}=\frac{1}{\pi} \int_{\R}\xi^{2l+2}\frac12\sum_{\pm}\ln|\Tc^{-1}|(\pm\sqrt{\xi^2/4+1})d\xi- \frac{1}{2l+3}\sum_{m}\Im (2z_m)^{2l+3},
\end{align*}
where $\Tc^{-1}$ is the \emph{renormalised} transmission coefficient defined for any $q\in X^s$ in Theorem \ref{thm:Tc} and $z_m=i\sqrt{1-\lambda_m^2}\in i(0,1]$ with 
$\{\lambda_m\}_m\subset (-1,1)$ being  the possible countably many zeros of the holomorphic function $\Tc^{-1}(\lambda)$ and hence the possible eigenvalues of the Lax operator $L$.

In particular if $q-1\in\cS(\R)$, then 
by changing of variables $\xi\rightarrow\lambda$ with $\lambda^2=\xi^2+1$  and noticing the symmetry in Subsection \ref{subsubs:T,R}: $\ln|T^{-1}|(\lambda+i0)=\ln|T^{-1}|(\lambda-i0)$ for $\lambda\in \Ic=(-\infty,1]\cup[1,\infty)$,
{\small\begin{align*} 
&\cH^{2l+2}=\frac{2^{2l+3}}{\pi} 
 \int_{\Ic}
|\lambda|\sqrt{\lambda^2-1}^{{2l+1}} \ln|T^{-1}|(\lambda) d\lambda
- \frac{1}{2l+3}\sum_{m}\Im (2z_m)^{2l+3}, 
\end{align*}}
for $l\geq 0$.
This can  be compared  with $2^{2l+3}c_{2l+3,\varrho}$ on   Pages 76 in \cite{FT}. 
   
\item 
 For any ball $B_r^s(q_0)=\{p_0\in X^s\,|\, d^s(q_0, p_0)<r\}$, $r>0$, in $X^s$ such that    (see Lemma \ref{metricspace} below) for any $p_0\in B^s_r(q_0)$,
$$
E^{s}(p_0)\leq E^{s}(q_0)+c(1+E^s(q_0))^{\frac12}d^s(q_0,p_0)+c(d^s(q_0, p_0))^2\leq C(E^s(q_0),r), 
$$
there exists $\tau_0$  (depending only on $E^{s}(q_0), r$) such that all the   solutions $p\in \cC(\R; X^s)$  of the Gross-Pitaevskii equation \eqref{GP} with the corresponding initial data $p_0\in B_r^s(q_0)$ satisfy  the  energy conservation law \eqref{ConsEnergy}.

\item The idea of the proof of Theorem \ref{thm} is similar as in \cite{KT}, however due to the nonzero background, the proof requires substantial new ideas and concepts and the characterised quantities in the energy space are the nonlinear function of $q$: $|q|^2-1$ and its derivative $q'$ rather than the solution $q$ itself.

\item In the proof showing the asymptotic approximation of the Gross-Pitaevskii equation by the Korteweg-de Vries equations in long-wave regime, \cite{BGSS} made use of the uniform bounds of $E^{k}(q), k=1,2,3,4$ which were derived from  a linear (and not obvious at all) combination of the first nine energy conservation laws $\cH^{0},\cdots, \cH^{8}$.
Theorem \ref{thm} here is a first existence result of infinitely many conserved quantities which control $E^k(q), k=1,2,\cdots$   of the solutions of the Gross-Pitaevskii equation (and mKdV with the same boundary condition at $\infty$).
\end{enumerate}
\end{rmk}

 We also have   the following results for the modified KdV equation  \eqref{mKdV}. We recall that we define wellposedness in terms of the existence of a representative. 
 \begin{thm}\label{thm:mKdV}
 The complex modified KdV equation  \eqref{mKdV} is globally-in-time well-posed in the metric space $(X^s, d^s)$, $s > \frac34$ in the following  sense (as in Theorem \ref{thm:gwp}):  
For any initial data $\psi_0\in X^s$,   there exists a unique   solution $\psi\in \cC(\R; X^s)$ (by which we mean  that the flow map on $1+\mathcal{S}$ extends continuously to $X^s$) and the flow map $X^s \ni \psi_0\mapsto   \psi\in \cC(\R; X^s)$ is 
continuous.    
The energy functionals $(\cE^s_\tau(\cdot))_{s>\frac12, \tau\geq2}$ established in Theorem \ref{thm}   are also conserved by the modified KdV flow \eqref{mKdV}. 

For real data the flow map extends to a continuous map from $X^s$ to $\cC(\R; X^s)$ for $s \ge 0$.   

 \end{thm}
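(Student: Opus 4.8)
The plan is to prove Theorem \ref{thm:mKdV} by leveraging the shared Lax structure with the Gross-Pitaevskii equation: the mKdV flow \eqref{mKdV} has the \emph{same} Lax operator $L$ in \eqref{LaxOp}, so the renormalised transmission coefficient $\Tc^{-1}(\lambda)$ and the energies $\cE^s_\tau$ built from it in Theorem \ref{thm} are a priori conserved. The well-posedness part is then structured exactly as in the proof of Theorem \ref{thm:gwp}: first construct solutions for smooth data $\psi_0-1 \in \cS(\R)$ by classical integrable/parabolic-regularisation methods, deriving the key Strichartz-type and local smoothing bounds \emph{uniformly} in terms of $E^s(\psi_0)$; then upgrade to the energy-space statement by a density-plus-metric-completeness argument using Theorem \ref{thm:metric}.

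First I would fix $s > \tfrac34$ and, given $\psi_0 \in X^s$, choose $\tau_0 \ge C$ depending only on $E^s(\psi_0)$ as in Theorem \ref{thm} so that the quantity $\tfrac{1}{\tau_0}\|\mathbf{q}\|_{l^2_{\tau_0}DU^2}$ stays below $\tfrac{1}{2C}$ along the flow of smooth approximants. The conservation of $\cE^s_{\tau_0}$ under the mKdV flow --- which follows because $L_t = [P_{\hbox{\tiny mKdV}};L]$ with $P_{\hbox{\tiny mKdV}}$ in \eqref{PmKdV} gives the same spectral invariance as in the Gross-Pitaevskii case, hence the same invariance of $\Tc^{-1}(\lambda)$ --- together with the equivalence \eqref{Equivalence} yields the a priori bound $E^s_{\tau_0}(\psi(t)) \le 2E^s_{\tau_0}(\psi_0)$ for all $t$, exactly as \eqref{ConsEnergy}. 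This is the mechanism that globalises in time. For the local existence I would run a fixed-point argument for $\psi - 1$ in a suitable Bourgain / $U^p$-$V^p$-type space adapted to the Airy propagator $e^{-t\partial_x^3}$, using the $\tfrac34^+$ threshold for the cubic derivative nonlinearity $6|\psi|^2\psi_x$ (this is where $s>\tfrac34$ enters, in contrast to the $s>\tfrac12$ threshold for GP with its $\partial_{xx}$ dispersion); the local time depends only on $E^s(\psi_0)$, and iterating with the conserved $\cE^s_{\tau_0}$ bound gives a global solution. Continuous dependence of the flow map $X^s \ni \psi_0 \mapsto \psi \in \cC(\R;X^s)$ follows from the Lipschitz bounds of the iteration together with the density of $\{v : v-1 \in \cS(\R)\}$ in $X^s$ (Theorem \ref{thm:metric}), which is precisely what lets us phrase well-posedness as the continuous extension of the smooth flow map, as stated in the theorem.

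For the final sentence --- real data and $s \ge 0$ --- I would observe that when $\psi_0$ is real-valued the complex mKdV \eqref{mKdV} reduces to the (defocusing) real mKdV $\psi_t + \psi_{xxx} - 6\psi^2\psi_x = 0$, which enjoys the stronger low-regularity well-posedness theory (sharp local well-posedness down to $H^{1/4}$ on the line, and below via the complete-integrability / Miura-type machinery); combined with the conservation of the whole hierarchy $\cE^s_\tau$ one propagates regularity and runs the same density argument to reach all $s \ge 0$. Here one must also check that reality is preserved by the flow (it is, since the real mKdV preserves real data) and that the metric $d^s$ restricted to real-valued representatives is still complete, which is immediate from Theorem \ref{thm:metric}.

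The main obstacle I expect is not the algebraic conservation statement --- that is essentially a corollary of the shared Lax pair --- but establishing the \emph{uniform-in-data} local theory in the nonzero-background energy space $X^s$ for the derivative nonlinearity: the Gross-Pitaevskii solution theory of Theorem \ref{thm:gwp} must be redone with the Airy group in place of the Schrödinger group, and one must verify that the same functional-analytic framework ($l^2_\tau DU^2$, the $\sech$-localised metric, the analytic structure on $X^s$) accommodates the extra derivative in $6|\psi|^2\psi_x$ and the cubic-in-$x$ phase, all while keeping every estimate quantitative in $E^s(\psi_0)$ so that the conserved energy can close the global iteration. The secondary delicate point is the endpoint $s \ge 0$ for real data, where one leaves the perturbative regime entirely and genuinely needs the integrable structure (or a Miura map to KdV) rather than the energy method alone.
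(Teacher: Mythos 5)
Your architecture for the complex case $s>\tfrac34$ matches the paper's: the conservation of $\cE^s_\tau$ comes for free from the shared Lax operator (same $\Tc^{-1}(\lambda)$, hence Theorems \ref{thm:Tc} and \ref{thm:Es} apply verbatim), and the local theory is run on $b=\tilde q-\tilde q_{0,\varepsilon}$ with the Kenig--Ponce--Vega contraction for the derivative nonlinearity, the linear transport term $6b_x$ being removed by the Galilean change of variables $y=x+6t$; globalization and continuity then follow by the same density-plus-equivalence argument as for Gross--Pitaevskii. That part of your proposal is sound and is essentially Theorem \ref{thm:lwpmKdV} combined with Theorems \ref{thm:Tc} and \ref{thm:Es}.

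The genuine gap is in the real-data statement for $s\ge 0$. Your primary mechanism there --- ``conservation of the whole hierarchy $\cE^s_\tau$ propagates regularity down to all $s\ge 0$'' --- cannot work, because the energies $\cE^s_\tau$ are only constructed for $s>\tfrac12$ (the construction needs $H^{s-1}\hookrightarrow l^2_1DU^2$), and for $s\in[0,\tfrac14)$ one is below every perturbative threshold for the cubic derivative nonlinearity, so neither the energy method nor the contraction argument survives. You do flag ``a Miura map to KdV'' parenthetically, but this is not an optional shortcut: it is the entire proof. Concretely, the paper first uses reality and $|q_{0,\varepsilon}|^2-1\in H^N$ to classify the background into the four alternatives $q_{0,\varepsilon}\mp1\in H^N$ or $q_{0,\varepsilon}\mp\tanh(x)\in H^N$, reducing by the symmetry $q\to-q$ to the two cases $q+1\in H^s$ and $q+\tanh(x)\in H^s$. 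Lemma \ref{lem:ABC} then shows that $w\mapsto M(w-1)-1$ (resp.\ the $\tanh$ version from \cite{MR3400442}) is a diffeomorphism of $H^s$ onto its range, which is characterized spectrally by the absence (resp.\ presence) of an eigenvalue of $-\partial_{xx}+u$ at or below $-1$; the surjectivity argument is nontrivial and rests on the positive solution $\phi$ of $-\phi''+u\phi+\phi=0$ from \cite{MR2189502} and a case analysis on the asymptotics of $\tfrac{d}{dx}\ln\phi$. Real mKdV solutions then correspond under $u=M(q)-1$ to solutions of the KdV equation \eqref{KdV6}, and global well-posedness for $s\ge0$ is imported from the Killip--Visan $H^{-1}$ theory \cite{KV} since $u\in H^{s-1}\subset H^{-1}$. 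Without this construction the final sentence of the theorem is unproved.
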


 The following sections are organised as follows:
 \begin{itemize}
 \item In Section \ref{sec:lwp} we state and prove  Theorem \ref{thm:lwp} (\textit{resp.} Theorem \ref{thm:lwpmKdV}), which states the local-in-time well-posedness of the Gross-Pitaevskii equation \eqref{GP} (\textit{resp.} the modified KdV equation \eqref{mKdV}) in the energy space $(X^s, d^s)$, $s\ge 0 $ (\textit{resp.} $s>\frac34$ in the complex case and $s\geq 0$ in the real case): For any initial data $q_0\in X^s$, there exists a unique solution $ q\in \cC([-t_0, t_0]; X^s)$ of \eqref{GP} (\textit{resp.} \eqref{mKdV}), such that the flow map is 
 continuous and the existence time $t_0$ depends only on $E^{s}(q_0)$.
 
 \item In Section \ref{sec:T} we state  Theorem \ref{thm:Tc}, where we introduce the renormalised transmission coefficient $\Tc^{-1}(\lambda)$ and show the conservation of $\Tc^{-1}(\lambda; q(t))$ by the Gross-Pitaevskii flow on the existence time interval $I$ for any solution $q\in \cC(I;X^s)$, $s>\frac12$.
 
 \item Section \ref{sec:Tc} is devoted to the proof of Theorem \ref{thm:Tc}.
 
 \item In Section \ref{sec:est} we state and prove Theorem \ref{thm:Es}, where we establish a family of energy functionals $(\cE^{s}_{\tau}: X^s\mapsto[0,\infty))_{s>\frac12,\tau\geq 2}$ in terms of $\ln\Tc^{-1}$, which satisfies the equivalence relation \eqref{Equivalence}.
 
 \item Section \ref{sec:metric} is devoted to the proof of Theorem \ref{thm:metric}.
  \item In the Appendix we calculate the quadratic term in the expansion of $\ln\Tc^{-1}(\lambda)$ on the imaginary axis.
 \end{itemize}
 
 At the end of this introduction, we prove our main Theorems \ref{thm:gwp} and   \ref{thm} concerning the Gross-Pitaevskii equation \eqref{GP} by use of the results from Theorems \ref{thm:lwp}, \ref{thm:Tc} and \ref{thm:Es}.
 Since the modified KdV equation \eqref{mKdV} shares the same Lax operator as the Gross-Pitaevskii equation, Theorem \ref{thm:mKdV} follows from Theorems \ref{thm:lwpmKdV}, \ref{thm:Tc} and \ref{thm:Es} exactly in the same way.
 
  We first  state the relations between  $E^{s}_{\tau}=\|\bq\|_{H^{s-1}_\tau}$, $E^{s}=E^{s}_{2}$ and $\frac1\tau\|\bq\|_{l^2_\tau DU^2}$.
 \begin{lem}\label{lem:ctau} 
 There exists a family of constants $(C_s)_{s>\frac12}$ with $C_s\geq 1$ 
and $C_s=C_1$, $s\geq 1$ 
such that   whenever $\tau\geq 2$, for all $s>\frac12$,
{\small\begin{equation}\label{ctau,Etau}\begin{split}
&\frac1\tau\|\bq\|_{l^2_\tau DU^2}\leq C_s\tau^{-\frac12-s}E^{s}_{\tau},
\quad E^{s}_{\tau}\leq C_s\tau^{\max\{0,s-1\}}E^{s}, 
\\
&\hbox{ and hence }\frac1\tau\|\bq\|_{l^2_\tau DU^2}\leq C_s\tau^{-\frac12-\min\{s,1\}}E^{s}.
\end{split}\end{equation}}  
 \end{lem}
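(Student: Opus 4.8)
\textbf{Proof plan for Lemma \ref{lem:ctau}.}

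The plan is to prove each of the three inequalities in \eqref{ctau,Etau} by unwinding the definitions of the frequency-rescaled norms and using elementary symbol comparisons on the Fourier side. Recall that $E^{s}_{\tau}=\|\bq\|_{H^{s-1}_\tau}$, that $E^{s}=E^{s}_{2}$, and that by definition $\|f\|_{H^{\sigma}_\tau}^2=\int_{\R}(\tau^2+\xi^2)^{\sigma}|\hat f(\xi)|^2\,d\xi$. For the first inequality I would use the embedding $H^{s-1}\hookrightarrow l^2_\tau DU^2$ with $s>\tfrac12$ announced in Subsection \ref{subs:norm}: combining it with the scaling identities \eqref{ftau}, namely $\|f\|_{l^2_\tau DU^2}=\|f_\tau\|_{l^2_1 DU^2}$ and $\|f\|_{H^{s-1}_\tau}=\tau^{s-\frac12}\|f_\tau\|_{H^{s-1}}$ (the latter is \eqref{ftau} with exponent $s-1$ in place of $s$), one gets $\|f\|_{l^2_\tau DU^2}\le c_s\|f_\tau\|_{H^{s-1}}=c_s\,\tau^{-s+\frac12}\|f\|_{H^{s-1}_\tau}$, i.e. $\tfrac1\tau\|f\|_{l^2_\tau DU^2}\le c_s\tau^{-\frac12-s}\|f\|_{H^{s-1}_\tau}$, and summing the two components of $\bq$ gives the stated bound with $C_s=c_s$. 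Here $C_s$ can be taken nondecreasing as $s\downarrow\tfrac12$ and, since for $s\ge1$ the embedding constant for $H^{s-1}\hookrightarrow l^2_1 DU^2$ is controlled by the one for $H^{0}=L^2\hookrightarrow l^2_1 DU^2$ (because $DU^2+\tau^{1/2}L^2\supset L^2\supset H^{s-1}$ uniformly), one may and does fix $C_s=C_1$ for all $s\ge1$.

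For the second inequality I would compare the multipliers $(\tau^2+\xi^2)^{s-1}$ and $(4+\xi^2)^{s-1}$ directly. When $s\ge1$ the exponent $s-1\ge0$ is nonnegative and, using $\tau\ge2$, one has $\tau^2+\xi^2\le \tfrac{\tau^2}{4}(4+\xi^2)$ for all $\xi$, hence $(\tau^2+\xi^2)^{s-1}\le (\tau/2)^{2(s-1)}(4+\xi^2)^{s-1}$; integrating against $|\hat{\bq}(\xi)|^2$ yields $E^{s}_{\tau}\le (\tau/2)^{s-1}E^{s}\le \tau^{s-1}E^{s}$, which is the claimed $\tau^{\max\{0,s-1\}}$ bound with constant $1\le C_s$. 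When $\tfrac12<s<1$ the exponent $s-1<0$ is negative; then $\tau^2+\xi^2\ge \tfrac{\tau^2}{4}\,\cdot\,\min(1,\,\cdot\,)\cdots$ — more simply, since raising to a negative power reverses inequalities and $\tau^2+\xi^2\ge 4+\xi^2$ for $\tau\ge2$, we get $(\tau^2+\xi^2)^{s-1}\le(4+\xi^2)^{s-1}$ pointwise, hence $E^{s}_{\tau}\le E^{s}$, matching $\tau^{\max\{0,s-1\}}=\tau^0=1$. In both regimes the constant is $1$, so any $C_s\ge1$ works and we keep the $C_s$ already fixed above.

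The third inequality is just the composition of the first two: $\tfrac1\tau\|\bq\|_{l^2_\tau DU^2}\le C_s\tau^{-\frac12-s}E^{s}_{\tau}\le C_s^2\,\tau^{-\frac12-s}\tau^{\max\{0,s-1\}}E^{s}$. A quick case check shows $-\tfrac12-s+\max\{0,s-1\}=-\tfrac12-\min\{s,1\}$: for $s\ge1$ it equals $-\tfrac12-s+s-1=-\tfrac32=-\tfrac12-1$, and for $s\le1$ it equals $-\tfrac12-s+0=-\tfrac12-s$. Replacing $C_s^2$ by $C_s$ (i.e. enlarging $C_s$ once and for all so that it dominates its own square within the needed range, or simply renaming the constant) gives the stated form. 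I do not expect any genuine obstacle here; the only point requiring care is the bookkeeping of the constants so that the single family $(C_s)_{s>1/2}$ with $C_s=C_1$ for $s\ge1$ serves all three estimates simultaneously, and the sign analysis of the exponent $\max\{0,s-1\}$ versus $\min\{s,1\}$, which the display above resolves. The genuinely nontrivial input — the embedding $H^{s-1}\hookrightarrow l^2_\tau DU^2$ for $s>\tfrac12$ — is supplied by Subsection \ref{subs:norm} and is used as a black box.
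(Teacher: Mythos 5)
Your proposal is correct and follows essentially the same route as the paper: the first inequality via the scaling identities \eqref{ftau} together with the embedding $H^{s-1}\hookrightarrow l^2_1DU^2$ for $s>\tfrac12$, the second via the pointwise comparison of the multipliers $(\tau^2+\xi^2)^{s-1}$ and $(4+\xi^2)^{s-1}$ (i.e.\ monotonicity in $\tau$ for $s\le1$ and the factor $(\tau/2)^{s-1}$ for $s\ge1$), and the third by composition. The only caveat is cosmetic: a constant $C_s\ge1$ cannot be "enlarged to dominate its own square," but your alternative of simply renaming $C_s^2$ as the new constant handles the bookkeeping correctly.
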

 \begin{proof}We derive from the scaling property \eqref{ftau} and the embedding $H^{s-1}(\R)\hookrightarrow l^2_1 DU^2(\R)$, $s>\frac12$ that
{\small\begin{align*}
&\frac1\tau\|\bq\|_{l^2_\tau DU^2}=\frac{1}{\tau}\Bigl(\bigl\|(|q|^2-1)_\tau\bigr\|_{l^2_1 DU^2}^2
+\bigl\|(\d_x q)_\tau\bigr\|_{l^2_1 DU^2}^2\Bigr)^{\frac12},
\hbox{ with }f_\tau=\frac1\tau f(\frac\cdot\tau)
\\
&\qquad\qquad\leq C_s \frac{1}{\tau}\Bigl(\bigl\|(|q|^2-1)_\tau \bigr\|_{H^{s-1}}^2
+\bigl\|(\d_x q)_\tau\bigr\|_{H^{s-1}}^2\Bigr)^{\frac12}
=C_s\tau^{-\frac12-s}E^{s}_{\tau}(q).
\end{align*}}
By virtue of the fact that $\tau\mapsto E^{s}_{\tau}$ is decreasing if $s\in (\frac12,1]$ and $E^{s}_{\tau}\leq (\tau/2)^{ s-1}E^{s}$ if $s\geq 1$, we have $E^{s}_{\tau}\leq C_s\tau^{\max\{0,s-1\}}E^{s}$.
 \end{proof}

We are going to prove the global-in-time wellposedness result (Theorem \ref{thm:gwp}) and the energy conservation law \eqref{ConsEnergy} (Theorem \ref{thm}) simultaneously,  for the initial data $q_0\in X^s$, $s>\frac12$ by use of  the following facts from Theorems \ref{thm:lwp}, \ref{thm:Tc} and \ref{thm:Es}:
 \begin{itemize}
 \item There exists a unique solution $q\in \cC([-t_0, t_0]; X^s)$ of the Gross-Pitaevskii equation with $t_0>0$ depending only on $E^{s}(q_0)$ (by Theorem \ref{thm:lwp});
 \item The renormalised transmission coefficient $\Tc^{-1}(\lambda;q(t))$ is conserved by the Gross-Pitaevskii flow on $[-t_0, t_0]$ (by Theorem \ref{thm:Tc});
 \item The energy functional $\cE^{s}_{\tau_0}$, which is constructed in terms of $\ln\Tc^{-1}$,  is also conserved by the Gross-Pitaevskii flow, and furthermore,  the equivalence relation \eqref{Equivalence} holds (by Theorem \ref{thm:Es}).  
 \end{itemize}

For the initial data $q_0\in X^s$, we take  $\tau_0$ depending only on $E^{s}(q_0)$ such that  (with the   constant $C$ given in  \eqref{Equivalence})
{\small\begin{equation}\label{GeneralTau0}
C_s^2\tau_0^{-\frac12-\min\{s,1\}}
\bigl(2E^{s}( q_0)\bigr)<\frac{1}{2C}
\hbox{ and hence by Lemma \ref{lem:ctau}, }\frac1{\tau_0}\|\bq_0\|_{l^2_{\tau_0} DU^2}<\frac{1}{2C}.
\end{equation}}
The equivalence relation \eqref{Equivalence} implies initially  
 $E^{s}_{\tau_0}( q_0)\leq \sqrt{2\cE^{s}_{\tau_0}( q_0)}\leq 2E^{s}_{\tau_0}( q_0)$.

 By  the equivalence relation \eqref{Equivalence} and the conservation of the energy $\cE^{s}_{\tau_0}(q(t))$, the solution $ q\in \cC([-t_0,t_0];X^s)$  satisfies the conservation law \eqref{ConsEnergy} on the existence time interval $t\in [-t_0, t_0]$ as follows (noticing also \eqref{ctau,Etau}, \eqref{GeneralTau0}):
{\small \begin{align*}
& E^{s}_{\tau_0}(  q(t))\leq \sqrt{2\cE^{s}_{\tau_0}( q(t))}
 =\sqrt{2\cE^{s}_{\tau_0}(  q_0)}\leq 2E^{s}_{\tau_0}( q_0),
 \\
 &\frac1{\tau_0}\|\bq(t)\|_{l^2_{\tau_0} DU^2}\leq C_s\tau_0^{-\frac12-s}E^{s}_{\tau_0}( q(t))
 \leq C_s\tau_0^{-\frac12-s}(2E^{s}_{\tau_0}(q_0))
 \\
&\qquad\qquad\qquad\qquad\qquad
 \leq C_s^2\tau_0^{-\frac12-\min\{s,1\}}(2E^{s}(q_0))<\frac{1}{2C}.
 \end{align*}}  
 By a continuity argument, the solution $q$ exists globally in time  and satisfies the energy conservation law \eqref{ConsEnergy}: $E^{s}_{\tau_0}(q(t))\leq 2E^{s}_{\tau_0}(q_0)$, $\forall t\in\R$.
 Indeed, if not and suppose $I\neq\R$ is the maximal existence time interval for the solution $q$, then by the above argument we have $E^{s}_{\tau_0}(q(t))\leq 2E^{s}_{\tau_0}(q_0)$ for all $t\in I$.
 By Theorem \ref{thm:lwp} we can extend the solution to a strictly larger time interval than $I$, which is a contradiction of the maximality of $I$.

\setcounter{equation}{0}
\section{Local well-posedness}\label{sec:lwp}
We prove the locally-in-time  well-posedness  for the Gross-Pitaevskii equation (see Theorem \ref{thm:lwp}) and for the modified Korteweg-de Vries equation (see Theorem \ref{thm:lwpmKdV}) respectively  in this section.
\begin{thm}\label{thm:lwp}The Gross-Pitaevskii equation \eqref{GP} is locally-in-time well-posed in the metric space $(X^s, d^s)$, $s\geq0$ in the following sense (as in Theorem \ref{thm:gwp}):
\begin{itemize}
\item 
For any initial data $q_0\in X^s$, there exists  $t_0>0$ depending only on $E^{s}(q_0)= \|\bq_0\|_{H^{s-1}_2}$, $\bq_0=\bigl(   |q_0|^2-1, q_0'  \bigr)$, and a unique   solution $q\in \cC((-t_0, t_0); X^s)$ (defined in Definition \ref{def}) of the Gross-Pitaevskii equation;  

\item 
For the neighbourhood $B_r^s(q_0)=\{p_0\in X^s\,|\, d^s(q_0, p_0)<r\}$, $r>0$, of the initial data $q_0\in X^s$, there exists $t_1>0$ depending only on $E^{s}(q_0), r$ such that the flow map $B_r^s(q_0)\ni p_0\mapsto p\in \cC((-t_1, t_1); X^s)$  is  
continuous. 
\end{itemize} 
\end{thm}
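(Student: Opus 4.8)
The plan is to work at the level of a fixed representative $\tilde q_0 \in H^s_{\loc}$ of $q_0 \in X^s$ and to solve the Gross-Pitaevskii equation \eqref{GP} by rewriting it as a perturbation of the defocusing cubic NLS around the constant state $1$. Write $\tilde q = 1 + w$; then $w$ solves $i\d_t w + \d_{xx} w = 2(1+w)(|1+w|^2-1) = 2(w + \bar w) + (\text{cubic and quadratic in } w)$, a semilinear Schr\"odinger equation with a bounded linear term and algebraic nonlinearity. The natural data space is dictated by $X^s$: we only control $\bq_0 = (|q_0|^2-1,\d_x q_0) \in H^{s-1}$, not $w$ itself in any global norm, so $w$ lives in $L^\infty + H^s$ locally and the genuinely controlled quantity is $\bq$. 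First I would run a standard Picard iteration for $w$ in a space built on the Strichartz estimates for the free Schr\"odinger group on $\R$ — concretely, $L^4_{t,x}$ on $[-t_0,t_0]\times\R$ together with $C_t H^s_{\loc}$ control of $w$ and $C_t H^{s-1}$ control of $\bq$ — with the iteration time $t_0$ chosen depending only on $E^s(q_0) = \|\bq_0\|_{H^{s-1}_2}$. The key linear inputs are: (i) the inhomogeneous Strichartz / energy estimates for $i\d_t + \d_{xx}$ on the real line; (ii) the observation that the nonlinearity, when expressed through $\bq$ (i.e. differentiating the equation once in $x$ and also tracking the evolution of $|q|^2-1$), closes in $H^{s-1}$; and (iii) for $0 \le s \le \tfrac12$ one must use the $L^4_{t,x}$ Strichartz norm (hence the Strichartz-type bound \eqref{Strichartz} in Definition \ref{def}) to absorb the nonlinearity, while for $s > \tfrac12$ one can also run the argument at $H^s$ directly. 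Local existence, uniqueness, and persistence of the $X^s$ structure (i.e. $\|\bq(t)\|_{H^{s-1}}$ stays finite on $[-t_0,t_0]$, and weak continuity \eqref{WS} holds) then follow from the contraction and a standard bootstrap.

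For the second bullet — continuity of the flow map on the ball $B^s_r(q_0)$ — I would first use Lemma \ref{metricspace} (quoted in Remark \ref{rmk:E}(2)) to get a uniform bound $E^s(p_0) \le C(E^s(q_0),r)$ for all $p_0 \in B^s_r(q_0)$, which by the first bullet gives a common existence time $t_1 = t_1(E^s(q_0),r)$. Then I would prove Lipschitz (or at least continuous) dependence by estimating the difference $\tilde p - \tilde q$ of two solutions in the same iteration space: the difference solves a linear Schr\"odinger equation with coefficients built from $p$ and $q$, and the contraction estimate from the existence proof applies verbatim to the difference, yielding $d^s(p(t),q(t)) \lesssim d^s(p_0,q_0)$ on $[-t_1,t_1]$. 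Some care is needed because $d^s$ is defined via the localised quantity $\inf_{|\lambda|=1}\|\sech(\cdot-y)(\lambda p - q)\|_{H^s}$ integrated in $y$, rather than a linear norm; I would handle this by choosing, for each pair and each $y$, the optimal phase $\lambda$ and comparing with a globally-chosen phase, using that the set of phases is compact, so that control of $\|\d_x(p-q)\|_{H^{s-1}}$ plus $\||p|^2-|q|^2\|_{H^{s-1}}$ plus a single modulated $L^2$ difference suffices to bound $d^s$.

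The main obstacle I expect is \emph{not} the Schr\"odinger analysis per se but the interplay between the $L^\infty$-type boundary behaviour and the $L^2$-based energy norm: one cannot put $w = \tilde q - 1$ in any translation-invariant global space, so every estimate must be phrased through $\bq = (|q|^2-1,\d_x q)$, and one has to verify that the nonlinear terms — e.g. $\d_x(q(|q|^2-1))$ and the evolution equation $\d_t(|q|^2-1) = 2\Im(\bar q\,\d_{xx}q) = \d_x(2\Im(\bar q \,\d_x q))$ — genuinely close in $H^{s-1}$ using only $\|\bq\|_{H^{s-1}}$ and the $L^4_{t,x}$ control, with no hidden dependence on $\|w\|_{L^\infty}$ beyond what is already controlled. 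A secondary technical point is the low-regularity range $0 \le s \le \tfrac12$, where the product estimates in $H^{s-1}$ are borderline and one must lean on the Strichartz norm \eqref{Strichartz}; this is presumably why Definition \ref{def} builds the $L^4_{t,x}$ bound into the very notion of solution. Uniqueness in the class of Definition \ref{def} (rather than just in the iteration space) will require a separate short argument showing any such solution coincides with the Picard solution, using \eqref{WS}–\eqref{Strichartz} to place it in the contraction space a posteriori.
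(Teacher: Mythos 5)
There is a genuine gap at the very first step: the decomposition $\tilde q = 1 + w$ does not produce a perturbation that lives in any space where a Picard iteration can close. For the black soliton datum $q_0=\tanh$, $w_0=\tanh-1$ tends to $-2$ as $x\to-\infty$, so $w$ is in none of $L^2$, $L^4_{t,x}([-t_0,t_0]\times\R)$, or any dual Strichartz space; the free evolution $S(t)w_0$ and the Duhamel contribution of the ``bounded linear term'' $2(w+\bar w)$ are likewise not in the iteration space you describe. You correctly identify this obstacle (``one cannot put $w=\tilde q-1$ in any translation-invariant global space''), but the proposed remedy --- phrasing every estimate through $\bq=(|q|^2-1,\d_x q)$ and iterating in $C_tH^{s-1}$ for $\bq$ together with $C_tH^s_{\loc}$ for $w$ --- is not a contraction scheme: $H^s_{\loc}$ carries no norm suitable for a fixed point argument, and the map $q\mapsto\bq$ cannot be inverted inside the iteration to recover the $q$'s appearing in the nonlinearity. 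So as written the existence proof does not get off the ground, and the continuity argument in your second paragraph inherits the same problem.

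The paper's key idea, which is absent from your proposal, is to perturb not around the constant $1$ but around the \emph{mollified initial datum} $\tilde q_{0,\varepsilon}=\tilde q_0\ast\rho_\varepsilon$. The elementary but decisive estimate $\Vert f-f_\varepsilon\Vert_{L^2}\lesssim\Vert f'\Vert_{H^{-1}}$ (see \eqref{difference}) shows that $b_0=\tilde q_0-\tilde q_{0,\varepsilon}\in L^2$ (indeed $H^s$) with norm controlled by $E^s(q_0)$, while \eqref{q0epsilon,-1} gives $|\tilde q_{0,\varepsilon}|^2-1\in H^N$ and $\tilde q_{0,\varepsilon}''\in L^2$ with $\tilde q_{0,\varepsilon}$ bounded. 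Hence $b=\tilde q-\tilde q_{0,\varepsilon}$ satisfies an NLS with bounded smooth coefficients and an $L^2$ forcing term, and the standard $L^\infty_tL^2_x\cap L^8_tL^4_x\cap L^6_{t,x}$ Strichartz contraction closes, with existence time depending only on $E^s(q_0)$. The rest of your outline (common existence time on $B_r^s(q_0)$ via Lemma \ref{metricspace}, Lipschitz dependence for the difference equation, Besov/finite-difference treatment for fractional $s$, and a separate uniqueness argument in the class of Definition \ref{def}) is consistent with the paper, but it all rests on this regularized decomposition that your proposal is missing.
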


We begin with a lemma. Let $\varepsilon\in(0,1]$, with $\varepsilon=1$ being a legitimate and most natural choice in what follows.  
   We regularize a function $f$ by taking its convolution with the mollifier $\rho_\varepsilon=\varepsilon^{-1}\rho(\varepsilon^{-1}x)$, $0\leq\rho\in C^\infty_0(\R)$, $\int_{\R}\rho=1$ as $f_{\varepsilon}:=f\ast \rho_\varepsilon$. 
\begin{lem} \label{lem:pq}
  Let $q\in X^s$, $s\geq 0$  and let $\tilde q$ be a representative.  Then 
   \begin{enumerate}
   \item $\tilde q_{\varepsilon} \in X^\sigma $ for all $\sigma \ge 0 $ and $\tilde q_{\varepsilon} \to \tilde q$ in $X^s$ as $\varepsilon \to 0 $.
The map $X^s\ni \tilde q \to \tilde q_{\varepsilon} \in X^\sigma $ is   Lipschitz continuous.  
   \item Let $ q\in X^s$, and $\phi$ a Schwartz function such that for one (and hence for all) representative $\tilde q$ there holds $\int_{\R} \tilde q \phi \dx \ne 0$. Then there is a neighborhood of $q$ such that this remains true. We fix the representatives with $\int_{\R} \tilde q\phi \dx \in (0,\infty)$, then the map   
     \[  X^s \ni q \to \tilde q- \tilde q_{\varepsilon} \in H^s  \]
     is continuous.
   \item  The map
     \[ H^s\ni b \to \tilde q+b \in X^s \]
     is   Lipschitz continuous. 
   \end{enumerate}
\end{lem}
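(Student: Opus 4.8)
The plan is to reduce everything to elementary convolution- and Fourier-side estimates, using two standing facts about a representative $\tilde q$ of $q\in X^s$. First, $\tilde q$ lies in the uniformly local space $H^s_{\mathrm{uloc}}$: pairing $|q|^2-1\in H^{s-1}$ against translated bump functions gives $\int_x^{x+1}|\tilde q|^2\le 1+C\||q|^2-1\|_{H^{s-1}}$ uniformly in $x$, hence $\tilde q\in L^2_{\mathrm{uloc}}$, and with $\partial_x\tilde q\in H^{s-1}(\R)$ this upgrades to $\tilde q\in H^s_{\mathrm{uloc}}$. Second, convolution with $\rho_\varepsilon$ is infinitely smoothing on every $H^\sigma$ and fixes constants ($\rho_\varepsilon\ast 1=1$); the latter is the structural point that makes the lemma work even though $\tilde q$ itself need not be globally in $L^2$ or $H^s$.

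For the membership assertion in (1), these facts immediately give that $\tilde q_\varepsilon=\tilde q\ast\rho_\varepsilon$ is smooth and bounded with all derivatives bounded and that $\partial_x\tilde q_\varepsilon=(\partial_x\tilde q)\ast\rho_\varepsilon\in H^{\sigma-1}$ for all $\sigma$. For the quadratic quantity I would split
\[
|\tilde q_\varepsilon|^2-1=\rho_\varepsilon\ast(|\tilde q|^2-1)+\bigl(|\rho_\varepsilon\ast\tilde q|^2-\rho_\varepsilon\ast|\tilde q|^2\bigr),
\]
the first term being in $H^N$ for all $N$; the bracket is the convolution commutator $-\tfrac12\iint\rho_\varepsilon(x-y)\rho_\varepsilon(x-z)|\tilde q(y)-\tilde q(z)|^2\,dy\,dz$, whose $x$-derivative equals $2\,\Re\bigl(\ov{\tilde q}_\varepsilon\,\partial_x\tilde q_\varepsilon\bigr)-\rho_\varepsilon\ast\partial_x(|\tilde q|^2-1)\in H^N$ for all $N$, so it suffices to show the commutator lies in $L^2$, after which $|\tilde q_\varepsilon|^2-1\in H^{\sigma-1}$ for every $\sigma$ (hence $\tilde q_\varepsilon\in X^\sigma$) follows on the Fourier side. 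The $L^2$ bound for the commutator I would get from its bilinear structure: on the Fourier side it is the bilinear multiplier with symbol $\hat\rho(\varepsilon\eta)\hat\rho(\varepsilon\zeta)-\hat\rho(\varepsilon(\eta+\zeta))$, which vanishes on the axes $\eta=0$ and $\zeta=0$, so the commutator equals $\varepsilon^2$ times a Coifman--Meyer-type bilinear operator applied to $(\partial_x\tilde q,\partial_x\ov{\tilde q})$; for $s\ge 1$ this maps $L^2\times L^2\to L^1$, which together with the crude pointwise bound $\lesssim\varepsilon^{-1}\|\tilde q\|_{L^2_{\mathrm{uloc}}}^2$ yields $L^2$, while for $s<1$ one also uses the high-frequency decay of the symbol (for fixed $\varepsilon$) and a Littlewood--Paley/Bony decomposition to make sense of and smooth the low-regularity product $\partial_x\tilde q\cdot\partial_x\ov{\tilde q}$. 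The hard part will be precisely this commutator estimate in the low-regularity range $0\le s<1$, where $|q|^2-1$ and $\partial_x q$ are genuine distributions.

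For the convergence and Lipschitz continuity in (1), the clean observation is that the \emph{differences} $\tilde q-\tilde q_\varepsilon$ lie in the \emph{global} space $H^s$, again because $\rho_\varepsilon$ has integral one: writing $1-\hat\rho(\varepsilon\xi)=\xi\,m_\varepsilon(\xi)$ with $m_\varepsilon$ smooth and $|m_\varepsilon(\xi)|\lesssim\min(\varepsilon,|\xi|^{-1})$, one has $\widehat{\tilde q-\tilde q_\varepsilon}(\xi)=-i\,m_\varepsilon(\xi)\,\widehat{\partial_x\tilde q}(\xi)$, an honest function, and
\[
\|\tilde q-\tilde q_\varepsilon\|_{H^s}^2=\int_{\R}|m_\varepsilon(\xi)|^2(1+\xi^2)^s|\widehat{\partial_x\tilde q}(\xi)|^2\,d\xi\lesssim\int_{\R}\min\bigl(1,\varepsilon^2(1+\xi^2)\bigr)(1+\xi^2)^{s-1}|\widehat{\partial_x\tilde q}(\xi)|^2\,d\xi,
\]
which is $\lesssim\|\partial_x\tilde q\|_{H^{s-1}}^2$ and tends to $0$ as $\varepsilon\to 0$ by dominated convergence; applying it to $\tilde q-\tilde p$ gives the Lipschitz bound, and the analogous estimate for $|\tilde q|^2-1$ (whose derivative lies in $H^{s-2}$) handles the quadratic data. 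To pass from these $H^s$ and $H^{s-1}$ bounds to the metric $d^s$ on the quotient $X^s$ I would take $\lambda=1$ in the infimum in \eqref{ds} and use the comparison $\int_{\R}\|\sech(\cdot-y)f\|_{H^s}^2\,dy\simeq\|f\|_{H^s}^2$ for $f\in H^s$ (a Plancherel/Peetre computation) for convergence, and for the Lipschitz estimate a localization lemma showing the $d^s$-integrand is stable under mollification, giving $d^\sigma(\tilde q_\varepsilon,\tilde p_\varepsilon)\lesssim_\varepsilon d^s(\tilde q,\tilde p)$.

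For (2), the set $\{q\in X^s:\int\tilde q\,\phi\ne 0\}$ is open because $q\mapsto\int\tilde q\,\phi$ is continuous on $X^s$ once representatives are chosen compatibly (the functional is finite since $\phi\in\mathcal{S}$ and $\tilde q\in H^s_{\mathrm{uloc}}$, and it changes by $O(d^s(q,p))$), while independence of the representative is clear since two representatives differ by a unimodular constant; normalising $\int\tilde q\,\phi\in(0,\infty)$ by rotating with the phase of $\int\tilde q\,\phi$ then pins down a single-valued continuous section $q\mapsto\tilde q$, after which continuity of $q\mapsto\tilde q-\tilde q_\varepsilon\in H^s$ is immediate from the displayed bound above, which depends linearly on $\partial_x\tilde q$. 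Finally, (3) is a direct verification: $\partial_x(\tilde q+b)=\partial_x\tilde q+b'\in H^{s-1}$ and
\[
|\tilde q+b|^2-1=(|\tilde q|^2-1)+2\,\Re(\ov{\tilde q}\,b)+|b|^2\in H^{s-1},
\]
the last two terms by the product estimates $H^s\cdot H^s\hookrightarrow H^{s-1}$ and $H^s_{\mathrm{uloc}}\cdot H^s\hookrightarrow H^{s-1}$ valid for $s\ge 0$ (reduced via a partition of unity to products of compactly supported $H^s$ functions), so $\tilde q+b\in X^s$; Lipschitz continuity follows by taking $\lambda=1$ in $d^s(\tilde q+b_1,\tilde q+b_2)^2\le\int_{\R}\|\sech(\cdot-y)(b_1-b_2)\|_{H^s}^2\,dy\lesssim\|b_1-b_2\|_{H^s}^2$.
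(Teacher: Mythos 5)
Your overall architecture is close to the paper's (uniformly local bounds for $\tilde q$, the fact that $\rho_\varepsilon\ast 1=1$ so that $\tilde q-\tilde q_\varepsilon$ is controlled by $\Vert \partial_x\tilde q\Vert_{H^{-1}}$, convergence by dominated convergence, and part (3) read off directly from the definition of $d^s$). But there is one genuine gap, in the step you yourself flag as "the hard part": the claim that the convolution commutator
\[
|\rho_\varepsilon\ast\tilde q|^2-\rho_\varepsilon\ast|\tilde q|^2
=-\tfrac12\iint\rho_\varepsilon(x-y)\rho_\varepsilon(x-z)\,|\tilde q(y)-\tilde q(z)|^2\,\dy\dz
\]
lies in $L^2$. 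You propose Coifman--Meyer for $s\ge 1$ and an unspecified Bony/Littlewood--Paley argument for $0\le s<1$, and leave the latter open. As written, the $L^2$ target is both unproven and stronger than what is attainable from the hypotheses at $s=0$: the commutator is naturally an $L^1$ quantity, since $\int_{\R}\iint\rho_\varepsilon(x-y)\rho_\varepsilon(x-z)|\tilde q(y)-\tilde q(z)|^2\dy\dz\dx=\iint\rho_\varepsilon(u)\rho_\varepsilon(v)\Vert \tilde q(\cdot)-\tilde q(\cdot-(v-u))\Vert_{L^2}^2\,{\rm d}u\,{\rm d}v\lesssim\Vert\partial_x\tilde q\Vert_{H^{-1}}^2$, whereas an $L^2$ bound would require an $L^4$ bound on the translation differences $\tilde q(\cdot)-\tilde q(\cdot-h)$, which for $0\le s\le \tfrac12$ is not available from $\partial_x\tilde q\in H^{s-1}$ and $|\tilde q|^2-1\in H^{s-1}$ alone.

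The repair is exactly the paper's route, and it costs nothing: you only need the commutator (equivalently $|\tilde q_\varepsilon|^2-|\tilde q|^2$) in $H^{-1}$, because its $x$-derivative $2\Re(\overline{\tilde q_\varepsilon}\,\partial_x\tilde q_\varepsilon)-\rho_\varepsilon\ast\partial_x(|\tilde q|^2-1)$ is already in $H^N$ for all $N$, and $\Vert g\Vert_{H^{N+1}}\lesssim\Vert g\Vert_{H^{-1}}+\Vert g'\Vert_{H^N}$. Since $H^1(\R)\hookrightarrow L^\infty(\R)$, the one-dimensional embedding $L^1\hookrightarrow H^{-1}$ (applied with a partition of unity, as in the paper, or to the exact $L^1$ identity above) gives the $H^{-1}$ bound for all $s\ge 0$ simultaneously, with no bilinear multiplier machinery. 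Two smaller soft spots you should also firm up: the Lipschitz continuity of $p\mapsto p_\varepsilon$ in the metric $d^\sigma$ requires commuting the weight $\sech(\cdot-y)$ past $\ast\rho_\varepsilon$ inside the $y$-dependent infimum over $\lambda$ (the paper writes out the commutator $[\sech(\cdot-y),\ast\rho_\varepsilon]$ explicitly and bounds it by the same integrand with $\sech'$), and in part (2) the continuity of $q\mapsto\tilde q-\tilde q_\varepsilon$ is not "immediate" from a bound linear in $\partial_x\tilde q$, because $d^s(q,p)$ small does not make $\Vert\partial_x\tilde q-\partial_x\tilde p\Vert_{H^{s-1}}$ small; one must split into a compact interval (where the metric does control the globally normalized representatives) and its complement (where the tails of $\Vert\tilde q'\Vert_{H^{s-1}}$ are uniformly small over the neighborhood).
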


\begin{proof} 
We derive from
\[
\begin{split} 
(f-f_{\varepsilon})(x)& \, = \int_{\R} (f(x)- f(x-y) ) \rho_\varepsilon(y) \dy  
  \\&  =    \int_{\R}\int^y_0 f'(x-a)\,\da\,\rho_\varepsilon(y)\dy
   = \int_{\R} f'(x-a)   \int_{A(a)}  \rho_{\varepsilon}(y) \dy                     \da
    \end{split} 
\]
where
\[ A(a) = \left\{ \begin{array}{rl}  (a,\infty) & \text{ if } a >0 \\
  (-\infty,a) & \text{ if } a <0 \end{array} \right. \] 
that
\begin{equation} \label{difference} 
 \Vert f -f_{\varepsilon} \Vert_{L^2} \lesssim    \Vert f' \Vert_{H^{-1}} \end{equation} 
with an absolute implicit constant. 

We choose $\eta \in C^\infty_0$. Then 
    \[ \int  \eta  |f_{\varepsilon}|^2 dx = \int \eta   (|f|^2 -1) dx+ \int \eta dx
    - \int \eta |f-f_\varepsilon|^2 dx -2 \Re\int  \eta \bar f_\varepsilon  (f-f_\varepsilon) dx, \]
    and hence
    \[ \int \eta |f_\varepsilon|^2 dx \le 2 \Vert |f|^2-1 \Vert_{H^{-1}} \Vert \eta \Vert_{H^1} 
    +2\|\eta\|_{L^1} + 3 \Vert f-f_\varepsilon \Vert_{L^2}^2 \Vert \eta \Vert_{sup}. \]
Choosing $\eta $ appropriately we see that there exists $C>0$ so that for all $x \in \R$
    \begin{equation}\label{uniforml2}   \Vert f_\varepsilon\Vert_{L^2([x,x+1])} \le C(1+ \Vert |f|^2-1 \Vert_{H^{-1}}^{\frac12}  + \Vert f' \Vert_{H^{-1}}). \end{equation} 
    We may choose $\tilde \rho = \rho * \rho $ and obtain with a small abuse of notation
    \begin{equation}\label{pointwisebound} \Vert f_\varepsilon \Vert_{L^\infty} \le c \varepsilon^{-1/2}
      C(1+ \Vert |f|^2-1 \Vert_{H^{-1}}^{\frac12}  + \Vert f' \Vert_{H^{-1}}). \end{equation}

    Using the embedding $L^1\hookrightarrow H^{-1} $ we estimate using a partition of unity 
    \[
\begin{split} 
    \Vert |f_{\varepsilon}|^2-1 \Vert_{H^{-1}}
   & \,  \le  \Vert |f|^2-1 \Vert_{H^{-1}} + c \sum_{k\in \Z}
    \Vert |f_{\varepsilon}|^2 - |f|^2 \Vert_{L^1((k-1,k+1))}
\\ & \le  \Vert |f|^2-1 \Vert_{H^{-1}} + c\,  \sup_{k\in \Z} \left( \Vert f_\varepsilon \Vert_{L^2((k-1,k+1))} + \Vert f\Vert_{L^2(({k-1,k+1}))} \right) 
    \Vert f_{\varepsilon} - f\Vert_{L^2}
    \\ & \le  c \Big( \Vert |f|^2-1 \Vert_{H^{-1}} +  (1+\Vert f' \Vert_{H^{-1}})\Vert f'\Vert_{H^{-1}} \Big)
    \end{split} 
    \]
 Notice that for any $\sigma\in\R$,
\begin{equation}\label{q0epsilon'}
\|f_{\varepsilon}'\|_{H^{\sigma}}\leq C(\varepsilon,\sigma)\|f'\|_{H^{-1}}, 
\end{equation} 
and hence 
\begin{equation}\label{q0epsilon,-1}\begin{split}
\||f_{\varepsilon}|^2-1\|_{H^{\sigma+1}} + \Vert f_{\varepsilon}' \Vert_{H^{\sigma-1}} 
&\lesssim \||f_{\varepsilon}|^2-1\|_{H^{-1}}+\|\bar f_{\varepsilon}f_{\varepsilon}'\|_{H^{\sigma}}+\|f'\|_{H^{-1}}
\\
&\leq C(\varepsilon, \sigma, E^0(f)) E^0(f).
\end{split}\end{equation}

Therefore $\tilde q_\varepsilon\in X^\sigma$ for all $\sigma\geq 0$ and the convergence $\tilde q_\varepsilon\rightarrow \tilde q$ in $X^s$, i.e. $d^s(\tilde q_\varepsilon, \tilde q)\rightarrow 0$, as $\varepsilon \to 0 $ follows immediately from these considerations. Indeed,
\[
\begin{split} 
(d^s(\tilde q_\varepsilon, \tilde q))^2
=  \int_{\R} \inf_{|\lambda|=1} \Vert \sech(x-y) ( \tilde q - \lambda\tilde  q_{\varepsilon})  \Vert^2_{H^s} \dy \, &   \le  \int_{\R}  \Vert \sech(x-y) (\tilde q- \tilde q_\varepsilon) \Vert^2_{H^s} \dy
  \\ & \le c_1 \int_{\R} \Vert \sech(x-y)\tilde  q' \Vert_{H^{s-1}}^2 dy
  \\ & \le c_2  \Vert \tilde q' \Vert_{H^{s-1}}^2 <\infty.
\end{split}
\]
  Given $\delta>0$ we can restrict the above $y$ integration to a compact interval with an error at most $\delta$. The convergence on the compact $y$ interval is immediate.

We turn to the proof of 
Lipschitz  continuity of the map
\[ \ast\rho_\varepsilon: X^s \ni \tilde q \mapsto \tilde q_\varepsilon\in  X^\sigma.  \] 
 Indeed,  recalling the metric distance function $d^s$ in \eqref{ds}, we first calculate $d^s(\tilde q_\varepsilon, \tilde p_\varepsilon)$.
  We have  the following commutator formulae: 
 \begin{align*}
 &[\sech(\cdot-y), \ast\rho_\varepsilon] (\lambda \tilde q-\tilde p) 
 = \int_{\R}\int^m_0\sech'(\cdot-y-a)(\lambda \tilde q-\tilde p)(\cdot-m)\da\, \rho_\varepsilon(m) \dm,
\end{align*}
where $[\sech(\cdot-y), \ast\rho_\varepsilon]f=\sech(\cdot-y)\bigl(f\ast\rho_\varepsilon\bigr)-\bigl(\sech(\cdot-y)f\bigr)\ast\rho_\varepsilon$. 
Hence we derive the  Lipschitz continuity of the map $\ast\rho_\epsilon: X^s\mapsto X^s$ as follows: 
{\small\begin{align*}
&\bigl(d^s(\tilde q\ast\rho_\epsilon, \tilde p\ast\rho_\epsilon)\bigr)^2
=\int_{\R}\inf_{|\lambda|=1} \| \sech(\cdot-y)\bigl((\lambda \tilde q-\tilde p)\ast\rho_\epsilon\bigr)\|_{H^s}^2\dy
\\
&\qquad\lesssim 
\int_{\R}\inf_{|\lambda|=1} 
\Bigl( \| \sech(\cdot-y) (\lambda \tilde q-\tilde p) \|_{H^s}^2
 +\| \sech'(\cdot-y) (\lambda \tilde q-\tilde p) \|_{H^s}^2\Bigr)\dy
\leq  
C(d^s(\tilde q,\tilde p))^2.
\end{align*}}

We turn to the proof that
\[ X^s \ni   q \to \tilde q-\tilde q_{\varepsilon} \in H^s  \]
is continuous. First, it is not hard to see that, if $\phi$ is a Schwartz function and
\[ \int \tilde q \phi dx \ne 0, \]
then there is a neighborhood   so that this remains true. 
Now let $\varepsilon_0 >0$. There exists a smaller neighborhood and a compact interval $I$ so that
\[ \|\tilde q-\tilde q_\varepsilon\|_{H^s(\R\backslash I)}
\lesssim \Vert \tilde q' \Vert_{H^{s-1}(\R\backslash I ) } < \varepsilon_0/2 \]
for all representative of functions in this smaller neighborhood. Clearly there exists $\delta>0$ so that
\[   \Vert  p-\tilde q \Vert_{H^s(I)} < \varepsilon_0/2,
\quad \forall   p \in B_\delta^s(\tilde q).\] 

Now let $\tilde q$ be a representative and consider
\[ H^s \ni b \to \widetilde{q+b} \in X^s. \]
It suffices to prove that there exists $C$ so that
\[  d^s(q+b,q) \le C \Vert b \Vert_{H^s}, \]
which follows immediately from the definition of $d^s$.  
\end{proof}

\begin{proof}[Proof of Theorem \ref{thm:lwp} ]

If $q$  solves the Gross-Pitaevskii equation \eqref{GP} with the initial data $q_0\in X^0$ and $\tilde q$, $\tilde q_0$ are the corresponding representatives of $q(t), q_0$, then $b=\tilde q-\tilde q_{0,\varepsilon}$ satisfies the following nonlinear Schr\"odinger-type equation  
 \begin{equation}\label{eq:p}
 \begin{split}
   i\d_t b+\d_{xx}b
 = g(b),  \quad b|_{t=0}=b_0=\tilde q_0-\tilde q_{0,\varepsilon}\in H^s,
 \end{split}
 \end{equation}
 where
 \begin{align*}
 g(b)=&2|b|^2b+4\tilde q_{0,\varepsilon}|b|^2+2\overline {\tilde q_{0,\varepsilon}}b^2
 +(4|\tilde q_{0,\varepsilon}|^2-2)b+2(\tilde q_{0,\varepsilon})^2\bar b
  +2\tilde q_{0,\varepsilon}(|\tilde q_{0,\varepsilon}|^2-1)-\tilde q_{0,\varepsilon}''.
 \end{align*}
Vice versa: If $b$ satisfies this equation then $\tilde q$ satisfies \eqref{GP}.  

 We claim that there exist a positive time  $t^0$ and a positive constant $C^0$ depending only on $E^{0}(q_0), \varepsilon$ and a unique solution  of \eqref{eq:p}: $b\in \cC([-t^0, t^0]; L^2)$ such that
 $$ 
 \|b\|_{t^0}:=\|b(t)\|_{L^\infty([-t^0, t^0];L^2_x)}
 +\|b\|_{L^8([-t^0, t^0]; L^4(\R_x))}
 +\|b\|_{L^6([-t^0,t^0]\times\R)}\leq C^0 E^{0}(q_0).$$
 Indeed, recall the Strichartz estimates for the  Schr\"odinger semigroup $S(t)=e^{it\d_{xx}}$:
 \begin{align*}
 \|S(t)b_0\|_{T}\lesssim \|b_0\|_{L^2},
 \quad \Bigl\| \int^t_0 S(t-t') g(t') dt'\Bigr\|_{T}\lesssim \|g\|_{L^1([-T, T];L^2_x)}.
 \end{align*}
 Since we derive from the estimates \eqref{q0epsilon,-1} that
 \begin{align*}
\|g(b)\|_{L^1([-T, T];L^2_x)}
&\lesssim T^{\frac12}\|b\|_T^3+T^{\frac34}\|\tilde q_{0,\varepsilon}\|_{L^\infty_{x}}\|b\|_T^2
\\
&\quad+T(\|\tilde q_{0,\varepsilon}\|_{L^\infty_{x}}^2+1)\bigl(\|b\|_T+\||\tilde q_{0,\varepsilon}|^2-1\|_{L^2_x}+\|\tilde q_{0,\varepsilon}''\|_{L^2_x}\bigr)
\\
&\leq C(\varepsilon, E^0(q_0))\Bigl( T^{\frac12}\|b\|_T^3+T^{\frac34}\|b\|_T^2 
+T(\|b\|_T+1)\Bigr),
 \end{align*}
there exist a small enough positive time $t^0$ and a positive constant $C^0$ (depending only on $\varepsilon, E^0(q_0)$) such that the map
 {\small
 \begin{align*}
 b\mapsto S(t)b_0+\int^t_0S(t-t')g(b(t'))dt', 
 \end{align*}}
 is a contraction map in the complete metric space
  $\{b\in \cC([-t^0,t^0]; L^2)\,|\, \|b\|_{t^0}\leq C^0 E^s(q_0)\}$, and hence its  fixed point is the unique solution of \eqref{eq:p}.
  It is easy to see that the flow map $b_0\mapsto b(t)$ is locally Lipschitz in $L^2$.
  Correspondingly there exists   a  solution (in Definition \ref{def}) with $\tilde q=\tilde q_{0,\varepsilon}+b\in \cC((-t^0,t^0);X^s+L^2)=\cC((-t^0,t^0);X^0)$ of the Gross-Pitaevskii equation \eqref{GP} with the initial data $q_0\in X^0$,  such that 
 $\|\tilde q(t)-\tilde q_0\|_{L^2}\leq \|\tilde q_{0,\varepsilon}-\tilde q_0\|_{L^2}+\|b(t)\|_{L^2}\leq (C+C^0) E^{0}(q_0)$, $\forall t\in  [-t^0, t^0]$
 and $\tilde q(t)-\tilde q_{0,\varepsilon}=b(t)\in L^4([-t^0,t^0]\times \R)$. 
 
  Consider   two solutions (in Definition \ref{def})  $q_1, q_2\in \cC((-t^0, t^0); X^0)$ of the Gross-Pitaevskii equation \eqref{GP} with the initial data $ q_0\in X^0$. We may choose for both solutions the same representative $\tilde q_0$. 
Then on any compact time interval $I\ni 0$ in $(-t^0,t^0)$ their difference $b_{12}=\tilde q_1-\tilde q_2\in L^\infty(I; L^2)\cap L^4(I\times\R)$  with zero initial data satisfies in the distribution sense the following equation similar as \eqref{eq:p}:
 \begin{align*}
  i\d_t b_{12}+\d_{xx}b_{12}
  &=2|b_{12}|^2b_{12}+4\tilde q_{2}|b_{12}|^2+2\overline{\tilde q_2}(b_{12})^2
 +(4|\tilde q_2|^2-2)b_{12}+2(\tilde q_{2})^2\bar b_{12},
 \end{align*}
 which has a unique solution $0$ in $L^\infty([a, b]; L^2)$, by virtue of the energy inequality  
\footnote{
We can follow the standard regularizing procedure to derive the energy inequality: We regularize the $b_{12}$-equation by convolution with $\rho_\delta$, take the $L^2(\R)$-inner product between the regularized equation and  $b_{12, \delta}=b_{12}\ast\rho_\delta$, take the imaginary part and finally we use Gronwall's inequality and let $\delta\rightarrow0$.
}.
Hence $\tilde q_1=\tilde q_2=\tilde q_{0, \varepsilon}+b$ with $b$ satisfying \eqref{eq:p}.

If $s\in (0,1)$, we decompose $g=g(b)$ into 
$$
g=g_2(b)+g_1(b)+g_0,
\quad g_1(b)=(4|\tilde q_{0,\varepsilon}|^2-2)b+2(\tilde q_{0,\varepsilon})^2\bar b,
\quad g_0=2\tilde q_{0,\varepsilon}(|\tilde q_{0,\varepsilon}|^2-1)-\tilde q_{0,\varepsilon}''.
$$
Recall the definition of the Besov-norm for $s\in (0,1)$:
\begin{align*}
\|f\|_{\dot B^s_{\alpha,r}}=\Bigl\| \frac{\|f(x-y)-f(x)\|_{L^\alpha_x}}{|y|^s}\Bigr\|_{L^r(\R;\frac{dy}{|y|})},
\quad \|f\|_{B^s_{\alpha,r}}=\|f\|_{L^\alpha}+\|f\|_{\dot B^s_{\alpha,r}},
\end{align*}
and in particular   $\dot B^{s}_{2,2}=\dot H^s$. We apply the previous construction to the finite differences, and integrate the estimates for fixed $y$ according to the Besov norm above. It follows from these construction that the time of existence is the same for all $s\in [0,1)$.

The case $s \ge 1 $ follows similarly.

Therefore the Gross-Pitaevskii flow map $X^s\ni \tilde q_0 \mapsto \tilde q_0\ast\rho_\varepsilon+b\in X^s$ is  continuous on the existence time interval 
$[-t^0, t^0]$.
Indeed,  by the Lipschitz continuity of the flow \eqref{eq:p}, for any two solutions $\tilde q_1(t)=\tilde q_{1,\varepsilon}+b_1(t)$ and $\tilde q_2(t)=\tilde q_{2,\varepsilon}+b_2(t)$,
  \begin{align*}
 d^s(\tilde q_1(t), \tilde q_2(t))
 &\leq  d^s(\tilde q_{1,\varepsilon}, \tilde  q_{2,\varepsilon})+C\|b_1(0)-b_2(0)\|_{H^s},
 \end{align*}
 and the continuity of the GP flow follows from Lemma \ref{lem:pq}.

\end{proof}

 We complete this section by a discussion of the flow defined by modified Korteweg-de Vries equation \eqref{mKdV}: $q_t+q_{xxx}-6|q|^2q_x=0$.

\begin{thm}\label{thm:lwpmKdV}The complex modified KdV equation  \eqref{mKdV} is locally-in-time well-posed in the metric space $(X^s, d^s)$, $s > \frac34$ in the following sense (as in Theorem \ref{thm:gwp}):
\begin{itemize}
\item 
For any initial data $q_0\in X^s$, there exists  $t_0>0$ depending only on $E^{s}(q_0)= \|\bq_0\|_{H^{s-1}_2}$, $\bq_0=\bigl(   |q_0|^2-1, q_0'  \bigr)$, and a unique   solution $q\in \cC((-t_0, t_0); X^s)$, by which we mean  that the flow map on $1+\mathcal{S}$ extends continuously to $X^s$. 

\item 
  For the neighbourhood $B_r^s(q_0)=\{p_0\in X^s\,|\, d^s(q_0, p_0)<r\}$, $r>0$, of the initial data $q_0\in X^s$, there exists $t_1>0$ depending only on $E^{s}(q_0), r$ such that the flow map $B_r^s(q_0)\ni p_0\mapsto p\in \cC((-t_1, t_1); X^s)$  is  continuous. 
\end{itemize}
For real data the flow map extends to a 
continuous map from $X^s_{\R}$  to   $\cC((-t_1, t_1); X^s)$ for $s \ge 0$. Here $X^s_{\R}$ denotes the subspace of real valued functions.    
\end{thm}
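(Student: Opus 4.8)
The plan is to prove Theorem \ref{thm:lwpmKdV} in close analogy with the proof of Theorem \ref{thm:lwp}, exploiting the fact that the unknown $b = \tilde q - \tilde q_{0,\varepsilon}$ solves a semilinear perturbation of the complex mKdV equation on the real line. First I would regularize the initial data: given $q_0 \in X^s$ with representative $\tilde q_0$, set $\tilde q_{0,\varepsilon} = \tilde q_0 * \rho_\varepsilon$, which by Lemma \ref{lem:pq} lies in $X^\sigma$ for all $\sigma \ge 0$, so that $b_0 = \tilde q_0 - \tilde q_{0,\varepsilon} \in H^s$. Substituting $\tilde q = \tilde q_{0,\varepsilon} + b$ into \eqref{mKdV} produces an equation of the schematic form
\begin{equation*}
b_t + b_{xxx} = \cN(b) + F_0,
\end{equation*}
where $\cN(b)$ collects the terms that are at least linear in $b$ (the genuinely cubic term $6|b|^2 b_x$, the mixed quadratic terms involving one factor of $\tilde q_{0,\varepsilon}$, and the linear-in-$b$ terms with coefficients built from $\tilde q_{0,\varepsilon}$ and its derivatives), while $F_0 = 6|\tilde q_{0,\varepsilon}|^2 (\tilde q_{0,\varepsilon})_x - (\tilde q_{0,\varepsilon})_{xxx}$ is a fixed smooth Schwartz-type forcing term, controlled in any $H^\sigma$ by $C(\varepsilon, E^0(q_0))$ via the estimates \eqref{q0epsilon,-1}. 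Vice versa, solving this $b$-equation yields a solution of \eqref{mKdV} in the sense of Definition \ref{def} (adapted to mKdV), i.e. the flow map on $1 + \mathcal{S}$ extends continuously to $X^s$.

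The core of the argument is then a fixed-point/contraction scheme for the $b$-equation in an appropriate Bourgain-type or $X^{s,b}$-space (or a Fourier restriction norm space adapted to the Airy propagator $e^{-t\partial_x^3}$) on a short time interval $[-t^0, t^0]$, following the standard local theory for cubic mKdV. The threshold $s > \frac34$ is exactly the regularity at which the cubic term $|b|^2 b_x$ in the complex mKdV equation is amenable to a contraction argument in $H^s$-based spaces (with no low-regularity gain from complete integrability available for the complex equation); the derivative in the nonlinearity costs one degree, and the bilinear Airy smoothing estimates then require $s > \frac34$. The quadratic and linear terms involving $\tilde q_{0,\varepsilon}$ are strictly easier: $\tilde q_{0,\varepsilon}$ is smooth with all derivatives bounded (by \eqref{pointwisebound}–\eqref{q0epsilon,-1}), so those contributions are estimated by $C(\varepsilon, E^s(q_0))$ times a positive power of $t^0$ times lower powers of $\|b\|$, and absorbed by choosing $t^0$ small depending only on $\varepsilon$ and $E^s(q_0)$ — and ultimately only on $E^s(q_0)$ after fixing $\varepsilon = 1$. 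This gives a unique fixed point $b \in \cC([-t^0,t^0]; H^s)$ with $\|b\|_{\text{Airy space}} \le C^0 E^s(q_0)$, hence a unique solution $\tilde q = \tilde q_{0,1} + b$; uniqueness at the level of $X^s$ follows because two solutions with the same initial data can be written with the same representative $\tilde q_{0,\varepsilon}$, and their difference solves a similar equation with zero data, which has only the trivial solution by an energy estimate (via the regularization-and-Gronwall procedure, exactly as in the Gross-Pitaevskii case).

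For the continuity of the flow map on balls $B^s_r(q_0)$ I would combine the Lipschitz continuity of $b_0 \mapsto b$ for the $b$-equation (standard, from the contraction estimate) with the Lipschitz properties of $q \mapsto \tilde q - \tilde q_\varepsilon$ and $b \mapsto \widetilde{q+b}$ and $\tilde q \mapsto \tilde q_\varepsilon$ established in Lemma \ref{lem:pq}; the chain then gives
\begin{equation*}
d^s(\tilde q_1(t), \tilde q_2(t)) \le d^s(\tilde q_{1,\varepsilon}, \tilde q_{2,\varepsilon}) + C\|b_1(0) - b_2(0)\|_{H^s} \le C\, d^s(q_{1,0}, q_{2,0})
\end{equation*}
on a common interval $[-t_1,t_1]$ with $t_1$ depending only on $E^s(q_0)$ and $r$. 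For real data the situation improves to $s \ge 0$ because when $\psi$ is real, \eqref{mKdV} becomes the classical (defocusing, up to sign) real mKdV equation, for which the local theory works down to $s \ge \frac14$ and, for the $X^s$-formulation with the smooth background $\tilde q_{0,\varepsilon}$, all the way to $L^2$; I would simply invoke the known sharp real-mKdV local well-posedness for the $b$-equation with real coefficients, noting that the perturbative terms built from $\tilde q_{0,\varepsilon}$ do not lower the threshold, and conclude as before that the flow extends continuously from $X^s_\R$ to $\cC((-t_1,t_1); X^s)$ for $s \ge 0$. The main obstacle is the bilinear/trilinear Airy estimate controlling the derivative nonlinearity $|b|^2 b_x$ at the endpoint regularity $s > \frac34$; everything else is routine once that estimate is in place, since the inhomogeneous pieces coming from the nonzero background are smooth and small in the relevant norms.
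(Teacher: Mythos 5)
For the complex equation your argument is essentially the paper's: write $b=\tilde q-\tilde q_{0,\varepsilon}$, remove the transport term $6b_x$ by a Galilean change of variables, and run a contraction in the Kenig--Ponce--Vega framework for the Airy equation. One small correction there: the threshold $s>\frac34$ is \emph{not} forced by the cubic term $|b|^2b_x$ (for which the KPV trilinear estimates work down to $s\ge\frac14$), but by the quadratic, KdV-type terms $\Re(b\,\overline{\tilde q_{0,\varepsilon}})\,b_x$ and $(|\tilde q_{0,\varepsilon}|^2-1)b_x$, which behave like $\partial_x(u^2)$ and for which the bilinear contraction estimate requires $s>\frac34$. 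This does not affect the validity of the complex case.

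The real-data statement is where your proposal has a genuine gap. You assert that the real mKdV local theory for the $b$-equation ``works all the way to $L^2$'' because the background $\tilde q_{0,\varepsilon}$ is smooth; there is no such contraction-based theory. The flow map of real mKdV fails to be uniformly continuous on $H^s$ for $s<\frac14$, so no fixed-point scheme of the type you describe can reach $s=0$, and the smooth background does not help (it reintroduces the KdV-type quadratic term, which is if anything worse). The paper's mechanism for $s\ge0$ is entirely different: it uses the Miura map $M(q)=q_x+q^2$ to conjugate real mKdV (with the boundary conditions encoded in $X^s_\R$) to the KdV equation \eqref{KdV6}, and then invokes the Killip--Visan well-posedness of KdV in $H^{-1}$. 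This requires the nontrivial Lemma \ref{lem:ABC}: one must first sort the real data into the cases $q\pm1\in H^s$ or $q\pm\tanh\in H^s$, and then prove that $w\mapsto M(w-1)-1$ (resp.\ $(w,\lambda)\mapsto M(w-\lambda\tanh(\lambda x))-\lambda^2$) is a diffeomorphism of $H^s$ onto the set of potentials $u$ for which $-\partial_{xx}+u$ has no eigenvalue $\le-1$ (resp.\ has a negative ground state $-\lambda^2$); surjectivity rests on the existence of a bounded positive solution of $-\phi''+u\phi+\phi=0$ from Kappeler et al. None of this is present in, or recoverable from, your proposal, so the $s\ge0$ claim for real data remains unproved as written.
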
 

\begin{proof}
  We proceed in the same fashion as for the Gross-Pitaevskii equation. Now $b=\tilde q-\tilde q_{0,\varepsilon}$ satisfies
  \begin{equation}\label{mKdVm}   b_t + b_{xxx} = g(b) \end{equation} 
  where
  \[
\begin{split} 
  g(b) = & \, 
  6 |b|^2 b_x + 12 \Re (b\overline{\tilde q_{0,\varepsilon}}) b_x
  + 6 b_x + 
  6 (|\tilde q_{0,\varepsilon}|^2- 1) b_x +6 |b|^2  \tilde q'_{0,\varepsilon} 
\\ & + 12 \Re ( b\overline{\tilde q_{0,\varepsilon}}_x) \tilde q_{0,\varepsilon} 
  + 6|\tilde q_{0,\varepsilon}|^2 \partial_x \tilde q_{0,\varepsilon} 
  - \tilde q^{(3)}_{0,\varepsilon}.
  \end{split} 
  \]
Changing coordinates $(t,x) \to (t,y)$ with $ y = x+6t$ we remove the term $6b_x$.  Notice that
\[ 6|\tilde q_{0,\varepsilon}|^2 \partial_x \tilde q_{0,\varepsilon} - \tilde q^{(3)}_{0,\varepsilon} \in L^2  \]
and $\tilde  q_{0,\varepsilon}$ is together with all its derivatives uniformly bounded. The most critical terms are $6 |b|^2 b_x$, $12\Re (b\overline{\tilde q_{0,\varepsilon}}) b_x$
and $6 (|\tilde q_{0,\varepsilon}|^2-1)  b_x$.

We claim that \eqref{mKdVm} is locally wellposed in $H^s$, $s >\frac34$, and that the solution is  continuous with values in $H^s$. 
Indeed, this follows from a contraction argument as for the Korteweg-de Vries equation
\begin{equation}\label{KdV}
u_t+u_{xxx}-6uu_x=0
\end{equation}
 by Kenig, Ponce and Vega
\cite{MR1211741, MR1086966}. More precisely their arguments allow to deal with
$|b|^2b_x$ and $\Re (b\overline{\tilde q_{0,\varepsilon}}) b_x$. Since $|\tilde q_{0,\varepsilon}|^2-1 \in H^N$ for all $N$ the term
\[ (|\tilde q_{0,\varepsilon}|^2-1 ) b_x \]
is covered by the same estimates as the previous terms.

For real initial data we use a different argument.
 Let $s \ge 0$. 
Then again $|q_{0,\varepsilon}|^2 -1 \in H^N$ for all $N$. Since it is also real we must have one of the following alternatives for fixed $N$:
 \begin{enumerate}
 \item  $  q_{0,\varepsilon} -1 \in H^N $
 \item $q_{0,\varepsilon} - \tanh(x) \in H^N $
 \item $q_{0,\varepsilon}+1 \in H^N $
 \item $q_{0,\varepsilon}+\tanh(x) \in H^N $.
   \end{enumerate} 
 Replacing $q$ by $-q$ if necessary it remains to consider two situations: 
 \begin{enumerate}[(i)]
 \item $q+1 \in H^s $
 \item $q+\tanh(x) \in H^s$.
 \end{enumerate}
 It is easy to see that $q \in X^s$ if one these situations holds.
 We recall the definition of the Miura map
 \[ M(q) = q_x+q^2. \] 
 Then the following lemma holds.
 \begin{lem}\label{lem:ABC}
 A)  The map
   \[ H^s \ni w \to \big(M( w-1) -1\big) \in H^{s-1} \]
   is a diffeomorphism of $H^s$ to its range 
\[   \{ u\in H^{s-1}  : -\partial_{xx}+u \text{ has no eigenvalue } \le -1 \}. \]    

   B)  The map
   \[ H^s\times (0,\infty)  \ni (w, \lambda) \to  \bigl( M(w-\lambda \tanh(\lambda x) )-\lambda^2\bigr)  \in H^{s-1}  \]
   is a diffeomorphism to its range 
   \[ \{ u\in H^{s-1} : -\partial_{xx}+u \text{ has  a negative  eigenvalue} \}. \] 
   Moreover $-\lambda^2$ is the lowest eigenvalue.


C) In both cases A) and B), let $q=w-1$ resp. $q=w- \tanh(x)$, then $q:\R\times\R\mapsto \R$ satisfies the real modified KdV \eqref{mKdV} iff
\[ u=M(q)-1= q_x+q^2-1 \]
satisfies the KdV equation \eqref{KdV}:
\begin{equation}\label{KdV6} u_t - 6 u_x + u_{xxx} -6 u u_x = 0. \end{equation}
\end{lem}  
 Since the KdV equation \eqref{KdV} is wellposed in $H^{-1}$ \cite{KV} (in the sense that the flow map extends continuously), if $w=q+1\in H^s$ then it follows from A) and C) that $u=M(q)-1\in H^{s-1}\subset H^{-1}$ and \eqref{mKdV} is well-posed in $H^s$, $s\geq0$. 
 Similarly Theorem \ref{thm:lwpmKdV} follows in the case (ii): $w=q+\tanh(x)\in H^s$.
 
 It remains to prove Lemma \ref{lem:ABC}.
 Part B) of the Lemma has been proven by Buckmaster and the first author
 \cite{MR3400442}.  If $q$ satisfies \eqref{mKdV} then $u= q_x + q^2$
 satisfies \eqref{KdV}. Now suppose that $u$
 satisfies  KdV   \eqref{KdV6} (the term $6u_x$ is
 inessential, and can be removed by a Galilean transform). Since the
 preimage is unique it has to be a solution to mKdV, at least if the
 initial data is sufficiently smooth. This can be achieved by an approximation argument.

 It remains to prove A). It is easy to see (compare \cite{MR3400442}) that
 $w \in H^s$ implies $  M(w-1) -1 \in H^{s-1} $ for $ s \ge 0$. Moreover this map is clearly analytic.   The derivative
 at $w_0$ is
 \[   \dot w \to \dot w_x + 2 (w_0-1)  \dot w  \]
 which has the (right) inverse 
 \[ (Tf)(x) = -\int_{x}^\infty e^{2\int_x^y w_0 d\tau -2(y-x)} f(y) dy. \]
It is easy to see that $T$ maps $H^{s-1}$ to $H^{s}$  for all $ s \ge 0$ and $w_0 \in H^{s}$.
 Moreover the linearization is injective. 
 Indeed, suppose that $\dot w \in L^2$ satisfies
 \[   \dot w_x + 2 (w_0-1)  \dot w = 0 . \]
 Then $\dot w$ is absolutely continuous and decays to $0$ as $x \to \infty$.
 The variation of constants formula and a limit argument show that  $\dot w$ vanishes.

 To verify injectivity of the nonlinear map we assume that $w_0$ and $w_1$ are mapped to the same function. Then, with $\dot w= w_1-w_0$  
 \[ \dot w_x + (w_0+w_1)\dot w - 2 \dot w = 0    \]
 and hence $\dot w = 0 $ by the same argument as for the injectivity of the linearization.

 The argument for surjectivity is  based on Kappeler {\textit{et al}} \cite{MR2189502}. 
 Let $u \in H^{-1}$ be a function so that the spectrum of $-\partial^2 + u $
 is contained in $(-1,\infty)$. According to \cite{MR2189502} there exists
 a bounded positive function $\phi$ which satisfies 
\begin{equation}\label{phi} - \phi'' + u \phi + \phi = 0.  \end{equation}
 Let $ v = \frac{d}{dx} \ln \phi$. A straightforward calculation shows that
 \[ v' + v^2  = u +1, \hbox{ i.e. }M(v)-1=u.\]
 Let $\tilde v = v \ast \rho$ where $\rho\in C^\infty_0$ is supported in $[-1,1]$
 with integral $1$. It suffices to find $\tilde v$ so that
 \[ \lim_{x\to -\infty} \tilde v(x) = -1, \quad \lim_{x\to \infty} \tilde v(x) = -1. \]
Now we use \cite{MR3400442} to see that $\tilde v$ has a limit in $\{ \pm 1\} $ as $x\to \pm \infty$, possibly different on both sides. If 
 \[ \lim_{x\to -\infty} \tilde v(x) = 1 , \lim_{x\to \infty}\tilde  v(x) = -1, \]
 then $\phi \in L^2$ and it  were an eigenfunction of the eigenvalue $-1$, which  contradicts our assumption.  
Thus, if 
\[ \lim_{x\to \infty}\tilde  v(x) =-1, \]
then $\lim_{x\to -\infty}\tilde  v(x) = -1$ and we found the preimage in case A). 
Hence suppose that
\[ \lim_{x\to \infty}\tilde  v(x) = 1. \]  
Then 
 \[  \phi_1 = \phi(x) \int_x^\infty \phi(y)^{-2} dy  \]
 is a nonnegative solution of \eqref{phi} and is bounded for positive $x$. Hence
$   v_1= \frac{d}{dx} \ln   \phi_1 $
 satisfies (with $\tilde v_1=v_1\ast\rho$ as above) 
 \[ \lim_{x\to \infty}  \tilde v_1(x) = -1 \]
 and, by the previous considerations of our assumption,  
 \[ \lim_{x\to -\infty} \tilde v_1(x) = -1. \]
 With this we have found the preimage $v_1$ in case A).  
   \end{proof}

\setcounter{equation}{0}
\section{The transmission coefficient} \label{sec:T}

We  introduce    the renormalised transmission coefficient $\Tc^{-1}(\lambda)$ and state its properties in Theorem \ref{thm:Tc} in this section.

We will first  recall  the definition of the transmission coefficient $T^{-1}$ associated to the Lax operator \eqref{LaxOp}, i.e. the Lax equation:
\begin{equation}\label{Lax} 
 u_x 
 = \left( \begin{matrix} -i\lambda & q \\ \bar q & i \lambda \end{matrix} \right) u, 
 \end{equation} 
on the Riemann surface $\cR$ (see Subsection \ref{subss:Riemann} below for the definition),
  in the classical functional setting where $q-1$ is Schwartz function in Subsection \ref{subs:Jost}.
  
With the notations introduced in Subsection \ref{subs:notation}, we will give an \emph{asymptotic expansion} of the transmission coefficient $T^{-1}$ in Subsection \ref{subsubs:T}, which will play a key role in the analysis of  $T^{-1}$.

Finally in Subsection \ref{subss:Tc-1} we discuss the renormalisation of the transmission coefficient and give  Theorem \ref{thm:Tc} stating the well-definedness and the asymptotic expansion of the renormalised transmission coefficient $\Tc^{-1}$ in our finite energy setting $q\in X^s$, $s>\frac12$, whose proof will be postponed in Section \ref{sec:Tc}.

\subsection{A Riemann surface}\label{subss:Riemann}
We define a Riemann surface  by
\[\{(\lambda,z)\in \C^2\,|\,  \lambda^2 = 1+ z^2\}. \]  
If  infinity is added,  its genus is $0$ and it is indeed the Riemann sphere with respect to the complex variable $\zeta:=\lambda+z$.

We  typically choose the upper sheet $\mathcal{R}$ of this Riemann surface:  
\begin{equation}\label{cR}\begin{split}
&\mathcal{R}=\bigl\{(\lambda,z)\,|\, \lambda\in \mathcal{V},
\quad 
z=z(\lambda)=\sqrt{\lambda^2-1}\in \mathcal{U}\bigr \},
\\
&\hbox{ where }\mathcal{V}:=\C\setminus \Ic,
\, \Ic:= (-\infty, -1]\cup [1,+\infty), 
\,\mathcal{U}:=\{z\in \C\,| \, \Im z>0\},
\end{split}\end{equation} 
and we can take simply  $\lambda\in \mathcal{V}$ as the coordinate on 
$\mathcal{R}$.
We notice the following symmetry of $\cR$
\begin{equation}\label{Rsymmetry}\begin{split}
&(\lambda,z)\in \mathcal{R}\Leftrightarrow (\ov \lambda, -\ov z)\in \mathcal{R}.
\end{split}\end{equation}
In particular, the points $(\pm \sqrt{1-\tau^2/4}, i\tau/2)$, $\tau\in (0,2]$ and the purely imaginary points $(\pm i\sigma, i\tau/2)$, $\tau\in [2,\infty)$ stay on $\cR$:
\begin{equation}\label{sigma-tau}\begin{split} 
(\pm i\sigma, i\tau/2)\in \cR  \hbox{ whenever }   \tau\geq 2\hbox{ and }
 \sigma= \sqrt{\tau^2/4-1}\in\R.
\end{split}\end{equation}

We can  define a conformal mapping from $(\lambda,z)\in \cR$ to $\zeta\in\cU$ the upper half-plane  by
$\zeta=\zeta(\lambda)=\lambda+z$.
The mapping takes the cuts $\lambda\in\Ic$  to the real axis $\zeta\in\R$ and the neighbourhood of $\infty$ for $\Im \lambda<0$ to a neighbourhood of $\zeta=0$.
The inverse mapping is given by the so-called Zukowsky mapping
$\zeta\mapsto\lambda=\lambda(\zeta)=\frac 12(\zeta+\frac{1}{\zeta})$ and hence $1=(\lambda-z)\zeta$, $z=z(\zeta)=\frac 12(\zeta-\frac 1\zeta)$, $\frac{1}{\bar \zeta}=\bar\lambda-\bar z\in \cU$.

 \subsection{Jost solutions and the transmission coefficient}\label{subs:Jost}In this subsection we assume the classical functional setting (as in \cite{FT, ZS73})
\begin{equation}\label{SchwCond}
q=1+q^0,\quad q^0\in \mathcal{S}(\R) \hbox{ Schwartz function},
\end{equation} 
and we are going to introduce the Jost solution of the Lax equation \eqref{Lax} as well as the associated  transmission coefficient.
\subsubsection{Real line case $(\lambda,z)=(\hat{\xi},\xi ) \in\R^2$}
\label{subs:cR}

Let $0\neq z=\xi \in\R$ and $\lambda=\hat{\xi}\in \R$  such that $\hat{\xi}=(1+\xi^2)^{\frac12}>1$.
 Then under the assumption \eqref{SchwCond} on the potential $q$, $\pm i \xi$ are the two eigenvalues of the matrix in \eqref{Lax} at infinity:
 $ \left( \begin{matrix} -i\hat{\xi} & 1 \\ 1 & i \hat{\xi} \end{matrix} \right).$
 
Let   the Jost solution $u_l$  solve the Lax equation   \eqref{Lax} (viewing $\lambda=\hat{\xi}$ as parameter) satisfying the following boundary conditions at 
$- \infty$ 
\begin{align*}
&u_l( x )
=  e^{-i\xi x}
\left(\begin{matrix}
1 \\ i(\hat{\xi}-\xi) 
\end{matrix}\right)
+o(1)
\hbox{ as }x\rightarrow -\infty.
\end{align*}
Then there exist two complex numbers $T^{-1}, R\in\C$ such that $u_l$ takes the following asymptotic at $+ \infty$:
\begin{align*} 
&u_l( x )
=e^{-i\xi x} T ^{-1}  \left(\begin{matrix}
1\\ 
i(\hat{\xi}-\xi)  
\end{matrix}\right)
   + e^{i\xi x}\,R\, T^{-1} 
    \left(\begin{matrix}
1\\ i(\hat{\xi}+\xi)  
\end{matrix}\right)
+o(1)
\hbox{ as }x\rightarrow +\infty.
\end{align*}  
These two complex numbers $T^{-1},  R$ are called the transmission coefficient
 and the  right reflection coefficient respectively 
 \footnote{In this paper we  call $T^{-1}$ the transmission coefficient while its reciprocal   $T$ is the physical relevant  transmission coefficient. 
 We can define similarly the left reflection coefficient by considering the asymptotic at $-\infty$ of  the Jost solution with the boundary condition $ e^{i\xi x}
 \begin{pmatrix}
1 \\ i(\hat{\xi}+\xi) 
\end{pmatrix}+o(1)$ as $x\rightarrow+\infty$. }. 

Observe that if $u=(u^1, u^2)^T$ is the solution of \eqref{Lax}, then the quantity $|u^1|^2-|u^2|^2$   is  constant.
 We compare the asymptotic behaviours of the Jost solution $u_l$ at $\pm\infty$ respectively to acquire
\begin{equation}\label{Tleq1}
|T|^2=1-(\hat{\xi}+\xi)^2|R|^2\leq 1,
\quad\hbox{if } (\lambda,z)=(\hat{\xi},\xi)\in\R^2.
\end{equation}


\subsubsection{Upper Riemann sheet case $(\lambda,z)\in\mathcal{R}$}\label{subsubs:T,R}
The Jost solution $u_l(x;\lambda)$ defined above on the ``real axis'' 
$(\lambda,z)=(\hat{\xi},\xi)\in\R^2$ can be analytically continued to the upper Riemann sheet   $(\lambda, z)\in\mathcal{R}$,  taking the following asymptotics
\begin{equation}\label{Asymul}\begin{split}
&u_l(\lambda,x,t)
= e^{-iz x }
\left(\begin{matrix}
1 \\ i(\lambda-z)
\end{matrix}\right)
+o(1)e^{(\Im z) x}
\hbox{ as }x\rightarrow -\infty,
\\ 
&u_l(\lambda,x,t)
= e^{-iz x} T^{-1} (\lambda)
\left(\begin{matrix}
  1
\\ 
i(\lambda-z)
\end{matrix}\right)
+o(1) e^{(\Im z) x}
\hbox{ as }x\rightarrow +\infty.
\end{split}\end{equation}
Under the potential assumption \eqref{SchwCond},   $ T^{-1}(\lambda) $
 is a holomorphic function on $\mathcal{R}$
  and  $\lim_{|\lambda|\rightarrow \infty} T^{-1}(\lambda)=1$. 

  The possible zeros of $T^{-1}(\lambda)$ for $\lambda \in \C \backslash (\R \backslash (-1,1) )$  are   located on the interval
$(-1,1)\subset\R$.  Indeed, if $T^{-1}(\lambda)=0$, then
$\lambda\in\cV=\C\backslash \Ic$ by \eqref{Tleq1}.  Thus
$z=\sqrt{\lambda^2-1}\in\cU$ has strictly positive imaginary part such
that $\lambda, u_l$ (with the asymptotics \eqref{Asymul} and with $T^{-1}(\lambda)=0$) are the  eigenvalues and the
corresponding eigenfunctions of the self-adjoint Lax operator $L$ given in
\eqref{LaxOp}, and thus $\lambda\in(-1,1)\subset\R$.  Let $\lambda \in
(-1,1)$ be an eigenvalue, then $\pm iz=\mp \sqrt{1-\lambda^2}$ are negative and positive real numbers. By checking the characteristic exponents of the
ODE \eqref{Lax}: $Lu=\lambda u$ near infinity, the geometric multiplicity of $\lambda$ is $1$.
Since the Lax operator $L$ is self-adjoint, the algebraic multiplicity is also $1$, and
all eigenvalues in $(-1,1)$ are simple. As a consequence, the
root $\lambda$ of $T^{-1}$ has multiplicity $1$. 

We denote these at most countably many zeros on $(-1,1)$ by $\{\lambda_m\}_m$ and  
\begin{equation}\label{zk}
z_m=i\sqrt{1-(\lambda_m)^2}\in i(0, 1],
\quad m\in\N.
\end{equation}

We have the following symmetry for $T^{-1}$:
\begin{equation}\label{T:symm}
\ov T^{-1}(\lambda)=T^{-1}( \ov \lambda),
\quad (\lambda, z),\,( \ov \lambda, -\ov z)\in\mathcal{R}.
\end{equation}
 Indeed,  the symmetry of the Lax equation \eqref{Lax} implies that $\ov{u_l}^T
:=(\ov {u_l^2}, \ov {u_l^1})$ with the asymptotics
\begin{align*}
&\ov {u_l}^T
=-i(\ov \lambda-\ov z)
 e^{ i\ov z x}
\left(\begin{matrix}
1 \\  i(\ov\lambda+\ov z)
\end{matrix}\right)
+o(1)e^{\Im z x}
\hbox{ as }x\rightarrow -\infty,
\\ 
&\ov {u_l}^T
=-i(\ov\lambda-\ov z)e^{i\ov zx} \ov T^{-1}(\lambda) 
\left(\begin{matrix}
1 \\  i(\ov\lambda+\ov z)
\end{matrix}\right)+o(1)e^{\Im z x}
\hbox{ as }x\rightarrow +\infty,
\end{align*}
  satisfies the Lax equation  \eqref{Lax}  with $(\lambda, z)\in\mathcal{R}$ replaced by 
$(\ov \lambda, -\ov z)\in\mathcal{R}$, which itself possesses a Jost solution with the following asymptotics:
\begin{align*}
&u_l( \ov\lambda)
= e^{i\ov z x}
\left(\begin{matrix}
1 \\ i(\ov\lambda+\ov z) 
\end{matrix}\right)
+o(1)e^{\Im z x}
\hbox{ as }x\rightarrow -\infty,
\\ 
&u_l(\ov \lambda)
=e^{i\ov z x} T ^{-1}(\ov \lambda) \left(\begin{matrix}
1\\ 
i(\ov\lambda+\ov z)  
\end{matrix}\right)+o(1)e^{\Im z x}
\hbox{ as }x\rightarrow +\infty.
\end{align*}
By uniqueness we deduce \eqref{T:symm}.
Therefore
\begin{itemize}
\item   For $\hat{\xi} \in \Ic$, the limits $\lim_{\lambda\rightarrow \hat{\xi}+i0}|T^{-1}(\lambda)|$
and $\lim_{\lambda\rightarrow\hat{\xi}-i0}|T^{-1}(\lambda)|$  are the same.
Hence
the subharmonic function $\ln |T^{-1}(\lambda)|$ on $\cV$ is continuous on $\Ic$ and generally $\ln |T^{-1}(\lambda)|=\ln |T^{-1}(\ov\lambda)|$ for $(\lambda,z)\in\cR$;

\item For $(\lambda_{\pm},z)=(\pm i\sigma, i\tau/2)\in\cR$ with $\tau\geq 2$ and $\sigma=\sqrt{\tau^2/4-1}\in\R$, 
{\small\begin{equation}\label{RealSymm}
\begin{split}
\frac 12 \sum_{\pm }\Re &\ln T^{-1}(\lambda_{\pm})
 =\frac 12  \Re \bigl(\ln T^{-1}(\lambda_{+}) 
+ \ln T^{-1}(\overline{\lambda_+})\bigr)
=  \Re \ln T^{-1}(\lambda_{+}).
\end{split}\end{equation}}
\end{itemize}

Let us take the time variable into account.  
We multiply $u_l$ by $e^{-iz(2\lambda)t}$   such that the time evolutionary equation in \eqref{LaxPair} ensures
$$
\d_t(T^{-1})=0, 
\quad 
\d_t R=4iz\lambda R.
$$
That is, the transmission coefficient $T^{-1}(\lambda)$ is conserved by the Gross-Pitaevskii flow and we will make use of it to define the conserved energies for the Gross-Pitaevskii equation \eqref{GP}.

Similarly, $e^{-iz(4\lambda^2+2)t}u_l$ satisfies \eqref{LaxPair} with $q=\psi$ and with the matrix in $\eqref{LaxPair}_2$ replaced by $P_{\hbox{\tiny mKdV}}$ in \eqref{PmKdV}. The same transmission coefficient $T^{-1}(\lambda)$ as for the Gross-Pitaevskii equation is conserved by the modified KdV flow \eqref{mKdV}.

\subsection{Notations}\label{subs:notation}
Let $q\in X^s$, $s>\frac12$.
Let $(\lambda,z)\in\cR$ and $\zeta=\lambda+z\in\cU$ the upper half-plane be as in Subsection \ref{subss:Riemann}.
Then
\begin{equation}\label{BoundZeta}
|q|^2-\zeta^2\neq 0,  
\hbox{ and  } \frac1{\bigl| |q|^2-\zeta^{2}\bigr|}\leq \frac1{(\Im\zeta)^{2}},
\quad \frac{|\zeta|}{ \bigl| |q|^2-\zeta^{2}\bigr|}\leq \frac1{\Im\zeta}.
\end{equation}
{\small
Indeed, if $|\Re\zeta|\geq\Im\zeta$, then 
$$
\frac1{\bigl| |q|^2-\zeta^{2}\bigr|}
\leq \frac1{ |\Im \zeta^2| }  \leq \frac1{2(\Im\zeta)^{2}}
\hbox{ and }
\frac{|\zeta|}{\bigl| |q|^2-\zeta^{2}\bigr| }
\leq \frac{\sqrt2|\Re\zeta|}{  2|\Re\zeta|\Im\zeta}\leq  \frac1{\Im\zeta},
$$
while if $|\Re\zeta|\leq\Im\zeta$, then
$$
\frac1{\bigl| |q|^2-\zeta^{2}\bigr|}
= \Bigl( \bigl(|q|^2+(\Im\zeta)^2-(\Re\zeta)^2\bigr)^2+(2\Re\zeta\Im\zeta)^2\Bigr)^{-\frac 12} \leq \frac1{|\zeta|^2}.
$$  }

We introduce  the following  functions which will play an essential role in the  analysis of the transmission coefficient: 
 \begin{equation}\label{q1234}\tag{$\ast$}
 \begin{split}
 & q_1= \frac{i\zeta(|q|^2-1)-\bar q q'}{|q|^2-\zeta^2},
 \quad q_2=\frac{i\zeta q'+ (|q|^2-1)q   }{|q|^2-\zeta^2},
 \quad q_3=\frac{-i\zeta \bar q'+ (|q|^2-1)\bar q   }{|q|^2-\zeta^2},
 \\
&  q_4=\frac{2i\zeta (|q|^2-1)+q\bar q'-\bar q q'   }{|q|^2-\zeta^2},
 \quad \varphi(x)=2izx+\int^x_0 q_4(x_1) \dx_1.
 \end{split}
 \end{equation}

As in \cite{KT}, let the symbols $ \<X>, \<Y>$ correspond to the ordered integrals  with respect to the functions  $e^{-\varphi(x)}q_3(x)$ and $e^{\varphi(y)}q_2(y)$ respectively in the following way
\begin{align*}
&  \<XY> 
: 
=\int_{x_1<y_1} 
e^{\varphi(y_1)-\varphi(x_1)}q_3(x_1) q_2(y_1)
   \dx_1\dy_1,  
\\
&  
\<XY>^j 
: =\int_{x_1<y_1<\cdots<x_j<y_j}
\prod_{n=1}^j e^{\varphi(y_n)-\varphi(x_n)}q_3(x_n) q_2(y_n)
   \dx \dy,
\\
&  
\<XXYY> 
:=\int_{t_1<t_2<t_3<t_4}
  e^{ \varphi(t_{4})+\varphi(t_3)-\varphi(t_2)-\varphi(t_1)}
  q_3(t_1)q_3(t_2) q_2(t_3)q_2(t_4) dt,  
\end{align*}  
{\small\begin{align*}
&\<XXXYYY>=\int_{t_1<\cdots<t_6}
  e^{ \varphi(t_{6})+\varphi(t_5)+\varphi(t_4)-\varphi(t_3)-\varphi(t_2)-\varphi(t_1)}
  q_3(t_1)q_3(t_2)q_3(t_3) q_2(t_4)q_2(t_5)q_2(t_6) dt,  
  \\
&\<XXYXYY>=\int_{t_1<\cdots<t_6}
  e^{ \varphi(t_{6})+\varphi(t_5)-\varphi(t_4)+\varphi(t_3)-\varphi(t_2)-\varphi(t_1)}
  q_3(t_1)q_3(t_2)q_2(t_3) q_3(t_4)q_2(t_5)q_2(t_6) dt,  
\end{align*}}
and so on.
In particular, a symbol
of form $\<XBBY>_{2j}$, where $\<B>\<B>$ under the arc $\<XY>$ consists of $(j-1)$    non-interacting  symbols $\<XY>$, is said to be \textit{connected} 
of degree $2j$. We will simply omit the subscript $2j$ in $\<XBBY>_{2j}$  when the degree is clear.
For example, $\<XY>, \<XXYY>$ are connected symbols of degree $2,4$ respectively, and $\<XXXYYY>, \<XXYXYY>$ are connected symbols of degree $6$, while $\<XYXY>\,\,\,$ is not a connected symbol.

We  introduce the operator $S$ as follows:
\begin{equation}\label{SS}
(Sf)(t)=\int_{x<y<t}e^{\varphi(y)-\varphi(x)} q_2(y)(q_3 f)(x)\dx\dy,
\end{equation}
such that we can express $\<XY>^j=\lim_{t\rightarrow\infty}(S^j 1)(t)$.

\subsection{Asymptotic expansion of the transmission coefficient $T^{-1}$}\label{subsubs:T}
Let $q-1\in\cS(\R)$.
Recall the definition of  the transmission coefficient  $T^{-1}(\lambda)=\lim_{x\rightarrow\infty} e^{izx}u^1(x)$ in Subsection \ref{subs:Jost}, where 
 the Jost solution $u$ satisfies the Lax equation \eqref{Lax} and  the asymptotics  \eqref{Asymul}.
 We are going to solve the initial value problem \eqref{Lax}-$\eqref{Asymul}_1$  iteratively,    to derive the asymptotic expansion  of the transmission coefficient $T^{-1}$.
 
More precisely, we are going to follow the following procedure: We will first make change of variables $u\mapsto w$ (see \eqref{u-w}  below) to renormalise the   problem \eqref{Lax}-$\eqref{Asymul}_1$ into the following ODE, where $q_1, q_2, q_3, q_4$ are given in \eqref{q1234}:
\begin{equation}\tag{ODE}\label{ODE} 
\begin{split} 
w_x = &     \left( \begin{matrix} 0 &0 \\ 0 & 2iz \end{matrix} \right) w 
+\left( \begin{matrix} 0 
& q_2  
 \\ q_3
  & q_4  \end{matrix} \right)w,
  \quad \lim_{x\rightarrow-\infty}w(x) =\begin{pmatrix} 1\\0\end{pmatrix},
\end{split} 
 \end{equation}  
such that 
\begin{align*}
e^{ \int_{-\infty}^{\infty} q_1\dm }T^{-1}(\lambda)
=\lim_{x\rightarrow\infty}w^1(x)
\hbox{ the asymptotic of the first component of }w.
\end{align*}
Then we formally  solve \eqref{ODE}  iteratively as follows
\begin{equation}\label{eq:integral} 
\begin{split}
&w=\sum_{n=0}^\infty w_{n},
\quad w_{0}=\begin{pmatrix} 1\\0\end{pmatrix},
\\
&w_{n}(x)=\int^x_{-\infty}  \left( \begin{matrix}
0&  q_2(x_1)
\\
e^{2iz(x-x_1)+\int^x_{x_1} q_4 \dm} q_3(x_1)
 &  0
\end{matrix}\right) \,  w_{n-1}(x_1)\dx_1,
\end{split}
\end{equation} 
to derive the following formal asymptotic expansion for $T^{-1}$: 
{\small\begin{align*}
e^{ \int_{-\infty}^{\infty} q_1\dm }T^{-1}(\lambda)
=\lim_{x\rightarrow\infty}w^1(x)
=\sum_{n=0}^\infty\lim_{x\rightarrow\infty}w_n^1(x)
=1+\sum_{j=1}^\infty\<XY>^j, 
\end{align*} }
where we noticed $\lim_{x\rightarrow\infty}w_{2j-1}^1(x)=0$
and $\lim_{x\rightarrow\infty}w_{2j}^1(x)=\<XY>^j$, $j\geq 1$, with $\<XY>^j$ given in Subsection \ref{subs:notation}. 
 \begin{prop} \label{Prop:Expan,T}
 Let $q-1\in\cS$. Let $(\lambda,z)\in\cR$ with $\zeta=\lambda+z\in\cU$ as in Subsection \ref{subss:Riemann}.
 Recall the notations in Subsection \ref{subs:notation}.
 Then the transmission coefficient $T^{-1}$ defined in  Subsection \ref{subs:Jost} expands asymptotically as follows:
\begin{equation}\label{Expansion:T}
e^{\int_{-\infty}^{\infty} q_1 \dx}T^{-1}(\lambda) 
=1+\sum_{j=1}^\infty T_{2j}(\lambda),
\quad T_{2j}=\<XY>^j,
\end{equation}
and  its logarithm expands asymptotically as 
\begin{equation}\label{Expansion:lnT-1} 
 \int_{-\infty}^{\infty} q_1 \dx+\ln T^{-1}(\lambda)
 =T_2+\sum_{j=2}^\infty \tilde T_{2j},
\end{equation}
where $\tilde T_{2j}$ is linear combination of \emph{connected} symbols $\<XBBY>_{2j}$  of degree $2j$.
\end{prop}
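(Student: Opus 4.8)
The plan is to prove the proposition in three parts: a gauge transformation reducing the Jost problem to \eqref{ODE}, a Volterra iteration giving \eqref{Expansion:T}, and a combinatorial (linked–cluster) argument extracting \eqref{Expansion:lnT-1}.

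\emph{Part 1: the gauge transformation.} Starting from the Jost problem, i.e. \eqref{Lax} together with the first line of \eqref{Asymul}, I would write $u = M(x)\,w$ with $M(x)$ an explicit $2\times2$ matrix built pointwise from $q,\bar q,q',\zeta$ — essentially the matrix that diagonalises the potential part of the coefficient matrix modulo the background $\mathrm{diag}(-iz,iz)$ and normalises the upper–left entry. Using $\lambda^2-z^2=1$ and the algebraic identities \eqref{q1234}, a direct computation shows that $w$ solves \eqref{ODE} and that the scalar gauge factor absorbed along the way integrates to $e^{\int_{\R}q_1\,dx}$, so that $e^{\int_{-\infty}^{\infty}q_1\,dx}\,T^{-1}(\lambda)=\lim_{x\to\infty}w^1(x)$. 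The only points needing care are that $M(x)$ is invertible for every $x$, which is guaranteed by \eqref{BoundZeta} since $|q|^2\neq\zeta^2$, and that $M(x)$ tends to a constant matrix as $x\to\pm\infty$, which holds because $q-1\in\mathcal{S}$; this makes the boundary condition in \eqref{ODE} and the identification of $T^{-1}(\lambda)$ legitimate.

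\emph{Part 2: the Volterra series.} Eliminating $w^2$ from \eqref{ODE} with the integrating factor $e^{-\varphi(x)}$ — and noting that $w^2(x)\to 0$ while $\Re\varphi(x)\to+\infty$ as $x\to-\infty$ (from $\Re(2izx)=-2x\Im z$ and the boundedness of $\int_0^x q_4$), so that the lower limit $-\infty$ is admissible — turns \eqref{ODE} into the scalar equation $w^1=1+Sw^1$ with $S$ given by \eqref{SS}. For a fixed $(\lambda,z)\in\cR$ the functions $q_2,q_3,q_4$ are Schwartz (the denominators being smooth and non-vanishing by \eqref{BoundZeta}) and $|e^{\varphi(y)-\varphi(x)}|\le e^{\|q_4\|_{L^1}}$ whenever $x<y$, because $\Im z>0$; hence $S$ is a Volterra operator with bounded kernel and $L^1$ weights, the Neumann series $\sum_{j\ge0}(S^j1)(x)$ converges absolutely and locally uniformly, and letting $x\to\infty$ gives $\lim_{x\to\infty}w^1(x)=\sum_{j\ge0}(S^j1)(\infty)=1+\sum_{j\ge1}\<XY>^j$, since $(S^j1)(\infty)=\<XY>^j$ by the explicit form of $S$ and the odd iterates $w_{2j-1}^1$ vanish in the limit. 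Combined with Part 1 this is \eqref{Expansion:T}.

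\emph{Part 3: the logarithm.} Since $\<XY>^j$ is, after relabelling its variables of integration, exactly the ordered–integral symbol attached to the Dyck word $(XY)^j$, I would compute $\ln\bigl(1+\sum_{j\ge1}\<XY>^j\bigr)$ from the series $\ln(1+x)=\sum_{k\ge1}(-1)^{k-1}x^k/k$ and expand each product $\<XY>^{j_1}\cdots\<XY>^{j_k}$ by Fubini into a sum of ordered–integral symbols $\<W>$, where $W$ ranges over the shuffles of the words $(XY)^{j_1},\dots,(XY)^{j_k}$; every such shuffle is again a Dyck word of the same (even) total length, so one gets an expansion $\int_{\R}q_1\,dx+\ln T^{-1}=\sum_{W}d_W\<W>$ over Dyck words. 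The key combinatorial point is then the unique factorisation of a Dyck word into irreducible (``connected'') blocks: an exponential–formula / M\"obius–inversion argument on the lattice of such factorisations shows that $d_W=0$ for every \emph{reducible} $W$, so only connected symbols survive, grouped by degree into $T_2+\sum_{j\ge2}\tilde T_{2j}$ with $\tilde T_{2j}$ a linear combination of connected symbols of degree $2j$ (the case $j=2$, $\tilde T_4=-2\<XXYY>$, already exhibiting the cancellation of the disconnected $\<XYXY>$). Absolute convergence from Part 2 — valid in particular for $|\lambda|$ large, where $\|q_2\|_{L^1}$ and $\|q_3\|_{L^1}$ are $O(|\lambda|^{-1})$ — justifies all the rearrangements and gives the stated asymptotic expansion. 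I expect this last, purely combinatorial, step — setting up the shuffle/Dyck bookkeeping and carrying out the linked–cluster cancellation cleanly — to be the main obstacle; Parts 1 and 2 are routine ODE manipulations.
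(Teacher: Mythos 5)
Your proposal is correct and follows essentially the same route as the paper: the gauge transformation $u\mapsto v\mapsto w$ producing \eqref{ODE} with the scalar factor $e^{\int_{\R}q_1\dx}$, the Volterra/Neumann iteration \eqref{eq:integral} yielding $T_{2j}=\<XY>^j$, and the linked-cluster extraction of the logarithm. The only cosmetic difference is that for the last step the paper simply invokes Theorem 3.3 of \cite{KT} (the shuffle-Hopf-algebra fact that the logarithm of a group-like element is primitive, i.e.\ only connected symbols survive), whereas you propose to redo that combinatorics directly via Dyck-word factorisation and M\"obius inversion — the same argument in different clothing, and your check $\tilde T_4=-2\<XXYY>$ is consistent with it.
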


\begin{proof}
 It is convenient to rewrite the Lax equation \eqref{Lax} for the Jost solution $u_l$ (we will omit the subscript $l$) in several steps.
 
A straightforward calculation shows that  
  \[ \frac1{|q|^2-\zeta^2} 
\Bigl[  \left(\begin{matrix} -i\zeta & q \\
                          \bar q & i\zeta \end{matrix} \right) 
 \left( \begin{matrix} -i\lambda  & q \\ \bar q & i\lambda \end{matrix} \right) 
 -\left( \begin{matrix} |q|^2-1  & 0 \\ 0 & |q|^2-1 \end{matrix} \right) \Bigr]
\left(\begin{matrix} -i\zeta & q \\ \bar q &i\zeta   \end{matrix} \right) 
 =
  \left( \begin{matrix} -iz & 0 \\ 0 & iz \end{matrix} \right).
  \] 
  We define the renormalised Jost solution of $u$ as
\[v= \left( \begin{matrix} -i\zeta & q \\ \bar q & i\zeta \end{matrix} \right)u,
\hbox{ such that }u=\frac1{|q|^2-\zeta^2} 
\left( \begin{matrix}-i\zeta & q \\ \bar q & i\zeta \end{matrix} \right)v. \] 
Hence $v$ solves (with $q_1, q_2, q_3, q_4$ defined in \eqref{q1234})
\[ 
\begin{split} 
v_x = & \frac1{|q|^2-\zeta^2}\Big[ 
\left( \begin{matrix}-i\zeta & q \\ \bar q & i\zeta \end{matrix} \right)
 \left( \begin{matrix} -i\lambda & q \\ \bar q  & i\lambda \end{matrix} \right) \left( \begin{matrix} -i\zeta   & q \\ \bar q & i\zeta \end{matrix} \right)     
 +\left( \begin{matrix} 0 & q_x \\ \bar q_x & 0 \end{matrix} \right) 
\left( \begin{matrix} -i\zeta & q \\ \bar q &i\zeta  \end{matrix} \right) \Big] v 
 \\   
= &   \left( \begin{matrix} -iz &0 \\ 0 & iz \end{matrix} \right) v
+ 
\left( \begin{matrix} -q_1    
&q_2   
 \\ q_3
  &q_4-q_1  \end{matrix} \right)v.
\end{split} 
 \]

We want to remove the upper left entries of the two matrices:
Let  
\begin{equation}\label{u-w} 
w=-\frac{1}{2iz}e^{izx+\int_{-\infty}^x q_1 \dm} v
=-\frac{1}{2iz}e^{izx+\int_{-\infty}^x q_1 \dm}
\left( \begin{matrix} -i\zeta & q \\ \bar q & i\zeta \end{matrix} \right)u,
\end{equation}
then it satisfies the renormalised \eqref{ODE} above. 
In other words, the renormalized Jost solution
$w $
satisfies    the following integral equation 
\begin{equation*}\label{eq:w,integral}
\begin{split} 
 w (x)
&=  \left( \begin{matrix} 1 \\ 0 \end{matrix} \right)  
  +
\int_{-\infty}^x  
\left( \begin{array}{cc}
0
&  q_2(x_1)
\\
e^{2iz(x-x_1)+\int^x_{x_1} q_4 \dm} q_3(x_1)
 & 0
\end{array}\right) \,  w(x_1)\,dx_1,
\end{split}
\end{equation*}   
with the following asymptotics as 
$x\rightarrow\pm\infty$ (recalling $u$'s asymptotics \eqref{Asymul}):
\begin{align*}
&   w (\lambda,x,t)= \left(\begin{matrix}
1\\0
\end{matrix}\right)+o(1)
\hbox{ as }x\rightarrow -\infty,
\\
&  w (\lambda,x,t)= 
\left(\begin{matrix}
e^{   \int_{-\infty}^{\infty} q_1\dm } T^{-1}(\lambda)\\0
\end{matrix}\right)+o(1)
\hbox{ as }x\rightarrow +\infty.
\end{align*}  
Hence we use the iterative procedure in \eqref{eq:integral} to derive the formal asymptotic expansion \eqref{Expansion:T} of $e^{ \int_{-\infty}^{\infty} q_1\dm }T^{-1}(\lambda)
=\lim_{x\rightarrow\infty}w^1(x)=\sum_{n=0}^\infty\lim_{x\rightarrow\infty}w_n^1(x)$. 

Finally, it follows from Theorem 3.3 \footnote{Indeed, we can endow the ring of the formal power series of the unknown symbols $ \<X>, \<Y>$ with a shuffle product and a coproduct, and we typically take a  commutative subalgebra $H$ where the symbols $ \<X>, \<Y>$ appear in non interacting  pairs, such that $g$ is a group-like element and $\ln g$ is a primitive element in $H$.
See also  \cite{KT, Lothaire, Reutenauer}    for more shuffle  algebra theory.} in \cite{KT} that whenever we have the \emph{formal} expansion as in \eqref{Expansion:T}: $g=1+\sum_{j=1}^\infty T_{2j}$, $T_{2j}=\<XY>^j$, we will have the formal expansion of its logarithm in \eqref{Expansion:lnT-1}: $\ln g=T_2+\sum_{j=2}^\infty \tilde T_{2j}$.

\end{proof}

  \subsection{The renormalised transmission coefficient $\Tc^{-1}$}\label{subss:Tc-1}
  Let $q\in X^s$, $s>\frac12$ and let $(\lambda, z)\in\cR$, $\zeta=\lambda+z\in\cU$. 
  
  Recall $q_1= \frac{i\zeta(|q|^2-1)-\bar q q'}{|q|^2-\zeta^2}$  defined in \eqref{q1234}, then its integral $\int^\infty_{-\infty} q_1\dx$ (which appears in the  expansion  of $T^{-1}$) may not be well-defined for $q\in X^s$. 
More precisely, by view of the following fact coming from $\lambda^2-z^2=1$ and $\zeta=\lambda+z$:
{\small\begin{equation}\label{RelationZeta}\begin{split}
&\frac1{1-\zeta^2}=\frac{-1}{2z\zeta},
\quad\frac{\zeta}{1-\zeta^2}=\frac{-1}{2z},
\quad \frac1{|q|^2-\zeta^2}
=\frac1{1-\zeta^2}-\frac{|q|^2-1}{(1-\zeta^2)(|q|^2-\zeta^2)},
\end{split}\end{equation}}
we  rewrite the integral of $q_1$ when $q-1\in \cS(\R)$ as follows:
{\small\begin{equation}\label{Integral:q1}\begin{split}
\int_{-\infty}^{\infty} q_1 \dx 
&=-\frac{i}{2z}\cM 
-\frac{i}{2z\zeta}\cP+\frac{i}{2z}\int_{\R}\frac{(|q|^2-1)^2}{|q|^2-\zeta^2}\dx 
-\frac1{2z\zeta}\int_{\R} \frac{\bar q q'(|q|^2-1)}{(|q|^2-\zeta^2)} \dx,  
\end{split}\end{equation}  }
where $\cM=\int_{\R} (|q|^2-1)\dx$, $\cP=\Im\int_{\R} q\bar q'\dx$ are the mass and momentum (in \eqref{MP}), which can be well-defined only under further integrability assumptions on $|q|^2-1, q'$.

We hence introduce the \textit{renormalised} transmission coefficient $\Tc^{-1}$ which is the transmission coefficient $T^{-1}$ module the mass and momentum, such that it is well-defined for $q\in X^s$. 
More precisely, we have
\begin{thm}\label{thm:Tc} 
Let $q\in X^s$, $s>\frac 12$.
Then there exists a renormalised transmission coefficient $\Tc^{-1}(\lambda)$ which is holomorphic on the Riemann surface $\cR$ (defined in \eqref{cR}), such that 
\begin{itemize}
\item   If $q=1+q^0$, $q^0\in\cS(\R)$, then  
\begin{equation}\label{lnTc}\begin{split}
&\Tc^{-1}(\lambda)=e^{ -i\cM(2z)^{-1}-i \cP(2z\zeta)^{-1}
}T^{-1}(\lambda),
\\
&\hbox{ i.e. }
-\ln \Tc(\lambda)=-\ln T(\lambda)-i \cM (2z)^{-1} -i \cP(2z\zeta)^{-1},
\quad \zeta=\lambda+z,
\end{split}\end{equation} 
where $T^{-1}$ is the transmission coefficient defined in Subsection \ref{subs:Jost}
and $\cM=\int_{\R} (|q|^2-1)\dx$ and $\cP=\Im\int_{\R} q\bar q'\dx$ 
are the  conserved mass and momentum associated to the Gross-Pitaevskii equation, such that 
\begin{equation}\label{Tc1}
\qquad |\Tc^{-1}(\lambda)|\geq 1\hbox{ if }\lambda\in\Ic=(-\infty,-1]\cup[1,\infty),  
\quad \Tc^{-1}\rightarrow 1 \hbox{ as }|\lambda|\rightarrow \infty. 
\end{equation}
\item 
For any fixed $(\lambda, z)\in\cR$, the renormalised transmission coefficient $\Tc^{-1}(\lambda; q)$ is extended uniquely to an analytic function in $q\in X^s$ (with respect to the analytic structure in Theorem \ref{thm:analytic})   and $\Tc^{-1}(\lambda; q(t))$ is conserved by the Gross-Pitaevskii flow on the existence time interval of the solution $q(t)$ (defined in \eqref{def}).
\item $\Tc^{-1}(\lambda)$ has the following asymptotic expansion
\begin{equation}\label{Expansion:Tc} 
\Tc^{-1}(\lambda)
 =e^{\Phi(\lambda)}\Bigl(1 +\sum_{j=1}^\infty T_{2j}(\lambda)\Bigr),
 \quad T_{2j}=\<XY>^j,
\end{equation}
and its logarithm expands asymptotically  as 
\begin{equation}\label{Expansion:lnT} 
 \ln \Tc^{-1}(\lambda)
 =\Phi(\lambda)+T_2(\lambda)+\sum_{j=2}^\infty \tilde T_{2j}(\lambda),
\end{equation}
where
\begin{equation}\label{Phi}
\Phi(\lambda):=-\frac{i}{2z}\int_{\R}\frac{(|q|^2-1)^2}{|q|^2-\zeta^2}\dx 
+\frac1{2z\zeta}\int_{\R} \frac{\bar q q'(|q|^2-1)}{(|q|^2-\zeta^2)} \dx,
\end{equation}
 and $\tilde T_{2j}$ is linear combination of \emph{connected} symbols $\<XBBY>_{2j}$  of degree $2j$. 
Here the symbols are defined in Subsection \ref{subs:notation}.

\item  $\Tc^{-1}$ satisfies the following properties:
{\small\begin{equation}\label{Tc:property}\begin{split} 
&\Re {\Tc}^{-1}(\lambda)=\Re \Tc^{-1}(\bar\lambda)
 \hbox{ if } (\lambda,z)=(i\sigma,\pm i\frac\tau2)\in \cR,\,\tau\geq2,
 \,\sigma=\sqrt{\frac{\tau^2}4-1},
\\
& \Tc^{-1}\hbox{ has at most countably many simple zeros }\{\lambda_m\}\subset (-1,1). 
\end{split}\end{equation}  } 
\end{itemize}

We can define a superharmonic function $G(z)$ on the upper half plane $\cU$ as follows
 \begin{equation}\label{G}
G(z): = \frac 12\sum_{ \pm}
\Re\Bigl(4z^2\ln \Tc^{-1}\bigl(\pm\sqrt{z^2+1}\bigr)\Bigr),
\quad \Im z>0,
\quad \Im\sqrt{z^2+1}\geq0,
\end{equation}
such that $G\geq 0$ on the  upper half plane $\cU$ and $-\Delta G\geq 0$ is a nonnegative measure on the upper half plane $\cU$ as follows
\begin{equation}\label{DeltaS}
\begin{split}
&\nu_G(z)=-\Delta_{z} G(z) 
=-\pi\sum_{m}(2z)^2\delta_{z=z_m}\geq 0,
 \,\,z_m=i\sqrt{1-\lambda_m^2}\in i(0,1],
\end{split}
\end{equation} 
where $\{\lambda_m\}$ are the simple zeros of $\Tc^{-1}(\lambda)$ in \eqref{Tc:property}.
\end{thm}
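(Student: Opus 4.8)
The plan is to reduce everything to the asymptotic expansion \eqref{Expansion:Tc}--\eqref{Expansion:lnT} and the analyticity of the individual symbols in $q\in X^s$, following the scheme in Section~\ref{sec:Tc}. First I would establish the \emph{analytic} and \emph{multilinear} estimates on the connected symbols $\langle XBBY\rangle_{2j}$: using the functions $q_1,\dots,q_4,\varphi$ from \eqref{q1234}, the bound \eqref{BoundZeta}, and the operator $S$ in \eqref{SS}, show that each ordered integral defines an analytic function of $q\in X^s$ on $\cR$ with bounds decaying in $j$, so that the series $\sum_j T_{2j}$ and $\sum_j \tilde T_{2j}$ converge locally uniformly (at least away from the eigenvalues). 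Here the key input is that $q_2,q_3\in H^{s-1}$-type spaces and $\Re\varphi$ has the right sign as $x\to\pm\infty$ (from $\Im z>0$), giving exponential gain in the ordered integrals; the $DU^2$/$U^2$-duality (Subsection~\ref{subs:norm}) is what makes the $s>\frac12$ endpoint work. Then \emph{define} $\Tc^{-1}(\lambda;q)$ on $X^s$ by the formula \eqref{Expansion:Tc} with $\Phi$ given by \eqref{Phi}; the point is that $\Phi$ is manifestly well-defined for $q\in X^s$ (the integrands are quadratic in $\bq=(|q|^2-1,\partial_x q)\in H^{s-1}$ times bounded factors), in contrast to the naive $\int q_1\,dx$.

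Next I would verify that this definition is consistent with the classical one: for $q=1+q^0$ with $q^0\in\cS$, compare \eqref{Expansion:T}--\eqref{Expansion:lnT-1} from Proposition~\ref{Prop:Expan,T} with \eqref{Expansion:Tc}. Using the splitting \eqref{Integral:q1} of $\int q_1\,dx$ into the mass/momentum part $-\tfrac{i}{2z}\cM-\tfrac{i}{2z\zeta}\cP$ plus exactly the two integrals defining $\Phi$, one gets $\Tc^{-1}=e^{-i\cM(2z)^{-1}-i\cP(2z\zeta)^{-1}}T^{-1}$, which is \eqref{lnTc}. From this identity, holomorphy of $\Tc^{-1}$ on $\cR$ and the limit $\Tc^{-1}\to1$ as $|\lambda|\to\infty$ follow from the corresponding classical facts for $T^{-1}$ (Subsection~\ref{subs:Jost}) together with the fact that the exponential prefactor is holomorphic on $\cR$ and tends to $1$; the lower bound $|\Tc^{-1}(\lambda)|\ge1$ on $\Ic$ follows because on the cut $z$ is real (for $\lambda\in\Ic$, $z=\sqrt{\lambda^2-1}\in\R$), hence $e^{-i\cM(2z)^{-1}}$ has modulus $1$ and $\cP$ is real so the second factor also has modulus $1$, reducing to $|T^{-1}|\ge1$ which is \eqref{Tleq1}. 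The zeros of $\Tc^{-1}$ coincide with those of $T^{-1}$ (the prefactor never vanishes), so the statement that they are at most countably many simple points in $(-1,1)$ is inherited verbatim from the spectral analysis of the self-adjoint Lax operator $L$ in Subsection~\ref{subsubs:T,R}. The symmetry $\Re\Tc^{-1}(\lambda)=\Re\Tc^{-1}(\bar\lambda)$ on the relevant imaginary points comes from \eqref{T:symm} combined with the observation that at $(\pm i\sigma,\pm i\tau/2)$ one has $\bar z=-z$, so $e^{-i\cM(2z)^{-1}}$ is real and the $\cP$-factor is handled by the same conjugation as in \eqref{RealSymm}.

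Then comes conservation: for Schwartz data $\partial_t T^{-1}=0$ is classical (end of Subsection~\ref{subsubs:T,R}), and $\cM,\cP$ are classical conserved quantities of \eqref{GP}, so $\partial_t\Tc^{-1}=0$ on $1+\cS$. To extend to $q\in\cC(I;X^s)$ I would use: (a) the unique analytic extension of $q\mapsto\Tc^{-1}(\lambda;q)$ from $1+\cS$ to $X^s$ established above; (b) the density of $\{q:q-1\in C_0^\infty\}$ in $X^s$ (Theorem~\ref{thm:metric}); and (c) the well-posedness/flow-continuity from Theorem~\ref{thm:lwp}, approximating $q_0$ by smooth data, propagating by the flow, and passing to the limit using continuous dependence together with the local uniform convergence of the symbol expansion in $q$. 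Finally, for $G(z)$ defined by \eqref{G}: nonnegativity $G\ge0$ on $\cU$ follows from $|\Tc^{-1}|\ge1$ pushed through the conformal change $\lambda=\lambda(z)$ (so $\Re\ln\Tc^{-1}\ge0$ on the image), and $-\Delta G\ge0$ with the explicit atomic form \eqref{DeltaS} follows because $\ln\Tc^{-1}$ is holomorphic away from its zeros, hence $\Re(4z^2\ln\Tc^{-1}(\sqrt{z^2+1}))$ is harmonic there, while at each zero $\lambda_m$ (equivalently $z_m=i\sqrt{1-\lambda_m^2}$) the simple zero contributes a logarithmic singularity whose Laplacian is $-\pi(2z_m)^2\delta_{z_m}$ by the standard $\Delta\ln|z-z_0|=2\pi\delta_{z_0}$ identity; the factor $4z^2$ and the symmetrization over $\pm$ account for the precise weight and the two preimages under $z\mapsto\pm\sqrt{z^2+1}$.

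I expect the main obstacle to be step one --- the uniform-in-$j$ analytic multilinear estimates on the connected symbols in the $X^s$, $s>\frac12$, topology, in particular making the endpoint $s=\frac12^+$ work via the $DU^2$/$l^2_\tau DU^2$ machinery and controlling the low-frequency/large-$|\lambda|^{-1}$ (i.e. $\zeta$ near $0$) behaviour where the weights $1/(|q|^2-\zeta^2)$ degenerate; once the expansion is under analytic control, the identification with $T^{-1}$, the holomorphy, the bounds on $\Ic$, the symmetries, and the properties of $G$ are comparatively routine consequences of the classical theory plus density and flow-continuity.
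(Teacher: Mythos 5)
Your overall architecture (expansion of the renormalised Jost solution, identification with $e^{-i\cM(2z)^{-1}-i\cP(2z\zeta)^{-1}}T^{-1}$ via \eqref{Integral:q1}, conservation by density plus flow continuity, and the superharmonicity of $G$) matches the paper's, but there is a genuine gap at the foundational step. You propose to \emph{define} $\Tc^{-1}(\lambda;q)$ on all of $X^s$ by the series formula \eqref{Expansion:Tc}. The multilinear bounds one can actually prove (Lemma \ref{Lem:wn} in the paper) are of the form $|T_{2j}|\le \bigl(C\tfrac{|\Re z|+\tau}{\tau}\|q_2\|_{l^2_\tau DU^2}\|q_3\|_{l^2_\tau DU^2}\bigr)^{j}$, so the series converges only under the smallness condition \eqref{small:cC0,z}, i.e.\ for $\Im z$ large relative to $E^s(q)$ — there is no scaling to reduce to small data here. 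For a fixed $(\lambda,z)\in\cR$ and a large potential the series need not converge, and convergence ``away from the eigenvalues'' is not the relevant obstruction. The paper resolves this by defining $\Tc^{-1}=e^{\Phi}\lim_{x\to\infty}w^1(x)$ through the solution of \eqref{RenormalLax}, constructed by splitting $\R$ into $(-\infty,a_0]\cup[a_0,a_1]\cup[a_1,\infty)$: the perturbative series is used only on the two tails, where the restricted $l^2_\tau DU^2$ norm of $\bq$ can be made small, and on the compact middle interval the Lax ODE is solved directly using $q\in H^s([a_0,a_1])$, $s>\frac12$. Without this (or an equivalent) device you do not obtain a function holomorphic on all of $\cR$, and the subsequent statements about the zeros $\lambda_m$ and about $G$ near $z_m\in i(0,1]$ have no object to refer to.

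Two further points. First, for general $q\in X^s$ you cannot inherit ``countably many simple zeros in $(-1,1)$'' verbatim from the classical self-adjoint spectral analysis, which is carried out for Schwartz potentials; the paper instead deduces $z_m\in i(0,1]$ (hence $\lambda_m\in(-1,1)$) from the requirement that $-\Delta_z G=-\pi\sum_m(2z)^2\delta_{z=z_m}$ in \eqref{DeltaS} be a \emph{nonnegative} measure, which forces $(2z_m)^2\le 0$. Second, $|\Tc^{-1}|\ge 1$ is a statement on the cut $\Ic$, i.e.\ on the boundary $\Im z=0$ only; to conclude $G\ge 0$ in the interior of $\cU$ you must combine this boundary positivity with the superharmonicity of $G$ and the decay $\Tc^{-1}\to 1$ at infinity via the minimum principle, as the paper does. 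These are fixable, but as written your argument for \eqref{Tc:property} and for $G\ge0$ is incomplete.
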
 
 The proof of this theorem will be demonstrated  in next section where the functional spaces $l^p_\tau U^2$, $l^p_\tau V^2$, $l^p_\tau DU^2$ will come into play.

 
 \setcounter{equation}{0}
  \section{Proof of Theorem \ref{thm:Tc}} \label{sec:Tc}
  
 In this section we will prove Theorem \ref{thm:Tc} concerning the well-definedness and the property of the renormalised transmission coefficient $\Tc^{-1}(\lambda)$ in the energy framework $q\in X^s$, $s>\frac12$ in the following steps: \begin{itemize}
 \item In Subsection \ref{subs:norm}  we introduce the function spaces $l^p_\tau U^2, l^p_\tau V^2, l^p_\tau DU^2$. 
 
 \item We  derive some preliminary estimates   in Subsection \ref{subs:wn}.
 Then we   solve the renormalised Lax equation \eqref{RenormalLax} rigorously when $q\in X^s$ and define the renormalised transmission coefficient $\Tc^{-1}(\lambda)$ in Subsection \ref{subs:Holo}. 
 
 \item We  study the nonnegative superharmonic function $G(z)$ and conclude the proof of Theorem \ref{thm:Tc} in Subsection \ref{subsubs:G}.
 \end{itemize}
 
\subsection{Function spaces}\label{subs:norm} 
In this subsection we will  briefly recall the function spaces $U^2, V^2, DU^2$  and    the inhomogeneous  norms $\|\cdot\|_{l^p_\tau V}$, $V=U^2, V^2, DU^2$.
See \cite{HHK, KT} for more details of the $U^2, V^2$ theory.

\subsubsection{Spaces $U^2, V^2, DU^2$}\label{subs:UDU}
We denote the bounded functions on $\R$ by $B(\R)$.
We use the spaces $U^2, V^2\subset B(\R)$ and $DU^2$ to substitute the Sobolev spaces $\dot H^{\frac 12}\not\hookrightarrow L^\infty$ and $\dot H^{-\frac 12}$.
The space  $V^2$ is defined as follows
\begin{align*}
V^2=\Bigl\{ v\,\Big|\,   \|v\|_{V^2}=\mathop{\sup}\limits_{-\infty<t_1<\cdots<t_N=\infty}
\Bigl( \sum_{j=1}^{N-1} |v(t_{j+1})-v(t_j)|^2 \Bigr)^{\frac 12}<\infty \Bigr\},
\end{align*} 
where we always set $v(\infty)=0$. In particular, the constant function $1\in V^2$ with norm $1$.
For any finite sequence $\{\phi_j\}_{j=1}^{N-1}$  with $\sum_{j=1}^{N-1}|\phi_j|^2=1$, the step function $\phi=\sum_{j=1}^{N-1}\phi_j 1_{[t_j, t_{j+1})}$ with $-\infty<t_1<\cdots<t_N=\infty$ is called  $U^2$ atom.
We  define the space $U^2$ by 
\begin{align*}
U^2 =\Bigl\{ u=\sum_{k=1}^\infty c_k \psi_k\,\Big|\,
(c_k)_k \in \ell^1(\N)
\hbox{ and } \psi_k \hbox{ is }U^2\hbox{ atom} \Bigr\},
\end{align*}
endowed with the $U^2$-norm:
\begin{align*}
\|u\|_{U^2 }=\inf\Bigl\{\sum_{k=1}^\infty |c_k|\,\Big|\, u=\sum_{k=1}^\infty c_k \psi_k,
\quad c_k\in\C,\quad \psi_k \hbox{ is }U^2\hbox{ atom}\Bigr\}.
\end{align*}

We define the space $DU^2$ via the distributional derivatives as
\begin{align*}
DU^2=\{u'\,|\, u\in U^2\}, 
\end{align*}
with the norm $\|u'\|_{DU^2}=\|u\|_{U^2}$.
Then $DU^2$ function is a distribution function with the following finite norm: 
\begin{equation*}\begin{split}
&\|f\|_{DU^2}= \sup\Bigl\{\int_{\R} f\varphi dt
 \,\Big|\, \|\varphi\|_{V^2}\leq 1, \varphi\in C^\infty_0(\R)\Bigr\}.
\end{split}\end{equation*}

We have the following pleasant   estimates which will be used frequently:  
\begin{equation}\label{product}
\begin{split}
&\|f\|_{L^\infty}\leq \|f\|_{V^2}\leq \|f\|_{L^\infty}+2\|f'\|_{DU^2},
\quad \|f\|_{V^2}\leq 2\|f\|_{U^2},
\\
&\|fg\|_{V^2}\leq \|f\|_{L^\infty}\|g\|_{V^2}+\|f\|_{V^2}\|g\|_{L^\infty},
\quad \|fg\|_{DU^2}\leq 2\|f\|_{V^2}\|g\|_{DU^2},
\\
&\|f(u)\|_{V^2}\leq C(f',\|u\|_{L^\infty})\|u\|_{V^2}. 
\end{split}
\end{equation}

\subsubsection{Space $l^p_\tau DU^2$} \label{subsubs:est}
We take  the localised version of $U^2, V^2, DU^2$-norms 
\begin{equation*}\label{Unorm}
\|u\|_{l^p_\tau U}=\bigl\| \|\chi_{\tau,k} u\|_{U} \bigr\|_{\ell^p_k(\Z)},
\quad U=V^2, U^2\hbox{ or }DU^2,
\end{equation*}
where $\tau\geq 2$ is the frequency scale and $\chi$ is a smooth function compactly supported on $[-\frac23, \frac23]$ with value $1$ on the interval $[-\frac13,\frac13]$, such that  $\chi_{\tau,k}$ form  a partition of unity:
\begin{equation}\label{unity}
1=\sum_{k\in\Z}\chi_{\tau,k},
\quad \chi_{\tau,k}=\chi\bigl(\tau(\cdot-\frac{k}{\tau})\bigr)=\chi(\tau\cdot-k).
\end{equation}
For any fixed real positive number $a>0$, $\|u\|_{l^p_\tau U}$ is equivalent to $\bigl\| \|\tilde\chi_{\tau,k} u\|_{U} \bigr\|_{\ell^p_k(\Z)}$ with $\tilde\chi=\chi(\cdot/a)$.

\begin{prop}[\cite{KT}]\label{prop:est}
We  have the following properties of the  norm $\|\cdot\|_{l^p_\tau DU^2}$:
\begin{itemize}
\item
The following inequality describes the effect of the phase shift  
\begin{equation}\label{PhaseShift}
\|e^{i\xi x}u\|_{l^2_\tau DU^2}\leq C\sqrt{(\tau+|\xi|)/\tau}\|u\|_{l^2_\tau DU^2}.
\end{equation}

\item 
The following inequality describes the effect of taking the derivative
\begin{equation}\label{fg:free}
\|f\|_{l^p_\tau U^2}\lesssim \tau\|f\|_{l^p_\tau DU^2}+\|f'\|_{l^p_\tau DU^2}.
\end{equation} 

\item The $l^p_\tau DU^2$, $p\geq 2$-norm and $\dot H^{\frac 12-\frac 1p}$-norm is related by 
$$\|u\|_{l^p_\tau DU^2}\leq C\tau^{\frac 1p-1}\|u\|_{\dot H^{\frac 12-\frac 1p}}.$$

 \end{itemize}
\end{prop}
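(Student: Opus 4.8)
The plan is to reduce each of the three inequalities to the unit frequency scale $\tau=1$ by means of the scaling identities in \eqref{ftau} (the $l^2_\tau DU^2$ and $l^p_\tau U^2$ norms are, up to explicit powers of $\tau$, invariant under $f\mapsto f_\tau$, and $\dot H^{\sigma}$ rescales in the familiar way), and then to prove each statement at scale $1$ by testing against the partition of unity $1=\sum_k\chi_{1,k}$ from \eqref{unity} and resumming in $\ell^p_k$. The resummation costs nothing because the $\chi_{1,k}$, and any fixed dilate of them, have finitely overlapping supports; the only further inputs are the atomic and duality descriptions of $U^2,V^2,DU^2$ recalled in Subsection \ref{subs:UDU} and the product and embedding bounds collected in \eqref{product}.

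For the derivative estimate \eqref{fg:free} I would fix $k$, put $g=\chi_{1,k}f$, and use $g'=\chi_{1,k}f'+\chi_{1,k}'f$ together with the product bound $\|hg\|_{DU^2}\leq2\|h\|_{V^2}\|g\|_{DU^2}$ from \eqref{product} (with $h=\chi_{1,k}$ and $h=\chi_{1,k}'$, both in $V^2$) to get $\|g\|_{DU^2}+\|g'\|_{DU^2}\lesssim\|f\|_{DU^2(I_k)}+\|f'\|_{DU^2(I_k)}$ over a slightly enlarged interval $I_k$. Since $DU^2=\{u':u\in U^2\}$ isometrically, $g'\in DU^2$ forces $g=u+c$ with $u\in U^2$, $u(\infty)=0$, and a single constant $c$; moreover $|c|\lesssim\|g\|_{DU^2}$ because, up to a bounded factor, $c$ is the pairing of $g$ with a fixed function that equals $1$ on $I_k$ and lies in $V^2$ (recall $1\in V^2$). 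Hence $\|g\|_{U^2}\leq\|u\|_{U^2}+|c|\,\|1\|_{U^2}\lesssim\|g\|_{DU^2}+\|g'\|_{DU^2}$, and resumming over $k$ in $\ell^p$ and restoring the $\tau$-scaling (which weights the constant part by $\tau$) yields \eqref{fg:free}.

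For the Sobolev embedding (third item) it suffices, after rescaling to $\tau=1$, to treat the endpoints $p=2$ and $p=\infty$ and interpolate. The endpoint $p=2$ is the cleanest: $\|g\|_{DU^2}=\sup\{\int g\varphi:\|\varphi\|_{V^2}\leq1\}\leq\|g\|_{L^1}\lesssim\|g\|_{L^2}$ for $g$ supported in an interval of length $\lesssim1$ (using $\|\varphi\|_{L^\infty}\leq\|\varphi\|_{V^2}$), applied to $g=\chi_{1,k}u$ and combined with $\sum_k\|\chi_{1,k}u\|_{L^2}^2\lesssim\|u\|_{L^2}^2$; here $\dot H^{1/2-1/p}$ reduces to $L^2$. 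The endpoint $p=\infty$, where one needs a local version of $\dot H^{1/2}\hookrightarrow DU^2$, is genuinely more delicate: the naive duality $\|\chi_{1,k}u\|_{DU^2}\leq\|u\|_{\dot H^{1/2}}\|\chi_{1,k}\varphi\|_{\dot H^{-1/2}}$ fails because a compactly supported $\chi_{1,k}\varphi$ with nonzero mean is not in $\dot H^{-1/2}(\R)$; this low-frequency subtlety is handled frequency-by-frequency via the Littlewood--Paley characterisation of $U^2$ and $V^2$ exactly as in \cite{KT} (see also \cite{HHK}). Restoring the scaling produces the factor $\tau^{1/p-1}$.

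The main obstacle is the phase-shift inequality \eqref{PhaseShift}. The crude estimate $\|\chi_{\tau,k}e^{i\xi x}\|_{V^2}\lesssim1+|\xi|/\tau$ on a single interval of length $1/\tau$ only gives the multiplier bound $\|e^{i\xi x}u\|_{l^2_\tau DU^2}\lesssim(1+|\xi|/\tau)\|u\|_{l^2_\tau DU^2}$, and one must work harder for the sharp $\sqrt{(\tau+|\xi|)/\tau}$. My plan is: when $|\xi|\lesssim\tau$ the crude bound already gives the constant $O(1)$; when $|\xi|\gtrsim\tau$ set $\mu=|\xi|$, observe that on each interval of the finer partition $\{\chi_{\mu,k}\}$ the phase $e^{i\xi x}$ varies by $O(1)$ so that $\|\chi_{\mu,k}e^{i\xi x}\|_{V^2}\lesssim1$ and hence, by \eqref{product}, $\|e^{i\xi x}u\|_{l^2_\mu DU^2}\lesssim\|u\|_{l^2_\mu DU^2}$, and then combine this with the two scale-comparison facts $\|v\|_{l^2_\mu DU^2}\lesssim\|v\|_{l^2_\tau DU^2}$ and $\|v\|_{l^2_\tau DU^2}\lesssim(\mu/\tau)^{1/2}\|v\|_{l^2_\mu DU^2}$, valid for $\mu\geq\tau$ because $DU^2$ scales like a space of regularity $-\tfrac12$ (the square root being saturated, e.g., by a single smooth bump of width $\sim1/\tau$). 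Establishing these two comparisons with the correct powers is the technical heart of the matter; it is carried out in \cite{KT}, and I would follow that route.
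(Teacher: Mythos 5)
The paper offers no proof of Proposition \ref{prop:est} to compare against: it is imported verbatim from \cite{KT}, as the citation in the proposition header indicates. Judged on its own terms, your reduction scheme (rescale to $\tau=1$ via \eqref{ftau}, localize with the finitely overlapping $\chi_{1,k}$, resum in $\ell^p_k$) is the right skeleton, and your treatments of \eqref{fg:free} and of the $p=2$ endpoint of the third item are essentially complete. Two small remarks on \eqref{fg:free}: the constant $c$ you introduce is automatically zero, since $g=\chi_{1,k}f$ is compactly supported and every $u\in U^2$ vanishes at $-\infty$, so $g=u$ outright and $\Vert g\Vert_{U^2}=\Vert g'\Vert_{DU^2}$; and the expression $|c|\,\Vert 1\Vert_{U^2}$ you write down is ill-formed because the constant function $1$ is not in $U^2$ (atoms vanish on $(-\infty,t_1)$), so it is fortunate that this term is not actually needed. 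For the $p=\infty$ endpoint and the interpolation in $p$ you defer to the Littlewood--Paley arguments of \cite{KT,HHK}; that is where the genuine content of the third item sits, and your outline does not reproduce it.

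For \eqref{PhaseShift} you have misjudged where the difficulty lies, and as a result chosen a needlessly heavy route whose key step you do not prove. The "crude" single-interval bound is not $\Vert\chi_{\tau,k}e^{i\xi x}\Vert_{V^2}\lesssim 1+|\xi|/\tau$: for any bounded function one has $\sum_j|f(t_{j+1})-f(t_j)|^2\le 2\Vert f\Vert_{L^\infty}\,\mathrm{TV}(f)$, and the total variation of $e^{i\xi x}$ over an interval of length $\sim 1/\tau$ is $\sim|\xi|/\tau$, so in fact $\Vert\chi_{\tau,k}e^{i\xi x}\Vert_{V^2}\lesssim\sqrt{(\tau+|\xi|)/\tau}$ already. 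Writing $\chi_{\tau,k}e^{i\xi x}u=(\chi_{\tau,k}e^{i\xi x})(\tilde\chi_{\tau,k}u)$, applying the product bound $\Vert fg\Vert_{DU^2}\le 2\Vert f\Vert_{V^2}\Vert g\Vert_{DU^2}$ from \eqref{product}, and using the equivalence of the $l^2_\tau$-norms built from $\chi$ and from $\tilde\chi$ noted after \eqref{unity}, one obtains \eqref{PhaseShift} directly, with no two-scale argument. Your alternative route is not wrong in spirit, but it hinges on the comparison $\Vert v\Vert_{l^2_\mu DU^2}\lesssim\Vert v\Vert_{l^2_\tau DU^2}$ for $\mu\ge\tau$ with a constant independent of $\mu/\tau$; the naive localization argument loses a factor $(\mu/\tau)^{1/2}$ here, and closing that gap requires a square-function estimate for $DU^2$ that you leave entirely to \cite{KT}. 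Since that lemma is at least as hard as the inequality you are trying to prove, the direct argument above is the one to record.
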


\subsection{The renormalised transmission coefficient}\label{subs:Tr} 
Let $q\in X^s$, $s>\frac12$ and $(\lambda, z)\in\cR$ with $\zeta=\lambda+z\in\cU$.
Recall the renormalised Lax equation \eqref{ODE} in the proof of Proposition \ref{Prop:Expan,T}:
\begin{equation}\label{RenormalLax}\tag{ODE}
\begin{split} 
w_x = &     \left( \begin{matrix} 0 &0 \\ 0 & 2iz \end{matrix} \right) w 
+\left( \begin{matrix} 0 
& q_2  
 \\ q_3
  & q_4  \end{matrix} \right)w,
  \quad \lim_{x\rightarrow-\infty}w(x) =\begin{pmatrix} 1\\0\end{pmatrix},
\end{split} 
 \end{equation}  
 and its formally equivalent  iterative version \eqref{eq:integral}:
\begin{equation}\label{SolveJost}
\begin{split}
w=\sum_{n=0}^\infty w_n,
\,  w_{0}=\begin{pmatrix} 1\\0\end{pmatrix},
  \,\,
 w_{n}(t)=   \begin{pmatrix}\int^t_{-\infty}  q_2(x) w_{n-1}^2(x)\dx
\\
\int^t_{-\infty} e^{ \varphi(t)-\varphi(x)} q_3(x) w_{n-1}^1(x)\dx
\end{pmatrix}, 
\end{split}
\end{equation} 
$n\geq 1$, such that  if $q-1\in\cS$ then  
$$e^{ \int_{-\infty}^{\infty} q_1\dm }T^{-1}(\lambda)
=\lim_{t\rightarrow\infty}w^1(t)=\sum_{n=0}^\infty\lim_{t\rightarrow\infty}w_n^1(t),
$$  where   $q_1, q_2, q_3, q_4, \varphi$ are given in \eqref{q1234}. 
In particular,  noticing $w^1_{2j+1}=w^2_{2j}=0$, we can rewrite \eqref{SolveJost} as
{\small
\begin{equation}\label{SolveJost1}\begin{split}
 w^1(t)=\sum_{j=0}^\infty w_{2j}^1(t)
=\sum_{j=0}^\infty (S^j 1)(t),
\quad w^2(t)=\sum_{j=0}^\infty w_{2j+1}^2(t)
=\sum_{j=0}^\infty  (S_1 S^j 1)(t),
\end{split}\end{equation} 
} 
where we recall the definition of the operator $S$ in \eqref{SS}:
$$(Sf)(t)=\int_{x<y<t}e^{\varphi(y)-\varphi(x)} q_2(y)(q_3 f)(x)\dx\dy,$$
and we define the operator $S_1$:
$$
(S_1 f)(t)=\int_{-\infty}^t e^{\varphi(t)-\varphi(x)} (q_3 f)(x)\dx.
$$
We are going to   solve \eqref{RenormalLax} rigorously and define the renormalised transmission coefficient $\Tc^{-1}$ in Subsection \ref{subs:Holo}.
Before that we give some preliminary estimates for $q, q_2, q_3, q_4$ and $w_{n}$.

  \subsubsection{Preliminary estimates}\label{subs:wn}
We first claim the following $L^\infty$-estimate:
\begin{equation}\label{qLinfty}\begin{split}
&\|q\|_{L^\infty}\lesssim 1+\tau^{\frac12} \||q|^2-1\|_{l^\infty_{\tau} DU^2} ^{\frac12}
+\|q'\|_{l^\infty_{\tau} DU^2},
\quad\forall\tau>0,
\\
&\hbox{ and in particular, }\|q\|_{L^\infty}\lesssim 1+\|\bq\|_{l^\infty_2DU^2}
\lesssim 1+E^{s}(q).
\end{split}\end{equation}
Indeed, we notice that by the partition of unit \eqref{unity}, for any $\tau>0$, at each point $x\in \R$, there exists $k\in\Z$ such that the  function $\chi_{\tau,k}$ or $\bar\chi_{\tau,k}$ with $\bar\chi=\chi(\cdot/2)$ taking value $1$ at point $x$, and thus
$\|q\|_{L^\infty}\leq \sup_k \|\chi_{\tau,k}q\|_{L^\infty}+\sup_k\|\bar\chi_{\tau,k}q\|_{L^\infty}$. By fundamental theorem of calculus: 
$(\chi_{\tau,k}q)(x)=\int^x_{\frac{k-1}{\tau}}  \chi_{\tau,k}'(y)q(y)\dy +\int^x_{\frac{k-1}{\tau}}  \chi_{\tau,k}q' \dy$, 
we  derive \eqref{qLinfty} from H\"older's inequality and \eqref{product} as follows:
{\small\begin{align*}
  |(\chi_{\tau,k}q)(x)|
&  \leq  \Bigl(\int^x_{\frac{k-1}{\tau}} | \chi_{\tau,k}'| |q|^2\dy\Bigr)^{\frac12}
\Bigl(\int^x_{\frac{k-1}{\tau}} | \chi_{\tau,k}'| \dy\Bigr)^{\frac12}
 +\bigl|\int^x_{\frac{k-1}{\tau}}  \chi_{\tau,k}q' \dy\bigr|
 \\
&\lesssim  \Bigl( \int^x_{\frac{k-1}{\tau}} | \chi_{\tau,k}'|  \,(|q|^2-1)\dy
+\int^x_{\frac{k-1}{\tau}} | \chi_{\tau,k}'| \dy\Bigr)^{\frac12}
 +\|q'\|_{l^\infty_{\tau} DU^2}
 \\
&\lesssim  (\tau\||q|^2-1\|_{l^\infty_{\tau} DU^2}+1)^{\frac12}+\|q'\|_{l^\infty_{\tau} DU^2}.
\end{align*}}

By use of the fact \eqref{BoundZeta} and \eqref{qLinfty} above, we derive immediately from the estimates in \eqref{product} the following   estimates for $q$ and $q_2, q_3, q_4$ defined in \eqref{q1234}:
\begin{lem}\label{Lem:cC0}
Let $(\lambda,z)\in\cR$, $\zeta=\lambda+z\in\cU$, 
with $\tau=2\Im z>0$, $\omega=\Im\zeta>0$.
Let  $q\in X^s$, $s>\frac12$, $\bq=(|q|^2-1, q')\in H^{s-1}\hookrightarrow l^2_1 DU^2$ and $ q_2, q_3, q_4$ be defined in \eqref{q1234}.
Then there exists a constant $\cC_0$, depending only on (for any fixed $\tau_1>0$)
\begin{equation}\label{C0,tau}
\omega^{-1},
 \,  \tau_1\||q|^2-1\|_{l^\infty_{\tau_1} DU^2},
\,  \|q'\|_{l^\infty_{\tau_1} DU^2},
\,\|q'\|_{l^\infty_\tau DU^2}
\end{equation}
such that
\begin{equation*}\begin{split} 
&\|q\|_{l^\infty_\tau V^2}\leq \cC_0,
\quad \|q_\kappa\|_{l^p_\tau DU^2}  
\leq \cC_0 \|\bq\|_{l^p_\tau DU^2},
\quad \kappa=2,3,4.
\end{split}\end{equation*}  
\end{lem}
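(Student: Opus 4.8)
The plan is to establish Lemma \ref{Lem:cC0} by a direct computation that reduces every quantity to the algebraic building blocks $|q|^2-1$ and $q'$, controlled in the $l^p_\tau DU^2$ norm, via the product and composition estimates \eqref{product} together with the pointwise bounds \eqref{BoundZeta} on the resolvent factor $1/(|q|^2-\zeta^2)$. First I would record the bound $\|q\|_{l^\infty_\tau V^2}\le\cC_0$: by \eqref{product} we have $\|q\|_{V^2}\le\|q\|_{L^\infty}+2\|q'\|_{DU^2}$ localised at scale $\tau$, and the $L^\infty$ part is handled by \eqref{qLinfty} (with $\tau_1$ in place of $\tau$), which expresses $\|q\|_{L^\infty}$ in terms of $\tau_1\||q|^2-1\|_{l^\infty_{\tau_1}DU^2}$ and $\|q'\|_{l^\infty_{\tau_1}DU^2}$ — exactly the quantities allowed in \eqref{C0,tau}. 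Hence $\|q\|_{l^\infty_\tau V^2}\lesssim \|q\|_{L^\infty}+\|q'\|_{l^\infty_\tau DU^2}\le\cC_0$.

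Next I would treat $q_2,q_3,q_4$. Each of these is, by \eqref{q1234}, a sum of terms of the shape $\frac{\zeta}{|q|^2-\zeta^2}\cdot g$ or $\frac{1}{|q|^2-\zeta^2}\cdot h\cdot g$ where $g\in\{|q|^2-1,\ q',\ \bar q'\}$ is one of the two components of $\bq$ (or its conjugate, which has the same norm) and $h\in\{q,\bar q\}$. The strategy is: use the product rule $\|fg\|_{DU^2}\le 2\|f\|_{V^2}\|g\|_{DU^2}$ from \eqref{product} to peel off the factor $g\in\bq$ in $DU^2$, leaving the multiplier $\frac{\zeta}{|q|^2-\zeta^2}$ or $\frac{h}{|q|^2-\zeta^2}$ to be estimated in $V^2$. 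For the multiplier one writes it as $F(|q|^2)$ (times $q$ or $\bar q$ in the cubic term) with $F$ smooth away from $\zeta^2$, applies the composition estimate $\|F(u)\|_{V^2}\le C(F',\|u\|_{L^\infty})\|u\|_{V^2}$ and the algebra property, using \eqref{BoundZeta} to bound $\sup|F|$, $\sup|F'|$ uniformly by powers of $\omega^{-1}=(\Im\zeta)^{-1}$, and using $\|\,|q|^2-1\,\|_{V^2}\le\|q\|_{L^\infty}\|q\|_{V^2}+\|q\|_{V^2}\|q\|_{L^\infty}\lesssim\cC_0^2$ (localised at scale $\tau$, using step one and that $|q|^2-1$ is a product) to bound $\|\,|q|^2-\zeta^2\,\|_{V^2}$ below/derivative above. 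Localising all of this at the $\tau$-scale via the $l^p_\tau$ norms and summing in $k$ with H\"older ($\ell^p\cdot\ell^\infty\hookrightarrow\ell^p$) yields $\|q_\kappa\|_{l^p_\tau DU^2}\le\cC_0\|\bq\|_{l^p_\tau DU^2}$ for $\kappa=2,3,4$, after absorbing the finitely many powers of $\omega^{-1}$, $|\zeta|/|\,|q|^2-\zeta^2|\le\omega^{-1}$, and $\|q\|_{l^\infty_\tau V^2}$ into the single constant $\cC_0$.

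One technical point worth spelling out: the composition estimate and the algebra property in \eqref{product} are stated for the global $V^2$ norm, so to localise I would insert the partition of unity $\chi_{\tau,k}$ and use that $\chi_{\tau,k}q$, $\chi_{\tau,k}(|q|^2-1)$ still obey the same product/composition rules, together with the equivalence of $\|u\|_{l^p_\tau U}$ with the version using a fattened cutoff $\bar\chi=\chi(\cdot/2)$ noted after \eqref{unity}, so that multiplying a $DU^2$-localised piece by a $V^2$-localised multiplier stays localised. The dependence on both $\tau_1$ (a fixed reference scale, needed because $\|q\|_{L^\infty}$ cannot be controlled by the $\tau$-scale norm alone when $\tau$ is large) and $\tau$ (the actual scale, entering through $\|q'\|_{l^\infty_\tau DU^2}$ and through the phase $\varphi$) is exactly what the statement \eqref{C0,tau} records.

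The main obstacle I anticipate is bookkeeping rather than conceptual: one must verify that the multiplier $\frac{\zeta}{|q|^2-\zeta^2}$ (and its cubic counterpart) genuinely lies in $l^\infty_\tau V^2$ with norm $\lesssim\cC_0$, which requires controlling its \emph{derivative} — this derivative contains $\frac{\zeta(q\bar q'+\bar q q')}{(|q|^2-\zeta^2)^2}$, hence a product of $q'$ (only in $DU^2$, not $L^\infty$) with a bounded factor; but the composition estimate $\|F(u)\|_{V^2}\le C(F',\|u\|_{L^\infty})\|u\|_{V^2}$ is precisely designed to absorb this, since $\|\,|q|^2-1\,\|_{V^2}$ is already finite from step one, and the two occurrences of $|q|^2-\zeta^2$ in the denominator are each bounded below by $\omega^2$ via \eqref{BoundZeta}. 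Making sure every stray factor of $|\zeta|$ is paired with a denominator so that \eqref{BoundZeta} applies (never leaving a bare $|\zeta|\to\infty$) is the only place an error could creep in, and it is handled by the observation in \eqref{BoundZeta} that $|\zeta|/|\,|q|^2-\zeta^2|\le(\Im\zeta)^{-1}$.
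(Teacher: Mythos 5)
Your proposal is correct and follows essentially the same route as the paper, which derives the lemma directly from the pointwise resolvent bounds \eqref{BoundZeta}, the $L^\infty$ bound \eqref{qLinfty}, and the product/composition estimates \eqref{product} (the same localisation-and-peeling argument is carried out in detail for the sharper Lemma \ref{lem:C0} later). Your handling of the multiplier $\tfrac{\zeta}{|q|^2-\zeta^2}$ via its $V^2$-variation (rather than its pointwise derivative) is exactly the point the paper relies on.
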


We are going to study the functions $w_{2j}^1(t), w_{2j+1}^2(t)$ in \eqref{SolveJost1}.  
For notational simplicity we first introduce the function 
{\small$$
 \tilde\varphi=\varphi-2i\Re z x=-\tau x+\int^x_0 q_4,\quad \tau=2\Im z>0.
$$
If  $\bq\in l^\infty_\tau DU^2$, then by Lemma \ref{Lem:cC0} (with a possibly larger $\cC_0$): 
\begin{equation}\label{q4}\begin{split}
&\Bigl\|\chi_{\tau,k }e^{\int^y_{\frac k\tau} q_4 \dx}\Bigr\|_{ V^2}
\lesssim 
e^{\cC_0\|\bq\|_{l^\infty_\tau DU^2}},  
\quad
 \Bigl| e^{\int^{\frac k\tau}_{\frac{k'}{\tau}} q_4}\Bigr|
\lesssim 
e^{\cC_0\|\bq\|_{l^\infty_\tau DU^2}(k-k')}.
\end{split}\end{equation}

Therefore we have the following properties for the functions $w_{2j}(t), w_{2j+1}(t)$:
\begin{lem}\label{Lem:wn} 
Assume the same hypothesis as in Lemma \ref{Lem:cC0} and  (with a possibly larger $\cC_0$ which depends only on \eqref{C0,tau}) 
\begin{equation}\label{small:cC0}
\|\bq\|_{l^\infty_\tau DU^2}
\leq \frac{1}{2\cC_0}.
\end{equation}


Then the functions $w_{2j}, w_{2j+1}$ given in \eqref{SolveJost1} are well-defined, depending analytically on $\bq\in l^2_\tau DU^2$ and satisfying  the following estimates (with an universal constant $C$):
\begin{equation*}\label{est:wn}\begin{split}
&\|w_{2j}\|_{U^2}
\leq  \bigl(C\frac{|\Re z|+\tau}{\tau}\bigr)^{j} 
\bigl( \|q_2\|_{l^2_\tau DU^2}\|q_3\|_{l^2_\tau DU^2}\bigr)^{j} ,
\\
& \|w_{2j+1}\|_{L^\infty}
\leq \bigl(C\frac{|\Re z|+\tau}{\tau}\bigr)^{j+3/2} 
\bigl( \|q_2\|_{l^2_\tau DU^2}\|q_3\|_{l^2_\tau DU^2}\bigr)^{j}
\|q_3\|_{l^\infty_\tau DU^2} .
\end{split}\end{equation*}  
\end{lem}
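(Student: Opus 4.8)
The plan is to estimate the iterates $w_{2j}$ and $w_{2j+1}$ from the explicit formulas in \eqref{SolveJost1}, using the operator identities $w_{2j}^1 = (S^j 1)(t)$ and $w_{2j+1}^2 = (S_1 S^j 1)(t)$, and to bound $S$ and $S_1$ on the relevant spaces. First I would observe that $S$ has the two-layer structure $S = A_2 \circ A_3$, where $A_3$ acts by $(A_3 f)(t) = \int_{-\infty}^t e^{\varphi(t)-\varphi(x)} (q_3 f)(x)\dx$ (this is $S_1$) and $A_2$ acts by $(A_2 g)(t) = \int_{-\infty}^t q_2(y) g(y)\dy$. The key estimates are: $A_2$ maps $L^\infty$ (or $V^2$) into $U^2$ with norm $\lesssim \|q_2\|_{l^2_\tau DU^2}$ (pair $q_2\in l^2_\tau DU^2$ against $g\in V^2$, using the duality $DU^2 = (V^2)^*$ together with the localized $l^2_\tau$-summation), and $A_3=S_1$ maps $U^2$ (or $V^2$) into $V^2$ with norm $\lesssim \sqrt{(|\Re z|+\tau)/\tau}\,\|q_3\|_{l^2_\tau DU^2}$, where the phase-shift factor $\sqrt{(|\Re z|+\tau)/\tau}$ comes from \eqref{PhaseShift} applied to the oscillatory part $e^{2i\Re z\, x}$ of $e^{\varphi}$, after the non-oscillatory part $e^{\tilde\varphi}$ (which carries the exponential decay $e^{-\tau x}$) has been controlled by \eqref{q4} under the smallness assumption \eqref{small:cC0}. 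Combining these, $\|S\|_{V^2\to V^2} \lesssim C\frac{|\Re z|+\tau}{\tau}\,\|q_2\|_{l^2_\tau DU^2}\|q_3\|_{l^2_\tau DU^2}$; iterating $j$ times applied to the constant function $1\in V^2$ (with $\|1\|_{V^2}=1$) and using $\|S^j 1\|_{U^2} \le \|A_2\|\,\|S^{j-1}(A_3 1)\|_{V^2}$ gives the stated bound for $\|w_{2j}\|_{U^2}$; one further application of $S_1=A_3$ to $S^j 1$, picking up an extra factor $\sqrt{(|\Re z|+\tau)/\tau}\,\|q_3\|_{l^\infty_\tau DU^2}$ and landing in $L^\infty$ (via $\|\cdot\|_{L^\infty}\le\|\cdot\|_{V^2}$), yields the bound for $\|w_{2j+1}\|_{L^\infty}$ with the exponent $j+3/2$.

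The analyticity of $w_{2j}$ and $w_{2j+1}$ in $\bq \in l^2_\tau DU^2$ is then automatic: each is a finite multilinear (hence polynomial, of degree $2j$ resp. $2j+1$) expression in the entries $q_2, q_3, q_4$, which by Lemma \ref{Lem:cC0} depend analytically (indeed, $q_2, q_3$ linearly and $q_4$ linearly, while the $e^{\int q_4}$ factors depend analytically via the exponential series) on $\bq$; the multilinear estimates just obtained show these polynomials are bounded on the ball $\{\|\bq\|_{l^\infty_\tau DU^2}\le \tfrac1{2\cC_0}\}$, so the standard criterion (locally bounded $+$ Gateaux-holomorphic $\Rightarrow$ analytic) applies. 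I would also record that the series $\sum_j w_{2j}$ and $\sum_j w_{2j+1}$ converge in $U^2$ resp. $L^\infty$ whenever $C\frac{|\Re z|+\tau}{\tau}\|q_2\|_{l^2_\tau DU^2}\|q_3\|_{l^2_\tau DU^2} < 1$, which under \eqref{small:cC0} and Lemma \ref{Lem:cC0} ($\|q_\kappa\|_{l^2_\tau DU^2}\le \cC_0\|\bq\|_{l^2_\tau DU^2}$) holds for $\tau$ large relative to $|\Re z|$ and $\|\bq\|$; this is what later legitimizes defining $\Tc^{-1}$ via $\lim_{t\to\infty} w^1(t)$.

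The main obstacle is the correct handling of the exponential weight $e^{\varphi(t)-\varphi(x)}$ inside $S$ and $S_1$ on the $U^2/V^2$ scale. The real part $\Re(\varphi(t)-\varphi(x)) = -\tau(t-x) + \Re\int_x^t q_4$ gives genuine decay for $x<t$ only because $\tau>0$ dominates the $q_4$-contribution — this is exactly where \eqref{small:cC0} is used, via the localized bound \eqref{q4} on $\|\chi_{\tau,k} e^{\int q_4}\|_{V^2}$ and the geometric-in-$(k-k')$ bound on the "long-range" factors — and one must sum these geometric weights against the $l^2_\tau$-norms of $q_2, q_3$ without losing the clean constants; the oscillatory part $e^{2i\Re z(t-x)}$ contributes only the $\sqrt{(|\Re z|+\tau)/\tau}$ loss through \eqref{PhaseShift}. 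Keeping the $l^2_\tau$-summation and the $V^2$–$DU^2$ duality compatible through the two-fold composition (so that the output of $A_3$, a $V^2$ function, can be multiplied by $q_2\in l^2_\tau DU^2$ and integrated) is the delicate bookkeeping; everything else follows the template of \cite{KT}.
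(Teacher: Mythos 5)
Your overall strategy coincides with the paper's: write $w_{2j}^1=S^j1$ and $w_{2j+1}^2=S_1S^j1$, split $e^{\varphi(y)-\varphi(x)}$ into the oscillation $e^{2i\Re z(y-x)}$ (handled by \eqref{PhaseShift}) and the decaying part $e^{\tilde\varphi(y)-\tilde\varphi(x)}$ (handled by \eqref{q4} under \eqref{small:cC0}), and iterate the operator bound for $S$. However, packaging $S=A_2\circ A_3$ as a composition through a \emph{global} intermediate space introduces a genuine gap in the two key operator estimates.

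The bound $\Vert A_2 g\Vert_{U^2}\lesssim \Vert q_2\Vert_{l^2_\tau DU^2}\Vert g\Vert_{V^2}$ is false as stated: duality gives $\Vert\int_{-\infty}^t q_2 g\,dy\Vert_{U^2}=\Vert q_2 g\Vert_{DU^2}\le 2\Vert g\Vert_{V^2}\Vert q_2\Vert_{DU^2}$, and the global $DU^2$ norm of $q_2$ is controlled only by $\Vert q_2\Vert_{l^1_\tau DU^2}$, not by the $l^2_\tau$ norm (take $q_2$ a sum of $N$ unit-separated bumps of size $N^{-1/2}$ and $g=1$: the left side grows like $N^{1/2}$ while the right side stays bounded). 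To obtain the $l^2_\tau\times l^2_\tau$ product claimed in the lemma one must pair $\Vert\chi_{\tau,k}q_2\Vert_{DU^2}$ against $\Vert\tilde\chi_{\tau,k}(S_1f)\Vert_{V^2}$ and apply Cauchy--Schwarz in $k$; this requires the intermediate object $S_1f$ to be measured in $l^2_\tau V^2$, not in $V^2$ or $L^\infty$. That localized bound does hold---at block $k$ one has a domination by $\sum_{k'\le k}e^{-(k-k')}\Vert\chi_{\tau,k'}q_3\Vert_{DU^2}\Vert f\Vert_{V^2}$, an $\ell^1\ast\ell^2$ convolution---but establishing it \emph{is} the simultaneous two-index $(k,k')$ decomposition with geometric off-diagonal weights that the paper carries out inside a single estimate for $S$; it does not follow from the $V^2\to V^2$ bound you state. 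As written, your two operator norms do not compose to the claimed product of $l^2_\tau$ norms.

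There is also an internal inconsistency in the factor counting: the product of your stated bounds for $A_2$ and $A_3$ is $\sqrt{(|\Re z|+\tau)/\tau}\,\Vert q_2\Vert\Vert q_3\Vert$, yet you assert $\Vert S\Vert\lesssim\frac{|\Re z|+\tau}{\tau}\Vert q_2\Vert\Vert q_3\Vert$; the missing half power must come from modulating $q_2$ by $e^{2i\Re z y}$ in the $A_2$ step. Likewise, the final application of $S_1$ costs $(\frac{|\Re z|+\tau}{\tau})^{3/2}\Vert q_3\Vert_{l^\infty_\tau DU^2}$---one full power from $\Vert\chi_{\tau,k}e^{2i\Re z t}\Vert_{V^2}\lesssim 1+|\Re z|/\tau$ plus a half power from \eqref{PhaseShift}---not the single $\sqrt{\cdot}$ you record, even though you then quote the correct exponent $j+3/2$. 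The analyticity argument (finite multilinear expressions, locally bounded) is fine once the multilinear estimates are actually in place.
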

\begin{proof}
As $w_{2j}^1(t)=(S^j 1)(t)$,   we consider 
{\small\begin{align*}
&\|S f\|_{U^2}
 =\Bigl\| \int_{x<y<t} \bigl( e^{2i\Re z y} q_2(y)\bigr) e^{\tilde\varphi(y)-\tilde\varphi(x)} \bigl( e^{-2i\Re z x} (q_3 f)(x)\bigr) \dx\dy\Bigr\|_{U^2_t}
\\
&=\Bigl\|  \bigl( e^{2i\Re z y} q_2(y)\bigr) \int^y_{-\infty} e^{\tilde\varphi(y)-\tilde\varphi(x)} \bigl( e^{-2i\Re z x} (q_3 f)(x)\bigr) \dx\Bigr\|_{DU^2_y}
\\
&\leq\sum_{k} \Bigl\|  \bigl( e^{2i\Re z y}\chi_{\tau,k}(y) q_2(y)\bigr) \int^y_{y-\frac3\tau} e^{\tilde\varphi(y)-\tilde\varphi(x)}
\bigl(e^{-2i\Re zx}\tilde\chi_{\tau,k}(x) (q_3 f)(x)\bigr)\dx\Bigr\|_{DU^2_y}
\\
&+\sum_{k} \Bigl\|  \bigl( e^{2i\Re z y}\chi_{\tau,k}(y) q_2(y)\bigr) \int^{y-\frac3\tau}_{-\infty} \sum_{k'\leq k-1}e^{\tilde\varphi(y)-\tilde\varphi(x)}
\bigl(e^{-2i\Re zx}\chi_{\tau,k'}(x) (q_3 f)(x)\bigr)\dx\Bigr\|_{DU^2_y}.
\end{align*}
In the above, the first part on the righthand side can be bounded by (recalling \eqref{product})
\begin{align*}
&\sum_{k}\Bigl\| e^{2i\Re z y}\chi_{\tau,k}(y) q_2(y)   e^{\tilde\varphi(y)-\tilde\varphi(\frac k\tau)}\Bigr\|_{DU^2_y}
\Bigl\|  \int^y_{y-\frac3\tau} e^{\tilde\varphi(\frac k\tau)-\tilde\varphi(x)}
\bigl(e^{-2i\Re zx}\tilde\chi_{\tau,k}(x) (q_3 f)(x)\bigr)\dx\Bigr\|_{V^2_y} 
\\
&\lesssim \sum_k \bigl\|e^{2i\Re z\cdot}\chi_{\tau,k}q_2\bigr\|_{DU^2}
 \bigl\|\tilde\chi_{\tau,k}e^{\tilde\varphi(y)-\tilde\varphi(\frac k\tau)}\bigr\|_{V^2}
 \bigl\|\tilde\chi_{\tau,k}e^{\tilde\varphi(\frac k\tau )-\tilde\varphi(x)}\bigr\|_{V^2}
  \bigl\| e^{-2i\Re z\cdot}\tilde\chi_{\tau,k}(q_3 f)\bigr\|_{DU^2}
 \\
 &\lesssim \bigl(\frac{|\Re z|+\tau}{\tau}\bigr) \|\chi_{\tau,k}q_2\|_{\ell^2_k DU^2}
 \|\tilde\chi_{\tau,k}(q_3 f)\|_{\ell^2_k DU^2}
 \Bigl\|\tilde\chi_{\tau,k} e^{\pm\int^x_{\frac k\tau}q_4}\Bigr\|_{\ell^\infty_k V^2}^2,
\end{align*}
where we have used \eqref{PhaseShift} and $\|\tilde\chi_{\tau,k}e^{-\tau(\cdot-\frac k\tau)}\|_{V^2}\lesssim 1$,
and the second part on the righthand side can be bounded by
\begin{align*}
&\sum_{k}\Bigl\| e^{2i\Re z y}\chi_{\tau,k}(y) q_2(y)   e^{\tilde\varphi(y)-\tilde\varphi(\frac k\tau)}\Bigr\|_{DU^2_y}
\\
&\qquad\times\sum_{k'\leq k-1} 
 e^{\tilde\varphi(\frac k\tau)-\tilde\varphi(\frac{k'}{\tau})}
\Bigl\| \int^{y-\frac3\tau}_{-\infty} e^{\tilde\varphi(\frac {k'}\tau)-\tilde\varphi(x)}
\bigl(e^{-2i\Re zx}(\chi_{\tau,k'}q_3 f)(x)\bigr)\dx\Bigr\|_{V^2_y} 
\\
&\lesssim\sum_k \sum_{ k'\leq k-1} 
\bigl\|e^{2i\Re z\cdot}\chi_{\tau,k}q_2\bigr\|_{DU^2}
  \bigl\| e^{-2i\Re z\cdot}\chi_{\tau,k'}q_3 f\bigr\|_{DU^2} 
  \Bigl\|\tilde\chi_{\tau,k} e^{\pm\int^x_{\frac k\tau}q_4}\Bigr\|_{V^2}^2
  e^{-(k-k')} \Bigl| e^{\int^{\frac k\tau}_{\frac{k'}{\tau}}q_4}\Bigr|.
\end{align*} 
Under the smallness condition \eqref{small:cC0}, by use of the inequality \eqref{q4}, we derive 
\begin{align*}
\|S\|_{V^2\mapsto U^2}
\lesssim \frac{|\Re z|+\tau}{\tau} \|q_2\|_{l^2_\tau DU^2}
 \|q_3\|_{l^2_\tau DU^2} e^{\cC_0\|\bq\|_{l^\infty_\tau DU^2}}
 \leq  \frac{2(|\Re z|+\tau)}{\tau} \|q_2\|_{l^2_\tau DU^2}
 \|q_3\|_{l^2_\tau DU^2},
\end{align*}
and hence  Lemma \ref{Lem:wn} holds for $w_{2j}=S^j1$.
}

Similarly we consider the operator $S_1$ (noticing $\bigl\|\chi_{\tau,k}e^{2i\Re z t}\bigr\|_{V^2}\lesssim 1+|\Re z|/\tau$):
{\small
\begin{align*}
&\|S_1f\|_{L^\infty}
\lesssim 
\sup_k \bigl\|\chi_{\tau,k}e^{2i\Re z t}\bigr\|_{V^2} 
\bigl\|\tilde\chi_{\tau,k} e^{-2i\Re zx}q_3f\bigr\|_{DU^2}
\Bigl\|\tilde\chi_{\tau,k} e^{\pm\int^x_{\frac k\tau}q_4}\Bigr\|_{V^2}^2
\\
&+\sup_k\sum_{k'\leq k-1}
 \bigl\|\chi_{\tau,k}e^{2i\Re z t}\bigr\|_{V^2} 
\bigl\|\chi_{\tau,k} e^{-2i\Re zx}q_3f\bigr\|_{DU^2}
\Bigl\|\tilde\chi_{\tau,k} e^{\pm\int^x_{\frac k\tau}q_4}\Bigr\|_{ V^2}^2
e^{-(k-k')} \Bigl| e^{\int^{\frac k\tau}_{\frac{k'}{\tau}}q_4}\Bigr|
\\
&\lesssim (\frac{|\Re z|+\tau}{\tau})^{\frac32} \|q_3\|_{l^\infty_\tau DU^2}\|f\|_{V^2}
e^{\cC_0\|\bq\|_{l^\infty_\tau DU^2}},
\end{align*}
and hence Lemma \ref{Lem:wn} holds for $w_{2j+1}=S_1S^j1$.} 
 \end{proof}

\subsubsection{The renormalised transmission coefficient $\Tc^{-1}$ for  $q\in X^s$}\label{subs:Holo}  
In order to emphasize the dependence of the renormalised transmission coefficient $\Tc^{-1}$ on $q$, we will denote $\Tc^{-1}(\lambda)=\Tc^{-1}(\lambda;q)$ in this subsection. 
\begin{prop}\label{Prop:Tr}
Let $q\in X^s$, $s>\frac12$, then the renormalised Lax equation \eqref{RenormalLax} has a unique solution $w\in L^\infty$.
 
We define the renormalised transmission coefficient as
$$\Tc^{-1}(\lambda)=e^{\Phi}\lim_{x\rightarrow\infty}w^1(x),$$ 
where $\Phi= -\frac{i}{2z}\int_{\R}\frac{(|q|^2-1)^2}{|q|^2-\zeta^2}\dx 
+\frac1{2z\zeta}\int_{\R} \frac{\bar q q'(|q|^2-1)}{(|q|^2-\zeta^2)} \dx$ is given in \eqref{Phi}.
Then 
\begin{itemize}
\item $\Tc^{-1}(\lambda; q)$ is a well-defined holomorphic function in $(\lambda,z)\in\cR$ and depends analytically on $q\in X^s$ with respect to the analytic structure given in Theorem \ref{thm:analytic} below;

\item When   $q-1\in\cS$, the   relation \eqref{lnTc}:  $\Tc^{-1}(\lambda)=e^{ -i\cM(2z)^{-1}-i \cP(2z\zeta)^{-1}}T^{-1}(\lambda)$, 
the properties \eqref{Tc1}: $|\Tc^{-1}(\lambda)|\geq 1\hbox{ if }\lambda\in\Ic=(-\infty,-1]\cup[1,\infty)$,
 $\Tc^{-1}\rightarrow 1 \hbox{ as }|\lambda|\rightarrow \infty,$ and the asymptotic  expansions \eqref{Expansion:Tc}, \eqref{Expansion:lnT} all hold true;

\item Let $q(t,x)\in \cC(I; X^s)$ be a solution of the Gross-Pitaevskii equation  (in Definition \ref{def}), then $\Tc^{-1}(\lambda; q(t))$ is conserved  on the existence time interval $I$.

\item  
$\Re {\Tc}^{-1}(\lambda)=\Re \Tc^{-1}(\bar\lambda)
 \hbox{ if } (\lambda,z)=(i\sigma,\pm i\frac\tau2)\in \cR,\,\tau\geq2,
 \,\sigma=\sqrt{\frac{\tau^2}4-1}$.
\end{itemize} 
\end{prop}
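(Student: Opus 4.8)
# Proof proposal for Proposition \ref{Prop:Tr}

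\textbf{Overall approach.} The plan is to establish the four bullet points in order, using the preliminary estimates of Lemmas \ref{Lem:cC0} and \ref{Lem:wn} together with the series representation \eqref{SolveJost1}. The key observation is that the hypotheses of Lemma \ref{Lem:wn} are qualitative (a smallness of $\|\bq\|_{l^\infty_\tau DU^2}$), and although $X^s$ gives only $\|\bq\|_{l^2_\tau DU^2}$-type control with the \emph{full} $\tau\geq 2$ scale, one can always move to a large enough frequency scale $\tau$ (equivalently, a point $(\lambda,z)\in\cR$ with $\Im z$ large) so that \eqref{small:cC0} holds; holomorphy on all of $\cR$ then follows by analytic continuation from that region. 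Concretely, for fixed $(\lambda,z)\in\cR$ I would first solve \eqref{RenormalLax} for $q-1\in\cS$, verify the series \eqref{SolveJost1} converges in $L^\infty$ there by Lemma \ref{Lem:wn}, and then extend to $q\in X^s$ by density, since each term $w_{2j}^1,w_{2j+1}^2$ is — by the estimates in Lemma \ref{Lem:wn} combined with Lemma \ref{Lem:cC0} and the multiplicativity of $l^2_\tau DU^2$ under $V^2$-multipliers — a continuous (indeed analytic) function of $\bq\in l^2_\tau DU^2\supset H^{s-1}$, with summable bounds. This gives simultaneously existence/uniqueness of $w\in L^\infty$, the definition of $\Tc^{-1}(\lambda)=e^\Phi\lim_{x\to\infty}w^1(x)$, and the analytic dependence on $q\in X^s$ (the analytic structure of Theorem \ref{thm:analytic} being exactly the one making $\bq\mapsto q$ the chart).

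\textbf{The classical identities (second bullet).} For $q-1\in\cS$ I would derive \eqref{lnTc} by comparing the renormalised equation \eqref{RenormalLax} with the original Lax equation \eqref{Lax}: the change of variables \eqref{u-w} produces the factor $e^{\int_{-\infty}^\infty q_1}$ relating $\lim w^1$ to $T^{-1}$, and then the decomposition \eqref{Integral:q1} of $\int q_1$ into the mass/momentum part $-\tfrac{i}{2z}\cM-\tfrac{i}{2z\zeta}\cP$ plus the remainder $\Phi$ of \eqref{Phi} gives $\Tc^{-1}=e^{-i\cM(2z)^{-1}-i\cP(2z\zeta)^{-1}}T^{-1}$. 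The bounds \eqref{Tc1} then follow from the corresponding classical facts: $|T^{-1}|\geq 1$ on $\Ic$ comes from \eqref{Tleq1} together with the fact that the renormalising exponential has modulus one there (one checks $\Im(2z)^{-1}$ and $\Im(2z\zeta)^{-1}$ vanish when $\lambda\in\Ic$, i.e.\ $z$ purely imaginary), and $\Tc^{-1}\to 1$ as $|\lambda|\to\infty$ since $T^{-1}\to1$ and $z,z\zeta\to\infty$. The asymptotic expansions \eqref{Expansion:Tc}, \eqref{Expansion:lnT} are just \eqref{Expansion:T}, \eqref{Expansion:lnT-1} of Proposition \ref{Prop:Expan,T} with $\int q_1$ replaced by its renormalised part $\Phi$, and the logarithmic form uses the shuffle-algebra argument (Theorem 3.3 of \cite{KT}) already invoked there.

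\textbf{Conservation along the flow (third bullet).} Here I would argue by approximation from the classical case rather than by a direct manipulation of the rough equation. For $q(t)\in\cC(I;X^s)$ a solution in the sense of Definition \ref{def}, one has $\tilde q(t)-\tilde q_{0,\varepsilon}=b(t)$ with $b$ solving the regularised NLS-type equation \eqref{eq:p}; by Lemma \ref{lem:pq} and the local theory, approximating $q_0$ by data in $1+\cS$ and using continuous dependence of the flow together with the just-proved continuity of $q\mapsto\Tc^{-1}(\lambda;q)$ on $X^s$, it suffices to prove $\partial_t\Tc^{-1}(\lambda;q(t))=0$ for $q-1\in\cS$. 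In that smooth regime this is the classical Lax-pair computation: the time evolution in \eqref{LaxPair}, after multiplying $u_l$ by $e^{-iz(2\lambda)t}$ as in Subsection \ref{subsubs:T,R}, gives $\partial_t(T^{-1})=0$, and since $\cM,\cP$ are conserved by the Gross-Pitaevskii flow the relation \eqref{lnTc} yields $\partial_t\Tc^{-1}=0$ as well.

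\textbf{The reflection symmetry (fourth bullet).} For $(\lambda,z)=(i\sigma,\pm i\tfrac\tau2)$ with $\tau\geq2$, $\sigma=\sqrt{\tau^2/4-1}$, note $\overline\lambda=-\lambda=-i\sigma$ and $(\overline\lambda,-\overline z)=(-i\sigma,i\tau/2)\in\cR$ by the symmetry \eqref{Rsymmetry}. I would obtain $\Tc^{-1}(\lambda)=\overline{\Tc^{-1}(\overline\lambda)}$ directly from a symmetry of the series \eqref{SolveJost1}: under $\zeta\mapsto -\overline\zeta$ (which is what $\lambda\mapsto\overline\lambda$, $z\mapsto-\overline z$ does, since $\zeta=\lambda+z$) one checks from \eqref{q1234} that $q_2\leftrightarrow\overline{q_3}$, $q_4\mapsto\overline{q_4}$, $\varphi\mapsto\overline\varphi$, and $\Phi\mapsto\overline\Phi$, so that $\langle XY\rangle^j$ and each connected symbol map to their complex conjugates; hence $\lim w^1$ and $e^\Phi$ are conjugated, giving the claimed identity, which for real (imaginary-axis) $\lambda$ reads $\Re\Tc^{-1}(\lambda)=\Re\Tc^{-1}(\overline\lambda)$. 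Alternatively this is inherited from the classical symmetry \eqref{T:symm} plus \eqref{lnTc} by density, which is perhaps the cleanest route.

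\textbf{Main obstacle.} I expect the principal difficulty to be the analytic extension step: making rigorous that the formal iteration \eqref{SolveJost1}, which classically solves \eqref{RenormalLax} only under the qualitative smallness \eqref{small:cC0}, actually converges in $L^\infty$ for \emph{every} $(\lambda,z)\in\cR$ and \emph{every} $q\in X^s$, and defines a genuinely holomorphic function on all of $\cR$. The point is that $X^s$ controls $\|\bq\|_{l^2_\tau DU^2}$ for $\tau=2$ but not $\|\bq\|_{l^\infty_\tau DU^2}$ smallness at that scale; one must exploit that as $\Im z\to\infty$ the relevant norms decay (cf.\ the scaling \eqref{ftau} and Lemma \ref{lem:ctau}), prove holomorphy there, and then continue analytically down to the rest of $\cR$ — handling the possible zeros of $\Tc^{-1}$ on $(-1,1)$, where $\ln\Tc^{-1}$ is singular but $\Tc^{-1}$ itself remains holomorphic — via uniqueness of the $L^\infty$ solution of the ODE and the uniform-on-compacts estimates furnished by Lemmas \ref{Lem:cC0} and \ref{Lem:wn}.
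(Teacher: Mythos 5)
Your treatment of the second, third and fourth bullets essentially coincides with the paper's: the identity \eqref{lnTc} via the change of variables \eqref{u-w} and the decomposition \eqref{Integral:q1}, conservation by density from the Schwartz case, and the symmetry inherited from \eqref{T:symm}. The gap is in the first bullet, which you yourself flag as the "main obstacle" but do not resolve. Your plan — prove convergence of the series \eqref{SolveJost1} where $\Im z$ is large enough that the smallness \eqref{small:cC0} holds, then "analytically continue down to the rest of $\cR$" — does not produce a holomorphic function on all of $\cR$: analytic continuation presupposes that an extension exists, and the object you are continuing is \emph{defined} as $e^{\Phi}\lim_{x\to\infty}w^1(x)$ for a solution $w$ of \eqref{RenormalLax}, so you must actually solve that ODE at every $(\lambda,z)\in\cR$, including points near the cut $\Ic$ where $\tau=2\Im z$ is of order one and, for large data, no global-in-$x$ perturbative series converges. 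Invoking "uniqueness of the $L^\infty$ solution and the uniform-on-compacts estimates of Lemmas \ref{Lem:cC0} and \ref{Lem:wn}" does not help, because Lemma \ref{Lem:wn} requires precisely the smallness that fails there. The density step has the same circularity: to pass from $q-1\in\cS$ to $q\in X^s$ you need continuity of $q\mapsto w$ on $X^s$ at the given $(\lambda,z)$, which is what is at stake.

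The paper's resolution is a \emph{spatial} decomposition rather than a spectral one: for fixed $(\lambda,z)\in\cR$ and $q\in X^s$, choose $a_0<a_1$ so that the smallness condition \eqref{small:cC0,z} holds for $q|_{(-\infty,a_0]\cup[a_1,\infty)}$ — this is always possible because the tails of the $\ell^2_k$ sum defining $\|\bq\|_{l^2_\tau DU^2}$ are small. One then solves \eqref{RenormalLax} perturbatively on $(-\infty,a_0]$, transfers via \eqref{u-w} to the original Lax equation \eqref{Lax} on the compact interval $[a_0,a_1]$ (solvable by ordinary ODE theory since $q\in H^s([a_0,a_1])$, $s>\tfrac12$), and solves the renormalised equation again on $[a_1,\infty)$ with a corrected ansatz for the second component. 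Holomorphy in $\lambda$ and analyticity in $q$ then come for free on each of the three pieces, with the same $a_0,a_1$ serving a whole metric ball $B^s_r(q)$. Without this (or an equivalent) device, your construction only defines $\Tc^{-1}$ on a $q$-dependent neighbourhood of $\lambda=\infty$, which is not enough for the proposition.
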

\begin{proof}
\noindent\textbf{Step 1. Resolution of \eqref{RenormalLax}.}\\
If $q\in X^s$, $s>\frac12$ and there are  the points $(\lambda,z)\in\cR$   such that $\tau=2\Im z\geq 2$ and the following  smallness condition holds (with a possibly larger $\cC_0$ than the ones in Lemmas \ref{Lem:cC0} and \ref{Lem:wn})
\begin{equation}\label{small:cC0,z}
\cC_0\bigl(\frac{|\Re z|+\tau}{\tau}\bigr)^{\frac 12}  \|\bq\|_{l^2_\tau DU^2}
 \leq \frac12,
\end{equation}
 we have by Lemmas \ref{Lem:cC0} and \ref{Lem:wn}  that
$$
\|w_n\|_{L^\infty}\leq \frac{|\Re z|+\tau}{\tau} \bigl( \cC_0 \bigl(\frac{|\Re z|+\tau}{\tau}\bigr)^{\frac12} \|\bq\|_{l^2_\tau DU^2}\bigr)^n.
$$
Hence \eqref{RenormalLax} (or equivalently \eqref{SolveJost1}) has a unique solution $w=\sum_{n\geq 0}w_n\in L^\infty$, depending analytically on $\bq\in l^2_\tau DU^2$.

For general $q\in X^s$, $s>\frac12$ with $E^{s}(q)<\infty$,   for any fixed point $(\lambda,z)\in\cR$,   we can take two points $a_0, a_1\in\R$ such that the smallness condition \eqref{small:cC0,z} holds for $q|_{(-\infty,a_0]\cup[a_1,\infty)}$, by virtue of the embedding $H^{s-1}\hookrightarrow l^2_1DU^2$, $s>\frac12$.
We solve  \eqref{RenormalLax} as follows:
\begin{itemize}
\item The above analysis implies that we can solve  \eqref{RenormalLax}  until the point $a_0$: $w(a_0)=b_0$;

\item Recalling the change of variables $u\mapsto w$ in \eqref{u-w} which renormalises the Lax equation \eqref{Lax} to \eqref{ODE}, we do the  change of variables $w\mapsto \underline{u}$  to solve the Lax equation \eqref{Lax} with the following initial data at $a_0$
$$\underline{u}(a_0)=e^{iz(a_1-a_0) +\int^{a_1}_{a_0} q_1 \dx}
\frac{1}{|q(a_0)|^2-\zeta^2}
\left(\begin{matrix}-i\zeta&q(a_0)\\ \bar q(a_0)&i\zeta\end{matrix}\right)b_0$$  
  until the point $a_1$: $\underline{u}(a_1)$. This is possible since $q\in H^s([a_0, a_1])$, $s>\frac12$;
  
  \item We  finally  solve again  \eqref{RenormalLax} with the initial data $b_1=\left(\begin{matrix}-i\zeta&q(a_1)\\ \bar q(a_1)&i\zeta\end{matrix}\right)\underline{u}(a_1)$ on the semiline $[a_1,\infty)$.
More precisely, we take $\tilde w=\begin{pmatrix}0\\ e^{2iz(x-a_1)}b_1^2\end{pmatrix}\Big|_{[a_1,\infty)}$ such that $\dot w=w-\tilde w$ satisfies
  \begin{align*}
  \dot w_x=\left(\begin{matrix} 0 &q_2\\q_3&q_4+2iz \end{matrix}\right)\dot w
  +b_1^2 e^{2iz(x-a_1)} 
  \begin{pmatrix} q_2|_{[a_1,\infty)} \\ q_4|_{[a_1,\infty)}\end{pmatrix},
  \quad \dot w|_{a_1}=\begin{pmatrix} b_1^1\\0\end{pmatrix}.
  \end{align*}
    Then under the smallness condition \eqref{small:cC0,z} for $q|_{[a_1,\infty)}$, we follow exactly Lemma \ref{Lem:wn}
    \footnote{
  We can solve the ODE for $\dot w$ by the following iterative procedure:
  \begin{align*}
  \dot w=\sum_{n=0}^\infty \dot w_n,
  \quad
   \dot w_{n+1}=\begin{pmatrix}\int^t_{a_1} q_2 \dot w_n^2\dx
  \\
  \int^t_{a_1} e^{\varphi(t)-\varphi(x)}q_3 \dot w_n^1 \dx\end{pmatrix},
  \quad 
  \dot w_0=\begin{pmatrix} b_1^1+b_1^2\int^t_{a_1}e^{2iz(x-a_1)}q_2\dx
  \\
  b_1^2\int^t_{a_1} e^{\varphi(t)-\varphi(x)+2iz(x-a_1)} q_4\dx\end{pmatrix}.
  \end{align*}
Although $\dot w_0^2\neq 0$, there is an exponential decay in the integrand and the same estimates as in Lemma \ref{Lem:wn} imply the well-definedness of $\dot w_n$ and   $\dot w$. For example, we can  control straightforward
\begin{align*}
&\dot w_0=\begin{pmatrix} b_1^1+b_1^2\int_{a_1<x_1<t}e^{2iz(x_1-a_1)}q_2 dx_1
  \\
  b_1^2\int_{a_1<x_1<t} e^{2iz(t-a_1)+\int^t_{x_1}q_4} q_4 dx_1\end{pmatrix},
  \\
  &
  \dot w_1=\begin{pmatrix} b_1^2\int_{a_1<x_1<y_1<t} e^{2iz(y_1-a_1)+\int^{y_1}_{x_1}q_4} q_2(y_1)q_4(x_1) dx_1  dy_1
  \\
 b_1^1\int_{a_1<x_1<t}e^{\varphi(t)-\varphi(x_1)}q_3
 +b_1^2\int_{a_1<x_1<y_1<t}e^{\varphi(t)-\varphi(y_1)+2iz(x_1-a_1)}q_2(x_1) q_3(y_1)dx_1dy_1 \end{pmatrix}
\end{align*}
in terms of $\|\bq\|_{l^2_\tau DU^2}$.
}   to derive  the unique solution  $\dot w$.
 Hence the solution $w=\dot w+\tilde w\in L^\infty$ with $w^1\in V^2$ exists. 
  \end{itemize}
  
  \smallbreak
  \noindent\textbf{Step 2. Well-definedness of $\Tc^{-1}$.}\\
  When $q\in X^s$, $s>\frac12$, then $\Phi<\infty$ by Lemma \ref{Lem:cC0}.
  We can define  the renormalised transmission coefficient $\Tc^{-1}(\lambda)=e^{\Phi}\lim_{x\rightarrow\infty}w^1(x)$, which is holomorphic on the Riemann surface $(\lambda, z)\in\cR$.

   If $q-1\in \cS$, then by virtue of $e^{\int^\infty_{-\infty}q_1}T^{-1}(\lambda)=\lim_{x\rightarrow\infty}w^1(x)$ in Subsection \ref{subsubs:T} and the equality \eqref{Integral:q1}: $\int^\infty_{-\infty}q_1=-\frac{i}{2z}\cM-\frac{i}{2z\zeta}\cP-\Phi$, we have the relation \eqref{lnTc}:  $\Tc^{-1}(\lambda)=e^{ -i\cM(2z)^{-1}-i \cP(2z\zeta)^{-1}
}T^{-1}(\lambda)$ and hence the asymptotic expansions \eqref{Expansion:Tc} and \eqref{Expansion:lnT} follow from Proposition \ref{Prop:Expan,T}.
The properties \eqref{Tc1}: $|\Tc^{-1}(\lambda)|\geq 1\hbox{ if }\lambda\in\Ic=(-\infty,-1]\cup[1,\infty)$, $\Tc^{-1}\rightarrow 1 \hbox{ as }|\lambda|\rightarrow \infty,$ and the symmetry $\Re {\Tc}^{-1}(\lambda)=\Re \Tc^{-1}(\bar\lambda)$
  for $(\lambda,z)=(i\sigma,\pm i\frac\tau2)\in \cR$ follow from the results in  Subsection \ref{subs:Jost}.

Fix $(\lambda,z)\in\cR$. 
Provided with the analytic structure of $X^s$ in Theorem \ref{thm:analytic} below, for any  neighbourhood $B_r^s(q)$ of $q$, we can choose $a_0, a_1$ (depending on $E^s(q), r$) such that the smallness condition \eqref{small:cC0,z} holds for $p|_{(-\infty, a_0]\cup [a_1,\infty)}$ for all $p\in B_r^s(q)$. 
Therefore the  corresponding solution $w_p$ for \eqref{ODE}  depends analytically on 
$\bigl( \mathbf{p}|_{(-\infty,a_0]}, \mathbf{p}|_{[a_0,a_1]}$, $\mathbf{p}|_{[a_1,\infty)}\bigr)\in 
 l^2_\tau DU^2 \times H^s([a_0,a_1])\times l^2_\tau DU^2$ in $B^s_r(q)$ and hence $\Tc^{-1}(\lambda;\cdot)$ depends analytically on $q\in X^s$.

\smallbreak
\noindent\textbf{Step 3. Conservation of $\Tc^{-1}$ by the Gross-Pitaevskii flow.}\\  
If initially  $q_0-1\in\cS$, then the Gross-Pitaevskii equation \eqref{GP} has a unique global-in-time solution  $q\in \cC(\R; Z^1)$ (see \eqref{Zhidkovk} for the definition of Zhidkov's space $Z^1$) by Zhidkov's well-posedness result.
By Faddeev-Takhtajan \cite{FT}, $(q-1)(t,\cdot)\in \cS$ and  $\cM, \cP, T^{-1}(\lambda)$ are all conserved by the Gross-Pitaevskii flow, 
and hence $\Tc^{-1}(\lambda)=e^{ -i\cM(2z)^{-1}-i \cP(2z\zeta)^{-1}
}T^{-1}(\lambda)$ is also conserved.

Now let $q\in \cC(I; X^s)$, $s>\frac12$ be the  solution    of  the Gross-Pitaevskii equation \eqref{GP} with the initial data $q_0\in X^s$ on the time interval $I$.
Then by the density result in Theorem \ref{thm:metric}, we take $\{q_{0,n}\}\subset 1+\cS$ such that $d^s(q_{0,n}, q_0)\rightarrow0$ as $n\rightarrow\infty$.
By the
 continuity of the  Gross-Pitaevskii flow in Theorem \ref{thm:lwp},  for all $t\in I$, the corresponding solutions $q_n, q$ satisfy $d^s(q_n(t), q(t))\rightarrow0$ and hence $\Tc^{-1}(q_n(t))\rightarrow\Tc^{-1}(q(t))$ as $n\rightarrow\infty$ by the analyticity  of $\Tc^{-1}(\lambda;\cdot)$ above.
The conservation of $\Tc^{-1}(q_n)$ along the Gross-Pitaevskii flow  implies the conservation of $\Tc^{-1}(q(t))$ on the existence time interval $I$. 
\end{proof}

\subsection{Superharmonic function $G$ on the upper half-plane}\label{subsubs:G}
If $q-1\in \cS$, then   $G(z)$ defined in \eqref{G} is  a well-defined nonnegative superharmonic function on the $z$-upper half-plane. Indeed, as 
$|\Tc^{-1}(\lambda)|\geq 1\hbox{ if }\lambda\in\Ic=(-\infty,-1]\cup[1,\infty)$,
the trace of $G(z)$ on the real line is non negative:
{\small\begin{align*} 
 \mu(\xi)
=\frac 12\sum_{\pm} (2\xi)^2 \ln \bigl|\Tc^{-1}\bigr|\bigl(\pm \sqrt{\xi^2+1}\bigr)\geq 0, \quad \xi\in\R.
 \end{align*} }
 On the other hand, since   the meromorphic function $\Tc$  has only  simple poles $\lambda_m\in (-1,1)$, we can take a small enough neighborhood $\cV_m$ of $\lambda_m$ such that 
$\Tc(\lambda)=  A_0(\lambda)+\frac{A_1(\lambda)}{\lambda-\lambda_m}$ on $\cV_m$, 
with $ A_1\neq 0, A_0$  holomorphic functions on $\cV$. 
For $\lambda_m\neq 0$, $\lambda\in\cV_m$ and correspondingly for 
$z\in \cU_m=\{z\in\cU: (\lambda,z)\in\cR, \, \lambda\in \cV_m\}$, we can write (noticing $\lambda^2-\lambda_m^2=z^2-z_m^2$, $z_m=i\sqrt{1-\lambda_m^2}\in i(0,1]$){\small\begin{align*}
 \ln \Tc (\lambda)+  \ln \Tc (-\lambda)
&= \ln \bigl(A_0(\lambda)\,(\lambda-\lambda_m)+A_1(\lambda)\bigr)  + \ln \frac{\lambda+\lambda_m}{z+z_m}
+ \ln  \frac{1}{z-z_m}+\ln \Tc(-\lambda).
\end{align*}} 
For $\lambda_m=0$, $z_m=i$, we can still write $\ln \Tc (\lambda)+  \ln \Tc (-\lambda)$ in $\cV_0$ as
{\small$$ 
 \ln \bigl(A_0(\lambda)\lambda+A_1(\lambda)\bigr) 
+ \ln \frac{1}{z+z_m}
+ \ln  \frac{1}{z-z_m}
+\ln \bigl(A_0(-\lambda)\,\lambda-A_1(-\lambda)\bigr).$$}
Hence  $-\Delta_zG=\Delta_z \Re \frac 12\sum_{\lambda=\pm \sqrt{1+z^2}}
\bigl( 4z^2 \ln \Tc(\lambda) \bigr)$ is a nonnegative measure \eqref{DeltaS} on $\cU$.
As $\Tc^{-1}\rightarrow 1$ as $|\lambda|\rightarrow \infty$, we derive $G\geq 0$ on the whole upper half plane by maximum principle.

Let $q\in X^s$, then by the density argument as in the last part of last subsection,  we deduce that   $G$ defined in \eqref{G} still satisfies $G\geq 0$, $-\Delta G\geq 0$ on the upper half plane.
Since the meromorphic function $\Tc(q)$ has  countably many simple poles 
$\{\lambda_m\}\subset \C\backslash \Ic$, the fact $-\Delta G=-\pi\sum(2z)^2\delta_{z=z_m}\geq 0$ implies $z_m\in i(0,1]$ and hence $\lambda_m\in (-1,1)$.
This completes the proof of \eqref{Tc:property} for general $q\in X^s$.
Theorem \ref{thm:Tc} follows from Proposition \ref{Prop:Tr}.


 \setcounter{equation}{0}
  \section{The energies} \label{sec:est} 
  
 In  this section we will formulate the  energies $\cE^{s}_{\tau}(q)$, $\tau\geq 2$ for $q\in X^s$, $s>\frac12$ in terms of the renormalised transmission coefficient $\Tc^{-1}(\lambda)$ defined  in   Theorem \ref{thm:Tc}:
 \begin{thm}\label{thm:Es}
 Let $q\in X^s$, $s>\frac12$.
 Let $\Tc^{-1}(\lambda)$ be the  renormalised transmission coefficient which is a holomorphic function on the Riemann surface $\cR\ni(\lambda,z)$ and has   countably many simple zeros $\{\lambda_m\}\subset(-1,1)$ given in Theorem \ref{thm:Tc}.
 Let  $G(z)=\frac12\sum_{\pm}\Re\bigl(4z^2\ln\Tc^{-1}(\pm\sqrt{z^2+1})\bigr)$ be the nonnegative superharmonic function  on the upper half plane $\cU$,  with  $-\Delta_zG=-\pi\sum_{m}(2z)^2\delta_{z=z_m}\geq 0$,
$z_m=i\sqrt{1-\lambda_m^2}\in i(0,1]$,  given by Theorem \ref{thm:Tc}.

Then for $N=[s-1]$, $G(i\tau/2)$  has the following finite expansion as $\tau\rightarrow\infty$ (recalling the notations $\cH^l$ in the asymptotic expansion \eqref{lnT:FT}):
{\small\begin{equation}\label{ExpanG}
G(\frac{i\tau}2)=\sum_{l=0}^{N} (-1)^l \cH^{2l+2}\tau^{-2l-1}+\cH^{>2N+2}(\frac{i\tau}2), 
 \quad \cH^{>2N+2}=o(\tau^{-2s+1}),
\end{equation}}
 such that the trace of $G$ on the real line  $\mu$ exists,  the measure $(1+\xi^2)^N\mu$ is finite
 and the following trace formula for $\cH^{2l+2}$, $0\leq l\leq N$ holds:
  \begin{equation}\label{cE2j}
 \cH^{2l+2} 
 = \frac{1}{\pi}\int_{\R}   \xi^{2l+2}\frac12\sum_{\pm}\ln|\Tc^{-1}(\pm\sqrt{\xi^2/4+1})| d\xi-\frac{1 }{2l+3}\sum_{m}   \Im (2z_m)^{2l+3}  .
 \end{equation}

We define a family of  energy functionals $(\cE^{s}_{\tau'})_{\tau'\geq 2}: X^s\mapsto [0,\infty)$    as follows:
 \begin{align}
  \cE^{s}_{\tau'} (q)
 = &-\frac{2}{\pi}\sin(\pi(s-1))
 \int_{\tau'}^\infty (\tau^2-\tau'^2)^{s-1}  
 \Bigl (G(i\frac \tau2) 
 - \sum_{l=0}^N(-1)^l  \cH^{2l+2} \tau^{-2l-1} \Bigr)  
\dtau\notag
\\
&+\sum_{l=0}^N \tau'^{2(s-1-l)}
 \begin{pmatrix}  s-1 \\l\end{pmatrix} \cH^{2l+2},
 \hbox{ with }N\leq s-1<N+1,\label{cElarge} 
 \end{align}  
 such that $\cE^{s}_{\tau'}$ is analytic in $q\in X^s$ and we have the following trace formula for $\cE^{s}_{\tau'}$:
\begin{equation}\label{cEtrace}\begin{split}
\cE^{s}_{\tau'}
&=\frac{1}{\pi}\int_{\R} (\xi^2+\tau'^2)^{s-1} d\mu_{G(\frac\cdot2)}
 -\sum_{m} \Im\int^{2z_m}_0 w^2 (w^2+\tau'^2)^{s-1} dw.
\end{split}\end{equation}
Then there exists a universal constant $C\geq2$ (depending only on $s$) such that
whenever $(q,\tau_0)\in X^s\times [C,\infty)$ satisfying  
  \begin{equation}\label{tau0}\begin{split}
\frac{1}{\tau_0}\|\bq\|_{l^2_{\tau_0}DU^2}< \frac{1}{2C},
\hbox{ with } \|\bq\|_{l^2_{\tau_0}DU^2}^2=  \||q|^2-1\|_{l^2_{\tau_0}DU^2}^2+\|q'\|_{l^2_{\tau_0}DU^2}^2,
 \end{split}\end{equation} 
  $\cE^{s}_{\tau_0}(q)$ is   equivalent to the square of the energy norm $(E^{s}_{\tau_0}(q))^2$ in the sense of \eqref{Equivalence}: 
\begin{equation}\label{Equiv}
|\cE^{s}_{\tau_0} -(E^{s}_{\tau_0})^2|
\leq \frac{C}{\tau_0}\|\bq\|_{l^2_{\tau_0}DU^2} (E^{s}_{\tau_0})^2. 
 \end{equation}  
 \end{thm}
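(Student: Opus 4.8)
The plan is to build $\cE^s_{\tau'}(q)$ out of the superharmonic function $G$ via a Riesz-type representation on the upper half-plane, extract the polynomial part of its asymptotic expansion at $i\infty$ to obtain the renormalised Hamiltonians $\cH^{2l+2}$ and the trace formula, and then quantify the equivalence with $(E^s_\tau)^2$ by a careful perturbative analysis of the expansion \eqref{Expansion:lnT} of $\ln\Tc^{-1}$ from Theorem \ref{thm:Tc}.

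First I would establish the asymptotic expansion \eqref{ExpanG} of $G(i\tau/2)$. For $q-1\in\cS$ this is immediate from the classical expansion \eqref{lnT:FT} combined with the relation $G(z)=\frac12\sum_\pm\Re(4z^2\ln\Tc^{-1}(\pm\sqrt{z^2+1}))$ and the definition $\Tc^{-1}=e^{-i\cM(2z)^{-1}-i\cP(2z\zeta)^{-1}}T^{-1}$: on the imaginary axis the mass/momentum terms contribute to the lowest-order (non-decaying) coefficients which are exactly cancelled by the renormalisation, so the surviving terms are $\sum_{l\ge0}(-1)^l\cH^{2l+2}\tau^{-2l-1}$. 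For general $q\in X^s$ I would use the density of $1+\cS$ in $X^s$ (Theorem \ref{thm:metric}) together with the analyticity and the uniform estimates on $\ln\Tc^{-1}$ coming from Lemmas \ref{Lem:cC0}, \ref{Lem:wn} and Proposition \ref{Prop:Tr}, so that the expansion passes to the limit and the remainder $\cH^{>2N+2}=o(\tau^{-2s+1})$ is controlled by $\|\bq\|_{l^2_\tau DU^2}$-type quantities. Then the trace formula \eqref{cE2j} for $\cH^{2l+2}$ follows from the Riesz representation of the nonnegative superharmonic function $G$ on $\cU$ in terms of its boundary trace $\mu$ and its Laplacian $-\Delta G=-\pi\sum_m(2z)^2\delta_{z=z_m}$ (both furnished by Theorem \ref{thm:Tc}); matching the two asymptotic descriptions at $i\infty$ identifies the integral of $\xi^{2l+2}$ against $\mu$ minus the eigenvalue contribution $\frac{1}{2l+3}\sum_m\Im(2z_m)^{2l+3}$ with $\cH^{2l+2}$, which also shows $(1+\xi^2)^N\mu$ is finite.

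Next I would verify the two formulae \eqref{cElarge}–\eqref{cEtrace} for $\cE^s_{\tau'}$ are consistent and define an analytic functional. The fractional-power weight $(\tau^2-\tau'^2)^{s-1}$ with the $\sin(\pi(s-1))$ prefactor is the standard device for writing $(\xi^2+\tau'^2)^{s-1}$ as a superposition over $\tau\ge\tau'$ of the kernels appearing in $G(i\tau/2)$; substituting the trace formula for $G$ into \eqref{cElarge}, interchanging integrals (justified by the finiteness of $(1+\xi^2)^N\mu$ and the $o(\tau^{-2s+1})$ remainder), and using the Beta-integral identity $\int_{\tau'}^\infty(\tau^2-\tau'^2)^{s-1}(\xi^2+\tau^2)^{-N-1}\,d\tau = c_{s,N}(\xi^2+\tau'^2)^{s-1-N}\cdot(\text{rational in }\xi)$ collapses everything to \eqref{cEtrace}; the finite polynomial correction $\sum_l\tau'^{2(s-1-l)}\binom{s-1}{l}\cH^{2l+2}$ is exactly what restores the subtracted terms. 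Analyticity in $q\in X^s$ is inherited from the analyticity of $\Tc^{-1}(\lambda;\cdot)$ (Proposition \ref{Prop:Tr}) since $\cE^s_{\tau'}$ is a fixed integral transform of $\ln\Tc^{-1}$. Conservation under the GP (and mKdV) flow is then automatic because $\Tc^{-1}(\lambda;q(t))$ is conserved, which is how Theorem \ref{thm} will invoke this result.

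The main obstacle is the equivalence estimate \eqref{Equiv}. Here I would work from the connected expansion $\ln\Tc^{-1}(\lambda)=\Phi(\lambda)+T_2(\lambda)+\sum_{j\ge2}\tilde T_{2j}(\lambda)$ of Theorem \ref{thm:Tc}, evaluated along $\lambda=\pm\sqrt{z^2+1}$ with $z=i\tau/2$ and its neighbours, and feed it into the defining integral \eqref{cElarge}. The quadratic part $\Phi+T_2$ must be shown to reproduce $(E^s_{\tau_0}(q))^2=\|\bq\|^2_{H^{s-1}_{\tau_0}}$ up to the fractional-integral bookkeeping — this is the computation flagged in ``the Appendix'' where the quadratic term of $\ln\Tc^{-1}$ on the imaginary axis is computed explicitly, and it is where the precise Fourier-multiplier structure $(\tau_0^2+\xi^2)^{s-1}$ emerges. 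The higher-order connected terms $\tilde T_{2j}$, $j\ge2$, must be estimated by at least three powers of $\bq$ in $l^2_{\tau_0}DU^2$: using the operator bounds for $S$, $S_1$ from Lemma \ref{Lem:wn} (which give $\|\tilde T_{2j}\|\lesssim (C\tfrac{|\Re z|+\tau}{\tau})^{j}(\|q_2\|_{l^2_\tau DU^2}\|q_3\|_{l^2_\tau DU^2})^{j}$), together with Lemma \ref{Lem:cC0} bounding $q_2,q_3$ by $\cC_0\|\bq\|_{l^2_\tau DU^2}$, one sums the geometric series under the smallness hypothesis \eqref{tau0} and integrates in $\tau\ge\tau_0$ against $(\tau^2-\tau_0^2)^{s-1}$. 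The delicate points are: (a) keeping the $\tau$-dependence uniform so that the final bound has exactly the form $\frac{C}{\tau_0}\|\bq\|_{l^2_{\tau_0}DU^2}(E^s_{\tau_0})^2$ rather than a worse power of $\tau_0$, which forces one to track how $E^s_\tau$ and $\|\bq\|_{l^2_\tau DU^2}$ rescale (cf. the scaling \eqref{ftau} and Lemma \ref{lem:ctau}); and (b) handling the branch $\lambda=\pm\sqrt{z^2+1}$ along the whole ray $z\in i[\tau_0/2,\infty)$ including the passage through $z=i$ where $\lambda=0$, which is why $\tau_0\ge C\ge2$ is imposed — this keeps $\lambda$ away from the cut $\Ic$ and the eigenvalue region, so that the holomorphic expansion is valid and the zeros $\lambda_m$ only enter through the explicitly-subtracted sum $\sum_m\Im\int_0^{2z_m}w^2(w^2+\tau_0^2)^{s-1}dw$, which is itself an $O(1)$ quantity absorbed into the lower-order terms.
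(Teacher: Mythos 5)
Your overall architecture matches the paper's: represent the nonnegative superharmonic function $G$ by its boundary trace $\mu$ and the point masses at $2z_m$, match the two asymptotic descriptions at $+i\infty$ to get \eqref{cE2j} and the finiteness of $(1+\xi^2)^N\mu$, use the fractional integral in $\tau$ to convert the Poisson kernel into the weight $(\xi^2+\tau'^2)^{s-1}$, identify the quadratic part of $\ln\Tc^{-1}$ on the imaginary axis with the harmonic extension of $\pi(|\widehat{|q|^2-1}|^2+|\widehat{q'}|^2)$ (the Appendix computation), and treat the connected terms as errors under the smallness hypothesis \eqref{tau0}. One correction of emphasis: for general $q\in X^s$ the paper obtains \eqref{ExpanG} directly from quantitative bounds on the connected expansion rather than by density from $1+\cS$; a density argument would in any case require exactly those uniform bounds to control the $o(\tau^{1-2s})$ remainder, so it is not a shortcut. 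Also note that $\Phi+T_2=\tilde T_2+\tilde T_3$ with $\tilde T_3$ cubic; only the genuinely quadratic piece $\tilde T_2$ reproduces $(E^s_{\tau_0})^2$, and $\tilde T_3$ must be carried as an error term.

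The genuine gap is in your treatment of the equivalence \eqref{Equiv} for large $s$. Your plan --- bound $|\tilde T_{2j}(i\sigma)|$ by $(C\|q_2\|_{l^2_\tau DU^2}\|q_3\|_{l^2_\tau DU^2})^{j}$ via Lemma \ref{Lem:wn}, sum the geometric series, and integrate against $(\tau^2-\tau_0^2)^{s-1}$ --- only closes for $s\in(\frac12,\frac32)$. The connected term $\tilde T_{2j}(i\sigma)$ decays like $\tau^{1-4j}$ and $\tilde T_3(i\sigma)$ like $\tau^{-5}$, so $\int_{\tau_0}^\infty(\tau^2-\tau_0^2)^{s-1}\tau^2|\tilde T_3(i\sigma)|\,\dtau$ already diverges for $s\geq 2$, and more importantly the integrand is not $o(\tau^{1-2s})$, so these ``low order'' contributions cannot simply be absorbed as small errors. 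The paper must instead Taylor-expand $\tilde T_3$ and each $\tilde T_{2j}$ with $2\leq j<s$ to order $[2s]$ in $1/\tau$ by iterated integration by parts (the operators $D_\pm=(q_4\pm\partial_x)$ of Lemma \ref{lem:IP}, the symbol classes $O_M$ of \eqref{OM} and Lemma \ref{lem:LP}, and Proposition \ref{prop:larges}), recognize that the resulting polynomial coefficients $\cH^{2l+2}_f$ are precisely part of the full coefficients $\cH^{2l+2}$ that are subtracted inside the definition \eqref{cElarge} (so they cancel rather than being estimated), and only then bound the genuinely $o(\tau^{1-2s})$ remainders $\cH^{>k}_f$. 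Relatedly, the $(2s-[2s])^{-1}$ and $([s]+1-s)^{-1}$ singularities at (half-)integer $s$ must be compensated either by the $\sin(\pi(s-1))$ prefactor or by a high/low frequency splitting of $Q$. None of this machinery appears in your proposal, so as written the equivalence \eqref{Equiv} --- and hence the theorem --- is only established for $s<\frac32$.
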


\subsection{The framework}\label{subs:fw}
We are going to introduce the assumptions and the notations which will be used throughout this section.

\subsubsection{Assumption}\label{subss:small}
We restrict ourselves on the imaginary axis in this section:
\begin{equation}\label{ImaginAxis}\begin{split}
&(\lambda,z)=(i\sigma,i\tau/2)\in\cR,
\quad \tau\geq\tau_0\geq 2,\quad\sigma=\sqrt{\tau^2/4-1}\in [\tau/2-1, \tau/2),
\\
&\zeta=\lambda+z=i\omega,\quad  \omega=\sigma+\tau/2\in[\tau/2,\tau).
\end{split}\end{equation}
Here $\tau_0\geq2$ is a constant (to  be chosen sufficiently large later), such that  
  the following  assumption   holds:
\begin{equation}\label{smallq}
\bigl| |q|^2-1\bigr|\leq \frac{1}{64}{\tau_0}^2,
\quad q\in X^s,\,s>\frac12.
\end{equation}

\subsubsection{Functions $q_2, q_3, q_4$}\label{subss:nota}
We evaluate the functions $q_2, q_3, q_4$ defined in \eqref{q1234} on the imaginary axis to arrive at
\begin{equation*}\label{qIm}\begin{split}
 &q_2=-\frac{1}{\omega^{-2}|q|^2+1}(\frac{1}{\omega}q')
 + \frac1\omega\frac{q}{\omega^{-2}|q|^2+1} \bigl(\frac{1}{\omega}(|q|^2-1)\bigr),
 \\
& q_3= \frac{1}{\omega^{-2}|q|^2+1}(\frac{1}{\omega}\bar q')+\frac{1}{\omega}\frac{\bar q}{\omega^{-2}|q|^2+1}\bigl(\frac{1}{\omega}(|q|^2-1)\bigr),
\\
&q_4= \frac{ -2 }{\omega^{-2}|q|^2+1}
\bigl(\frac{1}{\omega}(|q|^2-1)\bigr)
+\Bigl( \frac{\frac1\omega q }{\omega^{-2}|q|^2+1}(\frac{1}{\omega}\bar q')
-\frac{\frac1\omega  \bar q  }{\omega^{-2}|q|^2+1}(\frac{1}{\omega} q')\Bigr).
\end{split}\end{equation*}
Notice that $q_2, q_3, q_4$ are all linear combinations of the components in
\begin{equation}\label{Q}
Q=\frac1\tau (|q|^2-1, q', \bar q'),
\end{equation}
up to  coefficients being some  polynomials of the following form  
\begin{equation}\label{P}
P=P\Bigl(\frac{1}{\omega^{-2}|q|^2+1},  \frac1\tau q, \,\frac1\tau \bar q, 
\,\frac{1}{\tau^2}(|q|^2-1)\Bigr),
\end{equation}
and for notational simplicity,  we denote $O$ to be the following set:
\begin{equation}\label{O}
O:=\Bigl\{ P\cdot\frac1\tau (|q|^2-1), \, P\cdot \frac1\tau q', \, P\cdot\frac1\tau \bar q'
\,\Big|\, P \hbox{ is any polynomial of form \eqref{P}}\Bigr\}.
\end{equation}

Under the  assumption \eqref{smallq}, 
 we are going to estimate $\|q_\kappa\|_{l^2_\tau DU^2}$   as follows (similar but more accurate  as the estimates in Lemma \ref{Lem:cC0}):
{\small\begin{equation}\label{cq}\begin{split}
&\|q_\kappa\|_{l^2_\tau DU^2}\lesssim C_\tau c_\tau,\quad \kappa=2,3,4,
\quad\hbox{ with }  C_\tau =1+\frac1\tau\|q'\|_{l^\infty_{\tau} DU^2},
\\
&  c_{\tau}(q)=\frac{1}{\tau}\|\bq\|_{l^2_\tau DU^2(\R)}
=\frac{1}{\tau}\bigl( \bigl\| |q|^2-1\bigr\|_{l^2_\tau DU^2}^2+\bigl\| \d_x q\bigr\|_{l^2_\tau DU^2}^2\bigr)^{\frac12}:= \|Q\|_{l^2_\tau DU^2}.
\end{split}\end{equation} }

\subsubsection{Asymptotic expansion  of $\ln\Tc^{-1}$ on the imaginary axis}\label{subss:SJ}
Recall the   aymptotic expansions of $\Tc^{-1}(\lambda)$ and $\ln\Tc^{-1}$ in Theorem \ref{thm:Tc}:
{\small\begin{align*}
&\Tc^{-1}(\lambda) 
 =e^{\Phi}\Bigl(1 +\sum_{j=1}^\infty T_{2j}\Bigr),
 \quad T_{2j}=\<XY>^j,
 \\
 & \ln \Tc^{-1}(\lambda) 
 =\Phi+T_2+\sum_{j=2}^\infty \tilde T_{2j},
 \, \Phi(\lambda):=-\frac{i}{2z}\int_{\R}\frac{(|q|^2-1)^2}{|q|^2-\zeta^2}\dx 
+\frac1{2z\zeta}\int_{\R} \frac{\bar q q'(|q|^2-1)}{(|q|^2-\zeta^2)} \dx,
\end{align*} }
where $\tilde T_{2j}$ is linear combination of \emph{connected} symbols $\<XBBY>_{2j}$  of degree $2j$.   
Recall the symbols in Subsection \ref{subs:notation}:  
{\small\begin{equation}\label{S}
\<XY>^j=\int_{x_1<y_1<\cdots<x_j<y_j}
\prod_{n=1}^j e^{\varphi(y_n)-\varphi(x_n)}q_3(x_n) q_2(y_n)
   \dx \dy,
\,\, \varphi(x)=-\tau x+\int^x_0 q_4, 
\end{equation}  
\begin{equation}\label{tildeS}\begin{split}  
&\<XBBY>_{2j}=\int_{t_1<\cdots<t_{2j}} 
\prod_{n=1}^{2j-1}e^{\delta_n(\varphi(t_{n+1})-\varphi(t_n))}
q_{\kappa_1}(t_1)\cdots q_{\kappa_{2j}}(t_{2j}) dt, 
\end{split}\end{equation}}
for some $\delta_n\in \{1,\cdots,j\}$ with $\delta_1=\delta_{2j-1}=1$ and
$\kappa_n\in \{2,3\}$ with $\kappa_1=3$, $\kappa_{2j}=2$. 
For notational simplicity, we also introduce the following symbol
 \begin{equation}\label{J}\begin{split} 
&\<XBY>= \int_{x<y} (e^{\varphi(y)-\varphi(x)}-e^{-\tau(y-x)}) q_2(y) q_3(x)\dx\dy.
\end{split}\end{equation} 

 We will rewrite  $(\Phi+T_2)$  in the asymptotic expansion  of $\ln\Tc^{-1}(\lambda)$ above   in Appendix \ref{appendix} as:
 $$\Phi+T_2=\tilde T_2+\tilde T_3,$$
 where $\tilde T_2, \tilde T_3$ identify the \emph{quadratic} and \emph{cubic} terms (in terms of  elements in the set $O$) in the expansion of $\ln\Tc^{-1}$ respectively.
 More precisely   we will prove in Appendix \ref{appendix}     that
\begin{lem}\label{Lem:IA}
The asymptotic expansion \eqref{Expansion:lnT} for the logarithm of the transmission coefficient $\ln\Tc^{-1}$ reads on the imaginary axis \eqref{ImaginAxis} as follows
\begin{equation}\label{Expansion:lnT,T2}
 \ln\Tc^{-1}(i\sigma)
 =\tilde T_2(i\sigma)+\tilde T_3(i\sigma)+\sum_{j\geq 2}\tilde T_{2j}(i\sigma), 
\end{equation} 
where $\tilde T_2$ identifies the quadratic part in the expansion of $\ln\Tc^{-1}(i\sigma)$:
{\small\begin{equation}\label{T2} \begin{split}
  \tilde T_2(i\sigma)
 =& \frac{-1}{\tau^2}\int_{x<y}e^{-\tau(y-x)}
\Bigl( (|q|^2-1)(y)(|q|^2-1)(x)+ q'(y)\bar q'(x)\Bigr) \dx\dy
\\
&  -i\frac{\tau+2\omega}{\tau^3\omega^2}\int_{\R}\Im(q'\bar q)(|q|^2-1)\dx 
\\
&+\frac{1}{\tau^3\omega} 
\int_{x<y}e^{-\tau(y-x)}\Bigl( q'(y) 
 \bar q'(x)- \bar q'(y)q'(x)\Bigr)\dx\dy,
\end{split}\end{equation}  }
   $\tilde T_3(i\sigma)$ identifies the cubic part and reads as  linear combination  of finite integrals of the following type
{\small\begin{equation}\label{ReT3}\begin{split}
&\int_{\R}  \bigl( \frac{|q|^2-1}{\tau^2}\bigr)^2\, h_1\dx,
\quad\<XBY>,
\quad \int_{x<y\hbox{ or }x>y}  e^{-\tau|y-x|} \bigl( \frac{|q|^2-1}{\tau^2}h_1\bigr)(y)h_2(x)\dx\dy,
\\
& 
 \int_{x<y}e^{-\tau(y-x)} h_1(y) \int^y_x \bigl(\frac{q'\hbox{ or }\bar q'}{\tau}\bigr)(m)\dm h_2(x)\dx\dy,
 \quad h_1, h_2 \in O,
\end{split}\end{equation}  }
and $\tilde T_{2j}$, $j\geq 2$ remains the same linear combination of integrals 
$\<XBBY>_{2j}$.
\end{lem}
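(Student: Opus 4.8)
The plan is to establish Lemma~\ref{Lem:IA} by carefully separating the low-degree terms from the expansion of $\ln\Tc^{-1}$ given in Theorem~\ref{thm:Tc}, and then re-expanding $\Phi+T_2$ on the imaginary axis into a quadratic part $\tilde T_2$ and a cubic part $\tilde T_3$ with the stated structure. Since the statement asserts precise formulas rather than estimates, the work is essentially an exact algebraic computation; the claim $\Phi+T_2 = \tilde T_2 + \tilde T_3$ with $\tilde T_2$ genuinely quadratic and $\tilde T_3$ genuinely cubic (modulo the higher order $\tilde T_{2j}$, $j\ge 2$, which are left untouched) is what needs to be verified term by term.

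First I would specialize all the relevant objects to the imaginary axis \eqref{ImaginAxis}: write $z=i\tau/2$, $\zeta=i\omega$, $\lambda=i\sigma$ with $\sigma^2=\tau^2/4-1$, $\omega=\sigma+\tau/2$, so that $2z=i\tau$, $2z\zeta=-\tau\omega$, and $|q|^2-\zeta^2 = |q|^2+\omega^2 = \omega^2(1+\omega^{-2}|q|^2)$. Substituting into $\Phi$ from \eqref{Phi} gives
\[
\Phi(i\sigma) = \frac{1}{\tau}\int_\R \frac{(|q|^2-1)^2}{|q|^2+\omega^2}\dx - \frac{1}{i\tau\omega}\int_\R \frac{\bar q q'(|q|^2-1)}{|q|^2+\omega^2}\dx,
\]
and then I would expand $\tfrac{1}{|q|^2+\omega^2}=\tfrac{1}{\omega^2}\bigl(1 - \omega^{-2}|q|^2 + \cdots\bigr)$; the leading contribution of the first integral is $\tfrac{1}{\tau\omega^2}\int (|q|^2-1)^2$, which is a quadratic term in $Q$, and everything else is cubic or higher in the elements of $O$ from \eqref{O}. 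For the second integral I would write $\bar q q' = \tfrac12 \partial_x(|q|^2) + i\,\Im(\bar q q')$ and $|q|^2-1 = (|q|^2-1)$, integrate by parts on the $\partial_x(|q|^2)$ piece, and track which pieces are quadratic (those involving $\Im(q'\bar q)(|q|^2-1)$ with $\omega^{-2}$ coefficients) versus cubic. Next I would compute $T_2 = \<XY> = \int_{x<y} e^{\varphi(y)-\varphi(x)} q_3(x) q_2(y)\dx\dy$, split the exponential as $e^{\varphi(y)-\varphi(x)} = e^{-\tau(y-x)} + \bigl(e^{\varphi(y)-\varphi(x)}-e^{-\tau(y-x)}\bigr)$ — the second bracket being exactly the $\<XBY>$ symbol of \eqref{J}, which is cubic since $\varphi+\tau x = \int_0^x q_4$ involves one extra factor from $O$ — and then substitute the explicit imaginary-axis formulas for $q_2,q_3$ from Subsection~\ref{subss:nota}. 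Collecting the $e^{-\tau(y-x)}$ piece: the product $q_3(x)q_2(y)$ has a ``principal'' bilinear part $-\tfrac{1}{\tau^2}(q'(y)\bar q'(x))$ plus cross terms of the form $\tfrac{1}{\tau^3\omega}$ times products of $q'$ and $(|q|^2-1)$, plus genuinely cubic remainders carrying the factors $\tfrac{1}{\omega^{-2}|q|^2+1}-1$.

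The bookkeeping step is to add $\Phi(i\sigma)+T_2(i\sigma)$ and observe the cancellations and recombinations that produce exactly the three displayed lines of \eqref{T2}: the $-\tfrac{1}{\tau^2}\int_{x<y}e^{-\tau(y-x)}\bigl((|q|^2-1)(y)(|q|^2-1)(x)+q'(y)\bar q'(x)\bigr)$ term must come from combining the $e^{-\tau(y-x)}$ part of $T_2$ with the leading $\tfrac{1}{\tau\omega^2}\int(|q|^2-1)^2$ from $\Phi$ after recognizing $\int(|q|^2-1)^2 = \int_{x<y}e^{-\tau(y-x)}\cdot(\cdots)$ — actually here I would use the identity $\tfrac{1}{\tau}\int f^2 = 2\int_{x<y}e^{-\tau(y-x)}f(x)f'(y)\dx\dy + \tfrac{1}{\tau}(\text{boundary})$ or more cleanly note that for the purely quadratic piece one can write $\tfrac{1}{\tau\omega^2}\int (|q|^2-1)^2$ directly; the precise reconciliation of the coefficient $\tfrac{1}{\tau^2}$ versus $\tfrac{1}{\tau\omega^2}$ uses $\tfrac{1}{\omega^2} = \tfrac{1}{\tau^2}\cdot\tfrac{\tau^2}{\omega^2}$ and absorbs the discrepancy $\tfrac{1}{\omega^2}-\tfrac{1}{\tau^2}$ (which is $O(\tau^{-4})$ relative size) into higher-order or into the $\tilde T_3$ bucket. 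The middle line $-i\tfrac{\tau+2\omega}{\tau^3\omega^2}\int \Im(q'\bar q)(|q|^2-1)$ is assembled from the imaginary part of the second integral in $\Phi$ plus the antisymmetric cross terms in $q_3(x)q_2(y)$, and the last line $\tfrac{1}{\tau^3\omega}\int_{x<y}e^{-\tau(y-x)}(q'(y)\bar q'(x)-\bar q'(y)q'(x))$ comes from the $\tfrac1\omega$-weighted cross terms in $q_2 q_3$. Finally I would verify that every term not placed into $\tilde T_2$ is cubic in elements of $O$ — i.e. it has at least three factors drawn from $\{\tfrac1\tau(|q|^2-1), \tfrac1\tau q', \tfrac1\tau\bar q'\}$ — and that each such cubic term matches one of the four templates in \eqref{ReT3}: the $\int(\tfrac{|q|^2-1}{\tau^2})^2 h_1$ type from the $\omega^{-2}|q|^2$ correction in $\Phi$'s first integral, the $\<XBY>$ type from the exponential splitting, the $\int e^{-\tau|y-x|}(\tfrac{|q|^2-1}{\tau^2}h_1)(y)h_2(x)$ type from the $\tfrac{1}{\omega^{-2}|q|^2+1}-1$ corrections inside $q_2,q_3$, and the $\int e^{-\tau(y-x)} h_1(y)\int_x^y(\tfrac{q'}{\tau})(m)\dm\, h_2(x)$ type from expanding $\varphi-(-\tau x) = \int_0^x q_4$ to first order inside $\<XBY>$; the $\tilde T_{2j}$, $j\ge2$, are copied verbatim from \eqref{Expansion:lnT}.

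The main obstacle I anticipate is the coefficient bookkeeping: several distinct sources contribute to each displayed term, the various ``$\tfrac{1}{\omega}$ vs $\tfrac{1}{\tau}$'' and ``$\tfrac{\tau+2\omega}{\cdots}$'' factors are delicate, and one must be scrupulous about integration by parts (which moves a derivative between the two arguments and generates boundary terms that vanish because $|q|^2-1, q' \to 0$ in a suitable averaged sense for $q\in X^s$, $s>\tfrac12$) and about the convention $\varphi(x)=-\tau x+\int_0^x q_4$ with $q_4\in O$. A secondary subtlety is confirming that the formally-cubic remainder from $\Phi$ (where the geometric series in $\omega^{-2}|q|^2$ is infinite) is legitimately controlled and fits the template \eqref{ReT3} with $h_1$ ranging over the polynomials $P$ of \eqref{P}; since Lemma~\ref{Lem:cC0} and the assumption \eqref{smallq} guarantee $\omega^{-2}|q|^2 \le \tfrac{1}{64} + \omega^{-2}$ is small, the series converges absolutely and each tail term indeed has the claimed multilinear structure, so no analytic difficulty arises beyond organizing the algebra cleanly.
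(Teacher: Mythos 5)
Your overall strategy---specialize to the imaginary axis, split $e^{\varphi(y)-\varphi(x)}$ into $e^{-\tau(y-x)}$ plus a remainder (the cubic symbol of \eqref{J}), expand the denominators $|q|^2-\zeta^2$, use $q(y)-q(x)=\int_x^y q'(m)\dm$, integrate by parts, and sort by homogeneity in $(|q|^2-1,q',\bar q')$---is the same as the paper's, and most of your cubic bookkeeping is sound. But two steps as written would not close. First, expanding $\frac{1}{|q|^2+\omega^2}$ as a geometric series in $\omega^{-2}|q|^2$ does not respect the degree grading: $|q|^2$ is not one of the degree-raising factors $\frac1\tau(|q|^2-1)$, $\frac1\tau q'$, $\frac1\tau\bar q'$, so each tail term $\omega^{-2k}|q|^{2k}$, $k\ge1$, still contains a purely constant piece (write $|q|^2=1+(|q|^2-1)$) contributing to the \emph{quadratic} part; your claim that ``everything else is cubic or higher'' is false term by term. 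Resumming those constant pieces amounts to rederiving the exact one-step identity \eqref{RelationZeta}, $\frac{1}{|q|^2-\zeta^2}=\frac{1}{1-\zeta^2}-\frac{|q|^2-1}{(1-\zeta^2)(|q|^2-\zeta^2)}$, which is what the paper uses: the first summand is a pure constant and the second carries one explicit extra factor of $|q|^2-1$, with the leftover denominator absorbed into the admissible coefficient $P$ of \eqref{P}. No infinite series is needed.

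Second, and more seriously, the proposal to ``absorb the discrepancy $\frac1{\omega^2}-\frac1{\tau^2}$ \dots\ into higher-order or into the $\tilde T_3$ bucket'' is not a legitimate operation. The lemma asserts an exact decomposition by homogeneity: $\tilde T_2$ is exactly the bilinear part and $\tilde T_3$ exactly the trilinear part, so a constant (in $q$) multiple of a quadratic integral cannot be hidden in $\tilde T_3$ no matter how small that constant is in $\tau$. In the actual computation nothing of the sort occurs: the local quadratic integral $\frac{i}{4z^2\zeta}\int_{\R}(|q|^2-1)^2\dx$ produced by $\Phi$ cancels \emph{exactly} against the boundary term generated when one integrates by parts in the bilinear convolution (using $2\Re(q'\bar q)=(|q|^2-1)'$), and the two surviving convolution terms combine via $\frac{1}{2z\zeta}+\frac{1}{4z^2\zeta^2}=\frac{1}{4z^2}$ --- equivalently $\zeta^2=1+2z\zeta$, i.e.\ $\omega^2+1=\tau\omega$ on the imaginary axis --- to yield the single coefficient $-\frac{1}{\tau^2}$ in the first line of \eqref{T2}. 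Your sketch does not identify this cancellation, and without it (and with the sign slips in your specialization of $\Phi$: on the imaginary axis $-\frac{i}{2z}=-\frac1\tau$ and $\frac{1}{2z\zeta}=-\frac{1}{\tau\omega}$) the coefficients of $\tilde T_2$ cannot be recovered. A minor further point: the fourth template of \eqref{ReT3} arises from the substitution $q(y)=q(x)+\int_x^y q'$ in the cross terms of $q_2(y)q_3(x)$, not from expanding the exponential correction, which is kept intact as the second template.
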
 
\begin{rmk}\label{rmk:a4}
If $q-1\in\cS$, then by integration by parts
$\int_{x<y}e^{-\tau(y-x)}f(y)g(x)\dx\dy
=\frac1\tau\int_{\R} fg-\frac1\tau\int_{x<y}e^{-\tau(y-x)} f(y)g'(x)\dx\dy$ 
 we can expand $\tilde T_2(i\sigma)$ as
 {\small
\begin{align*}
\tilde T_2(i\sigma)=-\frac1{\tau^3}\int_{\R}\bigl((|q|^2-1)^2+|q'|^2 \bigr)\dx
+\frac{1}{\tau^4}\int_{\R}\bigl(q'\bar q''-3i\Im(q'\bar q)(|q|^2-1)\bigr) \dx
+O(\frac1{\tau^{5}}),
\end{align*}}
while $\tilde T_3(i\sigma)=O(\frac1{\tau^5})$, $T_{2j}(i\sigma)=O(\frac{1}{\tau^{3j}})$, $\tilde T_{2j}(i\sigma)=O(\frac{1}{\tau^{4j-1}})$, 
as $\tau\rightarrow\infty$.
Recalling $\ln T^{-1}(\lambda)=i\cM(2z)^{-1}+i\cP(2z\zeta)^{-1}+\ln\Tc^{-1}(\lambda)$, we derive the finite expansion until the fourth-order for $\ln T^{-1}(i\sigma)=\frac1\tau\cM-i\frac{1}{\tau^2}\cP-\frac{1}{\tau^3}\cH^2+i\frac{1}{\tau^4}\cH^3+O(\frac{1}{\tau^5})$, as $\tau\rightarrow\infty$, which can be  compared with \eqref{lnT:FT}.
\end{rmk}

\subsection{Trace formula and the organisation of the section}
\label{subs:super}
 In this section we will recall the trace formula (from e.g.  \cite{ABR,KT})   for   the nonnegative superharmonic functions $G$ on the upper half-plane $\cU$ given in Theorem \ref{thm:Tc}.
As a consequence we derive the formulation of the energy norm $E^{s}_{\tau_0}(q)$ in terms of the quadratic term $\tilde T_2(i\sigma)$, as well as the equivalence relation \eqref{Equiv} between $E^{s}_{\tau_0}(q)$ and the energy  $\cE^{s}_{\tau_0}(q)$ (defined in \eqref{cElarge}).

\subsubsection{Trace formula}
Recall  the superharmonic function 
$$G(z)=\frac12\sum_{\pm}\Re\bigl(4z^2\ln\Tc^{-1}(\pm\sqrt{z^2+1})\bigr)$$
defined on the upper half-plane $\cU$ in Theorem \ref{thm:Tc}, with the nonnegative Radon measure $\mu$ as the trace of $G$ on the real line $\R$ and the nonnegative Radon measure $\nu=-\Delta_z G=-\pi\sum_m (2z)^2\delta_{z=z_m}$ on the upper half-plane $\cU$, $z_m\in i(0,1]$. We define
\[ \Xi_s(z)=\Im \int^z_0 w^2(w^2+\tau_0)^s dw  \]
for $z$ in the upper half plane.

\begin{lem}\label{prop:G} The followings hold true:
\begin{itemize}
\item Representation  of $G$ through  $\mu,\nu$.\\ 
The function $G$ can be represented in terms  of the Poisson kernel and the fundamental solution of the Laplace equation as follows:
\begin{equation}\label{SupHarm}
G(z)=\frac{1}{\pi}\int_{\R} \frac{\Im z}{|z-\xi|^2} d\mu(\xi)
+\frac{1}{2\pi}\int_{\cU} \ln\Bigl| \frac{z-\bar \zeta}{z-\zeta} \Bigr| d\nu(\zeta).
\end{equation}

\item Expansion of $G$ at $+i\infty$.\\ 
  If  two measures $(1+|\xi|^2)^N\mu$, $\Im z(1+|z|^2)^N\nu$ 
    are finite, then  we have the following precise expansion of $G$ at $+i\infty$:
\begin{equation}\label{ExpanG}
G(\frac{i\tau}2)=\sum_{l=0}^N(-1)^l \cH^{2l+2} \tau^{-2l-1}
+ \cH^{>2N+2},
\,\, \cH^{> 2N+2}=o(\tau^{-2N-1}),
\end{equation} 
where $\cH^{2l+2}$ is given in \eqref{cE2j} and 
{\small\begin{align*} 
&\cH^{>2N+2}=(-1)^{N+1}
\Bigl( \frac{1}{\pi} \int_{\R}\frac{\xi^{2N+2}}{\tau^2+\xi^2} d\mu_{G(\frac\cdot2)} 
- \sum_m \Im \int^{2z_m}_0 \frac{w^{2N+4}}{\tau^2+w^2} dw  \Bigr)\tau^{-2N-1}.
\end{align*}}
\item Trace formula of $G$.\\
Let $N=[s-1]$. Then the following trace formula holds
{\small\begin{equation}\label{TracForm}
\begin{split}
&-\frac{2\sin(\pi (s-1))}{\pi}\int^{\infty}_{\tau_0} (\tau^2-\tau_0^2)^{s-1}
\Bigl( G(\frac{i\tau}2)-\sum_{l=0}^N (-1)^l \cH^{2l+2} \tau^{-2l-1}\Bigr) d\tau
\\
& 
+\sum_{l=0}^N \tau_0^{2(s-1-l)}\begin{pmatrix}  s-1 \\l\end{pmatrix} \cH^{2l+2}
 = \frac1\pi\int_{\R} (\xi^2+\tau_0^2)^{s-1}  d\mu_{G(\cdot/2)} -\sum_m \Xi_{s-1}(2z_m),
\end{split}
\end{equation}}
 whenever either side is finite. 
\end{itemize}
\end{lem}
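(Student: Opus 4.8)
The plan is to treat Lemma \ref{prop:G} as a statement purely about the nonnegative superharmonic function $G$ on the upper half-plane $\cU$, following the scheme of \cite{KT} (and \cite{ABR} for the potential-theoretic input), using as external facts only the three properties supplied by Theorem \ref{thm:Tc}: $G\ge 0$ on $\cU$; $-\Delta G=\nu$ is the atomic measure \eqref{DeltaS} supported on $\{z_m\}\subset i(0,1]$; and $G$ has a nonnegative boundary trace $\mu$ on $\R$. For the first bullet I would invoke the Riesz decomposition of a nonnegative superharmonic function on a half-plane: $G=G^{\nu}+h$, where $G^{\nu}(z)=\tfrac1{2\pi}\int_{\cU}\ln\bigl|\tfrac{z-\bar\zeta}{z-\zeta}\bigr|\,d\nu(\zeta)$ is the Green potential of $\nu$ with respect to $\cU$ (the kernel being the Green function of $-\Delta$ on $\cU$, nonnegative on $\cU\times\cU$, the integral being finite because $\nu$ is a finite measure, a consequence of $G$ being finite at one point), and $h\ge 0$ is the greatest harmonic minorant of $G$. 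A nonnegative harmonic function on $\cU$ has the Herglotz representation $h(z)=c\,\Im z+\tfrac1\pi\int_{\R}\tfrac{\Im z}{|z-\xi|^2}\,d\mu_0(\xi)$ with $c\ge 0$ and a nonnegative measure $\mu_0$; since $G^{\nu}(i\tau/2)\to 0$ and, by the asymptotic expansion \eqref{Expansion:lnT,T2} of $\ln\Tc^{-1}$ on the imaginary axis (Lemma \ref{Lem:IA}), also $G(i\tau/2)\to 0$, one gets $c\,\tau/2\le h(i\tau/2)\le G(i\tau/2)\to 0$, hence $c=0$. Finally, since $|z-\bar\zeta|=|z-\zeta|$ for $z\in\R$, $\zeta\in\cU$, the potential $G^{\nu}$ has vanishing boundary trace, so $\mu_0$ is exactly the trace $\mu$ of $G$; this is \eqref{SupHarm}.

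For the second bullet I would substitute \eqref{SupHarm} at $z=i\tau/2$ and Taylor-expand both kernels in powers of $\tau^{-1}$ -- the Poisson kernel $\tfrac{\tau/2}{\xi^2+\tau^2/4}$, and $\ln|\tfrac{i\tau/2-\bar z_m}{i\tau/2-z_m}|$ for $\zeta=z_m$. Collecting the coefficient of $\tau^{-2l-1}$, inserting the explicit form \eqref{DeltaS} of $\nu$, and identifying the resulting quantities with the Faddeev--Takhtajan coefficients of \eqref{lnT:FT}, produces exactly \eqref{cE2j} for $0\le l\le N$ and hence the stated expansion of $G(i\tau/2)$. The tails after $N+1$ terms are, respectively, a constant multiple of $\tau^{-2N-1}\int_{\R}\tfrac{\xi^{2N+2}}{\tau^2+4\xi^2}\,d\mu(\xi)$ and the analogous $\nu$-integral; each is bounded by a constant multiple of $\tau^{-2N-1}$ times $\int_{\R}(1+\xi^2)^N\,d\mu$, which is finite by the hypothesis that $(1+|\xi|^2)^N\mu$ and $\Im z(1+|z|^2)^N\nu$ are finite, and tends to $0$ after the factor $\tau^{-2N-1}$ is removed, by dominated convergence; this gives $\cH^{>2N+2}=o(\tau^{-2N-1})$ together with the stated closed form.

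For the third bullet I would apply the renormalised transform $f\mapsto-\tfrac{2\sin(\pi(s-1))}{\pi}\int_{\tau_0}^{\infty}(\tau^2-\tau_0^2)^{s-1}\bigl(f(i\tau)-\text{[first }N+1\text{ terms of its }\tau^{-1}\text{-expansion]}\bigr)\,\dtau$ to the rescaled function $G(\cdot/2)$ (whose trace is $\mu_{G(\cdot/2)}$ and whose Riesz measure sits at $\{2z_m\}$), term by term in \eqref{SupHarm}. The subtraction of the first $N+1$ expansion terms makes the double integral absolutely convergent because $s-1<N+1$, so Fubini reduces the computation to one-variable integrals. For the Poisson part, the substitution $v=\tau^2$ turns the inner $\tau$-integral into a Mellin integral of the type $\int_0^\infty\tfrac{u^{s-1}}{u+c}\,du=\tfrac{\pi}{\sin(\pi s)}c^{s-1}$ (valid for $0<s<1$, with $\sin(\pi s)=-\sin(\pi(s-1))$ producing the prefactor) and its analytic continuation in $s$; the Hadamard finite part of this continuation yields $\tfrac1\pi\int(\xi^2+\tau_0^2)^{s-1}\,d\mu_{G(\cdot/2)}$ together with polynomial-in-$\tau_0^2$ corrections of the form $\sum_{l=0}^N\binom{s-1}{l}\tau_0^{2(s-1-l)}(\cdots)$, which are cancelled by the matching terms on the left of \eqref{TracForm}. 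The discrete part is handled identically: the corresponding $\tau$-integral of the Green function of the interior point $2z_m$ collapses, after the substitution $w=i\sqrt{\;\cdot\;}$, to $\Xi_{s-1}(2z_m)=\Im\int_0^{2z_m}w^2(w^2+\tau_0^2)^{s-1}\,dw$, with the same finite-part corrections absorbed by the $\binom{s-1}{l}$-sum.

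I expect the main obstacle to be the renormalisation bookkeeping in the third bullet: justifying the Fubini interchange simultaneously on the continuous and atomic parts after the subtraction, and correctly evaluating the Hadamard finite parts of the individually divergent integrals $\int_{\tau_0}^{\infty}(\tau^2-\tau_0^2)^{s-1}\tau^{-2l-1}\,\dtau$ by analytic continuation in $s$ -- with due care at the integer values $s-1=N$, where individual contributions have poles that must cancel -- and matching them against the combinatorial sum $\sum_{l=0}^{N}\binom{s-1}{l}\tau_0^{2(s-1-l)}\cH^{2l+2}$ that appears on the left of \eqref{TracForm} and in the definition \eqref{cElarge} of $\cE^s_{\tau'}$. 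By comparison, the first two bullets are routine once the finiteness of $\mu$ and $\nu$ -- precisely the hypotheses under which they are stated -- is in hand; the only input from the analytic part of the paper is the decay $G(i\tau/2)\to 0$, which enters through Lemma \ref{Lem:IA}.
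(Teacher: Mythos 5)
The paper offers no proof of this lemma (it is introduced with ``we will recall the trace formula (from e.g.\ \cite{ABR,KT})''), so your route -- Riesz decomposition, Herglotz representation, kernel expansion, Mellin/analytic continuation for the third bullet -- is the natural one and is essentially what lies behind the citation. However, there is a concrete gap at the hinge of your argument, namely the claim that Taylor-expanding \eqref{SupHarm} at $z=i\tau/2$ ``produces exactly \eqref{cE2j}''. It does not. Expanding the Green kernel $\frac{1}{2\pi}\ln\bigl|\frac{z-\bar\zeta}{z-\zeta}\bigr|$ at $z=i\tau/2$ against $\nu=-\pi\sum_m(2z_m)^2\delta_{z_m}$ gives the atomic contribution $\sum_m\sum_{k\ge0}\frac{(2|z_m|)^{2k+3}}{2k+1}\,\tau^{-2k-1}$, i.e.\ denominator $2l+1$ at order $\tau^{-2l-1}$, whereas \eqref{cE2j} requires $2l+3$. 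The black soliton $q=\tanh$ is an explicit counterexample: there $\mu=0$, the only zero is $\lambda_1=0$, $z_1=i$, so the right side of \eqref{SupHarm} equals $2\ln\bigl|\frac{z+i}{z-i}\bigr|$, which at $z=i\tau/2$ is $2\ln\frac{\tau+2}{\tau-2}=8\tau^{-1}+O(\tau^{-3})$; but $G(i\tau/2)=\cH^2\tau^{-1}+O(\tau^{-3})$ with $\cH^2=2\cE_{GL}(\tanh)=\frac83=-\frac13\Im(2i)^3$. So \eqref{SupHarm} cannot hold as written, and a proof that derives the second bullet from the first by expansion is internally inconsistent.

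The reason your Riesz--Herglotz argument for the first bullet breaks is that the weight $4z^2$ destroys the Herglotz normalisation: $G=\Re\bigl(4z^2\cdot\frac12\sum_\pm\ln\Tc^{-1}\bigr)$ picks up harmonic components that are invisible to both $\mu$ and $\nu$ but do not vanish. For instance the renormalisation term $-i\cM(2z)^{-1}$ contributes $\Re\bigl(4z^2\cdot(-i\cM)(2z)^{-1}\bigr)=2\cM\,\Im z$, a multiple of $\Im z$ whose sign is that of $\cM$ (negative for $\tanh$), and the $\arg$-part of $\ln\Tc^{-1}$ contributes multiples of $\Im(z^2)$; so $G$ is neither nonnegative on all of $\cU$ nor equal to $P[\mu]+\cG[\nu]$, and the step ``$c\,\Im z\le h\le G\to0$ hence $c=0$'' is not available. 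The representation has to be applied to the \emph{unweighted} function $\frac12\sum_\pm\ln|\Tc^{-1}|$, which is harmonic off the atoms $\pi\delta_{z_m}$ and whose boundary measure contains, besides $\frac12\sum_\pm\ln|\Tc^{-1}|\,d\xi\ge0$, a point mass at $\xi=0$ coming from the $\cM,\cP$ renormalisation; only afterwards does one multiply by $-\tau^2$ on the imaginary axis and expand. That multiplication shifts the summation index by one (turning $\frac{1}{2k+1}$ into $\frac{1}{2l+3}$ with $k=l+1$), and the resulting $O(\tau)$ terms cancel between the origin mass and the eigenvalue contributions -- this cancellation is itself a trace identity you would need to verify. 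Your Mellin-transform treatment of the third bullet is sound in outline, but it must be run on this corrected representation, with the Sobolev weight paired against the unweighted $\mu$ and Green masses (which is exactly how the factors $\xi^2$ in $d\mu_{G(\cdot/2)}$ and $w^2$ in $\Xi_{s-1}$ arise).
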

We have the following description of the energy  norm by the trace formula:
\begin{lem} \label{col:T2} 
Let $q\in X^s$, $s>\frac12$, then the energy norm defined in \eqref{Es}:
{\small\begin{align*}
(E^{s}_{\tau_0}(q) )^2
 = \bigl\| \bq\bigr\|_{H^{s-1}_{\tau_0}}^2 
 =\int_{\R}(\xi^2+\tau_0^2)^{s-1}\bigl(|\widehat{|q|^2-1}|^2+|\widehat{q'}|^2\bigr)(\xi)\dxi
\end{align*}} 
can be identified  as the integral of the quadratic term $\tilde T_2(i\sigma)$ in \eqref{T2} as 
\begin{equation}\label{Estau0} \begin{split}
(E^{s}_{\tau_0}(q))^2=&-\frac{2}{\pi}\sin(\pi (s-1))\int_{\tau_0}^\infty (\tau^2-\tau_0^2)^{s-1} \cH_2^{>2N+2}(i\sigma)\dtau 
 \\
& +\sum_{l=0}^{N} \begin{pmatrix}  s-1 \\l\end{pmatrix}
 \tau_0^{2(s-1-l)} \cH_2^{2l+2}, \quad N=[s-1],
 \end{split}\end{equation}   
 where $\cH_2^{2l+2}   =  \int_{\R}\xi^{2l} \bigl( \bigl|\widehat{|q|^2-1}\bigr|^2
 +|\widehat{q'}|^2 \bigr)  (\xi)\dxi\lesssim \tau_0^{-2(s-1-l)}(E^{s}_{\tau_0}(q))^2$ and
 $$ 
\cH_2^{>2N+2}(i\sigma)
 = \Re(4z^2\tilde T_2)(i\sigma)- \sum_{l=0}^{N} (-1) ^l
\cH_2^{2l+2}\,\tau^{-2l-1}=o(\tau^{1-2s}),
\quad\tau\rightarrow\infty.
 $$  
\end{lem}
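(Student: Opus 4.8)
The plan is to apply the general trace formula from Lemma \ref{prop:G} not to the full superharmonic function $G$ coming from $\ln\Tc^{-1}$, but to the "linearised" object built purely from the quadratic term $\tilde T_2(i\sigma)$ of the expansion in Lemma \ref{Lem:IA}. Concretely, one sets $G_2(z)=\Re\bigl(4z^2\tilde T_2(\sqrt{z^2+1})\bigr)$ (symmetrised over $\pm$), observes that $\tilde T_2$ is already an \emph{explicit} quadratic functional of $\bq=(|q|^2-1,q')$ — see the formula \eqref{T2} — and computes directly that $G_2$ is a harmonic function on the upper half plane (no point masses, since the quadratic truncation carries no eigenvalue contribution), with an explicit trace $\mu_2$ on the real axis. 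The key computational input is that the first, dominant piece of $\tilde T_2$,
\[
\frac{-1}{\tau^2}\int_{x<y}e^{-\tau(y-x)}\bigl((|q|^2-1)(y)(|q|^2-1)(x)+q'(y)\bar q'(x)\bigr)\dx\dy,
\]
is, after the substitution $\tau=2\Im z$ and taking $\Re(4z^2\cdot)$, exactly a Poisson-type integral against $|\widehat{|q|^2-1}|^2+|\widehat{q'}|^2$; this is the standard identity $\int_{x<y}e^{-\tau(y-x)}f(y)\overline{g(x)}\,dxdy$ reproducing $\widehat f\,\overline{\widehat g}/(\tau+i\xi)$ on the Fourier side, whose real part against $4z^2=-\tau^2$ gives the weight $\xi^2/(\tau^2+\xi^2)$ up to lower order. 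The remaining two terms of \eqref{T2} contribute only odd/lower-order pieces that are absorbed into the error.

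Having identified $\mu_2$, the next step is to read off the expansion of $G_2(i\tau/2)$ at $+i\infty$ by expanding $1/(\tau^2+\xi^2)=\sum_{l\le N}(-1)^l\xi^{2l}\tau^{-2l-2}+O(\tau^{-2N-4}\xi^{2N+2})$ inside the Poisson integral; this produces precisely the coefficients $\cH_2^{2l+2}=\int_\R\xi^{2l}(|\widehat{|q|^2-1}|^2+|\widehat{q'}|^2)\,d\xi$ together with the remainder $\cH_2^{>2N+2}(i\sigma)=o(\tau^{1-2s})$. The bound $\cH_2^{2l+2}\lesssim\tau_0^{-2(s-1-l)}(E^s_{\tau_0}(q))^2$ is immediate from $\xi^{2l}\le \tau_0^{-2(s-1-l)}(\xi^2+\tau_0^2)^{s-1}$ for $0\le l\le N=[s-1]$. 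Then one feeds $G_2$ and $\mu_2$ into the trace formula \eqref{TracForm} of Lemma \ref{prop:G} (there are no point masses $z_m$, so the $\sum_m\Xi_{s-1}(2z_m)$ term drops), and the right-hand side $\tfrac1\pi\int_\R(\xi^2+\tau_0^2)^{s-1}\,d\mu_2$ is, by Plancherel, exactly $\|\bq\|_{H^{s-1}_{\tau_0}}^2=(E^s_{\tau_0}(q))^2$. Rearranging gives \eqref{Estau0}.

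The main obstacle I expect is \emph{not} the algebra of the trace formula but the analytic justification that $\tilde T_2$ and the associated $G_2$ genuinely fall under the hypotheses of Lemma \ref{prop:G}: one must check that the measure $(1+\xi^2)^N\mu_2$ is finite (this needs $q\in X^s$, i.e. $\bq\in H^{s-1}$, via Plancherel, so it is exactly the finiteness of $(E^s_{\tau_0})^2$ that one wants to conclude — so one argues first for $q-1\in\mathcal S$ where everything converges absolutely, establishes the identity there, and then extends by density using the analyticity and continuity statements of Theorem \ref{thm:Tc} and Lemma \ref{lem:pq}), and that $G_2$ really is the harmonic/superharmonic representative with that trace, i.e. that no boundary mass at infinity or on the cut $\Ic$ is lost. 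A secondary technical point is handling the terms of \eqref{T2} involving $\int^y_x q'$ or $\Im(q'\bar q)(|q|^2-1)$: these are \emph{cubic or higher} in $\bq$ if one is not careful, but in \eqref{T2} they appear with the $\frac1{\tau^3}$, $\frac1{\tau^4}$ prefactors exhibited, so on the imaginary axis they decay one order faster and land in $\cH_2^{>2N+2}=o(\tau^{1-2s})$ rather than affecting the principal coefficients; one still has to bound them, for which the $l^2_\tau DU^2$ estimates of Lemma \ref{Lem:cC0} and Proposition \ref{prop:est} under the assumption \eqref{smallq} are the right tool.
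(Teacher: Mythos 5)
Your proposal follows essentially the same route as the paper: identify $\Re(4z^2\tilde T_2)(i\sigma)$ via Plancherel as the Poisson extension of $\pi\bigl(|\widehat{|q|^2-1}|^2+|\widehat{q'}|^2\bigr)$ evaluated at $i\tau$, then feed it into the trace formula \eqref{TracForm} with no point masses. One small correction: the second and third terms of \eqref{T2} need not be ``absorbed into the error'' --- on the imaginary axis they are purely imaginary after multiplication by $4z^2=-\tau^2$, so they drop out of $\Re(4z^2\tilde T_2)$ identically and \eqref{Estau0} is an exact identity rather than one up to lower-order remainders.
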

\begin{proof}  
The proof (with $\tau_0=1$)  in \cite{KT} works here  and we  give here the proof for readers' convenience.
We make use of the unitary Fourier transform and inverse Fourier transform
$$
\hat f(\xi)
=\frac{1}{\sqrt{2\pi}}\int_{\R} e^{-ix\xi} f(x)\dx,
\quad
f(x)=\frac{1}{\sqrt{2\pi}}\int_{\R} e^{ix\xi} \hat f(\xi) \,d\xi,
$$
to write for any function $f\in H^s$,
\begin{equation*}\label{QuaSym}\begin{split}
\Re \int_{x<y} e^{-\tau (y-x)} \bar f(x) f(y)\dx\dy
&=\frac{1}{2\pi}\Re\int_{\R^3}
\frac{1}{\tau-i\xi} e^{iy\eta-iy\xi}
\hat f(\eta)
\ov{\hat f(\xi)}  \dy\,d\xi\,d\eta 
\\
&= \Re\int_{\R}\frac{1}{\tau-i\xi}  \hat f(\xi) 
\ov{\hat f(\xi)} \,d\xi= \int_{\R}\frac{\tau}{\tau^2+\xi^2}  |\hat f(\xi) |^2
 \,d\xi, 
\end{split}\end{equation*}  
where the righthand side is the value at the point $i\tau$ of  the harmonic function   on the upper half plane with the trace 
$\pi|\hat f(\xi)|^2$ on the real axis.
Therefore noticing from the definition \eqref{T2}   that
{\small\begin{align*}
\Re(4z^2\tilde T_2)(i\sigma)
=\int_{x<y}e^{-\tau(y-x)}
\Bigl( (|q|^2-1)(y)(|q|^2-1)(x)+ q'(y)\bar q'(x)\Bigr) \dx\dy,
\end{align*}}
 let $f=|q|^2-1$ or $q'$ above, then \eqref{Estau0}  follows from   the trace formula \eqref{TracForm}.\end{proof}

 \subsubsection{Ideas and the organisation of the rest of this section}
Recall the notations on the imaginary axis \eqref{ImaginAxis} and the expansion \eqref{Expansion:lnT,T2}: 
$$\ln\Tc^{-1}(i\sigma)=\tilde T_2(i\sigma)+\tilde T_3(i\sigma)+\sum_{j\geq 2}\tilde T_{2j}(i\sigma).$$
Recall   the energy $\cE^{s}_{\tau_0}$ defined in \eqref{cElarge}   (noticing the symmetry $\Re\ln\Tc^{-1}(i\sigma)=\Re\ln\Tc^{-1}(-i\sigma)$ in \eqref{Tc:property} and hence $G(i\frac\tau2)=   \Re\bigl( 4z^2\ln\Tc^{-1}\bigr)(i\sigma)$):
 {\small\begin{align*}
  \cE^{s}_{\tau_0} 
 = &-\frac{2}{\pi}\sin(\pi(s-1))
 \int_{\tau_0}^\infty (\tau^2-\tau_0^2)^{s-1}  
 \Bigl (\Re\bigl( 4z^2\ln\Tc^{-1}\bigr)(i\sigma)
 - \sum_{l=0}^N(-1)^l  \cH^{2l+2} \tau^{-2l-1} \Bigr)  
\dtau\notag
\\
&+\sum_{l=0}^N \tau_0^{2(s-1-l)}
 \begin{pmatrix}  s-1 \\l\end{pmatrix} \cH^{2l+2}.
 \end{align*}   }
Recall the formulation of the energy norm  $(E^{s}_{\tau_0})^2$ in \eqref{Estau0}.

In order to show the equivalence \eqref{Equiv} between $\cE^s_{\tau_0}$  and  $(E^{s}_{\tau_0})^2$,
it suffices to estimate,  if $s\in(\frac12,\frac32)$ such that $[s-1]<1$, 
their difference $|\cE^{s}_{\tau_0}-(E^{s}_{\tau_0})^2|$ which concerns cubic or higher order terms   in the expansion of $\ln\Tc^{-1}(i\sigma)$:
{\small\begin{equation}\label{Difference:E}
|\cE^{s}_{\tau_0}-(E^{s}_{\tau_0})^2|
=\Bigl|\frac{2}{\pi}\sin(\pi(s-1))\int^\infty_{\tau_0}(\tau^2-\tau_0^2)^{s-1}  \Re\Bigl(4z^2\bigl(\tilde T_3 +\sum_{ j\geq 2 }\tilde T_{2j}\bigr) \Bigr)(i\sigma) \dtau \Bigr|
\end{equation}  }
by $Cc_\tau (E^{s}_{\tau_0})^2$ whenever $c_\tau<\frac{1}{2C}$.

If $s\geq\frac32$ is large enough
\footnote{
Recalling Remark \ref{rmk:a4}: $\cH_3^2=0$, we indeed have to do finite expansions only when $s\geq 2$. }, then  we also have to do finite expansions for   $\tilde T_3(i\sigma)$ and $\tilde T_{2j}(i\sigma)$, $2\leq j\leq s$ until $k$-th order, $k=[2s]$:
{\small\begin{align*}
&\tau^2\tilde T_3(i\sigma)=\sum_{l=3}^{k} \cH_{3}^{l}\tau^{-l+1}
+\cH_3^{>k}(i\sigma),
\,\,
 \tau^2\tilde T_{2j}(i\sigma)
=\sum_{l=2j}^k \cH_{2j}^{l}\tau^{-l+1}+\cH_{2j}^{>k}(i\sigma)\hbox{ as }\sigma\rightarrow\infty,
\end{align*}} 
 such that  the difference above \eqref{Difference:E} is replaced by
{\small\begin{align}\label{replace} 
 |\cE^{s}_{\tau_0}-(E^{s}_{\tau_0})^2|
&=\Bigl|\frac{2}{\pi}\sin(\pi(s-1))\int^\infty_{\tau_0}(\tau^2-\tau_0^2)^{s-1}  \Re\Bigl(\cH_3^{>k} +\sum_{2\leq j\leq s}\cH_{2j}^{>k}
 \notag
\\
&+\tau^2\sum_{j>s}\tilde T_{2j} \Bigr)(i\sigma) \dtau
 -\sum_{l=1}^{[s-1]} \tau_0^{2(s-1-l)}\begin{pmatrix} s-1\\ l \end{pmatrix} 
\Re \bigl( \cH_{3}^{2l+2}+\sum_{2\leq j\leq s}\cH_{2j}^{2l+2}  \bigr)\Bigr|.
\end{align} }  
 
We are going to follow the strategy and make use of the estimates in Section 6, \cite{KT} to control the differences \eqref{Difference:E} and \eqref{replace}.
 The rest of this section is organised as follows: 
 \begin{itemize}
 \item
 We  establish the estimates for high order terms $\tilde T_{2j}$, $j>s$ (and $\tilde T_3$ if $s\in(\frac12,\frac32)$) and low order terms $\tilde T_3, \tilde T_{2j}$, $2\leq j<s$ for $s>\frac32$  in Subsections \ref{subs:High}  and \ref{subs:Low} respectively;
 \item We complete the proof of Theorem \ref{thm:Es} in Subsection \ref{subs:energy}.
 \end{itemize}

 \subsection{High order terms }\label{subs:High}
 We are going to derive the estimates for   high order terms in this section, which can be viewed as a more accurate version of the  estimates  in Section \ref{subs:Tr} on the imaginary axis setting:
\begin{itemize}
\item We derive first some preliminary estimates for $q_2, q_3, q_4$, $P$ and then the estimates for the integrals $\<XY>^j, \<XBBY>_{2j}, \<XBY>$  in Subsection \ref{subs:FundEst};
\item We make use of these  estimates   to control the high order terms    in Subsection \ref{subss:high}, which can  control the difference \eqref{Difference:E} when $s\in(\frac12,\frac32)$.
\end{itemize}
We will use the  estimates \eqref{product} and \eqref{fg:free} freely through this section.
 \subsubsection{Preliminary estimates}\label{subs:FundEst}  
Recall the notations in Subsection \ref{subs:fw} and we are going to estimate $q_2, q_3, q_4$, $P$, $\<XY>^j$, $\<XBBY>_{2j}$, $\<XBY>$,  in terms of 
{\small$$C_\tau=1+\frac1\tau\|q'\|_{l^\infty_\tau DU^2}
\hbox{ and } \|\bq\|_{l^p_{\tau} DU^2}=\bigl\| \bigl( \||q|^2-1\|_{l^p_{\tau} DU^2},\, \|q'\|_{l^p_{\tau} DU^2}\bigr)\bigr\|_{\ell^p}.$$  }
\begin{lem}\label{lem:C0}
Assume \eqref{ImaginAxis} and \eqref{smallq}: $(\lambda,z)=(i\sigma, i\tau/2)\in\cR$, $\tau\geq\tau_0\geq 2$, $\zeta=\lambda+z=i\omega$, $\omega\in [\frac\tau2,\tau)$  and   $q\in X^s$, $s>\frac12$ such that $\bigl| |q|^2-1\bigr|\leq \frac{1}{64}{\tau_0}^2$.  

Then the following estimates hold for $P$ defined in \eqref{P} and
 for $p\in [1,\infty]$:
\begin{equation*}\begin{split}
& \|P \|_{l^\infty_\tau V^2} \lesssim C_\tau,
\,\, 
\|q_\kappa\|_{l^p_\tau DU^2} +\|\frac{1}{\tau^2}(|q|^2-1)\|_{l^p_\tau U^2} \lesssim \frac{C_\tau}{  \tau} \|\bq\|_{l^p_{\tau} DU^2},\, \kappa=2,3,4.
\end{split}\end{equation*}  
 \end{lem}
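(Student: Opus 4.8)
The plan is to reduce everything to the basic $U^2/V^2/DU^2$ algebra properties collected in \eqref{product} and \eqref{fg:free}, together with the pointwise control on $\zeta$ coming from the assumptions \eqref{ImaginAxis}, \eqref{smallq}. First I would record the elementary consequences of $(\lambda,z)=(i\sigma,i\tau/2)\in\cR$ with $\tau\ge\tau_0\ge2$ and $|\,|q|^2-1\,|\le\tau_0^2/64$: on the imaginary axis $\zeta=i\omega$ with $\omega\in[\tau/2,\tau)$, hence the denominator $\omega^{-2}|q|^2+1$ appearing in $q_2,q_3,q_4$ and in $P$ satisfies $\tfrac12\le 1\le \omega^{-2}|q|^2+1\le 1+\tfrac{1}{64}\cdot\tfrac{\tau_0^2}{\omega^2}+\tfrac{1}{\omega^2}\le 2$ (using $\omega\ge\tau_0/2$), and more generally $|\omega^{-1}q|\lesssim 1$, $|\tau^{-2}(|q|^2-1)|\lesssim 1$. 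Thus every ``argument'' of the polynomial $P$ of the form \eqref{P} is bounded in $L^\infty$ by an absolute constant, and its derivative is controlled: the only variable factors with unbounded $V^2$-seminorm are $\tfrac1\tau q$ (with $\|\tfrac1\tau q'\|\lesssim C_\tau\cdot 1$ in the relevant localized norm after applying \eqref{qLinfty}) and $\tfrac1{\tau^2}(|q|^2-1)$. Applying the composition estimate $\|f(u)\|_{V^2}\le C(f',\|u\|_{L^\infty})\|u\|_{V^2}$ from \eqref{product} to the rational function $r(w)=1/(1+w)$ with $w=\omega^{-2}|q|^2$, together with the product rule $\|fg\|_{V^2}\le\|f\|_{L^\infty}\|g\|_{V^2}+\|f\|_{V^2}\|g\|_{L^\infty}$, gives $\|P\|_{l^\infty_\tau V^2}\lesssim C_\tau$ after noting $\|\tfrac1\tau q\|_{l^\infty_\tau V^2}\lesssim \tfrac1\tau\|q\|_{L^\infty}+\tfrac1\tau\|q'\|_{l^\infty_\tau DU^2}\lesssim C_\tau$ by \eqref{qLinfty} and \eqref{product}, and similarly for the $\tfrac1{\tau^2}(|q|^2-1)$ slot.

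Next I would turn to the $l^p_\tau DU^2$ bounds for $q_2,q_3,q_4$. By the explicit formulas in Subsection \ref{subss:nota}, each $q_\kappa$ ($\kappa=2,3,4$) is a finite sum of terms $P\cdot \tfrac1\tau(|q|^2-1)$, $P\cdot\tfrac1\tau q'$, $P\cdot\tfrac1\tau\bar q'$ — i.e. elements of the set $O$ from \eqref{O} — with a harmless extra factor $\tfrac1\omega$ or $1$ in front; since $\omega\ge\tau/2$, these extra factors only improve the estimate by a constant, so it suffices to bound a generic element $P\cdot h$ of $O$ with $h\in\{\tfrac1\tau(|q|^2-1),\tfrac1\tau q',\tfrac1\tau\bar q'\}$. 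Using $\|fg\|_{DU^2}\le 2\|f\|_{V^2}\|g\|_{DU^2}$ from \eqref{product} localized via the partition of unity \eqref{unity} (so $\|P h\|_{l^p_\tau DU^2}\lesssim \|P\|_{l^\infty_\tau V^2}\|h\|_{l^p_\tau DU^2}$, a standard paraproduct-type summation using that $\chi_{\tau,k}$ and $\tilde\chi_{\tau,k}$ overlap boundedly many neighbors), I get $\|q_\kappa\|_{l^p_\tau DU^2}\lesssim \|P\|_{l^\infty_\tau V^2}\cdot\tfrac1\tau\|\bq\|_{l^p_\tau DU^2}\lesssim \tfrac{C_\tau}{\tau}\|\bq\|_{l^p_\tau DU^2}$, exactly the claimed estimate. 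For the remaining quantity $\tfrac1{\tau^2}(|q|^2-1)$ in $l^p_\tau U^2$: apply \eqref{fg:free}, $\|f\|_{l^p_\tau U^2}\lesssim \tau\|f\|_{l^p_\tau DU^2}+\|f'\|_{l^p_\tau DU^2}$, to $f=\tfrac1{\tau^2}(|q|^2-1)$. This gives $\tfrac1{\tau^2}\big(\tau\|\,|q|^2-1\,\|_{l^p_\tau DU^2}+\|(|q|^2-1)'\|_{l^p_\tau DU^2}\big)$; the first summand is $\tfrac1\tau\|\,|q|^2-1\,\|_{l^p_\tau DU^2}$, and the second is $\tfrac1{\tau^2}\|\bar q q'+q\bar q'\|_{l^p_\tau DU^2}\lesssim \tfrac1{\tau^2}\|q\|_{L^\infty}\|q'\|_{l^p_\tau DU^2}\lesssim \tfrac{C_\tau}{\tau}\cdot\tfrac1\tau\|q'\|_{l^p_\tau DU^2}$ after invoking \eqref{qLinfty}, so altogether $\lesssim \tfrac{C_\tau}{\tau}\|\bq\|_{l^p_\tau DU^2}$ as desired.

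The one genuinely delicate point is the bookkeeping in the localized product estimates: passing from the pointwise/global bounds on $P$, $q$ and $|q|^2-1$ to bounds in the summed norms $l^p_\tau$ requires that multiplication by a localized cutoff $\chi_{\tau,k}$ does not destroy the algebra structure and that the $\ell^p_k$-summation of the local pieces is controlled by the $\ell^p$-norm of the factor that carries summability (here $\bq$), with the $L^\infty$-type factor $P$ contributing only its $l^\infty_\tau V^2$-norm. This is the standard $U^2$-type "one summable factor times one bounded factor" lemma (already used in \cite{KT} and implicitly throughout Subsection \ref{subs:Tr}); the main obstacle is simply to verify the composition estimate $\|P\|_{l^\infty_\tau V^2}\lesssim C_\tau$ cleanly, i.e. that the nonlinear denominator $(\omega^{-2}|q|^2+1)^{-1}$ stays bounded in $V^2$ with a constant depending only on $C_\tau$ — which is exactly where \eqref{smallq} (keeping the denominator uniformly away from $0$ and bounded) and \eqref{qLinfty} (controlling $\|q\|_{L^\infty}$ and hence the $V^2$-seminorm of $\tfrac1\tau q$) are both used. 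Everything else is a routine application of \eqref{product}, \eqref{fg:free}, and the embedding $H^{s-1}\hookrightarrow l^2_1 DU^2$ for $s>\tfrac12$.
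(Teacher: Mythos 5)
Your proposal is correct and follows essentially the same route as the paper: bound the coefficient functions ($P$, the denominator $(\omega^{-2}|q|^2+1)^{-1}$, $\tfrac1\tau q$, $\tfrac1{\tau^2}(|q|^2-1)$) in $l^\infty_\tau V^2$ by $C_\tau$ using \eqref{smallq} and \eqref{qLinfty}, then conclude via the localized bilinear estimate $\Vert fg\Vert_{DU^2}\lesssim \Vert f\Vert_{V^2}\Vert g\Vert_{DU^2}$ together with \eqref{fg:free}; the only difference is that the paper unwinds the definition of the $V^2$-norm for the reciprocal where you invoke the packaged composition estimate from \eqref{product}. One cosmetic slip: when estimating $\Vert(|q|^2)'\Vert_{l^p_\tau DU^2}$ in the last step you should use $\Vert q\Vert_{l^\infty_\tau V^2}\lesssim\tau C_\tau$ rather than $\Vert q\Vert_{L^\infty}$, which yields the same final bound.
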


\begin{proof}  
We firstly derive straightforward from the assumption \eqref{smallq} that
\begin{align*}
\|q\|_{l^\infty_\tau V^2}
\lesssim \|q\|_{L^\infty}+\|q'\|_{l^\infty_\tau DU^2}
\lesssim \tau(1+\tau^{-1}\|q'\|_{l^\infty_\tau DU^2})=\tau C_\tau.
\end{align*}  
Recalling  the partition of unity \eqref{unity}, we have
\begin{align*}
&\Bigl\| \frac{1}{ \omega^{-2}|q|^2+1}\Bigr\|_{l^\infty_\tau V^2}
\leq 1+\mathop{\sup}\limits_k
 \Bigl\| \frac{\chi_{\tau,k}}{ \omega^{-2}|q|^2+1}\Bigr\|_{ V^2}
\\
& \leq1+2\mathop{\sup}\limits_k 
\mathop{\sup}\limits_{\frac{k-1}\tau=t_0<t_1<\cdots<t_N=\frac{k+1}\tau}
\Bigl( \sum_{j=0}^{N-1}\bigl( (\chi_{\tau,k}(t_{j+1})  -  \chi_{\tau,k}(t_j) \bigr)^2
\\
&\qquad
+\bigl(\omega^{-2}(|q(t_{j+1})|^2-|q(t_j)|^2)\bigr)^2\Bigr)^{\frac12}
\lesssim1+\tau^{-1}\|q'\|_{l^\infty_\tau DU^2}=C_\tau,
\end{align*}
where we used $\bigl| |q|^2-1\bigr|\leq \frac{1}{64}{\tau_0}^2$ and $|q(t_{j+1})-q(t_j)|\lesssim  \|q'\|_{l^\infty_\tau DU^2}$.
Similarly we have the same estimate for  $\frac{1}{\tau^2}(|q|^2-1)$ and hence for the polynomial $P$.
Therefore the estimates for $q_\kappa$, $\kappa=2,3,4$ and $\|\frac{1}{\tau^2}(|q|^2-1)\|_{l^p_\tau U^2}$  follow from \eqref{product} and \eqref{fg:free}.
\end{proof}

We  claim the following estimates similar as \eqref{q4} (recalling $\chi_{\tau,k}$ in the partition of unity \eqref{unity} and the assumption \eqref{smallq}):
{\small\begin{equation}\label{Estimate:q4,chi}\begin{split}
&\Bigl\| \chi_{\tau,k} \Bigl(e^{\int^x_{\frac k\tau} q_4 dx'}-1\Bigr)\Bigr\|_{V^2}
\lesssim C_\tau \tilde c_{\tau,k}, 
\quad \Bigl\|\chi_{\tau,k}e^{\int^x_{\frac k\tau} q_4 dx'}\Bigr\|_{V^2}\lesssim 1,
\\
&\Bigl|e^{\int^{\frac k\tau}_{\frac{k'}\tau} q_4 dx}-1\Bigr|
\lesssim  e^{\frac12(k-k')}C_\tau \sum_{k''=k'}^k \tilde c_{\tau,k''}, 
\quad \Bigl| e^{\int^{\frac k\tau}_{\frac{k'}\tau} q_4 dx}\Bigr| \lesssim e^{\frac14 (k-k')},
\quad k'\leq k-1,
\end{split}\end{equation}  
with $\tilde c_{\tau,k}=\frac1\tau\bigl(\|\tilde\chi_{\tau,k}(|q|^2-1)\|_{DU^2}
+\|\tilde\chi_{\tau,k}q'\|_{DU^2}\bigr)$, $\tilde\chi=\chi(\frac{1}{12}\cdot)$.
Indeed,  we write  
\begin{align*} 
q_4=a+ib,
\quad a=\Re q_4=\frac{-2(|q|^2-1)/\omega}{\omega^{-2}|q|^2+1},
\quad b=\Im q_4=\frac{2\Im (q\bar q')/(\omega^2)}{\omega^{-2}|q|^2+1}.
\end{align*}
Since we derive from  \eqref{smallq} that
$\bigl\| \bar\chi_{\tau,k}(x) \int^x_{\frac k\tau} a \dx'  \bigr\|_{V^2}
\leq \frac8\tau   \|  a\|_{L^\infty} \leq \frac12$ with $\bar\chi=\chi(\frac12\cdot)$ supported on $[-\frac83,\frac83]$ such that $\bar\chi\chi=\chi$,
we have the following estimate from Lemma \ref{lem:C0}:
\begin{align*}
&\bigl\|  \chi_{\tau,k}(x)\bigl(e^{\int^x_{\frac k\tau} a \dx'}-1 \bigr) \bigr\|_{V^2}
 \lesssim \Bigl\|\bar\chi_{\tau,k}(x) \int^x_{\frac k\tau} a \dx'  \Bigr\|_{V^2}
 \lesssim  \frac{C_\tau}\tau \bigl\|\tilde\chi_{\tau,k}(|q|^2-1)\bigr\|_{ DU^2}.
\end{align*}
On the other hand, since $|e^{ic}|\leq 1$, $c\in\R$, we   have
\begin{align*}
\Bigl \| \chi_{\tau,k}(x) \Bigl(e^{i\int^x_{\frac k\tau} b\dx'}-1\Bigr) \Bigr\|_{V^2}
&\lesssim \Bigl\|\bar\chi_{\tau,k}(x) \int^x_{\frac k\tau} b\dx'   \Bigr\|_{V^2}
 \lesssim \frac{C_\tau}\tau \bigl\|\tilde\chi_{\tau,k}q'\bigr\|_{ DU^2}.
\end{align*}
Then the first line of the estimate \eqref{Estimate:q4,chi} follows. 
Similarly we have the second line of \eqref{Estimate:q4,chi} for $k'\leq k-1$: We derive straightforward from \eqref{smallq} that
$\bigl|e^{\int^{\frac k\tau}_{\frac{k'}\tau}q_4\dx}\bigr|\leq \bigl| e^{\int^{\frac k\tau}_{\frac{k'}\tau}a\dx} \bigr|\leq e^{\frac14(k-k')}$,
and hence by Lemma \ref{lem:C0}
\begin{align*}
\Bigl|e^{\int^{\frac k\tau}_{\frac{k'}\tau}q_4\dx}-1\Bigr|
\leq e^{\frac14 (k-k')} \Bigl| \int^{\frac k\tau}_{\frac{k'}\tau}q_4\dx\Bigr|
\lesssim   \bigl(e^{\frac14 (k-k')}  (k-k')\bigr)\, C_\tau\sum_{k''=k'}^k \tilde c_{\tau,k''}.
\end{align*}}
Therefore we have the following estimates for $\<XY>^j, \<XBBY>_{2j}, \<XBY>$:
\begin{lem}\label{lem:J}
Assume the same hypothesis as in Lemma \ref{lem:C0}. 
Then  we have the following estimates:
\begin{equation}\label{EstimateJS}\begin{split}
&|\<XY>^j|
\lesssim (\|q_2\|_{l^2_\tau DU^2}\|q_3\|_{l^2_\tau DU^2})^j,
\quad 
|\<XBBY>_{2j}|
\lesssim 
\max\{\|q_{2}\|_{l^{2j}_\tau DU^2}, \|q_3\|_{l^{2j}_\tau DU^2}\}^{2j},
\\
&|\<XBY>|
\lesssim \frac{C_\tau}{\tau }
 \|\bq\|_{l^3_\tau DU^2} 
\|q_2\|_{l^3_\tau DU^2}\|q_3\|_{l^3_\tau DU^2}.
\end{split}\end{equation}
\end{lem}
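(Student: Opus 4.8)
The plan is to prove the three estimates in \eqref{EstimateJS} by reducing each symbol to a product of localized $DU^2$ factors, exactly as in the iterative resolution of \eqref{RenormalLax} from Lemma~\ref{Lem:wn}, but now tracking constants carefully on the imaginary axis where $\Re z = 0$ and hence $|\Re z|+\tau = \tau$. First I would recall from Lemma~\ref{lem:C0} that on the imaginary axis all the polynomial prefactors $P$ are bounded in $l^\infty_\tau V^2$ by $C_\tau$, and that $e^{\varphi(y)-\varphi(x)} = e^{-\tau(y-x)} e^{\int_x^y q_4}$ with the exponential factors controlled by \eqref{Estimate:q4,chi}. The essential analytic input is the same as in the proof of Lemma~\ref{Lem:wn}: insert the partition of unity \eqref{unity}, split the $x<y$ region into the ``diagonal'' part $|x-y| \lesssim \tau^{-1}$ and the ``off-diagonal'' part, use the bilinear estimate $\|fg\|_{DU^2} \leq 2\|f\|_{V^2}\|g\|_{DU^2}$ together with $\|\chi_{\tau,k} e^{-\tau(\cdot - k/\tau)}\|_{V^2} \lesssim 1$, and sum the resulting geometric series in $k-k'$ coming from $e^{-(k-k')}$ times the $e^{\frac14(k-k')}$ from \eqref{Estimate:q4,chi}. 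Since $\Re z = 0$ here, the phase-shift loss $\sqrt{(|\Re z|+\tau)/\tau}$ in \eqref{PhaseShift} is just a constant, which is why no power of $\tau$ appears in the first estimate of \eqref{EstimateJS}.

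For the first bound, $|\<XY>^j| \lesssim (\|q_2\|_{l^2_\tau DU^2}\|q_3\|_{l^2_\tau DU^2})^j$, I would write $\<XY>^j = \lim_{t\to\infty}(S^j 1)(t)$ and reuse the operator bound $\|S\|_{V^2\to U^2} \lesssim \|q_2\|_{l^2_\tau DU^2}\|q_3\|_{l^2_\tau DU^2}$ established (on the imaginary axis) in Lemma~\ref{Lem:wn}, noting that $\|1\|_{V^2} = 1$; iterating $j$ times and using $\|u\|_{L^\infty}\leq\|u\|_{V^2}\leq 2\|u\|_{U^2}$ gives the claim. For the connected symbols $\<XBBY>_{2j}$ of degree $2j$, the structure \eqref{tildeS} has $2j$ factors $q_{\kappa_n}$ with $\kappa_n\in\{2,3\}$ and $2j-1$ exponential weights $e^{\delta_n(\varphi(t_{n+1})-\varphi(t_n))}$ with $\delta_1 = \delta_{2j-1} = 1$; here I would estimate each $q_{\kappa_n}$ in $l^{2j}_\tau DU^2$, pair it in the ordered-integral/$DU^2$--$V^2$ duality against the neighbouring exponential weight (which contributes only a bounded $V^2$-factor after localization, since $\delta_n \geq 1$ ensures genuine exponential decay $e^{-\tau(t_{n+1}-t_n)}$ up to the $q_4$ correction), and then apply H\"older in the sequence index $k$ across the $2j$ factors to land on $\max\{\|q_2\|_{l^{2j}_\tau DU^2},\|q_3\|_{l^{2j}_\tau DU^2}\}^{2j}$. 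The inner "non-interacting" arcs $\<B>\<B>$ under $\<XY>$ do not change the count: each contributes its own pair of $q_2, q_3$ factors and an exponential weight, and all of them are handled uniformly by the same localized estimate.

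The third bound, for $\<XBY>$, is where the extra gain $C_\tau \tau^{-1}\|\bq\|_{l^3_\tau DU^2}$ comes from, and I expect the bookkeeping there to be the main obstacle. The point is that $\<XBY> = \int_{x<y}\bigl(e^{\varphi(y)-\varphi(x)} - e^{-\tau(y-x)}\bigr)q_2(y)q_3(x)\,\dx\dy$, so the kernel is the \emph{difference} $e^{-\tau(y-x)}\bigl(e^{\int_x^y q_4} - 1\bigr)$, and \eqref{Estimate:q4,chi} shows that the factor $e^{\int_x^y q_4}-1$ is not merely bounded but is small, of size $C_\tau$ times the localized $DU^2$-mass $\sum \tilde c_{\tau,k}$ of $(|q|^2-1, q')$ between $x$ and $y$ — i.e. it costs one extra power of $\|Q\|_{l^?_\tau DU^2}\sim \tau^{-1}\|\bq\|$. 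I would therefore expand $e^{\int_x^y q_4}-1 = \int_x^y q_4(m)\,e^{\int_m^y q_4}\,dm$ (or use the telescoping bound in \eqref{Estimate:q4,chi} directly), treat $q_4$ as a third $DU^2$-factor alongside $q_2$ and $q_3$, distribute the three factors via H\"older in $\ell^3_k$, and sum the geometric series from the exponential weight, using $\|q_4\|_{l^3_\tau DU^2}\lesssim C_\tau\tau^{-1}\|\bq\|_{l^3_\tau DU^2}$ from Lemma~\ref{lem:C0}. The delicate part is making sure the three localization sums (one for each of $q_2$, $q_3$, $q_4$, spread over the ordered region $x<m<y$) recombine into a single $\ell^3_k$ triple product rather than losing a factor of the number of intervals; this is the same mechanism as in Section~6 of \cite{KT}, and I would invoke those estimates wherever the combinatorics of ordered integrals and non-interacting arcs is identical.
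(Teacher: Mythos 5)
Your proposal is correct and follows essentially the same route as the paper: the bounds for $\<XY>^j$ and $\<XBBY>_{2j}$ via the operator estimates $\|S\|_{V^2\to U^2}$ and $\|S_{2j}\|_{V^2\to U^2}$ built from the partition of unity, the bilinear $DU^2$--$V^2$ estimates and the exponential-weight bounds \eqref{Estimate:q4,chi}, and the bound for $\<XBY>$ by exploiting the smallness of $e^{\int_x^y q_4}-1$ (via the telescoping estimate in \eqref{Estimate:q4,chi}) as a third localized factor distributed by H\"older in $\ell^3_k$. This is exactly the paper's argument, including the observation that $\Re z=0$ on the imaginary axis removes the $(|\Re z|+\tau)/\tau$ loss.
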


\begin{proof} 
We  follow the decomposition idea in the proof of Lemma \ref{Lem:wn} to derive that
{\small\begin{align*}
&\Bigl\| \int_{x<y<t} (e^{\varphi(y)-\varphi(x)}-e^{-\tau(y-x)}) g(y) h(x)\dx\dy\Bigr\|_{U^2_t}
\\
&\lesssim \sum_{k} \Bigl\|\int_{y-\frac 3\tau}^y (\chi_{\tau,k} g)(y) e^{-\tau(y-x)}\Bigl(e^{\int^y_{\frac k\tau} q_4}-1 +  e^{\int^y_{\frac k\tau} q_4}(e^{\int^{\frac k\tau}_x q_4}-1)\Bigr) (\tilde \chi_{\tau,k} h)(x)\dx\Bigr\|_{DU^2_y}
\\
&\quad
+\sum_{k'\leq k-1} e^{- (k-k')}\Bigl\| \int_{-\infty}^{y-\frac 3\tau} (\chi_{\tau,k} g)(y) \Bigl(e^{\int^y_{\frac k\tau} q_4}-1 +  e^{\int^y_{\frac k\tau} q_4}(e^{\int^{\frac k\tau}_{\frac{k'}\tau} q_4}-1)
 \\
 &\qquad\qquad 
 +  e^{\int^y_{\frac k\tau} q_4}e^{\int^{\frac k\tau}_{\frac{k'}\tau}q_4}(e^{\int^{\frac{k'}\tau}_{x} q_4}-1)\Bigr)  
 (\chi_{\tau,k'} h)(x)\dx\Bigr\|_{DU^2_y}, 
\end{align*}   }
and hence we take $g=q_2$ and $h=q_3$ to arrive at the  estimate for $|\<XBY>|$ in  \eqref{EstimateJS} by virtue of the claim 
\eqref{Estimate:q4,chi}. 

Similarly, since $e^{\varphi(y)-\varphi(x)}=e^{-\tau(y-x)}e^{\int^y_{\frac k\tau} q_4}e^{\int^{\frac k\tau}_{\frac{k'}\tau}q_4}e^{\int^{\frac{k'}\tau}_x q_4}$, we have the following estimate for the operator $S$ (defined in \eqref{SS}) from the claim \eqref{Estimate:q4,chi}:
{\small\begin{align*}
\|S\|_{V^2\mapsto U^2}\lesssim \|q_2\|_{l^2_\tau DU^2}\|q_3\|_{l^2_\tau DU^2},
\quad S(f)(t)=\int_{x<y<t}e^{\varphi(y)-\varphi(x)} q_2(y) (q_3 f)(x)\dx\dy,
\end{align*}}
and hence the estimate for $\<XY>^j$ in  \eqref{EstimateJS} follows:
\begin{align*}
|\<XY>^j|=|\lim_{t\rightarrow\infty}(S^j 1)(t)|\leq \|S^j 1\|_{V^2}\leq (C\|q_2\|_{l^{2}_\tau DU^2}\|q_3\|_{l^{2}_\tau DU^2})^j.
\end{align*}
We apply the above estimate for the operator $S$ iteratively to arrive at  
{\small\begin{align*}
&\|S_{2j}\|_{V^2\mapsto U^2}\lesssim \max\{\|q_2\|_{l^{2j}_\tau DU^2}, \|q_3\|_{l^{2j}_\tau DU^2}\}^{2j}, \hbox{ for the operator }
\\
&S_{2j}(f)(t)=\int_{t_1<\cdots<t_{2j}<t}
\prod_{n=1}^{2j-1} e^{\delta_n(\varphi(t_{n+1})-\varphi(t_n))}
 q_{\kappa_1}(t_{2j})\cdots (q_{\kappa_{2j}} f)(t_1) dt_1\cdots dt_{2j},
\end{align*}}
where $\kappa_n\in \{2, 3\}$, $\delta_n\in \{1,\cdots, j\}$,
and hence the estimate for  $\<XBBY>_{2j}$ in  \eqref{EstimateJS} follows.
  \end{proof}

 \subsubsection{Estimates for high order terms}\label{subss:high}
 Recall  \eqref{bq}, \eqref{Es},  \eqref{Q} and \eqref{cq}:
 {\small\begin{align*}
 &\bq=(|q|^2-1, q'), \quad E^{s}_{\tau}=\|\bq\|_{H^{s-1}_\tau},
 \quad C_{\tau}=1+\frac1\tau\|q'\|_{l^\infty_\tau DU^2},
 \\
&Q=\frac{1}{\tau}(|q|^2-1, q', \bar q'),
\quad c_\tau=\frac1\tau\|\bq\|_{l^2_\tau DU^2}= \|Q\|_{l^2_\tau DU^2},
 \end{align*}} 
and the scaling invariance property \eqref{ftau}: {\small\begin{equation}\label{Qtau}\begin{split}
 &E^{s}_{\tau_0}
 = \tau_0^{s-\frac12}\|\bq_{\tau_0}\|_{H^{s-1}},
\quad
 \|f\|_{l^p_{\tau} DU^2}= \|f_{\tau_0}\|_{l^p_{\tilde\tau} DU^2},  
\quad f_\tau=\frac1\tau f(\frac\cdot\tau),
 \quad \tilde\tau:=\frac{\tau}{\tau_0}.
 \end{split}\end{equation} }

 We are going to give the estimates for high order terms in the expansion of $\ln\Tc^{-1}$ in Lemma \ref{Lem:IA}: $\tilde T_{2j}$, $j>s$ if $s>\frac32$ or $\tilde T_3$ and $\tilde T_{2j}$, $j>s$ if $s\in(\frac12, \frac32)$, which will be used to control the energy difference $|\cE^{s}_{\tau_0}-(E^{s}_{\tau_0})^2|$ in \eqref{Difference:E} and \eqref{replace}. Indeed, after rescaling, the estimates in Sections 5, 6 in \cite{KT} work well here and we simply make use of them to derive the estimates.
We refer the interested readers there for more detailed analysis.
 \begin{prop} \label{prop:smalls}
Assume \eqref{ImaginAxis} and \eqref{smallq}.
For $j\geq 1$, $T_{2j}=\<XY>^j$ and for $j\geq 2$, $\tilde T_{2j}$ is finite linear combination of integrals $\<XBBY>_{2j}$. $\tilde T_3$ is finite linear combination of cubic terms in \eqref{ReT3}.

Then there exist a constant $C$ and a constant $C_j$ depending on  $j\geq2$ such that
{\small\begin{equation}\label{Estimate:T3,Tj,smalls}\begin{split}
&|\mathbf{1}_{j\geq 1}T_{2j}(i\sigma)|
+|\mathbf{1}_{j\geq 2}\tilde T_{2j}(i\sigma)|\leq  (CC_\tau \|Q\|_{l^2_\tau DU^2})^{2j}, 
\\
&|\mathbf{1}_{j\geq 2}\tilde T_{2j}(i\sigma)|\leq C_j
\bigl( C_\tau  \|Q\|_{l^{2j}_\tau DU^2} \bigr)^{2j}, 
\\
&|\tilde T_3(i\sigma)|
 \leq  \bigl(CC_\tau \|Q\|_{l^{3}_\tau DU^2} \bigr)^3.
\end{split}\end{equation}   
}
If we assume furthermore (with a possibly larger $C$ depending on $s$)
\begin{equation}\label{assumption:c0}
c_{\tau_0}=\frac{1}{\tau_0} \|\bq\|_{l^2_{\tau_0} DU^2} \leq \frac 1C,
\end{equation} 
we have for non-half integer $s>\frac12$   that
{\small\begin{equation}\label{EstimateT2j}\begin{split}
&\int^\infty_{\tau_0}(\tau^2-\tau_0^2)^{s-1}
 \tau^2 \sum_{j\geq 2s-1} 
 \Bigl( \bigl|T_{2j}(i\sigma) \bigr|
 +\mathbf{1}_{j\geq2}\bigl|\tilde T_{2j}(i\sigma) \bigr|\Bigr) \dtau
 \leq \frac{C c_{\tau_0}^{2[2s]-2}}{ [2s]+1-2s}(E^{s}_{\tau_0})^2,  
\end{split}\end{equation}
 \begin{equation}\label{EstimatetildeT2j}\begin{split} 
 &\int^\infty_{\tau_0}(\tau^2-\tau_0^2)^{s-1}
 \tau^2\sum_{s\leq j<2s-1}\bigl| \mathbf{1}_{j\geq2}\tilde T_{2j}(i\sigma) \bigr| \dtau
 \leq\frac{ Cc_{\tau_0}^{2[s]}}{ [s]+1-s}(E^{s}_{\tau_0})^2,
\end{split}\end{equation} }
and  in particular when $s\in(\frac12,\frac32)$,   we have
{\small\begin{equation}\label{Estimatesmalls}\begin{split}
&\int^\infty_{\tau_0}(\tau^2-\tau_0^2)^{s-1}
 \tau^2\Bigl( \bigl|\tilde T_3(i\sigma)\bigr|+
 \sum_{j\geq 2}  \bigl|\tilde T_{2j}(i\sigma)\bigr| \Bigr) \dtau
  \leq \frac{ Cc_{\tau_0}}{(s-\frac12)(\frac32-s)}(E^{s}_{\tau_0})^2.
\end{split}\end{equation} }
\end{prop}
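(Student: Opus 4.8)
The plan is to prove Proposition \ref{prop:smalls} by reducing every estimate to two ingredients: the pointwise bounds on the symbols $\<XY>^j$, $\<XBBY>_{2j}$, $\<XBY>$ established in Lemma \ref{lem:J}, and the scaling identities \eqref{Qtau} that convert an integral in $\tau$ over $[\tau_0,\infty)$ into a weighted $\ell^p$-sum over dyadic-type frequency scales $\tilde\tau=\tau/\tau_0$. First I would record the auxiliary fact that, under the assumption \eqref{smallq} and $\tau\geq\tau_0$, Lemma \ref{lem:C0} gives $\|q_\kappa\|_{l^2_\tau DU^2}\lesssim C_\tau\tau^{-1}\|\bq\|_{l^2_\tau DU^2}=C_\tau\|Q\|_{l^2_\tau DU^2}$ for $\kappa=2,3,4$, and similarly with $l^{2j}_\tau$ or $l^3_\tau$ in place of $l^2_\tau$. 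Combining this with Lemma \ref{lem:J} immediately yields the three pointwise estimates in \eqref{Estimate:T3,Tj,smalls}: the $\<XY>^j$ bound gives $|T_{2j}(i\sigma)|\le(CC_\tau\|Q\|_{l^2_\tau DU^2})^{2j}$, the $\<XBBY>_{2j}$ bound (being a finite linear combination, with $j$-dependent number of terms) gives both $|\tilde T_{2j}(i\sigma)|\le(CC_\tau\|Q\|_{l^2_\tau DU^2})^{2j}$ and $|\tilde T_{2j}(i\sigma)|\le C_j(C_\tau\|Q\|_{l^{2j}_\tau DU^2})^{2j}$, and the $\<XBY>$ bound together with the structure of the cubic terms in \eqref{ReT3} gives $|\tilde T_3(i\sigma)|\le(CC_\tau\|Q\|_{l^3_\tau DU^2})^3$. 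For the cubic terms one must also treat the two genuinely different types $\int(\frac{|q|^2-1}{\tau^2})^2 h_1\,dx$ and the finite-difference term $\int_{x<y}e^{-\tau(y-x)}h_1(y)\int_x^y(\cdots)h_2(x)$, but each is handled by the same commutator/decomposition argument as $\<XBY>$, using $\|\frac{1}{\tau^2}(|q|^2-1)\|_{l^p_\tau U^2}\lesssim\frac{C_\tau}{\tau}\|\bq\|_{l^p_\tau DU^2}$ from Lemma \ref{lem:C0}.

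Next I would turn to the integrated bounds \eqref{EstimateT2j}--\eqref{Estimatesmalls}. The mechanism is: rescale by $\tau_0$ so that $\|Q\|_{l^p_\tau DU^2}=\|Q_{\tau_0}\|_{l^p_{\tilde\tau} DU^2}$ with $\tilde\tau=\tau/\tau_0\ge 1$; use that $l^p_{\tilde\tau}DU^2$-norms grow at most polynomially in $\tilde\tau$ (by the interpolation between $\ell^p$ scales as in \cite{KT}), so that $\|Q\|_{l^{2j}_\tau DU^2}\lesssim\tilde\tau^{\theta_j}\|Q\|_{l^2_{\tau_0}DU^2}=\tilde\tau^{\theta_j}c_{\tau_0}$ for a suitable exponent; observe $C_\tau\lesssim C_{\tau_0}$ can be absorbed (or bounded by a constant under \eqref{smallq}); then the sum over $j$ of $(CC_\tau\|Q\|)^{2j}$ is a geometric series convergent under the smallness hypothesis \eqref{assumption:c0}, and its leading term is $\gtrsim c_{\tau_0}^{2j_{\min}}$ where $j_{\min}$ is the smallest index in the range. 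Finally the remaining $\tau$-integral $\int_{\tau_0}^\infty(\tau^2-\tau_0^2)^{s-1}\tau^2\cdot\tilde\tau^{-M}\,d\tau$ is elementary: after substituting $\tilde\tau$, it equals $\tau_0^{2s+1}\int_1^\infty(\tilde\tau^2-1)^{s-1}\tilde\tau^{2-M}\,d\tilde\tau$, which converges precisely when $M>2s+1$ and produces the singular factors $\frac{1}{[2s]+1-2s}$, $\frac{1}{[s]+1-s}$, $\frac1{(s-\frac12)(\frac32-s)}$ visible on the right-hand sides, while $\tau_0^{2s+1}c_{\tau_0}^{2}=\tau_0^{2s-1}\|\bq\|_{l^2_{\tau_0}DU^2}^2\lesssim(E^s_{\tau_0})^2$ by Lemma \ref{lem:ctau} (the embedding $H^{s-1}\hookrightarrow l^2_1 DU^2$ and \eqref{Qtau}), leaving the extra powers of $c_{\tau_0}$ as claimed.

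The bookkeeping differs slightly in the three regimes. For \eqref{EstimateT2j} ($j\ge 2s-1$, i.e. degree $2j\ge[2s]+1\ge 2s+1$, just enough for $\tau$-integrability) I use the first line of \eqref{Estimate:T3,Tj,smalls} with the $l^2_\tau$ norm throughout, so the polynomial loss in $\tilde\tau$ is only the trivial $\tilde\tau^{s+\frac12}$ from the scaling of each $\|Q\|_{l^2}$ factor against $E^s_{\tau_0}$; summing the geometric series in $j$ from $j=[2s-1]$ onward gives $c_{\tau_0}^{2[2s]-2}$ times the remaining convergent integral. For \eqref{EstimatetildeT2j} ($s\le j<2s-1$, finitely many terms but $2j<2s+1$ so the naive $l^2$ bound fails the $\tau$-integrability) I instead use the second line of \eqref{Estimate:T3,Tj,smalls} with the $l^{2j}_\tau$ norm: the improved $\ell^{2j}$-summability buys extra negative powers of $\tilde\tau$ — concretely $\|Q\|_{l^{2j}_\tau DU^2}\lesssim\tilde\tau^{-(1/2-1/(2j))}\cdots$ after rescaling — making $\tilde\tau^{2}\cdot(\tilde\tau^{\cdots})^{2j}$ integrable and yielding the factor $c_{\tau_0}^{2[s]}$. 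For \eqref{Estimatesmalls} ($s\in(\frac12,\frac32)$, so $N=[s-1]=0$ and no finite expansion of $\tilde T_3$ is needed) I combine the $\tilde T_3$ bound (third line, $l^3_\tau$) with the $j\ge 2$ part of the $\tilde T_{2j}$ bound, both integrated as above; the worst singular factor is $\frac{1}{(s-\frac12)(\frac32-s)}$ coming from the $\sin(\pi(s-1))$ prefactor compensating near $s=1$ together with the endpoint behaviour near $s=\frac32$. The main obstacle is precisely this third regime and, more broadly, tracking the exact power of $\tilde\tau$ gained from the $\ell^{2j}$- versus $\ell^2$-summability so that the $\tau$-integral converges and the constant degenerates in the advertised (and not worse) way as $s$ approaches a half-integer — this is exactly the point where one must import, with care about the rescaling $\tau\mapsto\tau_0\tilde\tau$, the frequency-localized estimates of Sections 5 and 6 of \cite{KT} rather than reprove them.
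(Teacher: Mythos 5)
Your overall route coincides with the paper's: the pointwise bounds come from Lemmas \ref{lem:C0} and \ref{lem:J}, and the integrated bounds come from rescaling $\tau\mapsto\tilde\tau=\tau/\tau_0$ via \eqref{Qtau} and importing Propositions 5.13 and 6.2 of \cite{KT}. There is, however, one concrete gap in the first line of \eqref{Estimate:T3,Tj,smalls}. You assert that the bound $|\tilde T_{2j}(i\sigma)|\le(CC_\tau\|Q\|_{l^2_\tau DU^2})^{2j}$ with a constant $C$ \emph{independent of} $j$ follows because $\tilde T_{2j}$ is ``a finite linear combination, with $j$-dependent number of terms,'' of connected symbols of degree $2j$. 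Term-by-term, that reasoning only yields $C_j(CC_\tau\|Q\|_{l^2_\tau DU^2})^{2j}$ with an unquantified $C_j$ --- i.e.\ exactly your second line --- and \eqref{EstimateT2j} sums over \emph{all} integers $j\ge 2s-1$, so the geometric-series argument collapses unless $C_j\le C^{2j}$ uniformly. The paper closes this by a majorization/generating-function argument (the footnote in its proof, following Proposition 5.10 of \cite{KT}): from $\ln\bigl(1+\sum_{j}\eta^{2j}T_{2j}\bigr)=\eta^2T_2+\sum_{j\ge2}\eta^{2j}\tilde T_{2j}$ and $\ln(1+\zeta)\preceq\zeta/(1-\zeta)$ one deduces $|\tilde T_{2j}|\le(2A)^{j}$ whenever $|T_{2j}|\le A^{j}$, which is what makes the series in \eqref{EstimateT2j} summable under \eqref{assumption:c0}. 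You need this step (or an explicit exponential bound on the number and coefficients of the connected symbols occurring in $\tilde T_{2j}$); it does not come for free from Lemma \ref{lem:J}.

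A second, milder point: your description of the integrated step (``$l^p_{\tilde\tau}DU^2$-norms grow at most polynomially in $\tilde\tau$, then the $\tau$-integral is elementary'') is not the mechanism that produces $(E^s_{\tau_0})^2$ on the right-hand side. Bounding all $2j$ factors pointwise in $\tilde\tau$ and integrating an explicit power yields powers of $c_{\tau_0}$ or of $\|\bq\|_{L^2}$ with the wrong $\tau_0$-weights except at special values of $s$. The actual mechanism in Propositions 5.13 and 6.2 of \cite{KT}, which the paper invokes after rescaling, keeps $2j-2$ factors at scale $\tilde\tau=1$ (producing $c_{\tau_0}^{2j-2}$) and uses a square-function-type identity to convert $\int_1^\infty(\tilde\tau^2-1)^{s-1}\|\bq_{\tau_0}\|^2_{l^2_{\tilde\tau}DU^2}\,d\tilde\tau$ into $\|\bq_{\tau_0}\|^2_{H^{s-1}}$, whence $(E^s_{\tau_0})^2$ by \eqref{Qtau}, with the singular constants $1/(j+1-2s)$ appearing there. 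Since you explicitly defer to importing those propositions this is a mischaracterization rather than a fatal error, but as literally written your heuristic would not reproduce the stated right-hand sides.
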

 \begin{proof}   
The estimates for $T_{2j}(i\sigma), \tilde T_{2j}(i\sigma)$ in  \eqref{Estimate:T3,Tj,smalls} follow directly  from Lemma \ref{lem:C0} and  Lemma  \ref{lem:J}
\footnote{ 
If $|T_{2j}|\leq A$ then $|\tilde T_{2j}|\leq (2A)^j$, $j\geq 2$.
Indeed, following   the proof of Proposition 5.10 in \cite{KT}, by multiplying $q_2, q_3$ by $\eta$, we arrive from the expansions \eqref{Expansion:Tc} and \eqref{Expansion:lnT} at
 $ \ln(1 +\sum_{j\geq 1}\eta^{2j} T_{2j})=\eta^2T_2+\sum_{j\geq 2}\eta^{2j}\tilde T_{2j}. $
 We introduce a partial order $\preceq$ in the set of   holomorphic functions near zero, where $g\preceq h$ means that the absolute value of each coefficient in the Taylor series of $g$ at zero is bounded by the corresponding coefficient in the Taylor series of $h$. 
In particular, $\ln(1+\zeta)\preceq \frac{\zeta}{1-\zeta}:=f(\zeta)$ and hence $\sum_{j=2}^\infty\tilde T_{2j}\eta^{2j}\preceq f\circ f(A\eta)\preceq\sum_{j=1}^\infty 2^{j-1}A^j\eta^j$, $|T_{2j}|\leq A=CC_\tau c_\tau$ such that  $|\tilde T_{2j}|\leq (2A)^j$, $j\geq 2$. }.
By virtue of the integrals in \eqref{ReT3}, we derive from Lemmas  \ref{lem:C0} and \ref{lem:J} that
{\small \begin{align*} 
 |\tilde T_3(i\sigma)|
&\lesssim 
C_\tau\Bigl\|\frac{|q|^2-1}{\tau^2}\Bigr\|_{l^3_\tau U^2}^2
\Bigl\|\frac{|q|^2-1}{\tau}\Bigr\|_{l^3_\tau DU^2}
+|\<XBY>|
\\
& +\Bigl( \Bigl\|\frac{|q|^2-1}{\tau^2}\Bigr\|_{l^3_\tau U^2}
+\bigl\|\frac{q'}{\tau}\bigr\|_{l^3_\tau DU^2}\Bigr) 
\sum_{ h\in O}\|h\|_{l^3_\tau DU^2}^2
  \leq  \bigl(C C_\tau \|Q\|_{l^{3}_\tau DU^2} \bigr)^3.
\end{align*} }

If $C_\tau\leq  2$, by change of variables $\tau\to \tilde \tau=\frac{\tau}{\tau_0}$ we bound the integral  in terms of $|T_{2j}(i\sigma)|$ in \eqref{EstimateT2j} as follows:
\begin{align*}
&\int^\infty_{\tau_0}(\tau^2-\tau_0^2)^{s-1}
 \tau^2  \bigl( \bigl|T_{2j}(i\sigma)\bigr| +\mathbf{1}_{j\geq2}\bigl|\tilde T_{2j}(i\sigma) \bigr|\bigr)\dtau
 \leq (2C)^{2j}\int^\infty_{\tau_0}(\tau^2-\tau_0^2)^{s-1}
 \tau^2 \|Q\|_{l^2_\tau DU^2}^{2j} \dtau
 \\
 &=(2C)^{2j}\tau_0^{2s-1}\int^\infty_{1}(\tilde\tau^2-1)^{s-1}
   \|Q_{\tau_0}\|_{l^2_{\tilde\tau} DU^2}^{2j-2} 
 \|\bq_{\tau_0}\|_{l^2_{\tilde\tau}DU^2}^2d\tilde\tau,
\end{align*}
 with the equality ensured by \eqref{Qtau}.
 Proposition 5.13 in \cite{KT} implies that in the   regime $-\frac12<s-1\leq\frac{j-1}{2}\leq j-1$, the above integral is bounded by 
 \begin{align*}
 \frac{1}{j+1-2s}(2C)^{2j} \tau_0^{2s-1}  
 \|\frac{1}{\tau_0}\bq_{\tau_0}\|_{l^2_1 DU^2}^{2j-2} \|\bq_{\tau_0}\|_{H^{s-1}}^{2} 
=\frac{1}{j+1-2s}(2C)^{2j} c_{\tau_0}^{2j-2}(E^{s}_{\tau_0})^2,
 \end{align*}
 with the equality ensured by \eqref{Qtau}.

Therefore under the smallness  assumption \eqref{assumption:c0} such that
$C_\tau\leq 1+c_\tau\leq 1+c_{\tau_0}\leq 2$,  
  \eqref{EstimateT2j} holds.

Similarly, if $C_\tau\leq 2$, we bound the integral  in terms of $|\tilde T_{2j}(i\sigma)|$ in \eqref{EstimatetildeT2j} as
{\small\begin{align*}
&\int^\infty_{\tau_0}(\tau^2-\tau_0^2)^{s-1}
 \tau^2  \bigl|\tilde T_{2j}(i\sigma) \bigr| \dtau
\leq C_j 2^{2j}\tau_0^{2s-1}\int^\infty_{1}(\tilde\tau^2-1)^{s-1}
   \|Q_{\tau_0}\|_{l^{2j}_{\tilde\tau} DU^2}^{2j-2} 
 \|\bq_{\tau_0}\|_{l^{2j}_{\tilde\tau}DU^2}^2d\tilde\tau.
\end{align*}} Proposition 6.2 in \cite{KT} and \eqref{Qtau} implies \eqref{EstimatetildeT2j}.

Finally we can do the same as for $\tilde T_{2j}$ above to $\tilde T_3$ (with $2j$ replaced by $3$ in the above) for $s\in (\frac12,\frac32)$.
Hence \eqref{Estimatesmalls} follows from \eqref{EstimateT2j} and \eqref{EstimatetildeT2j}.
  \end{proof}

\subsection{Low order terms}\label{subs:Low}
Let $s>\frac32$ and we aim to get the estimates for the low order terms $\tilde T_3$, $\tilde T_{2j}$, $2\leq j<s$ in this section, which will be used to control the energy difference $|\cE^{s}_{\tau_0}-(E^{s}_{\tau_0})^2|$ in \eqref{replace}.
Indeed, as in Section 6 \cite{KT}, we have to do integration by parts to expand low order terms until $k$-th order, $k=[2s]-1$.

 We will first derive  the general formula for finite expansions    in Subsection \ref{subss:FE} and then   apply it to the low order terms    in Subsection \ref{subs:TEst}. 

\subsubsection{Finite expansion}\label{subss:FE}
Let us do integration by parts in the following integral:
{\small \begin{equation*}\label{IP:varphi} 
\begin{split}
& \int_{x<y} e^{\varphi(y)-\varphi(x)} g(y) h(x) \dx\dy 
 =\int_{x<y} e^{-\tau(y-x)} e^{\int^y_x q_4} g(y) h(x) \dx\dy
 \\
& 
=  \frac{1}{\tau}\int_{\R} gh \dx
+\frac{1}{\tau}\int_{x<y} e^{\varphi(y)-\varphi(x)}
 \bigl( g(y)(q_4 h)(x)-g(y)h'(x)\bigr) \dx\dy
 \\
& 
=  \frac{1}{\tau}\int_{\R} gh\dx
+\frac1\tau\int_{x<y}e^{\varphi(y)-\varphi(x)}
\bigl( (q_4 g)(y)h(x)+g'(y) h(x)\bigr)\dx\dy,
\end{split}\end{equation*} }
which leads us to define the two operators $D_{\pm}$:
 \begin{equation}\label{DM}
 D_{\pm}(g)=(q_4\pm \d_x)g. 
 \end{equation}
Then we have  
 \begin{lem} \label{lem:IP} 
 For any $k\in\N$, we have the following formal finite-order expansion: 
 \begin{equation*}
\begin{split}
& \int_{x<y} e^{\varphi(y)-\varphi(x)} g(y)  h(x) \dx\dy
=  \sum_{\ell=1}^{k} b_\ell  
+  b^{\geq k+1},  
\end{split}\end{equation*}
  where  
{\small\begin{align*}
&b_\ell=\frac{1}{\tau^\ell} \int_{\R} (D_+^{m_\ell} g)( D_-^{n_\ell}  h) \dy
=  \int_{\R} \Bigl((\frac{D_+}{\tau})^{m_\ell} \frac g\tau\Bigr)
\Bigl( (\frac{D_-}{\tau})^{n_\ell}  h\Bigr) \dy,
\, \, m_\ell+n_\ell+1=\ell, 
\\
&b^{\geq k+1}(z)=  
\frac{1}{\tau^k} \int_{x<y}  e^{\varphi(y)-\varphi(x)}  (D_+^m g)(y)
  (D_-^n  h)(x) \dx\dy
  \\
  &\qquad
  \qquad
  = \int_{x<y}  e^{\varphi(y)-\varphi(x)}  \Bigl((\frac{D_+}{\tau})^{m} g\Bigr)(y)
\Bigl((\frac{D_-}{\tau})^{n} h\Bigr)(x) \dx\dy, 
\, m+n=k.
\end{align*}   }
 \end{lem}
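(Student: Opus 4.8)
The plan is to prove this identity by induction on $k\in\N$, the single inductive step being exactly the integration by parts already carried out in the display just above the statement. Abbreviate, for decaying functions $g,h$,
\[
\mathcal{J}(g,h):=\int_{x<y} e^{\varphi(y)-\varphi(x)} g(y)h(x)\dx\dy ,
\]
and recall $D_\pm=q_4\pm\d_x$ from \eqref{DM} together with $\varphi(x)=-\tau x+\int_0^x q_4$ on the imaginary axis \eqref{ImaginAxis}. Integrating by parts once in $y$ (using $\frac{d}{dy}\bigl(-\frac1\tau e^{-\tau(y-x)}\bigr)=e^{-\tau(y-x)}$) or once in $x$ (using $\frac{d}{dx}\bigl(\frac1\tau e^{-\tau(y-x)}\bigr)=e^{-\tau(y-x)}$), and noting that the diagonal boundary term at $y=x$ contributes exactly $\frac1\tau\int_\R gh$ while the boundary terms at $y=+\infty$ and $x=-\infty$ vanish (there $e^{-\tau(y-x)}$ decays exponentially and $|e^{\int_x^y q_4}|=e^{\Re\int_x^y q_4}$ stays bounded by \eqref{smallq}, with $g,h$ decaying), I obtain the two elementary identities
\[
\mathcal{J}(g,h)=\frac1\tau\int_{\R} gh\dx+\frac1\tau\,\mathcal{J}(D_+ g,h)
=\frac1\tau\int_{\R} gh\dx+\frac1\tau\,\mathcal{J}(g,D_- h).
\]
These are precisely the two lines displayed above the lemma.

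First I would dispose of the base case $k=1$: the first identity gives $\mathcal{J}(g,h)=b_1+b^{\geq2}$ with $b_1=\frac1\tau\int_\R gh$ (so $m_1=n_1=0$ and $m_1+n_1+1=1$) and $b^{\geq2}=\frac1\tau\mathcal{J}(D_+^{1}g,D_-^{0}h)$ (so $1+0=1=k$), which is the asserted form. For the inductive step, assume $\mathcal{J}(g,h)=\sum_{\ell=1}^k b_\ell+b^{\geq k+1}$ with $b^{\geq k+1}=\tau^{-k}\mathcal{J}(D_+^{m}g,D_-^{n}h)$ and $m+n=k$. Applying one of the two elementary identities to $\mathcal{J}(D_+^{m}g,D_-^{n}h)$ — say pushing a derivative onto the first slot —
\[
\mathcal{J}(D_+^{m}g,D_-^{n}h)=\frac1\tau\int_{\R}(D_+^{m}g)(D_-^{n}h)\dy+\frac1\tau\,\mathcal{J}(D_+^{m+1}g,D_-^{n}h),
\]
and multiplying by $\tau^{-k}$, the first term is $b_{k+1}=\tau^{-(k+1)}\int_\R(D_+^{m}g)(D_-^{n}h)$ with $m_{k+1}=m$, $n_{k+1}=n$, hence $m_{k+1}+n_{k+1}+1=k+1$, while the second term is the new remainder $b^{\geq k+2}=\tau^{-(k+1)}\mathcal{J}(D_+^{m+1}g,D_-^{n}h)$ with $(m+1)+n=k+1$. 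This is exactly the statement with $k$ replaced by $k+1$, closing the induction.

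Two points deserve a remark rather than constituting genuine obstacles. At each integration by parts one is free to differentiate either factor — $D_+$ onto $g$ or $D_-$ onto $h$ — and any sequence of such choices produces a valid expansion; this is the freedom recorded by the otherwise unconstrained pair $(m_\ell,n_\ell)$ (subject only to $m_\ell+n_\ell+1=\ell$), and in the estimates of Subsection \ref{subss:high} and Subsection \ref{subs:Low} one selects the distribution best adapted to the available $l^p_\tau DU^2$ bounds on the iterates $D_\pm^j g$, $D_\pm^j h$. Moreover, since the assertion is a \emph{formal} finite-order expansion, the only thing actually at stake is the legitimacy of a single integration by parts, which is transparent for $q-1\in\cS$; for general $q\in X^s$ the terms $b_\ell$ and $b^{\geq k+1}$ are exactly the objects to be bounded in those later subsections, so no convergence question arises at this stage. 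Accordingly I do not expect a real obstacle here: the one thing to do carefully is to keep the bookkeeping $m_\ell+n_\ell+1=\ell$ (and $m+n=k$ in the remainder) synchronised with each integration by parts.
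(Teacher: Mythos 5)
Your proof is correct and is essentially the paper's argument: the paper displays the single integration by parts (yielding the two identities $\mathcal{J}(g,h)=\frac1\tau\int gh+\frac1\tau\mathcal{J}(D_+g,h)=\frac1\tau\int gh+\frac1\tau\mathcal{J}(g,D_-h)$) immediately before the lemma and leaves the iteration implicit, which you have simply formalised as an induction on $k$ with the same bookkeeping $m_\ell+n_\ell+1=\ell$, $m+n=k$.
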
  
In order to study the applications of the operators $D_{\pm}$ on $q_2, q_3$,
we  make use of the structures and the preliminary estimates for $q_2, q_3, q_4, P$ derived in Lemma \ref{lem:C0}.
   Recall $\bq=(|q|^2-1, q')$ and   $Q=\frac1\tau (|q|^2-1, q', \bar q')$.
  In the following we will be flexible in the notations concerning $Q$ in the sense that  the notation $Q\cdots Q$ will be understood as one element in the set $\{Q^{\alpha_1}\cdots Q^{\alpha_n}\,|\, \alpha_\beta=1,2,3\}$.
 
 Notice   that $q_2, q_3, q_4$ are, up to the multiplication by $\frac1\omega q$, $\frac1\omega\bar q$, $\frac{1}{\omega^{-2}|q|^2+1}$, $\omega\in[\frac\tau2,\tau)$, linear combinations of components of $Q$, and the applications (perhaps several times) of the operators $D_+, \d_x$ or of the multiplication operators  $\mathcal{M}_{q_\kappa}$   on $q_{\kappa'}$, $\kappa, \kappa'=2,3,4$   are, up to the multiplication by the polynomials  in $\frac1\omega q, \frac1\omega\bar q, \frac{1}{\omega^{-2}|q|^2+1}$, 
$$
\hbox{applications of the derivative }\d_x\hbox{ on }Q, Q', Q'',\cdots
\hbox{ or the multiplication of }  Q.
$$

For notational simplicity, we introduce the set $O_{M}$, $M\geq 1$, which concerns $(M-1)$-times applications  of the operators $\frac1\tau D_{\pm}$ or $\frac{1}{\tau}\d_x$ or $\cM_{\frac1\tau Q}$ on $Q$, as follows (noticing that $O_{1}=O$ defined in \eqref{O}):
{\small\begin{equation}\label{OM}\begin{split}
O_{M}=\Bigl\{
h_{(M,\alpha)}=P\cdot \Bigl(\prod_{\gamma=1}^{\alpha-1} \frac{Q^{(\ell_\gamma)}}{\tau^{1+\ell_\gamma}}\Bigr) \frac{Q^{(\ell_{\alpha})}}{\tau^{\ell_\alpha}}
\,\Big|\, P \hbox{ is any polynomial of form \eqref{P}},
\\
\qquad\alpha=1,\cdots,M,
\quad \ell_1+\cdots+\ell_\alpha+\alpha=M \Bigr\}.
\end{split}\end{equation}}
In the following  $h_{(M,\alpha)}$ will always denote an element in $O_M$ which is homogeneous of degree $\alpha$ in $Q$ and sometimes we will denote simply $h_{(M)}\in O_M$ without pointing out the precise homogeneity. 
Then the operators $ \frac1\tau D_{\pm}, \frac1\tau\d_x, \cM_{\frac1\tau Q}$ map $O_M$ to $O_{M+1}$ and $\cM_{\frac{Q^{(m)}}{\tau^{1+m}}}$ maps $O_M$ to $O_{M+m}$.

We can rewrite the finite expansion in Lemma \ref{lem:IP}   with   $\bar g=h=q'$ by use of the notations  $h_{(m)}\in O_m$ as follows:
{\small\begin{align*}\label{expan:ex} 
&\frac{1}{\tau^2}\int_{x<y}e^{\varphi(y)-\varphi(x)} \bar q'(y) q'(x)\dx\dy
=\sum_{\ell=1}^k a_\ell+a^{\geq k+1},  \hbox{where}
\\
& a_\ell
=\int_{\R}h_{(\ell+1, \alpha)}\dx,
\quad\alpha\geq2,
\\
& a^{\geq k+1}
=\int_{x<y}e^{\varphi(y)-\varphi(x)} h_{(m)}(y)h_{(n)}(x)\dx\dy,
\, m+n=k+2.
\end{align*}}
 Motivated by this finite expansion formula, we derive the following estimates.
  \begin{lem}\label{lem:LP}  Assume the same hypothesis in Lemma \ref{lem:C0}.
  Recall $c_\tau$, $C_\tau$, $E^{s}_{\tau}$ defined in \eqref{cq}, \eqref{Es}.
  Let   $O_{m}$ be the set defined in \eqref{OM}. 
 
 Then for any $M\geq 1$, there exists a constant $C(M)$  such that  the following holds for any $h_{(M,\alpha)}\in O_{M}$ homogeneous of degree $\alpha$, $\alpha\in [1,M]$ in $Q$:
{\small\begin{equation}\label{hm}
\|h_{(M,\alpha)}\|_{l^p_\tau DU^2}
\leq C(M) C_\tau \sum_{ \ell_1+\cdots+\ell_\alpha=M-\alpha, \, \,\sup_{\beta}\ell_\beta\leq [M/2]}^{M-1}
\prod_{\gamma=1}^{\alpha}
\bigl\|\frac{Q^{(\ell_\gamma)}}{\tau^{\ell_\gamma}}\bigr\|_{l^{p\alpha}_\tau DU^2}.
\end{equation} } 
Furthermore, the following estimates hold when $k=[2s]\leq 2s$, $\alpha\geq 2$:
 {\small\begin{equation}\label{aell}\begin{split}
 \tau^{\ell+2}\Bigl|\int_{\R}h_{(\ell+1,\alpha)}\dx\Bigr|
 &\lesssim  \tau_0^{\ell-2s+1}C_{\tau_0}c_{\tau_0}^{\alpha-2}(E^{s}_{\tau_0})^2,
 \,\,\tau\geq\tau_0, 
\,\,  1\leq \ell\leq k-1,
 \end{split}\end{equation}}  
{\small\begin{equation}\label{Jk}\begin{split} 
&\int^\infty_{\tau_0}(\tau^2-\tau_0^2)^{s-1} 
 \Bigl|\tau^2 \int_{x<y} \bigl( e^{\varphi(y)-\varphi(x)}-e^{-\tau(y-x)}\bigr)
  h_{(m,\alpha_1)}(y) h_{(n,\alpha_2)}(x)\dx\dy\Bigr|\dtau
\\
&\lesssim \frac{1}{|\sin(2\pi s)|}C_{\tau_0}^2\sum_{\alpha=\alpha_1+\alpha_2-1}^{k-1}c_{\tau_0}^{\alpha}(E^{s}_{\tau_0})^2,
\quad m+n=k, 
\end{split}\end{equation}}
{\small\begin{equation}\label{Ek}\begin{split} 
&\int^\infty_{\tau_0}(\tau^2-\tau_0^2)^{s-1}  \Bigl|\tau^2 \int_{t_1<\cdots<t_{n}} 
\prod_{i=1}^{n-1}e^{\delta_i(\varphi(t_{i+1})-\varphi(t_i))}
 h_{(m_1, \alpha_1)}(t_1)\cdots h_{(m_n,  \alpha_n)}(t_{n}) dt\Bigr|\dtau
\\
&\lesssim \frac{1}{|\sin(2\pi s)|}C_{\tau_0}^n\sum_{\alpha=\alpha_1+\cdots+\alpha_n-2}^{k+n-3}c_{\tau_0}^{\alpha} (E^{s}_{\tau_0})^2,
\quad 1\leq\delta_i\leq n/2,
\quad m_1+\cdots+m_n=k+1.
\end{split}\end{equation}} 
   \end{lem}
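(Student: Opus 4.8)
\textbf{Plan of proof for Lemma \ref{lem:LP}.}

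The plan is to reduce everything to the two basic building blocks already available: the pointwise/structural control of $q_2,q_3,q_4,P$ from Lemma \ref{lem:C0} (in particular $\|P\|_{l^\infty_\tau V^2}\lesssim C_\tau$ and $\|q_\kappa\|_{l^p_\tau DU^2}\lesssim \tau^{-1}C_\tau\|\bq\|_{l^p_\tau DU^2}$), the commutator/phase-shift estimates \eqref{product}, \eqref{fg:free}, \eqref{PhaseShift}, and the rescaled time-integration bounds of Propositions 5.13 and 6.2 of \cite{KT}, invoked exactly as in the proof of Proposition \ref{prop:smalls}. First I would prove the structural estimate \eqref{hm}. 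An element $h_{(M,\alpha)}\in O_M$ is, by the definition \eqref{OM}, a polynomial $P$ of the form \eqref{P} multiplied by a product of $\alpha$ factors of the shape $Q^{(\ell_\gamma)}/\tau^{\ell_\gamma}$ with $\sum\ell_\gamma+\alpha=M$. Using the algebra inequalities \eqref{product}: $\|fg\|_{DU^2}\leq 2\|f\|_{V^2}\|g\|_{DU^2}$ and $\|f(u)\|_{V^2}\le C(f',\|u\|_{L^\infty})\|u\|_{V^2}$, one peels off $P$ at the cost of one $C_\tau$ (this is exactly where $\|P\|_{l^\infty_\tau V^2}\lesssim C_\tau$ enters), and distributes the remaining $\alpha$ $DU^2$-type factors via a localized Hölder inequality $\ell^{p\alpha}\times\cdots\times\ell^{p\alpha}\hookrightarrow\ell^p$ over the partition of unity \eqref{unity}; the condition $\sup_\beta\ell_\beta\le[M/2]$ just records that at most one derivative-burdened factor can be the "heavy" one and the rest are distributed by Leibniz, which produces the stated finite sum over $\ell_1+\cdots+\ell_\alpha=M-\alpha$. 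This step is essentially bookkeeping once one accepts the $U^2/V^2/DU^2$ calculus.

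Next, the zero-th order bound \eqref{aell}: here $\int_\R h_{(\ell+1,\alpha)}\dx$ with $\ell\le k-1=[2s]-1$. I would first pass to the rescaled variables via \eqref{Qtau}: $\|f\|_{l^p_\tau DU^2}=\|f_{\tau_0}\|_{l^p_{\tilde\tau}DU^2}$, $E^s_{\tau_0}=\tau_0^{s-\frac12}\|\bq_{\tau_0}\|_{H^{s-1}}$, then estimate $|\int h_{(\ell+1,\alpha)}|$ by \eqref{hm} with $p=1$ together with the embedding $\|f\|_{L^1}\lesssim\|f\|_{l^1_\tau U^2}$ and \eqref{fg:free}, producing a product of $\alpha$ norms of rescaled $Q$-derivatives, which after using $H^{s-1}$-interpolation (distributing $s-1$ derivatives over $\alpha$ factors, with all but two factors absorbed into $c_{\tau_0}$) yields the claimed power count $\tau_0^{\ell-2s+1}C_{\tau_0}c_{\tau_0}^{\alpha-2}(E^s_{\tau_0})^2$; the exponent $\ell-2s+1$ comes from counting $\tau$-powers against the $\tau^{\ell+2}$ prefactor, and $\ell\le k-1\le 2s-1$ guarantees this is a genuine (convergent) remainder weight. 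For \eqref{Jk} and \eqref{Ek} I would combine the commutator expansion of $e^{\varphi(y)-\varphi(x)}-e^{-\tau(y-x)}$ in terms of $e^{\int q_4}-1$ exactly as in the proof of Lemma \ref{lem:J} (using the claim \eqref{Estimate:q4,chi}), apply \eqref{hm} to replace each $h_{(m_i,\alpha_i)}$ by the corresponding rescaled $Q$-products, and then invoke Proposition 6.2 of \cite{KT} for the $\tau$-integral $\int_{\tau_0}^\infty(\tau^2-\tau_0^2)^{s-1}\tau^2(\cdots)\dtau$; the $1/|\sin(2\pi s)|$ comes from the analytic continuation/Beta-function evaluation of that integral exactly as the $1/([2s]+1-2s)$-type factors appeared in Proposition \ref{prop:smalls}. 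The sum over $\alpha$ from $\alpha_1+\cdots-1$ (resp. $-2$) up to $k-1$ (resp. $k+n-3$) records the range of possible homogeneities after Leibniz expansion of the commutator and the derivative counting, and each term carries $c_{\tau_0}^\alpha(E^s_{\tau_0})^2$ by distributing $H^{s-1}$-regularity over exactly two of the factors.

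The main obstacle I expect is the careful accounting in \eqref{Ek}: the iterated integral has $n$ points, $n-1$ exponential weights with arbitrary indices $\delta_i\in\{1,\dots,n/2\}$, and the finite-expansion prefactor $\tau^2$ must be matched against a total of $m_1+\cdots+m_n=k+1$ derivative slots distributed among the $h_{(m_i,\alpha_i)}$; getting the summation range of $\alpha$ and the power of $c_{\tau_0}$ exactly right, while keeping the $\tau$-integral convergent and isolating precisely the factor $1/|\sin(2\pi s)|$, requires threading the rescaling \eqref{Qtau}, the localized Hölder step, and Proposition 6.2 of \cite{KT} in the correct order — any sloppiness loses a power of $\tau_0$ or a factor of $c_{\tau_0}$. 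Everything else is a routine (if lengthy) application of the $DU^2$-calculus already set up in Subsections \ref{subs:norm}--\ref{subs:FundEst}, so I would present \eqref{hm} in detail and then indicate the integration-by-parts/rescaling mechanism for \eqref{aell}--\eqref{Ek}, referring to \cite{KT} for the parts that transfer verbatim after rescaling.
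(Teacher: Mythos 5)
Your plan follows essentially the same route as the paper's proof: \eqref{hm} is obtained by peeling off $P$ via $\Vert P\Vert_{l^\infty_\tau V^2}\lesssim C_\tau$ and the algebra estimates \eqref{product}, \eqref{fg:free}, with H\"older in the localization index producing the $l^{p\alpha}$ factors, and \eqref{aell}--\eqref{Ek} are reduced, after the rescaling \eqref{Qtau} and the commutator/weight estimates of Lemma \ref{lem:J} and \eqref{Estimate:q4,chi}, to the time-integration propositions of \cite{KT} (the paper cites Propositions 6.4 and 6.5 there rather than 6.2, but the mechanism is the one you describe). The only loose point is your appeal to an $L^1\hookleftarrow l^1_\tau U^2$ embedding for \eqref{aell}; the paper instead pairs the single $DU^2$-factor against the $V^2$-product of the remaining factors, but this is a cosmetic difference and your overall argument is correct.
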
 
\begin{proof}
 The estimate \eqref{hm} comes from the estimates in \eqref{product}, \eqref{fg:free} and the estimate for $P$ in Lemma \ref{lem:C0}:
{\small\begin{align*}
\|h_{(M,\alpha)}\|_{l^p_\tau DU^2}
& \lesssim C_\tau \prod_{\gamma=1}^{\alpha-1} \Bigl\| \frac{Q^{(\ell_\gamma)}}{\tau^{1+\ell_\gamma}}\Bigr\|_{l^{\frac{p\alpha}{\alpha-1}}_\tau U^2}
\Bigl\| \frac{Q^{(\ell_{\alpha})}}{\tau^{\ell_\alpha}}\Bigr\|_{l^{p\alpha}_\tau DU^2},
\quad \ell_1+\cdots+\ell_\alpha=M-\alpha
\\
&\quad\lesssim C_\tau\sum_{M-\alpha\leq \ell_1+\cdots+\ell_\alpha\leq M-1} 
\prod_{\gamma=1}^{\alpha}
\Bigl\|\frac{Q^{(\ell_\gamma)}}{\tau^{\ell_\gamma}}\Bigr\|_{l^{p\alpha}_\tau DU^2}
\hbox{ by use of  \eqref{fg:free}}.
\end{align*}
In the above, by integration by parts we can always choose $\sup_{1\leq\beta\leq\alpha}\ell_\beta\leq [M/2]$.}

We now turn to the proof of \eqref{aell}.
We first bound
$\tau^{\ell+2}\Bigl|\int_{\R} h_{(\ell+1,\alpha)}\dx\Bigr|$, $\tau\geq\tau_0$, $\alpha\geq 2$ by
{\small\begin{align*}
  \tau_0^{\ell+2-\alpha}\Bigl| \int_{\R}P\cdot \Bigl(\prod_{\gamma=1}^{\alpha-1} \frac{\bq^{(\ell_\gamma)}}{\tau_0^{1+\ell_\gamma}}\Bigr) \frac{\bq^{(\ell_{\alpha})}}{\tau_0^{\ell_\alpha}}\dx\Bigr|
=  \tau_0^{\ell+2-\alpha}\Bigl| \int_{\R}P(\frac{\cdot}{\tau_0})\cdot  
\prod_{\gamma=1}^{\alpha}  \bq_{\tau_0}^{(\ell_\gamma)}    \dx\Bigr|, 
\quad f_\tau=\frac1\tau f(\frac\cdot\tau),
\end{align*}}
 for some $\ell_1+\cdots+\ell_\alpha=\ell+1-\alpha$. We can do as above for $h_{(M,\alpha)}$ to derive that
{\small\begin{align*}
&\tau^{\ell+2}\Bigl|\int_{\R} h_{(\ell+1,\alpha)}\dx\Bigr|\lesssim C_{\tau_0}\tau_0^{\ell+2-\alpha}
\sum_{\ell+1-\alpha\leq \ell_1'+\cdots+\ell_\alpha'\leq \ell, \,\,\ell_\beta'\leq[(\ell+1)/2]} 
\prod_{\gamma=1}^{\alpha}
 \bigl\| \bq_{\tau_0}^{(\ell_\gamma')} \bigr\|_{l^{\alpha}_{1} DU^2} .
\end{align*}} 
By the proof of Proposition 6.5 \cite{KT}, the pointwise bound \eqref{aell} holds: {\small\begin{equation*}\begin{split}
 \tau^{\ell+2}\Bigl|\int_{\R}h_{(\ell+1,\alpha)}\dx\Bigr|
 &\lesssim  C_{\tau_0} \tau_0^{\ell+2-\alpha} 
 \|\bq_{\tau_0}\|_{l^2_1 DU^2}^{\alpha-2}   
 \|\bq_{\tau_0}\|_{H^{s-1}}^2,
 \quad  1\leq \ell\leq 2s-1
 \\
& \sim \tau_0^{\ell-2s+1}C_{\tau_0}c_{\tau_0}^{\alpha-2}(E^{s}_{\tau_0})^2,
\hbox{ by the scaling property \eqref{Qtau}}.
 \end{split}\end{equation*}}

   We now consider the following integral
{\small\begin{align*}
\tau^2\int_{x<y}e^{\varphi(y)-\varphi(x)} h_{(m)}(y) h_{(n)}(x) \dx\dy, \, m+n=k+1,
\end{align*}}
which, by  the estimate for $\<XY>$ in Lemma \ref{lem:J} and   Lemma \ref{lem:LP}, is bounded by
{\small\begin{align*}
\tau^2  C_\tau^2\sum_{\alpha=2}^{k+1}
\sum_{\ell_1+\cdots+\ell_\alpha=k+1-\alpha}^{k-1}
\prod_{\gamma=1}^\alpha 
\bigl\|\frac{Q^{(\ell_\gamma)}}{\tau^{\ell_\gamma}}\bigr\|_{l^{\alpha}_\tau DU^2}.
\end{align*}}  
Therefore, exactly as in the proof of Proposition \ref{prop:smalls}, we do change of variable $\tau\to \tilde\tau=\frac{\tau}{\tau_0}$ and make use of the scaling property \eqref{Qtau}, such that
{\small\begin{align*}
&\int^\infty_{\tau_0}(\tau^2-\tau_0^2)^{s-1}  \Bigl|\tau^2 \int_{x<y} e^{\varphi(y)-\varphi(x)} h_{(m)}(y) h_{(n)}(x)\dx\dy\Bigr| \dtau\lesssim \tau_0^{2s-1} \int^\infty_1 (\tilde\tau^2-1)^{s-1} 
\\
&\cdot
C_{\tau_0}^2\sum_{\alpha=2}^{k+1} 
\sum_{\ell_1+\cdots+\ell_\alpha=k+1-\alpha,\,\,\ell_\beta\leq[k/2]}^ {k-1} 
\Bigl(\prod_{\gamma=1}^{\alpha-2}
\Bigl\|\frac{Q_{\tau_0}^{(\ell_\gamma)}}{\tilde\tau^{\ell_\gamma}}\Bigr\|_{l^\alpha_{\tilde\tau}DU^2}\Bigr)
\Bigl\|\frac{\bq_{\tau_0}^{(\ell_{\alpha-1})}}{\tilde\tau^{\ell_{\alpha-1}}}\Bigr\|_{l^\alpha_{\tilde\tau}DU^2} 
\Bigl\|\frac{\bq_{\tau_0}^{(\ell_\alpha)}}{\tilde\tau^{\ell_\alpha}}\Bigr\|_{l^\alpha_{\tilde\tau}DU^2} d\tilde\tau,
\end{align*}}
which, by Propositions 6.4 and 6.5 in \cite{KT}, is bounded by 
{\small\begin{align*}
\frac{\tau_0^{2s-1}  
C_{\tau_0}^2}{|\sin(2\pi s)|} \sum_{\alpha=2}^{k+1}  
 \| \frac1{\tau_0}\bq_{\tau_0}\|_{l^\alpha_{1}DU^2}^{\alpha-2}
 \| \bq_{\tau_0}\|_{H^{s-1}}^2
 =\frac{C_{\tau_0}^2\sum_{\alpha=0}^{k}c_{\tau_0}^\alpha}{2s-[2s]}    
(E^{s}_{\tau_0})^2,
 \quad\hbox{if } \frac{k}{2}<s<\frac {k+1}2.
\end{align*}}

Similarly, by use of the estimates for $\<XBY>, \<XBBY>_{2j}$ in Lemma \ref{lem:J} and Lemma \ref{lem:LP}  we derive the estimates  \eqref{Jk}
and \eqref{Ek}  respectively.  \end{proof}

\subsubsection{Estimates for low order terms $\tilde T_3,  \tilde T_{2j}$}\label{subs:TEst} 
 In this section we are going to do finite expansions in Lemma \ref{lem:IP} to the low order terms  $\tilde T_3, \tilde T_{2j}$, $2\leq j<s$, $s>\frac32$, 
keeping in mind the estimates in Lemma \ref{lem:LP}.


 \begin{prop}\label{prop:larges}
Assume \eqref{ImaginAxis} and \eqref{smallq} and let $s>\frac32$ be away from half integers such that $k:=[2s]\geq 3$.
Then
{\small    \begin{itemize}
 \item  we can expand $\tau^2\tilde T_3(i\sigma)$ until $k$th-order: 
\begin{equation}\label{EstimateT3}\begin{split} 
&\tau^2\tilde T_3(i\sigma)=\sum_{\ell=3}^{k} \cH_{3}^\ell\tau^{-\ell+1}
+\cH_3^{>k}(i\sigma),
\quad \cH_3^\ell \hbox{ independent of }\tau,
\quad \sigma\rightarrow\infty, 
\\
&\hbox{such that } \tau_0^{2s-\ell} |\cH_{3}^\ell| 
\leq  C C_{\tau_0} \sum_{\alpha=1}^{k-2} c_{\tau_0}^\alpha   (E^{s}_{\tau_0})^2,
 \\
&\hbox{ and }\int^\infty_{\tau_0}(\tau^2-\tau_0^2)^{s-1}
 \bigl|\cH_3^{>k}(i\sigma)  \bigr| \dtau
 \leq  \frac{C  C_{\tau_0}^3 \sum_{\alpha=1}^{k-1} c_{\tau_0}^\alpha }{|\sin(2\pi s)|} (E^{s}_{\tau_0})^2;
\end{split}\end{equation} 

\item for $2\leq j<s$,   we can expand  $\tau^2\tilde T_{2j}(i\sigma)$ until $k$th-order:
\begin{equation}\label{EstimateTildeT2j}\begin{split}
&\tau^2\tilde T_{2j}(i\sigma)
=\sum_{\ell=2j}^k \cH_{2j}^\ell\tau^{-\ell+1}+\cH_{2j}^{>k}(i\sigma),
\quad \cH_{2j}^\ell \hbox{ independent of }\tau,
\hbox{ as }\sigma\rightarrow\infty,
\\
&\hbox{such that } \tau_0^{2s-\ell}|\cH_{2j}^\ell|\leq  C C_{\tau_0} \sum_{\alpha=2j-2}^{k-2} c_{\tau_0}^\alpha  (E^{s}_{\tau_0})^2,
 \\
&\hbox{ and }\int^\infty_{\tau_0}(\tau^2-\tau_0^2)^{s-1}
 \bigl|\cH_{2j}^{>k}(i\sigma)  \bigr| \dtau \leq \frac{C C_{\tau_0}^{2j} \sum_{\alpha=2j-2}^{k-1} c_{\tau_0}^\alpha}{ |\sin(2\pi s)|} (E^{s}_{\tau_0})^2. 
\end{split}\end{equation} 
\end{itemize}}
\end{prop}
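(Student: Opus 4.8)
The plan is to run, in parallel for $\tilde T_3$ and for $\tilde T_{2j}$ with $2\le j<s$, the integration-by-parts machinery of Lemma~\ref{lem:IP} applied to each of the basic integrals that make up these terms, using Lemma~\ref{lem:LP} to bound all the resulting pieces. First I would recall from Lemma~\ref{Lem:IA} that $\tilde T_3(i\sigma)$ is a finite linear combination of the four types of cubic integrals in \eqref{ReT3}, and that $\tilde T_{2j}(i\sigma)$ is a finite linear combination of the connected symbols $\<XBBY>_{2j}$ in \eqref{tildeS}. For each such integral I would insert the factor $\tau^2$ and apply Lemma~\ref{lem:IP} with $k=[2s]$ to every ordered integral $\int_{x<y}e^{\varphi(y)-\varphi(x)}g(y)h(x)\dx\dy$ that appears, where $g,h$ are built from $q_2,q_3$ (and, in the $\<XBBY>$ case, with several nested exponentials, so that the iterated version of Lemma~\ref{lem:IP} is needed, one phase at a time). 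This produces a finite sum of ``boundary'' terms $b_\ell=\frac1{\tau^\ell}\int_\R(D_+^{m_\ell}g)(D_-^{n_\ell}h)\dy$ with $m_\ell+n_\ell+1=\ell$, $3\le\ell\le k$ (respectively $2j\le\ell\le k$), plus a remainder $b^{\ge k+1}$ still of the ordered-integral form. The coefficients $\cH_3^\ell$, $\cH_{2j}^\ell$ are then read off as $\tau^{\ell-1}$ times the collected $b_\ell$'s — these are manifestly $\tau$-independent once one absorbs all powers of $\tau$ into the $Q$-normalisation, and they land in the set $O_{\ell+1}$ of Lemma~\ref{lem:LP} (the homogeneity $\alpha$ in $Q$ being $\ge1$ for $\tilde T_3$, $\ge 2j-2$ for $\tilde T_{2j}$, after accounting for the extra $(|q|^2-1)/\tau^2$ or $q'/\tau$ factors that distinguish the various terms in \eqref{ReT3}).

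Next I would invoke Lemma~\ref{lem:LP} directly. The pointwise bound \eqref{aell} gives $\tau^{\ell+2}|\int_\R h_{(\ell+1,\alpha)}\dx|\lesssim \tau_0^{\ell-2s+1}C_{\tau_0}c_{\tau_0}^{\alpha-2}(E^s_{\tau_0})^2$, which after dividing by $\tau^2$ and restoring the index bookkeeping is exactly the claimed estimate $\tau_0^{2s-\ell}|\cH_3^\ell|\le CC_{\tau_0}\sum_{\alpha=1}^{k-2}c_{\tau_0}^\alpha(E^s_{\tau_0})^2$ (and the analogous one for $\cH_{2j}^\ell$ with the sum starting at $\alpha=2j-2$); the sum over the finitely many splittings $m_\ell+n_\ell+1=\ell$ and over the finitely many homogeneities is absorbed into the constant $C$. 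For the remainder terms $\cH_3^{>k}$ and $\cH_{2j}^{>k}$, which are of the form $\tau^2$ times an ordered integral (or iterated ordered integral) with integrands in $O_m$, I would apply \eqref{Jk} for the cubic case — with $g,h$ replaced by the appropriate $h_{(m,\alpha_1)}$, $h_{(n,\alpha_2)}$, $m+n=k$ — and \eqref{Ek} for the $\<XBBY>_{2j}$ case with $n=2j$. These give precisely the $\tau_0$-integrated bounds $\frac{1}{|\sin(2\pi s)|}C_{\tau_0}^N\sum_\alpha c_{\tau_0}^\alpha (E^s_{\tau_0})^2$ with $N=3$ (resp.\ $N=2j$), which is the final inequality in each bullet. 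One bookkeeping point to check carefully: after the $\tau^2$ factor is cancelled against the $\tau^{-2}$ from $\tau^2\tilde T_3=\tau^2\cdot O(\tau^{-2})$, the leading surviving boundary term for $\tilde T_3$ is $\ell=3$ and for $\tilde T_{2j}$ is $\ell=2j$, consistent with Remark~\ref{rmk:a4} which says $\cH_3^2=0$.

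The main obstacle I expect is purely combinatorial rather than analytic: tracking which element of $O_{\ell+1}$ (and with which $Q$-homogeneity $\alpha$) each boundary term $b_\ell$ belongs to, given that $D_\pm=q_4\pm\partial_x$ mixes multiplication by $q_4$ (itself a $P$ times a component of $Q$) with differentiation, and given that the original integrands $q_2,q_3$ already carry $P$-factors and a possible extra $(|q|^2-1)/\tau^2$ coming from the structure of \eqref{ReT3}. The cleanest route is to first establish, once and for all, that $\frac1\tau D_\pm$, $\frac1\tau\partial_x$ and $\mathcal M_{\frac1\tau Q}$ map $O_M\to O_{M+1}$ and $\mathcal M_{Q^{(m)}/\tau^{1+m}}$ maps $O_M\to O_{M+m}$ (this is stated just after \eqref{OM}), so that each application of Lemma~\ref{lem:IP} raises the $O$-index by one in a controlled way; then the homogeneity count in $Q$ and the required number of $c_{\tau_0}$-factors follow mechanically. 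The $\<XBBY>_{2j}$ case additionally requires commuting the integration by parts past the internal exponentials $e^{\delta_i(\varphi(t_{i+1})-\varphi(t_i))}$ with $\delta_i\in\{1,\dots,j\}$; since $\partial_x\varphi=-\tau+q_4$ and $\delta_i\le j$, each such commutation produces either a harmless $-\delta_i\tau$ (which combines with the $1/\tau$ weight) or a $\delta_i q_4$ (another $O$-raising multiplication), so the same scheme applies with $n=2j$ in \eqref{Ek}. Once this is set up, gathering the $\tau$-independent pieces into $\cH^\ell$ and the rest into $\cH^{>k}$, and reading off the three displayed estimates from \eqref{aell}, \eqref{Jk}, \eqref{Ek} after the change of variables $\tilde\tau=\tau/\tau_0$ and the scaling identities \eqref{Qtau}, completes the proof.
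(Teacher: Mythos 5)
Your proposal follows the paper's proof essentially step for step: expand each constituent integral of $\tilde T_3$ (the four types in \eqref{ReT3}) and of the connected integrals \eqref{tildeS} making up $\tilde T_{2j}$ via the integration by parts of Lemma \ref{lem:IP} up to order $k=[2s]$, identify the boundary terms with elements of $O_{\ell+1}$, and then invoke \eqref{aell} for the coefficients and \eqref{Jk}, \eqref{Ek} for the remainders after the rescaling $\tilde\tau=\tau/\tau_0$. The one point you get wrong is the claim that the boundary terms $b_\ell$ are ``manifestly $\tau$-independent once one absorbs all powers of $\tau$ into the $Q$-normalisation'': they are not, because $q_2,q_3,q_4$ and the polynomials $P$ of \eqref{P} carry $\omega=\Im\zeta=\sigma+\tau/2$ in their coefficients, and $\omega=\tau+O(\tau^{-1})$ only asymptotically. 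To extract genuinely $\tau$-independent $\cH_3^\ell$ and $\cH_{2j}^\ell$ one must additionally expand this $\omega$-dependence in powers of $\tau^{-1}$ and keep only the leading order; the tail of that expansion produces extra contributions to $\cH^{>k}$ of the form $\tau\int_{\R}h_{(k,\alpha)}\dx$, which the paper points out explicitly and controls again by \eqref{aell}. This is a bookkeeping correction rather than a change of strategy; the remainder of your argument, including the mapping properties $\frac1\tau D_\pm, \frac1\tau\d_x, \cM_{\frac1\tau Q}:O_M\to O_{M+1}$ and the treatment of the nested exponentials via \eqref{Ek} with $n=2j$, matches the paper's (admittedly sketchy) proof.
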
 
\begin{proof}   We will follow exactly the procedure in Section 6 \cite{KT} and hence be   sketchy.

Recall that $\tilde T_3$ is linear combinations of integrals of type in \eqref{ReT3}.
 We do integration by parts as in Lemma \ref{lem:IP} to the integrals  in \eqref{ReT3}   from the left and right sides alternatively to expand $\tau^2\tilde T_3(i\sigma)$ until $(k-1)$th-order:  We do integration by parts $(k-1)$-times to the integral $\<XBY>$ while $(k-2)$-times to the last two integrals in \eqref{ReT3},
 and we also  notice the expansion $\Im\zeta=\omega=\tau+O(\tau^{-1})$ as $\tau\rightarrow\infty$, to arrive at the expansion in \eqref{EstimateT3}.

Here, $\cH_{3}^\ell$,  $3\leq \ell\leq k$ is the leading order (in terms of $\tau^{-1}$)  of a  linear combination of  integrals of the following forms  with bounded coefficients:  
{\small\begin{align*}  
&  \tau^2\int_{\R}\Bigl( (D_+^{m_\ell}q_2)(D_-^{n_\ell}q_3)
-(\d_x^{m_\ell} q_2)\bigl((-\d_x)^{n_\ell}q_3\bigr)\Bigr)\dx, 
 \quad m_\ell+n_\ell\leq\ell-2,
\\
&   \tau^3\int_{\R} \bigl( \frac{(|q|^2-1)\hbox{ or }q'\hbox{ or }\bar q'}{\tau^2}h_{1}\bigr)^{(m_\ell-1)} h_{2}^{(n_\ell)}\dx,
\quad h_1, h_2\in O \hbox{ defined in \eqref{O}}.
\end{align*}}
Then  $\cH_{3}^\ell$ is linear combination of integrals of type $\tau^{m+1}\int_{\R} h_{(m,\alpha)}\dx$, $3\leq m\leq\ell$ with $h_{(m,\alpha)}\in O_{m}$ homogeneous of degree $\alpha\geq 3$ in $Q$.
Hence  the estimate for $\cH_3^\ell$ in  \eqref{EstimateT3} follows from \eqref{aell} in Lemma \ref{lem:LP}.

Here $\cH_{3}^{>k}$ is  linear combination of  integrals such as $\tau\int_{\R}h_{(k,\alpha)}\dx$, $\alpha\geq3$   (which appears still because of the expansion of $\omega$ in terms of $\tau$) and the following  integrals   with bounded coefficients, $m+n=k-1$, $m_1+m_2+m_3=k+1$ and $ h_{(m)}, h_{(m, \alpha)}\in O_m$:
{\small\begin{align*}   
&\tau^2\int_{x<y} 
\Bigl( (e^{\varphi(y)-\varphi(x)}-e^{-\tau(y-x)}) 
( \frac{1}{\tau^{m}} D_+^{m}q_2)(y) ( \frac{1}{\tau^{n}}D_-^{n}q_3)(x)\dx\dy,
\\
&
\tau^2 \int_{x<y} e^{-\tau(y-x)}
\Bigl( ( \frac{1}{\tau^{m}}D_+^{m}q_2)(y) ( \frac{1}{\tau^{n}}D_-^{n}q_3)(x) -( \frac{1}{\tau^{m}}\d_y^{m}q_2)(y)\bigl(  \frac{1}{\tau^{n}}(-\d_x)^{n}q_3\bigr)(x)
\Bigr)\dx\dy, 
\\
&\tau^2 \int_{x<y} e^{-\tau(y-x)}
 h_{(m+1,\alpha_m)}(y) h_{(n+1, \alpha_n)}(x)\dx\dy, 
 \quad \alpha_m+\alpha_n\geq 3,
\\
& \tau^2\int_{x<t<y} e^{-\tau(y-x)}
 h_{(m_1)}(y) h_{(m_2)}(t) h_{(m_3)}(x)\dx dt \dy.
\end{align*}   }
The estimate for $\cH_3^{>k}$ in \eqref{EstimateT3} then follows from Lemma \ref{lem:LP}. 

Similarly, since $\tilde T_{2j}$ is a linear combination of integrals $\<XBBY>_{2j}$ reading as \eqref{tildeS}, \eqref{EstimateTildeT2j} follows from Lemmas \ref{lem:IP} and \ref{lem:LP}.\end{proof}

 \subsection{The energies}\label{subs:energy} 
 We restrict ourselves on the imaginary axis \eqref{ImaginAxis}: $(\lambda,z)=(i\sigma, i\tau/2)\in\cR$.
 For any $q\in X^s$, $s>\frac12$, by \eqref{ctau,Etau}, there exists $\tau_0\geq C$ such that the smallness assumption \eqref{tau0}: $c_{\tau_0}=\frac{1}{\tau_0}\|\bq\|_{l^2_{\tau_0}DU^2}< \frac{1}{2C}$ holds.
Consequently the  condition \eqref{smallq}: $| |q|^2-1|\leq \frac{1}{64}{\tau_0}^2$ holds if $\tau_0, C$ above have been chosen large enough.
 Indeed, by view of \eqref{qLinfty} such that $\|q\|_{L^\infty}\lesssim 1+\tau_0 c_{\tau_0}^{1/2}+\tau_0 c_{\tau_0}$, \eqref{smallq} follows from \eqref{tau0} for large $C$ and $\tau_0$.
From now on we fix $\tau_0$.

 \subsubsection{The expansion of $G(i\tau/2)$}
 Let us first consider rigorously the expansion of the real part of the expansion of $\ln\Tc^{-1}(i\sigma)$  in Lemma \ref{Lem:IA}:
 \begin{equation}\label{Expan:ReTc}\begin{split}
\Re(4z^2\ln\Tc^{-1})|_{\lambda=i\sigma}
&=\Re(4z^2\tilde T_2)|_{\lambda=i\sigma}
+\Re(4z^2\tilde T_f)|_{\lambda=i\sigma}
\\
&+\sum_{j>s}\mathbf{1}_{j\geq 2}\Re(4z^2\tilde T_{2j})|_{\lambda=i\sigma},
\qquad \tilde T_f:=\tilde T_3+\sum_{j=2}^{[s]}\tilde T_{2j}.
 \end{split}\end{equation}

 Recall the expansion for $\Re(4z^2\tilde T_2)|_{\lambda=i\sigma}$ in \eqref{Estau0} with $N=[s-1]$:
{\small \begin{align*} 
& \Re(4z^2\tilde T_2)(i\sigma)= \sum_{l=0}^{N} (-1) ^l
\cH_2^{2l+2}\,\tau^{-2l-1}+ \cH_2^{>2N+2}(i\sigma),
\hbox{ as }\tau\rightarrow\infty,
\\
&\hbox{ with }
\tau_0^{2(s-1-l)}\cH_2^{2l+2}\leq (E^{s}_{\tau_0})^2,
\quad \cH_2^{>2N+2}(i\sigma)=o(\tau^{1-2s}).
 \end{align*}}
  If $s>\frac32$ is away from half integers, then by Proposition \ref{prop:smalls} and Proposition \ref{prop:larges} with $k=[2s]$ under the smallness assumption \eqref{tau0},  
 \begin{itemize}
 \item We can expand $\tilde T_f=\tilde T_3+\sum_{j=2}^{[s]}\tilde T_{2j}$ as
 {\small\begin{equation}\label{tildeTf}\Re(4z^2\tilde T_f)(i\sigma)
 =\sum_{\ell=3}^{k} \cH_{f}^\ell\tau^{-\ell+1}+ \cH_f^{>k}(i\sigma)
 \hbox{ as }\sigma\rightarrow\infty.\end{equation}}
 Here $\cH_{f}^\ell=-\Re( \cH_{3}^\ell+\sum_{j=2}^{[s]}  \cH_{2j}^{\ell})$ with
 $\cH_f^{2l+1}=0$ and  $\tau_0^{2(s-1-l)}|\cH_{f}^{2l+2}|\leq  Cc_{\tau_0} (E^{s}_{\tau_0})^2$,
 and $|\cH_f^{>k}(i\sigma)|
 \leq |\cH_3^{>k}(i\sigma)|
 +\sum_{j=2}^{[s]}|\cH_{2j}^{>k}(i\sigma)|=o(\tau^{1-2s})$;
 
 \item $\sum_{j>s} \bigl|4z^2\tilde T_{2j}(i\sigma)\bigr|=o(\tau^{1-2s})$.
 \end{itemize}
 
 \smallbreak
 
 To conclude, noticing that $k\geq 2N+2$ and $k>2N+2$ only if $k\in 2\Z+1$,
by   \eqref{Expan:ReTc} and \eqref{tildeTf} above,  we have the following expansion  for   $G(i\frac\tau2)=\Re(4z^2\ln\Tc^{-1})(i\sigma)$ when $s>\frac12$ away from half integers:
{\small\begin{equation}\label{Expansion:G}
  G(i\frac\tau2)
 =\Re(4z^2\ln\Tc^{-1})(i\sigma) 
 = \sum_{l=0}^{N} (-1)^l \cH^{2l+2}\tau^{-2l-1}+\cH^{>2N+2}(i\frac\tau2),
 \,\hbox{ as }\tau\rightarrow\infty,
 \end{equation}}
 where (noticing that $\cH^{k}=0$ if $k>2N+2$ such that $k\in 2\Z+1$)
\footnote{
If $s=m$ an integer then \eqref{Est:G} follows from the proof of Proposition \ref{prop:larges}; if $s=m+\frac12$, then we can replace $\cH_f^{>k}(i\sigma)$ in $\cH^{>2N+2}(i\tau/2)$ by  
(noticing $\cH_f^{>k}=\cH_f^{>k-1}$, $\cH_{f}^{k}=0$ when $k\in2\Z+1$)
{\small \begin{equation}\label{decomposition:Tf}
 \Bigl( (\cH_f^{>k})( i\sigma;Q) 
 -(\cH_f^{>k})( i\sigma; Q_{<\tau})\Bigr)
 + \bigl(\cH_f^{>k-1}( i\sigma;Q_{<\tau}) \bigr)
\end{equation}}
such that $\cH^{>2N+2}(i\tau/2)=o(\tau^{1-2s})$ holds:
Here $Q_{<\tau}=\frac1\tau((|q|^2-1)_{<\tau}, q'_{<\tau}, \bar q'_{<\tau})$ denotes the low frequency part of $Q=\frac1\tau(|q|^2-1 , q', \bar q')$ and hence there exists at least one high frequency $Q_{\geq\tau}$ in the first part of the decomposition \eqref{decomposition:Tf} while there is only low frequency part $Q_{<\tau}$ in the second part of \eqref{decomposition:Tf}, from which we derive $\cH^{>2N+2}(i\tau/2)=o(\tau^{1-2s})$ from the proof of Proposition \ref{prop:larges} (see also Section 6 \cite{KT}).
}
\begin{equation}\label{Est:G}\begin{split}
&\cH^{2l+1} =\cH_2^{2l+1}+ (-1)^l \cH_{f}^{2l+1}
\hbox{ with }|\tau_0^{2(s-1-l)}\cH^{2l+1} |\leq   C (E^{s}_{\tau_0})^2,
\\
&\cH^{>2N+1}(i\frac\tau2)
=\cH_2^{>2N+1}
+  \cH_f^{>k}  
+ \sum_{j>s}\Re (4z^2\tilde T_{2j})(i\sigma)=o(\tau^{1-2s}).
\end{split}\end{equation}  

By Proposition \ref{prop:G} and the above expansion \eqref{Expansion:G} for the non negative superharmonic function $G$ on the upper half plane, the trace of $G$ on the real line exists as a finite Radon measure $\mu$ such that the measure $(1+\xi^2)^N\mu$ is finite. Furthermore,  by view of 
$G=\Re(z^2\ln\frac{1}{z-z_m})+$ harmonic function in a small enough neighborhood of $z_m$ (with $\lambda_m$ the zeros of $\Tc^{-1}(\lambda)$),   $\cH^{2l+1}$ indeed reads as in \eqref{cE2j}. 

\subsubsection{The energies}
If $s\in (\frac 12,\frac32)$, by view of the difference \eqref{Difference:E} for $ \bigl| \cE^{s}_{\tau_0} - (E^{s}_{\tau_0})^2\bigr|$, 
under the smallness condition \eqref{tau0}, we derive the equivalence relation \eqref{Equiv} from Proposition \ref{prop:smalls}.
 
Similarly for $s\geq \frac32$,  by the above finite expansion \eqref{Expansion:G} for $G(i\tau/2)$   and  Propositions \ref{prop:smalls} and \ref{prop:larges},  we also have the  inequality \eqref{Equiv} since  we derive from \eqref{replace} that
{\small\begin{align*}
&|\cE^{s}_{\tau_0}-(E^{s}_{\tau_0})^2|
 \leq \sum_{l=0}^N \tau_0^{2(s-1-l)}\begin{pmatrix} s-1\\ l \end{pmatrix} 
 \bigl|  \cH_{f}^{2l+2}\bigr|
\\
&+ \Bigl|\frac{2}{\pi}\sin(\pi(s-1))\int_{\tau_0}^\infty (\tau^2-\tau_0^2)^{s-1}   
\bigl(\cH_f^{>k}+ \sum_{j>s}\Re(4z^2\tilde T_{2j})\bigr) ( i\sigma) 
\dtau\Bigr|\leq Cc_{\tau_0}(E^{s}_{\tau_0})^2.
\end{align*} }
Here we noticed that for the estimates in Propositions \ref{prop:smalls} and \ref{prop:larges}, if $s=m$ is an integer then the singularity $(2s-[2s])^{-1}$ is compensated by the coefficient $\sin(\pi(s-1))$, while if $s=m+\frac12$ we can  do the frequency decomposition as in \eqref{decomposition:Tf}. 
Therefore for any $q\in X^s$, $s>\frac12$ there exists $\tau_0\geq C\geq2$ such that the  energy $\cE^{s}_{\tau_0}$ is well-defined in  \eqref{cElarge} satisfying \eqref{Equiv}.

Furthermore we derive the trace formula \eqref{cEtrace} by Proposition \ref{prop:G}.
 For general $\tau'\geq 2$, we can still define our energy $\cE^{s}_{\tau'}$ as in \eqref{cElarge}  since $\Tc^{-1}(\lambda)$ is holomorphic on $\cR=\{(\lambda,z)\,|\,\lambda^2=z^2+1,\,\lambda\not\in\Ic,\,\Im z>0\}$ and  $G(i\frac\tau2)$ is integrable on the finite interval $\tau\in [\tau',\tau_0]$ if $\tau'\leq \tau_0$. 
The analyticity of $\cE^{s}_{\tau'}(q)$ in $q\in X^s$ follows from the analyticity of the renormalised transmission coefficient $\Tc^{-1}(\lambda; q)$ in Theorem \ref{thm:Tc}.

\setcounter{equation}{0}
 \section{The metric space}\label{sec:metric}
 In this section we study the  metric space $X^s$ defined in \eqref{Xs}:
 $$
X^s= \{ q\in H^s_{\loc}(\R)\,:\, |q|^2-1\in H^{s-1}(\R), \quad q'\in H^{s-1}(\R) \}/ {\mathbb{S}^1},
\quad s\geq 0,
 $$
and  its endowed metric  defined in \eqref{ds}:
 $$
d^s(q,p)=
\Bigl(\int _{ \R}
 \inf_{|\lambda|=1} \|\sech( \cdot-y) (\lambda q-p)\|_{H^{s}(\R)}^2 \dy
 \Bigr)^{\frac 12}.
 $$
Recall the energy $E^{s}(q)$ associated to $q\in X^s$ given by \eqref{Es2} and \eqref{Sobolev}:
\begin{equation*} 
E^{s}(q)=\Bigl( \|q'\|_{H^{s-1}_2(\R)}^2+\||q|^2-1\|_{H^{s-1}_2(\R)}^2\Bigr)^{1/2}.
\end{equation*}  

This section is devoted  to the proof of Theorem \ref{thm:metric} and will be divided into two subsections, with the first subsection devoted to the study of the metric structure (see Theorem \ref{thm:metric1} below), and the second one to the analytic structure (see Theorem \ref{thm:analytic} below). 

\subsection{The metric structure} 
We study in this subsection the metric structure of the metric space $(X^s, d^s)$.
\begin{thm}\label{thm:metric1}
  Suppose that $s\ge 0$. Then $(X^s, d^s)$ is a separable complete metric space.
  Moreover, there exists a constant $c$ depending on $\Lambda>0$ such that for any $q, p\in X^s$ with $E^s(q), E^s(p)\leq \Lambda$, 
  \[ |E^s(q) - E^s(p)| \le   c d^s(q,p). \] 
   $1+C_0^\infty(\R)$ is a dense subset. Every metric ball is contractible.
  If $s>0$ then every closed metric ball is weakly sequentially  compact.
    \end{thm}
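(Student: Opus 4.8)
\textbf{Proof plan for Theorem \ref{thm:metric1}.} The plan is to establish the five assertions more or less independently, reusing the regularization lemma (Lemma \ref{lem:pq}) throughout.

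\emph{Step 1: $d^s$ is a metric.} Symmetry is clear; for the triangle inequality, fix $y$, pick near-optimal phases $\lambda_1$ (for $q,p$) and $\lambda_2$ (for $p,r$) in the two infima, and use $\lambda_1\lambda_2 p - \lambda_1\lambda_2 r$ together with $\|\sech(\cdot-y)(\lambda_1\lambda_2 q - \lambda_1\lambda_2 r)\|_{H^s}\le \|\sech(\cdot-y)(\lambda_1 q - p')\|_{H^s}+\dots$; then take $L^2_y$ norms and apply Minkowski's integral inequality. For definiteness: if $d^s(q,p)=0$ then for a.e.\ $y$ there is $\lambda(y)$ with $\sech(\cdot-y)(\lambda(y)q-p)=0$ in $H^s$ on a neighborhood of $y$; a connectedness argument shows $\lambda(y)$ is locally constant, hence constant, so $q=\lambda p$ in $X^s$.

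\emph{Step 2: the energy bound $|E^s(q)-E^s(p)|\le c\,d^s(q,p)$ for bounded energies.} This is the key quantitative step and I expect it to be the main obstacle, because $|q|^2-1$ and $q'$ are nonlinear/nonlocal in $q$ and the local phase ambiguity must be absorbed. I would argue as follows. The derivative piece is phase-insensitive: $\|q'-p'\|_{H^{s-1}}^2 \lesssim \int_{\R}\|\sech(\cdot-y)(q'-p')\|_{H^{s-1}}^2\dy$, and $\sech(\cdot-y)(q'-p')=\sech(\cdot-y)(\lambda q - p)' - (\sech(\cdot-y))'(\lambda q-p) + (\text{terms})$, so it is controlled by $d^s(q,p)$ and the $H^{s-1}$-bounds on $q',p'$. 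For $|q|^2-|p|^2$, write $|q|^2-|p|^2 = \Re\big(\bar q(q-\lambda p)\big) + \Re\big((\bar q-\overline{\lambda p})p\big) = \Re(\bar q(q-\lambda p)) + \Re(\overline{(q-\lambda p)}\,p)$, which is phase-free; localizing with $\sech(\cdot-y)$ and using $\|q\|_{L^\infty}, \|p\|_{L^\infty}\lesssim 1+\Lambda$ (from \eqref{qLinfty}/\eqref{pointwisebound}) together with the algebra/multiplier properties of $H^{s-1}$ against $\sech$-localized $L^\infty$ functions gives $\||q|^2-|p|^2\|_{H^{s-1}}\lesssim (1+\Lambda)\,d^s(q,p) + d^s(q,p)^2$. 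Combining yields $E^s(p)\le E^s(q)+c(1+E^s(q))^{1/2}d^s(q,p)+c\,d^s(q,p)^2$ as quoted in Remark \ref{rmk:E}(2), hence the stated Lipschitz bound on balls where $E^s\le\Lambda$.

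\emph{Step 3: completeness.} Given a Cauchy sequence $(q_n)$ in $(X^s,d^s)$, Step 2 gives a uniform energy bound, so by Lemma \ref{lem:pq}(2) (after fixing representatives via $\int \tilde q_n\phi\,dx\in(0,\infty)$ for a suitable Schwartz $\phi$) the functions $\tilde q_n - (\tilde q_n)_\varepsilon$ are Cauchy in $H^s$, while $(\tilde q_n)_\varepsilon$ are locally uniformly bounded in all $H^\sigma$; a diagonal/weak-limit argument produces a limit $\tilde q$ with $|\tilde q|^2-1, \tilde q'\in H^{s-1}$, and one checks $d^s(q_n,q)\to0$ by splitting the $y$-integral into a large compact part (strong $H^s$ convergence of finite differences) and a tail (controlled by the uniform $H^{s-1}$ bound on $q'$), exactly as in the convergence argument inside the proof of Lemma \ref{lem:pq}.

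\emph{Step 4: density of $1+C_0^\infty(\R)$ and separability.} Given $q\in X^s$ with representative $\tilde q$, first mollify: $\tilde q_\varepsilon\to\tilde q$ in $X^s$ by Lemma \ref{lem:pq}(1), and $\tilde q_\varepsilon\in X^\sigma$ for all $\sigma$. Then truncate: since $\tilde q_\varepsilon-\ell\in H^N$ for some unimodular constant $\ell$ near $+\infty$ and some (possibly different) unimodular constant near $-\infty$, but $X^s$ identifies such $q$ only up to a single global phase — here one uses that $|\tilde q_\varepsilon|^2-1\in H^{s-1}$ forces the two boundary phases to agree after multiplying by a global constant (a smooth bounded phase correction, negligible in $H^{s-1}$ for the nonlinear quantity), so after a global rotation $\tilde q_\varepsilon - 1\in H^N$; now cut off $\tilde q_\varepsilon-1$ by a smooth bump to land in $1+C_0^\infty$ with small $d^s$-error. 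Separability follows by further approximating by functions $1+(\text{rational-coefficient smooth bumps})$.

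\emph{Step 5: contractibility of balls.} For a ball $B^s_r(q)$, use the straight-line homotopy through the fixed representative: $H(t,p)$ has representative $t\,\tilde p + (1-t)\,\tilde q$, $t\in[0,1]$. One checks $t\tilde p+(1-t)\tilde q\in X^s$ (the quadratic quantity $|t\tilde p+(1-t)\tilde q|^2-1$ lies in $H^{s-1}$ by the $L^\infty$ bounds and algebra property, the derivative trivially), that $d^s(H(t,p),q)$ is a convex-type function of $t$ hence stays $<r$ — more carefully, $d^s(t\tilde p+(1-t)\tilde q,\tilde q)\le t\,d^s(\tilde p,\tilde q)<r$ by the definition of $d^s$ and linearity in the argument — and that $H$ is jointly continuous, again via Lemma \ref{lem:pq}(3)-type estimates. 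This contracts $B^s_r(q)$ to $q$.

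\emph{Step 6: weak sequential compactness of closed balls when $s>0$.} Define weak convergence $q_n\rightharpoonup q$ in $X^s$ to mean $\tilde q_n-(\tilde q_n)_\varepsilon\rightharpoonup \tilde q-\tilde q_\varepsilon$ in $H^s$, $(\tilde q_n)_\varepsilon\to\tilde q_\varepsilon$ in $C^k_{loc}$, $|\tilde q_n|^2-1\rightharpoonup|\tilde q|^2-1$ and $\tilde q_n'\rightharpoonup \tilde q'$ in $H^{s-1}$. Given a sequence in $\overline{B^s_r(q_0)}$: Step 2 bounds $E^s$; extract a subsequence with $\tilde q_n'\rightharpoonup g$ and $|\tilde q_n|^2-1\rightharpoonup h$ weakly in $H^{s-1}$, and with $\tilde q_n\to \tilde q$ in $H^s_{loc}$ (here $s>0$ gives local compactness of the embedding $H^s_{loc}\hookrightarrow L^2_{loc}$, pinning the weak limits to $g=\tilde q'$, $h=|\tilde q|^2-1$); weak lower semicontinuity of the $H^s$-norms under the $y$-integral shows $d^s(q_n,q_0)$'s lim inf controls $d^s(\tilde q,q_0)$, so the limit lies in $\overline{B^s_r(q_0)}$. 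The one subtlety — that the $L^\infty$ bound \eqref{qLinfty} is uniform along the subsequence — follows from the uniform energy bound, so the nonlinear quantity passes to the limit correctly.

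The hardest part is Step 2, and within Step 2 the treatment of the nonlinear term $|q|^2-|p|^2$ in $H^{s-1}$ uniformly in the local phase: one must be careful that the decomposition chosen is genuinely phase-independent and that the $\sech$-localized product estimates in $H^{s-1}$ (for both $s<1$ and $s\ge1$) are applied with the right endpoint, which is exactly the kind of bookkeeping already present in the proof of Lemma \ref{lem:pq}.
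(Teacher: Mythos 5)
Your plan is missing the single hardest ingredient of the paper's proof: the finiteness of the metric. To know that $(X^s,d^s)$ is a metric space at all you must show $d^s(p,q)<\infty$ for every pair $p,q\in X^s$, and the only route to this is the diameter bound $d^s(1,q)\le c\,E^s(q)$ (inequality \eqref{diameter} of Lemma \ref{metricspace}). This is genuinely nontrivial: for each $y$ one must produce a unimodular $\lambda(y)$ with $\|\sech(\cdot-y)(q-\lambda(y))\|_{H^s}$ controlled, in a square-integrable way over $y$, by localized $H^{s-1}$ norms of $q'$ and $|q|^2-1$. The paper does this via Lemma \ref{l:bound}, estimating the deviation of the local average $\frac1\kappa\int\sech^2(x-y)q\,dx$ from modulus one and the local oscillation of $q$ around that average. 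Your Step 2 (the Lipschitz estimate for $E^s$) and your Step 1 both presuppose that $d^s$ is finite; neither supplies this bound, and nothing else in your outline does. Relatedly, your Step 2 needs the commutator Lemma \ref{lem:commute} to move $\langle D\rangle^{s-1}$ past the weight $\sech(\cdot-y)$ before the identity $\int\|\eta(\cdot-y)f\|_{L^2}^2\,dy=\|\eta\|_{L^2}^2\|f\|_{L^2}^2$ can be used; this is glossed over but is repairable.

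Your density argument (Step 4) contains a false claim. You assert that $|\tilde q_\varepsilon|^2-1\in H^{s-1}$ ``forces the two boundary phases to agree after multiplying by a global constant,'' so that after one global rotation $\tilde q_\varepsilon-1\in H^N$. This fails for the black soliton $q=\tanh(x)\in X^s$, which tends to $+1$ at $+\infty$ and $-1$ at $-\infty$: no global phase makes $q-1$ decay at both ends, yet $q$ \emph{is} approximable in $d^s$ by elements of $1+C_0^\infty$. The correct mechanism — and the whole point of the metric $d^s$ — is that a slowly winding phase is $d^s$-cheap even though it is expensive in any translation-invariant norm; the paper glues in the interpolating phase $e^{i\omega\ln 3/\ln(2+|x/R|^2)}$ on $[R,\infty)$, whose derivative is $o(1)$ in $L^2$ as $R\to\infty$. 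The same oversight undermines your Step 5: the straight-line segment $t\tilde p+(1-t)\tilde q$ can leave $X^s$ (take $\tilde p=e^{i\theta}\tilde q$ with $\theta$ slowly increasing from $0$ to $\pi$; then $|t\tilde p+(1-t)\tilde q|^2\to(2t-1)^2\neq1$ at $+\infty$, so the modulus condition fails), and it is not well defined on the $\mathbb S^1$-quotient since it depends on the choice of representatives. Contractibility should instead be run in the coordinates of Theorem \ref{thm:analytic}, where balls map biLipschitz-ly into a convex subset of $l^2_d\times\tilde H^s$. Your Steps 1, 3 and 6 are essentially the paper's arguments and are fine modulo the finiteness issue above.
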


  By weakly sequentially compact we mean that if $(q_j)$ is a sequence
  in $B=\overline{B^s_r(q)}$ with $B^s_r(q)=\{p\in X^s\,|\, d^s(p,q)<r\}$, then there is a subsequence and $p \in B$
  so that $q_{j_k} \to p$ and $|q_{j_k}|^2-1 \to |p|^2-1$ as
  distributions. If $s>0$ then the boundedness and the convergence
  $q_{j_k} \to p $ as a distribution imply that $q_{j_k}\to p $ in
  $L^2(K)$ for every compact interval $K$ and hence
  $|q_{j_k}|^2-1 \to |p|^2 -1$ in $L^1(K)$ for every compact interval $K$,
  and hence as distribution. Thus only the convergence of $q_{j,k} \to p$
  as a distribution and the weakly compactness of closed balls has to be proven.
  
  Before proving Theorem \ref{thm:metric1}, we claim the following lemma stating the relation between the energy and the metric, whose proof is postponed to the end of this subsection.
\begin{lem}\label{metricspace} 
  If $q \in X^s$ then with an absolute constant $c$ we have
  \begin{equation}\label{diameter}   d^s(1,q) \le c E^s(q). \end{equation} 
  If $p \in X^s$ and $q \in H^s_{loc} $ so that $d^s(p,q) < \infty$, then
  $ q\in X^s$ and
\begin{equation}\label{Es,ds,2}
 E^s(q) \le E^s(p) + c(1+E^s(p))^{\frac12}d^s(p,q)+ c(d^s(p,q))^2. 
 \end{equation}  
 \end{lem}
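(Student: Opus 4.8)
\textbf{Proof plan for Lemma \ref{metricspace}.}

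The plan is to reduce both inequalities to a single localized computation: for a fixed weight $\sech(\cdot - y)$, compare the $H^s$-norm of $\sech(\cdot-y)(\lambda q - p)$ (minimized over $|\lambda|=1$) with the quantities $\||q|^2-1\|_{H^{s-1}}$ and $\|q'\|_{H^{s-1}}$ localized near $y$, then integrate in $y$. First I would establish the elementary pointwise-in-$y$ inequalities
\begin{equation}\label{eq:locweights}
\bigl\| \sech(\cdot-y)(q-1)\bigr\|_{H^s}^2 \lesssim \bigl\|\sech^{1/2}(\cdot-y) q'\bigr\|_{H^{s-1}}^2 + \bigl\|\sech^{1/2}(\cdot-y)(|q|^2-1)\bigr\|_{L^2}^2 + \ \text{(lower order)},
\end{equation}
obtained by writing $q-1$ in terms of its derivative and the defect $|q|^2-1$ on the interval near $y$ where $\sech$ is comparable to a constant (this is the same kind of manipulation as in \eqref{difference}, \eqref{uniforml2}, \eqref{pointwisebound} in the proof of Lemma \ref{lem:pq}), combined with the fact that multiplication by the fixed smooth rapidly decaying weight $\sech(\cdot-y)$ is bounded on $H^{s-1}$ uniformly in $y$. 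Integrating \eqref{eq:locweights} over $y\in\R$ and using $\int_\R \sech(x-y)\,\dy = \pi$ (a fixed constant) together with Fubini converts the right side into $\|q'\|_{H^{s-1}}^2 + \||q|^2-1\|_{H^{s-1}}^2 \sim (E^s(q))^2$, up to controllable lower-order terms coming from the quadratic structure of $|q|^2-1$; since $\inf_{|\lambda|=1}\|\sech(\cdot-y)(\lambda q-1)\|_{H^s} \le \|\sech(\cdot-y)(q-1)\|_{H^s}$, this yields \eqref{diameter}.

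For \eqref{Es,ds,2}, suppose $d^s(p,q)<\infty$ with $p\in X^s$. The idea is that near each $y$ there is a phase $\lambda_y$, $|\lambda_y|=1$, for which $\sech(\cdot-y)(\lambda_y p - q)$ is small in $H^s$, hence $q$ differs from the ``good'' function $\lambda_y p$ by something small and localized. First I would write $q = \lambda_y p + r_y$ on the support of $\sech(\cdot-y)$, with $\|\sech(\cdot-y) r_y\|_{H^s}$ controlled by (something close to) the local contribution to $d^s(p,q)$; then
\[
q' = \lambda_y p' + r_y', \qquad |q|^2 - 1 = |p|^2 - 1 + 2\Re(\overline{\lambda_y p}\, r_y) + |r_y|^2,
\]
(using $|\lambda_y|=1$), and estimate the $H^{s-1}$-norms of these localized pieces. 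The cross term $\Re(\overline{\lambda_y p}\, r_y)$ is handled by an algebra/product estimate in $H^{s-1}$ for $s\ge 0$ after noting that $\lambda_y p$ is bounded (by the $L^\infty$ bound \eqref{pointwisebound}-type estimate applied to $p\in X^s$, which is where $E^s(p)$ and hence the factor $(1+E^s(p))^{1/2}$ enters), the quadratic term $|r_y|^2$ contributes the $(d^s(p,q))^2$ term, and the main linear term $\|\sech(\cdot-y)(\,|p|^2-1, p'\,)\|_{H^{s-1}}$ integrates in $y$ to $E^s(p)$ exactly as in the first part. Squaring, integrating over $y$, taking square roots and using $\sqrt{a+b+c}\le \sqrt a + \sqrt b + \sqrt c$ gives \eqref{Es,ds,2}; in particular the right side being finite shows $q\in X^s$.

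The main obstacle is the bookkeeping of the $y$-dependent phase $\lambda_y$: the infimum in the definition of $d^s$ is attained (or nearly attained) at different unimodular constants for different $y$, so one cannot globally subtract a single phase, and the estimates must be done for each fixed $y$ and only then integrated. Making the cross-term and quadratic-term estimates uniform in $y$ — so that after integration in $y$ one genuinely recovers $E^s(p)$ and $d^s(p,q)$ with the stated powers — requires care with the product estimates in $H^{s-1}$ (which for $0\le s<1$ is a negative-regularity space, so one uses $L^1\hookrightarrow H^{-1}$ and a partition-of-unity argument exactly as in the proof of Lemma \ref{lem:pq}) and with the uniform $L^\infty$ bound on $p$ near each $y$. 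I expect the half-line of regularities $s\in[0,1)$ to need the Besov-type finite-difference formulation already used in Section \ref{sec:lwp}, while $s\ge 1$ follows from the standard fractional Leibniz rule; the continuity statement $|E^s(q)-E^s(p)|\le c\, d^s(p,q)$ for $E^s(p),E^s(q)\le\Lambda$ is then immediate from \eqref{Es,ds,2} and its symmetric counterpart, absorbing the $(d^s)^2$ term using $d^s(p,q)\le c(\Lambda+1)$ on the given set.
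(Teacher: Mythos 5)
The first half of your plan has a genuine gap. You reduce \eqref{diameter} to the trivial bound $\inf_{|\lambda|=1}\|\sech(\cdot-y)(\lambda q-1)\|_{H^s}\le\|\sech(\cdot-y)(q-1)\|_{H^s}$ and then try to control $\|\sech(\cdot-y)(q-1)\|_{H^s}$ by localized versions of $\|q'\|_{H^{s-1}}$ and $\||q|^2-1\|_{H^{s-1}}$. This cannot work: for $q\equiv e^{i\theta}$ with $\theta\ne 0$ one has $E^s(q)=0$ and $d^s(1,q)=0$, yet
\[
\int_{\R}\|\sech(\cdot-y)(q-1)\|_{H^s}^2\,\dy=\|\sech\|_{L^2}^2\,\|e^{i\theta}-1\|_{L^2}^2=\infty ,
\]
so your proposed inequality \eqref{eq:locweights} is false, and no admissible ``lower order'' term (one whose integral over $y\in\R$ is controlled by $E^s(q)^2$) can repair it. The infimum over $\lambda$ is precisely what quotients out this gauge freedom, and discarding it at the first step loses the estimate irretrievably. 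The paper's proof instead computes the infimum: for $s=0$,
\[
\inf_{|\lambda|=1}\int_{\R}\sech^2(x-y)|q-\lambda|^2\dx=\int_{\R}\sech^2(x-y)(|q|^2-1)\dx-2\Bigl(\Bigl|\int_{\R}\sech^2(x-y)q\dx\Bigr|-\kappa\Bigr),
\]
and the real work is showing that the local average $\frac1\kappa\int\sech^2(x-y)q\dx$ has modulus close to $1$ up to $\Vert\eta(\cdot-y)\langle D\rangle^{-1}q_x\Vert_{L^2}+\Vert\eta(\cdot-y)\langle D\rangle^{-1}(|q|^2-1)\Vert_{L^2}$; this is Lemma \ref{l:bound}, which rests on the double-integral identity \eqref{wD} for $\int_{\R\times\R}\eta_0(x)\eta_0(y)|q(x)-q(y)|^2\dx\dy$. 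One then splits $y$ by Chebyshev into the set where these local quantities exceed $\delta$ (contributing $c_\delta E^s(q)$) and its complement, where one must still exhibit an explicit unimodular candidate $\lambda=q^1(y)/|q^1(y)|$ built from a mollification of $q$. Your plan contains no substitute for any of this.

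Your treatment of \eqref{Es,ds,2} is essentially the paper's route: the identity $|q|^2-|p|^2=\Re\bigl((\lambda q+p)\overline{(\lambda q-p)}\bigr)$ with a $y$-dependent near-optimal phase, an $L^\infty$/decomposition bound on $p$ (the paper's version also involves $E^s(q)$ on the right, which is then removed by absorbing an $\epsilon E^s(q)$ term into the left side -- a step you should not omit), and integration in $y$. The one tool you leave implicit but cannot avoid is the commutator bound $\Vert\eta\langle D\rangle^{s}f\Vert_{L^2}\le c\Vert\langle D\rangle^{s}(\eta f)\Vert_{L^2}$ of Lemma \ref{lem:commute}: it is what legitimizes passing the sharply localized weight through $\langle D\rangle^{s-1}$ so that $\int_{\R}\inf_{\lambda}\Vert\eta(\cdot-y)\langle D\rangle^{s-1}(q'-\lambda p')\Vert_{L^2}^2\dy\lesssim(d^s(p,q))^2$ for fractional and negative $s-1$; a generic product estimate in $H^{s-1}$ does not give you this.
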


\begin{proof}[Proof of Theorem \ref{thm:metric1}]
We organize the proof into a   series of steps. 
The cutoff functions $\eta$ and $\rho$ will be chosen appropriately in each step and may vary from step to step.

\noindent{\bf Step 1.}
Suppose that  $q, p\in X^s$ with $E^{s}(q), E^{s}(p)< \infty$. Then $d^s(q,p)=0$
  iff there exists $\lambda\in \C$ with $|\lambda|=1$ such that $p = \lambda q $.
Since  $p=\lambda q$ implies $d^s(q,p)=0$ trivially, we assume that $d^s(q,p)=0$.
Then as $ \|\sech( \cdot-y) f\|_{H^{s }}
\geq  C(y,a,b) \|f\|_{H^{s }(I)}$ for any $y\in\R$ and any interval $I=[a,b]$, there exists $\lambda\in \C$ with $|\lambda|=1$ so that 
  \[ \|\lambda q -p\|_{H^s(I)} = 0. \]
  Hence $ p = \lambda q$.

\noindent{\bf Step 2. Triangle inequality.}   If $q,p,r \in H^s_{\loc} $ with
  $ d^s(q,p)<\infty$ and $d^s(p,r) <\infty$, then  we simply integrate the square of the following triangle inequality 
  \[ \inf_{|\mu|=1}  \Vert \sech(\cdot-y) (q-\mu r) \Vert_{H^s}
  \le \inf_{|\lambda_1|=1}  \Vert \sech(\cdot-y) (q-\lambda_1 p) \Vert_{H^s}
  + \inf_{|\lambda_2|=1} \Vert \sech(\cdot-y) (p-\lambda_2 r) \Vert_{H^s}, \]  
to derive the triangle inequality
\[d^s(q,r) \le d^s(q,p) + d^s(p,r). \]

\noindent{\bf Step 3. $(X^s,d^s)$ is a metric space.}

We deduce from the triangle inequality in Step 2 and \eqref{diameter} that whenever $p,q \in X^s$ then $d^s(p,q) < \infty$ and $(X^s, d^s)$ is a metric space.

\noindent{\bf Step 4. $(X^s,d^s)$ is a complete metric space.} 

Let $(q_n)$ be   representatives of a Cauchy sequence in $(X^s, d^s)$ and let $y \in \R$. There exists $q\in H^s_\loc$ and a sequence  $(\lambda_n(y))$ with $|\lambda_n(y)|=1$  such that
\[ \sech(\cdot-y) \lambda_n(y)  q_n \to \sech(\cdot-y) q \qquad \text{ in } H^s.     \]
Clearly $q$ does not depend on $y$. 
This implies pointwise convergence of the integrant with respect to $y$ in the definition of the distance function and
\[ d^s(q_n, q) \le \sup_{m\ge n} d^s(q_n,q_m)  \to 0 \qquad \text{ with } n \to \infty. \] 
Lemma \ref{metricspace} implies $q\in X^s$.

\noindent{\bf Step 5. A dense subset.} 

  We claim that   $1+C^\infty_0(\R)  \subset X^s$ is dense.
  Let $ q $ satisfy $E^s(q) < \infty$. We fix a monoton function $\eta \in C^\infty$ with $\eta(x) = 1$ for $ x > \frac12$ and $\eta(x) = 0 $ for $ x \le -\frac12$.  Since 
\begin{align*}
& \Vert (1- \eta(R\pm x)) (|u|^2-1) \Vert_{H^{s-1}}+ \Vert(1- \eta(R\pm x)) u_x \Vert_{H^{s-1}}   \to 0   
\end{align*}
as $R\to\infty$, 
   given $\varepsilon>0$ there exists $R_0$ so that all these quantities above are at most of size $\varepsilon$ for $ R > R_0$.

    Let $\hat u_\pm = \int_{\R} \eta'(x\mp R) u \dx$ with $R>R_0$. We claim that there exists an absolute   constant $c$ such that
    \[ ||\hat  u_\pm |-1|  \le  c\varepsilon. \]  
    This estimate follows from Lemma \ref{l:bound} below.
     Multiplying by a complex constant of modulus $1$ we may assume that 
     $\hat u_- \in [\frac12,2]$. 
     We choose $\omega\in [-\pi,\pi)$ so that
      $\hat u_{+} = e^{i\omega} |\hat u_{+}| $.  
  We define
    \[ u_R = \eta(R-x) \eta(R+x) u(x) +
    (1-\eta(R-x))  e^{ i \omega   ( \ln( 3)/ \ln ( 2+ |x/R|^2)) }  
    + (1-\eta(R+x) ). \] 
      It is not hard to see that 
      \[ \lim_{R\to \infty} d^s(u_R , u) = 0. \]
      Clearly $u_R-1$ vanishes for $ x  <-2R$ and it decays as $x \to \infty$.
      After convolving $u_R-1$ with a Dirac sequence we may assume that (without changing the notation) that in addition $ u_R \in C^\infty$. 
      By a standard cutoff argument we may assume $u_R-1\in C^\infty_0(\R)$.

  \noindent{\bf Step 6. Weak compactness.}
   
Let $s\ge 0$, $ q \in X^s$, $r< \infty$ and $q_n \in X^s$ so that $ d^s(q,q_n) \le r$. We claim that there exists a weakly convergent subsequence with a limit $p$ in the same closed ball. This follows by an easy modification of Step 5.
Lemma \ref{metricspace} implies that the weak limit is in $X^s$.  
 If $ s >0$ we obtain
\begin{equation} \label{weaklower}  E^s(p) \le \liminf_{n\to \infty} E^s(q_n). \end{equation} 
Indeed, if $q_n$ converges weakly to $p$ then up to choosing $\lambda_n$
\[ \eta (|q_n|^2 -1) \to \eta(|p|^2-1)  \]
in $H^{s-1}$ by compactness. We easily deduce \eqref{weaklower}.
\end{proof}

In the remainder of this subsection we will give  two technical lemmas (Lemma \ref{lem:commute} and Lemma \ref{l:bound})  and their proofs, as well as the proof of Lemma \ref{metricspace}.
 
\begin{lem}\label{lem:commute}
Let  $0<\delta<\frac14$ and  
  suppose that $\eta^{(k)} \le C \eta$ for all $k \ge 0$,  $|\eta'|\le \delta \eta$. Let $s\in \R$, then
  \[   \Vert \eta\langle D\rangle^{s} f  \Vert_{L^2}
  \le c \Vert \langle D\rangle^s ( \eta f) \Vert_{L^2}, \]
  where the operator $\langle D\rangle^s$ is defined by the Fourier multiplier $\langle \xi\rangle^s=(1+|\xi|^2)^{s/2}$.
\end{lem}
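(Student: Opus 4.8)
\textbf{Proof plan for Lemma \ref{lem:commute}.}

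The plan is to exploit the fact that the Fourier multiplier $\langle D\rangle^s$ is, up to controllable errors, a convolution operator with a kernel that decays fast away from the diagonal, so that commuting it with the slowly-varying weight $\eta$ costs only lower-order terms which are themselves dominated by $\|\langle D\rangle^s(\eta f)\|_{L^2}$. Concretely, I would first reduce to the case $s\ge 0$: if the inequality holds for some $s$, apply it with $f$ replaced by $\langle D\rangle^{-s}g$ and use that $\langle D\rangle^{-s}$ maps $L^2$ to itself together with a duality/transpose argument to get it for $-s$; alternatively just prove both signs in parallel, since the kernel estimates below are symmetric in the sign of $s$.

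The main step is the commutator estimate. Write $\eta\langle D\rangle^s f = \langle D\rangle^s(\eta f) - [\langle D\rangle^s,\eta]f$, so it suffices to bound $\|[\langle D\rangle^s,\eta]f\|_{L^2}$ by (a small multiple of) $\|\langle D\rangle^s(\eta f)\|_{L^2}$, because then the two terms can be absorbed. The commutator $[\langle D\rangle^s,\eta]$ has Schwartz kernel $K_s(x,y)=(\check{m_s})(x-y)\,(\eta(y)-\eta(x))$, where $m_s(\xi)=\langle\xi\rangle^s$ and $\check{m_s}$ is a tempered distribution. Since $\check{m_s}$ is smooth and rapidly decaying away from the origin (it is a Bessel-type kernel, smooth on $\R\setminus\{0\}$ with all derivatives decaying faster than any polynomial at infinity, and with an integrable singularity at $0$ when $s<1$, and more generally a distributional singularity handled by the $\eta(y)-\eta(x)$ factor which vanishes on the diagonal), the factor $\eta(y)-\eta(x)$ improves matters: by the mean value theorem $|\eta(y)-\eta(x)|\le |x-y|\sup_{[x,y]}|\eta'|\le \delta|x-y|\sup_{[x,y]}\eta$, and then the hypotheses $|\eta'|\le\delta\eta$, $\eta^{(k)}\le C\eta$ let us bound $\sup_{[x,y]}\eta$ by $e^{\delta|x-y|}\eta(x)$ (integrate the differential inequality $|\eta'|\le\delta\eta$). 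For $\delta<\tfrac14$ the resulting kernel $|K_s(x,y)|\lesssim \delta\,|x-y|\,e^{\delta|x-y|}|\check{m_s}(x-y)|\,\eta(x)$ still produces an $L^2\to L^2$ bounded operator, with operator norm $O(\delta)$ times the weighted norm, via Schur's test against the kernel of $\langle D\rangle^{-s}$ composed appropriately; more precisely one factors $[\langle D\rangle^s,\eta]f = A\langle D\rangle^s(\eta f)$ where $A$ has kernel $K_s(x,y)\cdot(\check{m_{-s}})(y-\cdot)$-type, and Schur's test with the $e^{\delta|x-y|}$ weights (admissible since they are controlled by the rapid decay of the Bessel kernels for $\delta$ small) gives $\|A\|_{L^2\to L^2}=O(\delta)$. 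Hence $\|[\langle D\rangle^s,\eta]f\|_{L^2}\le \tfrac12\|\langle D\rangle^s(\eta f)\|_{L^2}$, say, and the lemma follows with $c=2$.

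The step I expect to be the main obstacle is making the kernel bookkeeping for general real $s$ fully rigorous: when $s\ge 1$ the inverse Fourier transform $\check{m_s}$ is not a function near the origin, so one must either work with the regularized kernel $\langle\xi\rangle^s(1+\epsilon\xi^2)^{-N}$ and pass to the limit, or split $\langle\xi\rangle^s = \langle\xi\rangle^{s_0}\cdot\langle\xi\rangle^{s-s_0}$ with $0\le s-s_0<1$ and iterate the commutator identity finitely many times, at each stage peeling off one more derivative landing on $\eta$ (which is harmless by $\eta^{(k)}\le C\eta$) and keeping the remaining multiplier in the good range. Either route is routine but needs care with the exponential weights so that the constant stays $O(\delta)$ uniformly; I would present the splitting argument since it avoids distributional kernels entirely and reduces everything to the clean case $0\le s<1$ plus the trivial integer-order case handled by the Leibniz rule and $|\eta^{(k)}|\le C\eta$.
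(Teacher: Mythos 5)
Your strategy is, at bottom, the same circle of ideas as the paper's proof: reduce to $s\ge 0$ by duality, exploit that the convolution kernel of $\langle D\rangle^{s}$ is smooth away from $0$ with \emph{exponential} decay, use the slowly-varying bound $\sup_{[x,y]}\eta\le e^{\delta|x-y|}\eta(x)$, and run a Schur test; the paper phrases this as $L^2$-boundedness of $\eta\langle D\rangle^{s}\eta^{-1}\langle D\rangle^{-s}$, while you phrase it as a bound on $A=[\langle D\rangle^{s},\eta]\,\eta^{-1}\langle D\rangle^{-s}$, which differs from that operator by the identity. Two remarks before the main point. First, "decaying faster than any polynomial" is not enough to absorb the weight $e^{\delta|x-y|}$: you need genuine exponential decay $e^{-a|x|}$ with $a>\delta$, which the kernel does have (shift the contour to $\Im\xi=\pm 1$, as the paper does, giving $e^{-|x|}$ against $\delta<\tfrac14$), but you must say and use this. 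Second, the $O(\delta)$ smallness of $A$ is superfluous --- the lemma permits an arbitrary constant $c$, so any finite operator bound on $A$ closes the triangle inequality; there is nothing to absorb.

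The substantive gap is the case $|s|\ge 1$, which you correctly identify but do not close, and neither proposed remedy is as routine as you claim. Near the diagonal the commutator gain of one power of $|x-y|$ turns the $|x-y|^{-1-s}$ singularity into $|x-y|^{-s}$, and composing with the $|z-y|^{-1+s}$ singularity of $\langle D\rangle^{-s}$ yields a locally integrable (logarithmic) kernel only for $0<s<1$; for $s\ge 1$ the Schur integral diverges. Regularizing the multiplier does not help, because the obstruction is the strength of the on-diagonal singularity, not its distributional nature. The iteration $\langle\xi\rangle^{s}=\prod_i\langle\xi\rangle^{s_i}$ with $s_i\in[0,1)$ does not reduce to the case already proved: after the first step you need $\|\eta\langle D\rangle^{s-s_1}f\|_{H^{s_1}}\lesssim\|\eta f\|_{H^{s}}$, i.e.\ the same commuted estimate but now in a weighted $H^{\rho}$ norm, so the induction has to be set up for the family $\|\eta\langle D\rangle^{\sigma}f\|_{H^{\rho}}\lesssim\|\eta f\|_{H^{\sigma+\rho}}$, which you do not do (and steps of order exactly $1$ are themselves borderline: $|x|^{-2}$ remains non-integrable after one power gain). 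The paper sidesteps all of this with a different splitting of the \emph{kernel} rather than of the \emph{symbol}: $g^{s}=g_1^{s}+g_2^{s}$ with $g_1^{s}$ compactly supported (containing the singularity) and $g_2^{s}$ smooth and exponentially decaying. The tail $g_2^{s}$ is handled by exactly your Schur argument with exponential weights; the singular part is handled on the Fourier side --- since $g_1^{s}\ast{}=\langle D\rangle^{s}-g_2^{s}\ast{}$ maps $H^{s}\to L^2$ for \emph{every} real $s$, and on its support of diameter $\le 2$ the weight ratio $\eta(x)\eta^{-1}(z)$ is within a factor $e^{2\delta}$ of $1$ --- so no integrability of the singular kernel is ever needed. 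If you want to keep your commutator formulation for all $s$, importing this kernel decomposition is the cleanest repair.
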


\begin{proof}  
The operator $\langle D\rangle^s$ is defined by convolution with (up to a multiplication by a power of $2\pi$)
$$
g(x)=\int_{\R}e^{i\xi x}\langle \xi\rangle^s d\xi,
$$
which is understood as inverse Fourier transform \textit{resp.} as oscillatory integral if $x\neq 0$.
To be more specific, let us assume $x>0$. Then we move the contour of the integration to 
$\{\xi+i\tau\,|\,\xi\in\R, \,-1<\tau<1\}$ in the complex space:
\begin{align*}
g(x)=\int_{\R} e^{i(\xi+i\tau)x} (1+(\xi+i\tau)^2)^{s/2} d\xi
=e^{-\tau x}\int_{\R}e^{i\xi x}(\xi^2+2i\tau\xi+1-\tau^2)^{s/2} d\xi.
\end{align*} 
We take $\tau=1$. We take the smooth cutoff function $\rho$ with $\rho=1$ around $0$ to decompose the  integration into the part close to $0$ and the part away from $0$:
\begin{align*}
e^{x}g(x)=\int_{\R}e^{i\xi x}\xi^{s/2}[\rho(\xi+2i)^{s/2}] d\xi
+\int_{\R}e^{i\xi x}\xi^s[(1-\rho)(1+2i/\xi)^{s/2}]d\xi.
\end{align*}
Similarly for $x<0$ we take $\tau=-1$.
Then by the theory of oscillatory integrals,
\begin{align*}
 |g(x)|&\leq Ce^{-|x|}\Bigl( (1+|x|)^{-1-s/2} \,+\, \bigl(1+\chi_{\{|x|\leq 1\}}|x|^{-1-s}\bigr)\Bigr),
\end{align*}
and the exponential decay holds:
\begin{equation*}
|\d_x^k g(x)|\leq Ce^{-|x|}(1+|x|^{-1-s/2}),
\quad \forall |x|\geq 1.
\end{equation*}

We denote $g=g^s$ to emphasize the dependence of the above function $g(x)$ on $s$ and we decompose $g^s(x)$ into 
$$
g^s(x)=g_1^s(x)+g_2^s(x),
\quad g_1^s(x)=\rho(x)g^s(x),
$$
such that
\begin{equation}\label{g1g2}\begin{split}
&|g_1^s|\leq C\chi_{\{|x|\leq 1\}}(1+|x|^{-1-s}),
\\
&|\d_x^k g_2^s(x)|\leq C_k e^{-\frac{|x|}{2}},
\hbox{ and hence }\|g_2^s\ast f\|_{H^N}\leq C_N\|f\|_{-N},\quad \forall N\in\N.
\end{split}\end{equation}
The claimed inequality is equivalent to 
\begin{align*}
\|\eta\langle D\rangle^s\eta^{-1}\langle D\rangle^{-s}f\|_{L^2}\leq c\|f\|_{L^2},
\quad \hbox{i.e.}\quad \|\eta  g^s\ast(\eta^{-1} g^{-s}\ast f) \|_{L^2}\leq c\|f\|_{L^2},
\end{align*}
and by duality it suffices to consider the case $s\geq 0$.
We do the above decomposition for $g^s$ and it remains to show 
$$
\|\eta  g_j^s\ast(\eta^{-1} g_l^{-s}\ast f) \|_{L^2}\leq c\|f\|_{L^2},\quad j,l=1,2.
$$
When $j=l=2$, then  the integral kernel of the operator on the LHS reads as
\begin{align*}
k_2^s(x,y)=\eta(x)\int_{\R} g_2^s(x-z)\eta^{-1}(z) g_2^{-s}(z-y) dz.
\end{align*}
By $|\d_x^k \eta(x)|\leq c_k e^{\delta|x-z|}\eta(z)$, the estimate follows from \eqref{g1g2}:
\begin{align*}
\int_{\R}e^{-\frac12|x-z|} e^{\delta|x-z|}e^{-\frac12  |z-y|}dz\leq Ce^{-\frac14|x-y|}.
\end{align*}
It is also straightforward to check the other cases by use of \eqref{g1g2}:
\begin{align*}
&\|\eta  g_1^s\ast(\eta^{-1} g_2^{-s}\ast f) \|_{L^2}\leq c\|g_2^{-s}\ast f\|_{H^{s}}\leq c\|f\|_{L^2},
\\
&\|\eta  g_2^s\ast(\eta^{-1} g_1^{-s}\ast f) \|_{L^2}\leq c\|g_1^{-s}\ast f\|_{H^s}\leq c\|f\|_{L^2},
\\
&\|\eta  g_1^s\ast(\eta^{-1} g_1^{-s}\ast f) \|_{L^2}\leq c\|g_1^{-s}\ast f\|_{H^{s}} \leq c\|f\|_{L^2}.
\end{align*}
  \end{proof}

We turn to another technical lemma.  Let $ \eta(x) = (1+x^2)^{-1}$. Then 
\begin{equation}\label{sech,eta} \left| \int_{\R} \sech^2(x-y)(|q|^2-1)(x) \dx  \right|
   \le C \Vert \eta(\cdot-y)  \langle D\rangle^{-1} (|q|^2-1) \Vert_{L^2}, 
   \end{equation}
   since
   \[ \eta^{-1}(\cdot-y) \sech^2(\cdot-y) \in \mathcal{S}(\R) \subset H^1(\R). \]  
   \begin{lem}\label{l:bound} 
    Let $\eta_0$ be a nonnegative Schwartz function. Then 
     \begin{equation}  \int_{\R\times \R} \eta_0(x) \eta_0(y) |q(x)-q(y)|^2 \dx \dy
     \le c \Vert \eta \langle D\rangle^{-1} q_x \Vert_{L^2}^2, \label{wD} \end{equation}  
and with $\kappa=\int_{\R}\sech^2(x)\dx$,
\begin{equation} \label{e:deviation}  
   \Big| \,    \Big|\frac1\kappa\int_{\R} \sech^2(x-y) q(x) \dx\Big|-1 \Big| \le c\left(  \Vert \eta(\cdot-y) \langle D\rangle^{-1} q_x \Vert_{L^2} + \Vert \eta(\cdot-y) \langle D\rangle^{-1} (|q|^2-1) \Vert_{L^2}\right) .   \end{equation} 
\end{lem}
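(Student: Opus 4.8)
\textbf{Proof proposal for Lemma \ref{l:bound}.}

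The plan is to prove the three displayed estimates in order, since \eqref{e:deviation} will follow by combining \eqref{wD}, \eqref{sech,eta} and a Cauchy--Schwarz argument. First I would establish \eqref{wD}. Write $q(x)-q(y)=\int_y^x q'(t)\dt$. The key point is to pair $q'$ against a test function: for fixed $x,y$ we have $q(x)-q(y)=\int_{\R} q'(t) \mathbf 1_{[y,x]}(t)\dt$, but $\mathbf 1_{[y,x]}$ is not in $H^1$, so instead I would write $q(x)-q(y) = \langle q', \psi_{x,y}\rangle$ where $\psi_{x,y}$ is a smooth approximation, or more cleanly use the pairing $\int \eta_0(x)\eta_0(y)|q(x)-q(y)|^2 \dx\dy = \int |\widehat{q'}(\xi)|^2 \, m(\xi)\, \dxi$ after expanding the square in Fourier: with $\Phi = \eta_0 \ast$ (or rather using $\int\eta_0(x)\eta_0(y)(q(x)-q(y))\overline{(q(x)-q(y))}$), the resulting multiplier $m(\xi)$ is bounded by $c\langle\xi\rangle^{-2}$ uniformly, because $q(x)-q(y)$ gains one derivative locally while the Schwartz weights $\eta_0$ localize. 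Concretely, $q(x)-q(y)=-\int_0^\infty [q'(x-a)\mathbf 1_{A(a)}(a)\cdots]$ in the spirit of the computation in the proof of Lemma \ref{lem:pq}; writing $q'=\langle D\rangle (\langle D\rangle^{-1} q')$ and moving $\langle D\rangle$ onto the smooth compactly-weighted kernel shows the left side is controlled by $\Vert \eta\langle D\rangle^{-1}q'\Vert_{L^2}^2$ with $\eta(x)=(1+x^2)^{-1}$, after invoking Lemma \ref{lem:commute} to pass between $\eta\langle D\rangle^{-1}$ and $\langle D\rangle^{-1}\eta$ (note $\eta$ satisfies the hypotheses $|\eta'|\le\delta\eta$ only after rescaling $\eta(\varepsilon\cdot)$; I would either rescale or replace $\eta$ by a slowly varying majorant, which is harmless).

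Next, granting \eqref{wD}, I would derive \eqref{e:deviation}. Set $P_y q = \frac1\kappa\int_{\R}\sech^2(x-y)q(x)\dx$, so that $P_y 1 = 1$. Then
\begin{align*}
|P_y q|^2 - 1 &= |P_y q|^2 - P_y(|q|^2) + P_y(|q|^2 - 1).
\end{align*}
The second term is bounded by $C\Vert\eta(\cdot-y)\langle D\rangle^{-1}(|q|^2-1)\Vert_{L^2}$ by \eqref{sech,eta} (with the obvious adjustment of the Schwartz weight). For the first term, by Cauchy--Schwarz in the probability measure $\frac1\kappa\sech^2(x-y)\dx$,
\begin{align*}
\big| P_y(|q|^2) - |P_y q|^2 \big| &= \frac{1}{2\kappa^2}\int_{\R\times\R}\sech^2(x-y)\sech^2(x'-y)|q(x)-q(x')|^2\dx\,\dx',
\end{align*}
which is exactly of the form controlled by \eqref{wD} with $\eta_0(x)=\frac1\kappa\sech^2(x-y)$ (a translate of a fixed Schwartz function), giving the bound $c\Vert\eta(\cdot-y)\langle D\rangle^{-1}q'\Vert_{L^2}^2$. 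Finally, $\big||P_yq|-1\big| \le |P_yq|^2-1|/(|P_yq|+1) \le ||P_yq|^2-1|$, so combining the two estimates yields \eqref{e:deviation}, after noting that the squared quantity $\Vert\eta(\cdot-y)\langle D\rangle^{-1}q'\Vert_{L^2}^2$ is dominated by the (non-squared) right-hand side of \eqref{e:deviation} when that quantity is $\le 1$, and when it is $\ge 1$ the estimate \eqref{e:deviation} is trivial since $\big||P_yq|-1\big|$ is controlled by $P_y(|q|^2-1)$-type quantities plus a constant anyway; I would phrase this dichotomy cleanly at the end.

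The main obstacle I anticipate is the precise bookkeeping in \eqref{wD}: justifying that the Fourier multiplier arising from $\int\eta_0(x)\eta_0(y)|q(x)-q(y)|^2$ is genuinely $O(\langle\xi\rangle^{-2})$ and not merely $O(\langle\xi\rangle^{-1})$, uniformly, and doing so in a way compatible with the weight $\eta(\cdot-y)$ rather than a global weight. The clean route is to avoid the Fourier computation entirely: write $q(x)-q(y)=\int_0^1 q'(y+t(x-y))(x-y)\dt$, insert $q' = \langle D\rangle(\langle D\rangle^{-1}q')$, integrate by parts the operator $\langle D\rangle$ (via its kernel $g$ from Lemma \ref{lem:commute}'s proof, which has exponential decay away from the diagonal plus a mild local singularity) onto the Schwartz data $\eta_0(x)\eta_0(y)(x-y)$, and bound everything by Schur's test. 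The exponential localization of both $\eta_0$ and the kernel $g$ makes all the integrals converge and produces the weighted $L^2$ norm of $\langle D\rangle^{-1}q'$; invoking Lemma \ref{lem:commute} once converts this to $\Vert\eta\langle D\rangle^{-1}q'\Vert_{L^2}$. I expect the rest to be routine.
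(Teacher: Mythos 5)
Your derivation of \eqref{e:deviation} from \eqref{wD} is correct and is essentially the paper's argument: the variance identity $P_y(|q|^2)-|P_yq|^2=\frac{1}{2\kappa^2}\int\!\!\int \sech^2(x-y)\sech^2(x'-y)|q(x)-q(x')|^2\dx\dx'$ plays the role of the paper's estimate \eqref{e:diff}, and $P_y(|q|^2-1)$ is handled by \eqref{sech,eta} in both treatments. The only real difference is how the square on $\Vert\eta(\cdot-y)\langle D\rangle^{-1}q_x\Vert_{L^2}$ is removed: you run a dichotomy on whether that norm exceeds $1$ (and your ``trivial'' branch does close, via $|P_yq|\le (1+P_y(|q|^2-1))^{1/2}$), whereas the paper factors $|A^2-B^2|=|A+B|\,|A-B|$ with $A=\Vert q\Vert_{L^2(\kappa^{-1}\sech^2(\cdot-y))}$, $B=|P_yq|$, which yields the first power directly once $|P_yq|$ is known to be bounded. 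Both are valid.

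The gap is in \eqref{wD}, which you essentially assert rather than prove. Your first route is not viable as framed: the quadratic form $\int\eta_0(x)\eta_0(y)|q(x)-q(y)|^2\dx\dy$, rewritten in terms of $q'$, has a kernel that is not translation invariant, so there is no Fourier multiplier $m(\xi)$; and even the heuristic ``$m(\xi)=O(\langle\xi\rangle^{-2})$'' would at best give the \emph{unweighted} bound $c\Vert\langle D\rangle^{-1}q_x\Vert_{L^2}^2$, which does not imply the weighted bound \eqref{wD} (the weighted norm is the smaller one, and the localization is exactly what is needed later when integrating in $y$). Your second route is the right one and is what the paper does, but the content you leave to ``Schur's test'' is precisely the crux. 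Writing $q(x)-q(y)=\int_x^y q'$ and carrying out the $x,y$ integrations gives the explicit kernel
\begin{equation*}
\rho(z_1,z_2)=2\int_{-\infty}^{\min\{z_1,z_2\}}\eta_0\dx\int_{\max\{z_1,z_2\}}^{\infty}\eta_0\dy ,
\end{equation*}
which is Lipschitz, rapidly decaying in both variables, and whose mixed second derivative is $-2\eta_0(z_1)\eta_0(z_2)+2\bigl(\int_{\R}\eta_0\bigr)\delta_{z_1-z_2}\eta_0(z_1)$. After moving $\langle D\rangle$ onto the kernel in each variable, the off-diagonal part is a (weighted) Hilbert--Schmidt operator, but the diagonal delta survives and contributes $c\int\eta_0\,|\langle D\rangle^{-1}q'|^2$; this term is \emph{not} absorbed by any smoothing and is controlled by $\Vert\eta\langle D\rangle^{-1}q_x\Vert_{L^2}^2$ only because $\eta_0\lesssim\eta^2$ for Schwartz $\eta_0$ and $\eta=(1+x^2)^{-1}$. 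Without identifying this diagonal singularity your argument would either silently overclaim a $\langle\xi\rangle^{-2}$ gain that is not there, or fail to produce the weighted right-hand side. Once you compute $\rho$ and its derivatives explicitly (as the paper does) the rest — including the commutation between $\eta$ and $\langle D\rangle^{-1}$ via Lemma \ref{lem:commute}, with the rescaling of $\eta$ you already noted — goes through.
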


\begin{proof}
Straightforward calculation yields
     \[
\begin{split} 
     \int_{\R \times \R} \eta_0(x) \eta_0(y) |q(x)&-q(y)|^2 dx dy
     =  2\Re \int_{x<y}  \eta_0(x) \eta_0(y) \int_{x<z_1,z_2<y} q'(z_1) \bar q'(z_2) dz_1 dz_2 dx dy
     \\ &  =2\Re \int_{\R\times\R} \int_{-\infty}^{\min\{z_1,z_2\} } \eta_0(x) dx \int_{\max\{z_1,z_2\}}^\infty \eta_0(y) dy q'(z_1) \bar q'(z_2) dz_1 dz_2.
\end{split} 
       \]
       Let
       \[ \rho(z_1,z_2) = 2 \int_{-\infty}^{\min\{z_1,z_2\} } \eta_0(x) dx \int_{\max\{z_1,z_2\}}^\infty \eta_0(y) dy.\]
       The assertion \eqref{wD} follows once we prove with $ \kappa_0 =2 \int_{\R} \eta_0 $,
       \[ \Vert \rho(.,.) \Vert_{L^2} + \Vert \partial_{z_1} \rho \Vert_{L^2} +
       \Vert \partial_{z_2} \rho \Vert_{L^2} + \Vert \partial^2_{z_1z_2} \rho- \kappa_0 \delta_{z_1-z_2} \eta_0(z_1)  \Vert_{L^2} \le  c. \] 
Indeed,
       \[ \partial_{z_1}  \rho(z_1,z_2) =
       \left\{ \begin{array}{rl}\displaystyle  2\eta_0(z_1) \int_{z_2}^\infty \eta_0(y) dy   & \hbox{ if }z_1 < z_2 \\
   \displaystyle    -2   \int_{-\infty}^{z_2} \eta_0(x) dx \eta_0(z_1) & \hbox{ if }z_2 < z_1
       \end{array}\right. \]
       and 
       \[ \partial_{z_1 z_2}^2  \rho(z_1,z_2) =  -2\eta_0(z_1) \eta_0(z_2)
       \hbox{ if }z_1\neq z_2. \] 
              At the diagonal $\{(z_1, z_2)\,|\,z_1=z_2\}$ we have
       \[ \partial_{z_1} \rho(z,z_+) - \partial_{z_1} \rho(z,z_-)
       =2\eta_0(z)\int^\infty_{z_+}\eta_0+2\eta_0(z)\int^{z_-}_{-\infty}\eta_0
       = 2\eta_0(z) \int_{\R} \eta_0\]   
       and hence
      \[ \partial_{z_1z_2}^2 \rho(z_1,z_2) = -2 \eta_0(z_1) \eta_0(z_2) 
      +2 \delta_{z_1-z_2} \eta_0(z_1)\int_{\R}\eta_0. \] 

We turn to the proof of \eqref{e:deviation}. 
            Let   $\kappa = \int_{\R} \sech^2(x) \dx $. 
            Then
      \[
\begin{split} 
      \left| \int_{\R} \sech^2(x-y) q(x) \dx  \right| \le & 
      \int_{\R} \sech^2(x-y) |q(x)| \dx
\\       \le &  \kappa^{1/2} \left(  \int_{\R} \sech^2(x-y) |q|^2(x) \dx \right)^{\frac12} 
\\ \le &  \kappa \Bigl( 1 + \kappa^{-1} \int_{\R} \sech^2(x-y) (|q|^2-1)(x) \dx\Bigr) 
\\ \le & \kappa + C\Vert \eta(x-y) \langle D\rangle^{-1} (|q|^2-1) \Vert_{L^2}^{\frac12},
\end{split}
\]
where in the last step we used \eqref{sech,eta}.
This implies the desired estimate \eqref{e:deviation} if for some $\varepsilon>0$, $\left| \frac1\kappa\int_{\R} \sech^2(x-y) q(x) dx\right|  \ge 1+ \varepsilon$.

      We hence fix $\varepsilon$ ($\varepsilon= \frac12$ being legitimate) and consider the case 
      \begin{equation}\label{boundedness} 
       \left|  \frac1\kappa\int_{\R} \sech^2(x-y)q(x) \dx \right| \le (1+  \varepsilon).  \end{equation} 
 Using Fubini and \eqref{wD}  we have
      \begin{equation} \label{e:diff}
\begin{split} 
      &  \left\|\frac1{\kappa} \int_{\R} \sech^2(x-y) q \dx - q \right\|^2_{L^2(\sech^2(.-y))}
      \\
&=   \int_{\R} \sech^2(x'-y)   \Big|\frac1{\kappa}\int_{\R} \sech^2(x-y) q(x) \dx - q(x') \Big|^2  \dx'
\\ 
&\le   \frac1{\kappa} \int_{\R^2} \sech^2(x'-y)  \sech^2(x-y)   |q(x) - q(x') |^2 \dx \dx'
  \\   
  &   \le   c \Vert \eta(\cdot-y) \langle D\rangle^{-1} q_x \Vert_{L^2}^2, \end{split} \end{equation} 
      and hence by triangle inequality, 
      \[ \begin{split}& \left|   \left| \frac1{\kappa}\int_{\R} \sech^2(x-y) q(x) \dx \right|^2 - 1 \right|  \le    \left|\frac1\kappa \int_{\R} \sech^2 (x-y) (|q|^2-1)(x)\dx\right|
        \\ &\qquad + \left|\frac{1}{\kappa} \int_{\R} \sech^2(x-y) |q|^2(x) \dx
        - \left| \frac1\kappa  \int_{\R} \sech^2(x-y) q(x) \dx \right|^2  
         \right|,
        \end{split} 
        \] 
 where the last term is estimated using \eqref{e:diff} and by writing it as
        $ |A^2-B^2| = |A+B| |A-B| $.  In this last step we made use of \eqref{boundedness}.  
\end{proof}

We complete this subsection with verifying the relation between metric and energy stated in Lemma \ref{metricspace}. 
\begin{proof}[Proof of Lemma \ref{metricspace}]
  We claim that there exists a constant $c>0$ so that  
  \[ d^s(q,1) \le c E^s(q). \]
  This is the first claim of the lemma. 
 We begin with the most difficult case $s=0$ and   fix $y\in \R$. Then, with $\kappa = \int_{\R} \sech^2(x) \dx $, 
\[\begin{split} 
& \inf_{|\lambda|=1}  \int_{\R} \sech^2(x-y) |q-\lambda|^2(x) \dx
 \\
 &   =  \int_{\R} \sech^2(x-y) (|q|^2+1) \dx 
  - 2 \sup_{|\lambda|=1} \Re \lambda \overline{  \int_{\R}  \sech^2(x-y) q(x) \dx}
   \\ 
   &  =  \int _{\R}\sech^2(x-y)(|q|^2-1) \dx -  
   2 \Big( \Big|\int_{\R} \sech^2(x-y) q(x) \dx\Big| - \kappa  \Big)
   \\ & \le c\left(  \Vert \eta(x-y) \langle D\rangle^{-1}(|q|^2-1 ) \Vert_{L^2}
   +  \Vert \eta(x-y) \langle D\rangle^{-1} q_x \Vert_{L^2} \right),
   \end{split}
\]
where the bound on the first term follows by \eqref{sech,eta}, and the second term by \eqref{e:deviation}.  
Let $\delta\in (0,1)$ be a small constant to be determined later and we define the set
\[Y_\delta=\{y\in\R\,|\,\Vert \eta(\cdot-y) \langle D\rangle^{-1} (|q|^2-1)\Vert_{L^2} +  \Vert \eta(\cdot-y) \langle D\rangle^{-1} q_x \Vert_{L^2} \ge \delta > 0\}. \]
Then
\begin{align*}
d^0(q,1)&=\Bigl(\int_{\R} \inf_{|\lambda|=1}  \int_{\R} \sech^2(x-y) |q-\lambda|^2(x) \dx\dy\Bigr)^{\frac12}
\\
&\leq \Bigl(\int_{Y_\delta^C} \inf_{|\lambda|=1}  \int_{\R} \sech^2(x-y) |q(x)-\lambda|^2  \dx\dy\Bigr)^{\frac12}
\\
&\quad
+c\Bigl( \int_{Y_\delta}\left(  \Vert \eta(x-y) \langle D\rangle^{-1}(|q|^2-1 ) \Vert_{L^2}
   +  \Vert \eta(x-y) \langle D\rangle^{-1} q_x \Vert_{L^2} \right) \dy\Bigr)^{\frac12},
\end{align*}
where the last term is bounded by $c_\delta E^0(q)$ for some constant $c_\delta$ depending on $\delta$.

   It remains to consider those $y\in Y_\delta^C$.
We regularize $q$ by taking the convolution with the Schwartz function $\frac1\kappa\sech^2(x)$, and we can always decompose (as in Section \ref{sec:lwp})
$$q=b+q^1,
\hbox{ with }b\in L^2,\quad q^1=q\ast(\frac1\kappa\sech^2)\in X^\sigma, \,\forall \sigma\geq 0,$$
such that  
   \[ \Vert \sech^2(x-y)\, b \Vert_{L^2} + \Vert \sech^2(x-y)\, (q^1)_x \Vert_{L^2}
   \le c   \Vert \eta(x-y) \langle D\rangle^{-1} q_{x} \Vert_{L^2}. \]
Fix $y\in Y_\delta^C$ and in the following we  will simply denote $\eta(x-y)$ by $\eta$.  
We use Lemma \ref{l:bound} and then choose $\delta$ small enough such that 
   \[ ||q^1(y)|-1| \le c \left(  \Vert \eta \langle D\rangle^{-1} (|q|^2-1)\Vert_{L^2}
   + \Vert \eta \langle D\rangle^{-1} q_x \Vert_{L^2}\right)
   \leq c\delta<\frac12, \]
   and thus $|q^1(y)|\neq 0$.
Moreover (as in \eqref{pointwisebound}),
   \[ \Vert q^1 \Vert_{L^\infty([y-R,y+R])} 
   \le  c(1 + \Vert \eta \langle D\rangle^{-1} (|q|^2-1) \Vert_{L^2} + \Vert \eta \langle D\rangle^{-1} q_x  \Vert_{L^2}) \le c, \]
   and
   \[ \Bigl|\int_{\R} \eta (|q^1|^2-1)\dx\Bigr|  \le \left| \int  \eta (|q|^2-1)\dx \right|
   + \int \eta (|q|^2- |q^1|^2) \dx, \]
  where the second term on the righthand side above is bounded by
   \[ \left( \Vert \eta^{1/2} q \Vert_{L^2} + \Vert \eta^{1/2} q^1 \Vert_{L^2}\right) \Vert \eta^{1/2} b \Vert_{L^2}\leq c \Vert \eta^{1/2} b \Vert_{L^2}. \] 
Since
\begin{align*}
& \Big\Vert \sech^2(x-y)  \bigl( q(x)-\frac{ q^1(y)}{|q^1(y)|}\bigr) \Big\Vert_{L^2_x}
  \le \Vert \sech^2(x-y) (q- q^1)(x) \Vert_{L^2_x} 
  \\
&\qquad\qquad\qquad\qquad
 + \|\sech^2(x-y)(q^1(x)-q^1(y))\|_{L^2_x}
 +\Vert \sech^2(x-y)(|q^1(y)|-1) \Vert_{L^2_x} ,
 \end{align*}
we take the square and then integrate with respect to $y$ in the set $Y_\delta^C$, to   complete the proof of $d^0(q,1)\leq cE^0(q)$. 
  To deal with general $s>0$ we slightly modify  the  steps.

  To prove the second claim \eqref{Es,ds,2} we consider the case when the right hand side is finite. 
 By the triangle inequality, 
 \[ \Vert |q|^2-1 \Vert_{H^{s-1}} \le \Vert |p|^2-1 \Vert_{H^{s-1}} + \Vert  |q|^2- |p|^2 \Vert_{H^{s-1}}. \]
 We now verify
\begin{equation}\label{Es,ds}
\Vert |q|^2- |p|^2 \Vert_{H^{s-1}} \le c\sqrt{1+E^s(p)+E^s(q)}d^s(p,q).  
\end{equation} 
Since for any $|\lambda|=1$,
\begin{align*}
  |q|^2- |p|^2  
 &=   \Re\bigl( (\lambda q+p)(\overline{\lambda q-p})\bigr) 
 =\Re\Bigl( \bigl(\lambda (b_q+q^1)+(b_p+p^1)\bigr)(\overline{\lambda q-p})\Bigr),
\end{align*}
where $q=b_q+q^1$ and $p=b_p+p^1$ are decompositions above,      we have for $s\geq 0$
\begin{align*}
\|  |q|^2- |p|^2  \|_{H^{s-1}}
\lesssim (\|(b_q,b_p)\|_{H^s}+\|( (q^1)_x, (p^1)_x)\|_{H^{s-1}}+\|(q^1, p^1)\|_{L^\infty}) \|\lambda q-p\|_{H^s},
\end{align*}
and hence \eqref{Es,ds} follows:
\begin{equation*} 
\begin{split} 
  \Vert |q|^2-|p|^2 \Vert^2_{H^{s-1}}  
  & \, \le
  c \int_{\R} \Vert \sech(x-y) (|q|^2-|p|^2)(x) \Vert^2_{H^{s-1}_x}\, dy  
  \\ 
  &  \le c(1+E^s(p)+E^s(q))  \int_{\R} \inf_{|\lambda|=1} \Vert \sech(x-y) (\lambda q-p) \Vert_{H^s}^2dy
  \\
&=c(1+E^s(p)+E^s(q))(d^s(q,p))^2.
  \end{split} 
\end{equation*} 
Then   for any $\epsilon>0$ small enough, there exists $c_\epsilon>0$ such that
\begin{equation}\label{q-1}
\begin{split} 
\Vert |q|^2-1 \Vert_{H^{s-1}}
 & \le  E^s(p) + c(1+E^s(p)+E^s(q))^{\frac12} d^s(p,q)
\\ & \le  \epsilon E^s(q)+E^s(p) +  c(1+E^s(p))^{\frac12}d^s(p,q)  + c_\epsilon (d^s(p,q))^2. 
\end{split}
\end{equation}

 Let $\eta \in \mathcal{S}(\R)$. We calculate
  \[
\begin{split} 
  \int_{\R} \Vert \eta(x-y) f(x) \Vert_{L^2_x}^2 dy
  =  \int_{\R} \int_{\R} | \eta(x-y) f(x) |^2 dx\, dy  
   =   \Vert \eta \Vert_{L^2}^2 \Vert f \Vert_{L^2}^2  .
\end{split}
\]   
We take $\|\eta\|_{L^2}=1$, such that
  {\small  \[
\begin{split} 
  \Vert q' &\Vert_{H^{s-1}}
=  
    \, \Bigl( \int_{\R} \Vert \eta(x-y) \langle D\rangle^{s-1} q'(x) \Vert^2_{L^2_x} dy\Bigr)^{1/2}
  \\   \le &\,  
  \Bigl(\int_{\R} \Vert \eta(x-y) \langle D\rangle^{s-1} p' \Vert_{L^2}^2 dy\Bigr)^{1/2}
  + \Bigl(\int_{\R}\inf_{\lambda} \Vert \eta(x-y)\langle D\rangle^{s-1}(q'-\lambda p') \Vert_{L^2}^2  dy\Bigr)^{1/2}
 \\ =
 &\,  \Vert p' \Vert_{H^{s-1}}
+ \Bigl(\int_{\R} \inf_{\lambda} \Vert \eta(x-y)\langle D\rangle^{s-1} (q'-\lambda p') \Vert^2_{L^2}dy\Bigr)^{1/2}.
  \end{split} 
\] } 
  Then by choosing $\epsilon$ sufficiently small in \eqref{q-1}, \eqref{Es,ds,2} follows from Lemma \ref{lem:commute} (taking $\eta=C\sech(\delta x)$ with $C>0$ such that $\|\eta\|_{L^2}=1$).
\end{proof} 
 
\subsection{The analytic structure} 
In this subsection we focus on the analytic structure of the metric space $(X^s, d^s)$.
\begin{thm}\label{thm:analytic}   
  Let $\eta\in C^\infty_0([-1,1])$ with $\eta=1$ on $[-1/2,1/2]$. 
Let $E^s(q)<\infty$. There exist $r$ and $L$ depending only on $E^s(q)$ such that the map
\begin{align}\label{analytic}
&B_r^s(q)\ni p\mapsto ((a_n)_n, b)\in l^2_d\times\tilde H^s,\hbox{ with }
\\
&\|(a_n)_n\|_{l^2_d}=\bigl(\sum_{n}|a_n-a_{n-1}|^2\bigr)^{\frac12},\notag
\\
&\tilde H^s=\{b\in H^s\,|\,\langle \eta((x-4Ln)/L)b,\eta((x-4Ln)/L)q\rangle_{H^s}\in \R,\quad \forall n\in\Z\}\notag
\end{align}
is a biLipschitz map to its image. 
If $d^s(q, q_1)<r$ then the coordinate change in the intersection is an analytic diffeomorphism with uniformly bounded derivatives. 
\end{thm}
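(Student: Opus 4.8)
\textbf{Proof proposal for Theorem \ref{thm:analytic}.}

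The plan is to build the chart in two pieces, separating the "phase at infinity" part (captured by the sequence $(a_n)_n$) from the "fluctuation" part $b\in\tilde H^s$, and then to check the compatibility of two such charts. First I would make precise the construction of the map. Given $p\in B^s_r(q)$ with $r$ small (depending on $E^s(q)$ through Lemma \ref{metricspace}, so that $E^s(p)$ is controlled), the local $L^2$-norm of $p-1$ on each window $[4Ln-L,4Ln+L]$ is small once $L$ is large and $r$ is small, by the same argument as in Step 5 of the proof of Theorem \ref{thm:metric1} and as in \eqref{pointwisebound}, \eqref{uniforml2}. Hence on each window $p$ is uniformly bounded and bounded away from $0$ on a subinterval, and the averaged quantity
\[
\hat p_n=\frac{1}{\kappa}\int_{\R}\eta\bigl((x-4Ln)/L\bigr)^2\,p(x)\,\dx
\]
satisfies $|\,|\hat p_n|-1\,|\lesssim r$ by Lemma \ref{l:bound} (estimate \eqref{e:deviation}, rescaled to the window of size $L$). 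I then set $a_n=\arg(\hat p_n/|\hat p_n|)$, fixing a branch of the argument recursively so that $|a_n-a_{n-1}|$ is small — consecutive windows overlap, so $\hat p_n$ and $\hat p_{n-1}$ are close relative to their moduli, which pins down the branch. Summability of $(a_n-a_{n-1})^2$ follows by comparing $a_n-a_{n-1}$ with a weighted integral of $p'$ over two adjacent windows and using $q'\in H^{s-1}\hookrightarrow l^2_1 DU^2$; this is where one reuses the finite-difference estimate \eqref{wD} of Lemma \ref{l:bound}. Once the $a_n$ are fixed, one produces a smooth unimodular function $e^{i\theta(x)}$ that interpolates $e^{ia_n}$ near $x=4Ln$ (a partition-of-unity construction exactly as in the definition of $u_R$ in Step 5), and sets $b=e^{-i\theta}p-e^{-i\theta}\cdot(\text{the interpolating background})$, arranged so that $b\in H^s$ and the reality condition $\langle \eta((x-4Ln)/L)b,\eta((x-4Ln)/L)q\rangle_{H^s}\in\R$ holds — the latter is achievable because we still have the freedom of a window-localized phase adjustment of size $O(r)$, and imposing reality of a single complex inner product per window removes exactly one real degree of freedom. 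This last normalization step is the analogue of the "$\int\tilde q\phi\,\dx\in(0,\infty)$" gauge fixing in Lemma \ref{lem:pq}.

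Next I would verify that the map $p\mapsto((a_n)_n,b)$ is biLipschitz onto its image. The lower bound (injectivity with quantitative control) is immediate: from $((a_n)_n,b)$ one reconstructs $p$ by $p=e^{i\theta}(b+\text{background})$, and this reconstruction is Lipschitz from $l^2_d\times\tilde H^s$ into $X^s$ in the metric $d^s$ — one integrates, window by window, the $H^s$-estimates, using that $\sech$ decays fast and the windows are at spacing $4L$, so the sum over $n$ of the window contributions is comparable to the $l^2_d$-norm of $(a_n)$ plus the $H^s$-norm of $b$. The upper bound (the forward map is Lipschitz) is the reverse: each $a_n$ depends on $p$ through an integral of $p$ against a fixed Schwartz weight, hence is a bounded linear functional locally, and $b$ depends on $p$ through multiplication by the smooth $e^{-i\theta}$, which is controlled because $\theta$ itself depends Lipschitz-continuously on $(a_n)$. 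The only subtlety is that $\theta$ depends on $p$ nonlinearly through the choice of branch of $\arg$, but since all increments $a_n-a_{n-1}$ stay in a fixed small interval the branch choice is locally constant, so no discontinuity arises; I would state this as a lemma and prove it by noting that on $B^s_r(q)$ the relevant $\hat p_n$ avoid a neighborhood of $0$ uniformly.

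Finally, for the compatibility of charts: if $d^s(q,q_1)<r$ then on the overlap $B^s_r(q)\cap B^s_r(q_1)$ we have two coordinate systems $((a_n),b)$ and $((a_n^{(1)}),b^{(1)})$, and the transition map is computed by composing reconstruction for the first chart with the forward map of the second. Reconstruction is real-analytic in its arguments because it is built from multiplication by $e^{i\theta}$ with $\theta$ a fixed real-analytic (indeed affine-in-$(a_n)$, via the partition of unity) function of the coordinates, composed with addition — all analytic operations on the Banach spaces involved. The forward map of the second chart is analytic for the same reasons its components were: $a_n^{(1)}$ is a bounded linear functional of $p$, and passing from $p$ to $b^{(1)}$ involves multiplication by $e^{-i\theta_1}$ where $\theta_1$ is analytic in the $a^{(1)}_n$, together with the reality-normalization which, being the solution of one real-linear equation per window with invertible (near-identity, for $r$ small) coefficient, is analytic by the implicit function theorem. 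Composition of analytic maps is analytic, and the derivatives are uniformly bounded on $B^s_r(q)\cap B^s_r(q_1)$ because all the estimates above are uniform in $r$ once $r$ is fixed small and $L$ large depending on $E^s(q)$, $E^s(q_1)\le E^s(q)+cr$. The main obstacle I anticipate is the careful bookkeeping in the branch-of-argument / reality-normalization step: one must show simultaneously that (i) the branch of $a_n$ is forced and locally constant, (ii) the residual window-phase used to impose the $H^s$-reality condition exists, is unique, and depends analytically on $p$, and (iii) these two choices do not interfere — i.e. the reality normalization can be done without disturbing the already-fixed $a_n$, which is why the reality condition is imposed against the \emph{fixed} reference $q$ rather than against $p$ itself. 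Getting the constants $r$ and $L$ to depend only on $E^s(q)$ throughout, uniformly over the window index $n$, is the part that requires the most care but uses only the tools already assembled: \eqref{pointwisebound}, \eqref{uniforml2}, Lemma \ref{l:bound}, Lemma \ref{metricspace}, and Lemma \ref{lem:commute}.
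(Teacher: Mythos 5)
Your overall architecture matches the paper's: windows of length $\sim L$ with $L$ fixed by $E^s(q)$, a discrete phase sequence $(a_n)$ glued into a smooth $\theta$ by a partition of unity, $b=e^{-i\theta}p-q$, the reality condition imposed against the fixed reference $q$, and the lower bounds of Lemma \ref{l:bound} / Proposition \ref{prop:lower} to keep the relevant quantities away from zero. But there is a genuine gap at the central step, namely the definition of $a_n$ and its relation to the reality condition. You define $a_n$ as the argument of the local average $\hat p_n$ of $p$, and then claim the condition $\langle \eta((\cdot-4Ln)/L)b,\eta((\cdot-4Ln)/L)q\rangle_{H^s}\in\R$ can be arranged afterwards by a ``window-localized phase adjustment'' that ``does not disturb the already-fixed $a_n$.'' This cannot work: on the support of $\eta((\cdot-4Ln)/L)$ the interpolant satisfies $\theta\equiv a_n$, so $\langle \eta_n b,\eta_n q\rangle_{H^s}=e^{-ia_n}\langle\eta_n p,\eta_n q\rangle_{H^s}-\|\eta_n q\|_{H^s}^2$, and reality \emph{forces} $e^{ia_n}$ to be (up to sign) the phase of $\tilde\mu(4nL)=\langle\eta_n p,\eta_n q\rangle_{H^s}$. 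The ``one real degree of freedom per window'' you want to spend on the normalization is exactly $a_n$ itself; there is no residual gauge. The paper resolves this by \emph{defining} $a_n$ through $e^{ia_n}=\tilde\lambda(4nL)=\tilde\mu(4nL)/|\tilde\mu(4nL)|$, after which the reality condition is an identity ($\langle\eta_n b,\eta_n q\rangle=|\tilde\mu(4nL)|-\|\eta_n q\|^2$), with well-definedness of the phase guaranteed by the lower bound $|\mu|\ge\frac12\|\sechL(\cdot-y)q\|_{H^s}^2$ coming from Proposition \ref{prop:lower}.

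This is not merely cosmetic, because the choice of $a_n$ also drives the quantitative estimates. The paper's bound $\sum_n|a_n-a_{n-1}|^2\le CL\,d^s(q,p)$ rests on Lemmas \ref{lem:lambda'} and \ref{lem:tildelambda}: the optimizing phase $\lambda(y)$ satisfies $\|\lambda_y\|_{H^N}\le c_Nd^s(q,p)$ because the combination $\bar\mu\mu_y-\mu\bar\mu_y$ \emph{vanishes identically when $p=\lambda q$}, so the variation of the phase is measured against the distance to $q$. Your substitute — bounding $a_n-a_{n-1}$ by weighted integrals of $p'$ via \eqref{wD} — only yields control by $E^s(p)$, not by $d^s(q,p)$, and more importantly does not give the Lipschitz dependence of $(a_n)_n$ on $p$ in the $l^2_d$ metric that the biLipschitz claim requires. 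To repair your argument you would have to replace $\hat p_n$ by the inner product against $q$ from the start (at which point the averaged quantity $\hat p_n$ and Lemma \ref{l:bound} are only needed, as in the paper, to control $\|q\|_{H^s(I)}$ from below and fix the branch of the argument).
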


\begin{proof}

We define
\begin{align*}
\|f\|_{H^{-1}(I)}=\sup\Bigl\{\int_{\R} fg\dx\,\Big|\, g\in C^\infty_0(I),\, \|g\|_{H^1}=1\Bigr\}.
\end{align*}
We denote
\begin{align*}
f^a(x)=\int_{-\frac12}^{\frac12} f(x+y) \dy,\quad \forall f\in L^1_\loc(\R).
\end{align*}
\begin{prop}\label{prop:lower}
There exists $\varepsilon>0$ such that
$$
\frac12\leq |q^a(x)|\leq 2
\hbox{ and }\|q\|_{L^2([x-\frac12,x+\frac12])}\geq \frac12,
$$
if 
\begin{align*}
\|q_x\|_{H^{-1}([x-\frac12,x+\frac12])}+\||q|^2-1\|_{H^{-1}([x-1,x+1])}\leq\varepsilon.
\end{align*}
In particular, if the interval $I$ satisfies 
$$
|I|\geq 6((E^s(q))^2/\varepsilon^2+1),
$$
then
$$
\|q\|_{H^s(I)}\geq E^s(q)\varepsilon^{-1}.
$$
\end{prop}
\begin{proof} 
Without loss of generality we take $x=0$ and we consider
\begin{align*}
\|q-q^a(0)\|_{L^2([-\frac12, \frac12])}^2
=\int^{\frac12}_{-\frac12} |q(y)-q^a(0)|^2\dy
=\int^{\frac12}_{-\frac12} \Bigl| q(y)-\int^{\frac12}_{-\frac12}q(x)\dx\Bigr|^2\dy,
\end{align*}
which reads as
\begin{align*}
& \int^{\frac12}_{-\frac12}\Bigl( \int^{\frac12}_{-\frac12}\int_x^y q_x(z)\dz\dx\Bigr)
\Bigl(\int_{-\frac12}^{\frac12}\int_{x'}^y\bar q_x(z')\dz'\dx'\Bigr)\dy 
\\
&:=\int^{\frac12}_{-\frac12}\int^{\frac12}_{-\frac12}
k(z,z') q_x(z)\bar q_x(z')\dz\dz'.
\end{align*}
In the above,
{\small
\begin{align*}
&k(z,z')=\int_{A(z,z')} \sign(y-x)\sign(y-x') \dy\dx\dx',
\\
&A(z,z')=\Bigl(\Bigl\{-\frac12<x<z<y<\frac12\Bigr\}\cap \Bigl\{-\frac12<x'<z'<y<\frac12\Bigr\}\Bigr)
\\
&\qquad \cup \Bigl\{-\frac12<x<z<y<z'<x'<\frac12\Bigr\}
\cup \Bigl\{ -\frac12<x'<z'<y<z<x<\frac12\Bigr\}
\\
&\qquad \cup \Bigl(\Bigl\{ -\frac12<y<z<x<\frac12\Bigr\}\cap \Bigl\{ -\frac12<y<z'<x'<\frac12\Bigr\}\Bigr),
\end{align*} 
such that
\begin{align*}
&k(z,z')=\mathbf{1}_{z<z'} (z+\frac12)(\frac12-z') 
+\mathbf{1}_{z'<z}  (z'+\frac12)(\frac12-z) 
\\
&\qquad\quad=\frac14-zz'-\frac12|z-z'|,\quad \forall (z,z')\in [-\frac12,\frac12]\times[-\frac12,\frac12],
\end{align*}}
which is symmetric and Lipschitz continuous with 
$$k(\pm\frac12,z')=k(z,\pm\frac12)=0,$$ and smooth away from the diagonal with uniformly bounded derivatives of all orders. 

We claim that 
\begin{equation}\label{utildeu}
\|q-q^a(0)\|_{L^2([-\frac12, \frac12])}
\leq c\|q_x\|_{H^{-1}([-\frac12,\frac12])},
\end{equation}
and it is equivalent to say that the integral operator with the integral kernel $k(z,z')$ maps from $H^{-1}$ to $H_0^1$.
That is, the integral operator with the integral kernel 
$$
\d_z k(z,z')=-z'+\frac12(\mathbf{1}_{z<z'}-\mathbf{1}_{z'<z})
$$
maps from $H^{-1}$ to $L^2$.
This is equivalent to the adjoint operator mapping from $L^2$ to $H_0^1$, which, by 
$\d_zk(z,\pm\frac12)=0$, is equivalent to the fact that
$$
\d_{zz'}k(z,z')=-1+\delta_{z-z'}
$$
is the integral kernel of an operator mapping from $L^2$ to $L^2$: This is obvious. 

Let $\eta\in C^\infty_0([-1,1])$ such that $0\leq \eta\leq 1$, $\eta=\frac12$ on $[-\frac12,\frac12]$ and $\int\eta=1$.
Then
\begin{align*}
\Bigl|\int_{\R} \eta(y)\bigl( |q(x+y)|^2-1\bigr) \dy\Bigr|\leq c\||q|^2-1\|_{H^{-1}([x-1,x+1])},
\end{align*}
and hence by $\frac12\int_{-\frac12}^{\frac12} |q(x+y)|^2\dy\leq \int \eta|q(x+\cdot)|^2=\int\eta(|q(x+\cdot)|^2-1)+\int\eta$,
$$
|q^a(x)|+\|q\|_{L^2([x-\frac12,x+\frac12])}\leq c\bigl(1+\||q|^2-1\|_{H^{-1}([x-1,x+1])}\bigr)^{\frac12}.
$$
Thus with a different test function, still denoted by $\eta$, with $\eta\in C^\infty_0([-\frac12,\frac12])$ and $\eta=1$ on $[-\frac14,\frac14]$, $\int\eta=1$, $0\leq\eta\leq1$, we have for any $x\in\R$,
{\small\begin{align*}
& \bigl| |q^a(x)|^2-1\bigr|
\leq \Bigl| \int_{\R} \eta(y)\bigl( |q(x+y)|^2-1\bigr) \dy\Bigr|
+\int_{\R} \eta(y)\bigl|q^a(x)+q(x+y)\bigr|\bigl| q^a(x)-q(x+y)\bigr|\dy
\\
&\qquad\leq c\||q|^2-1\|_{H^{-1}([x-\frac12,x+\frac12])}+c\bigl(1+\||q|^2-1\|_{H^{-1}([x-1,x+1])}\bigr)
\|q_x\|_{H^{-1}([x-\frac12,x+\frac12])},
\end{align*}}
where we used \eqref{utildeu} to control $\|q^a(x)-q(x+\cdot)\|_{L^2([-\frac12,\frac12])}$.
Therefore there exists $\varepsilon<0$ such that if $\|q_x\|_{H^{-1}([x-\frac12,x+\frac12])}+\||q|^2-1\|_{H^{-1}([x-1,x+1])}\leq\varepsilon$ then 
$$\frac12\leq |q^a(x)|\leq 2
\hbox{ and }\|q\|_{L^2([x-\frac12, x+\frac12])}\geq \frac12.$$

By Tschebycheff's inequality we have
\begin{align*}
\#\{n: \|q_x\|_{H^{-1}([2n-1, 2n+1])}+\||q|^2-1\|_{H^{-1}([2n-1,2n+1])}\geq\varepsilon\}
\leq \varepsilon^{-2} (E^s(q))^2,
\end{align*}
and hence if $|I|\geq 6(\varepsilon^{-2}(E^s(q))^2+1)$ then for all $s\geq 0$,
\begin{align*}
\|q\|_{H^s(I)}^2\geq \|q\|_{L^2(I)}^2\geq \frac14\cdot\frac23|I|\geq \varepsilon^{-2}(E^s(q))^2+1.
\end{align*}
\end{proof}

After these preparation we turn to the crucial construction. 
Let
\[ L = 6 (\varepsilon^{-2} (E^s(q)^2+1)) \]
in the sequel. We replace $\sech(x) $ by $\sechL(x)=\frac{e^L}{2}\varphi\ast(\sech(\max\{L,  |x|\}))$ for some fixed smooth compactly supported function $\varphi$.
This function is close to $1$ on an interval of length $2L$. 
For $q\in X^s$,  by Proposition \ref{prop:lower}, we have  
\begin{equation}\label{u:lower}
 \Vert \sechL(x-y) q(x) \Vert_{H^s_x}(y) \ge  \frac12E^s(q)/\varepsilon,\quad \forall y\in\R. 
 \end{equation}

We replace $\sech$ by $\sech_L$ in the definition of the distance. This leads to an equivalent metric with constants of size $e^L/2$.  
Expand the quantity in the integrand in the definition of $d^s(q,p)$ as
\begin{align*}
\|\sechL(x-y)(\lambda q-p)\|_{H^s_x}^2
&=\|\sechL(x-y)q\|_{H^s_x}^2
+\|\sechL(x-y)p\|_{H^s_x}^2
\\
&\quad-2\Re\bigl[ \bar \lambda \langle \sechL(x-y)p, \sechL(x-y)q\rangle_{H^s_x}\bigr].
\end{align*}
Let
$$
\mu=\mu(y):=\langle \sechL(x-y)p, \sechL(x-y)q\rangle_{H^s_x}.
$$
If  $\mu\neq 0$, we take $\lambda=\lambda(y)=\frac{\mu(y)}{|\mu(y)|}$ such that
\begin{align*}
\inf_{|\lambda|=1}\|\sechL(x-y)(\lambda q-p)\|_{H^s_x}^2
&=\|\sechL(x-y)q\|_{H^s_x}^2
+\|\sechL(x-y)p\|_{H^s_x}^2-2|\mu(y)|.
\end{align*}

Suppose that $d^s(q,p) \le \frac1{32}E^s(q)/\varepsilon$. 
Then for any $y_0\in\R$ and the interval $I_0=[y_0-\frac12, y_0+\frac12]$, we have
\begin{align*}
&e^{-\frac12}\inf_{|\lambda|=1}\|\sechL(x-y_0)(\lambda q(x)-p(x))\|_{H^s_x}^2
\\
&\leq \int_{I_0}\inf_{|\lambda|=1}e^{-|y-y_0|}\|\sechL(x-y_0)(\lambda q(x)-p(x))\|_{H^s_x}^2\dy
\\
&\leq \int_{I_0}\inf_{|\lambda|=1}\|\sechL(x-y)(\lambda q(x)-p(x))\|_{H^s_x}^2 \dy
\leq d^s(q,p)\leq \frac1{32}E^s(q)/\varepsilon.
\end{align*} 
This together with \eqref{u:lower} implies that  given $y$ there exists $\lambda $ with modulus $1$ so that
\begin{equation}\label{lambdau-v:lower}
 \Vert \sechL(\lambda q - p) \Vert_{H^s} \le \frac14E^s(q)/\varepsilon
\leq \frac12\|\sechL(x-y)q\|_{H^s},
\end{equation}
and hence
\begin{equation}\label{mu:lower}
\begin{split} 
  &|\mu| =    |\langle \sechL(x-y)p, \sechL(x-y)q\rangle_{H^s_x}|
  \\  & \ge   \Vert \sechL(x-y) q \Vert_{H^s}^2 - \left| \langle \sechL(x-y)(p-\lambda q), \sechL(x-y)q\rangle_{H^s_x}\right|
\\   &\ge \Vert \sechL(x-y) q \Vert_{H^s}^2 - \frac12 \Vert \sechL(x-y)q\Vert_{H^s_x}^2 =  \frac12 \Vert \sechL(x-y) q \Vert_{H^s}^2.
\end{split}
\end{equation}

We are going to study the map
$$
p(x)\mapsto \lambda(y)=\frac{\mu(y)}{|\mu(y)|},
$$
in a small ball around $q$ with the radius depending only on $E^s(q)$.
\begin{lem}\label{lem:lambda'}
If $d^s(q,p)< \frac1{32}E^s(q)/\varepsilon$, then 
$$
\|\lambda_y\|_{H^N}\leq c_N d^s(q,p), \quad \forall N\in\N.
$$
\end{lem}

\begin{proof}
We calculate
\begin{align*}
\lambda_y=\frac{\mu_y}{|\mu|}-\frac12\frac{|\mu|^2\mu_y+\mu^2\bar\mu_y}{|\mu|^3}
=\frac12\frac{\mu(\bar\mu\mu_y-\mu \bar\mu_y)}{|\mu|^3}.
\end{align*}
If $p=\lambda q$, then $\lambda_y=0$.

We differentiate $\mu$ to get
\begin{align*}
 \mu' &= \frac{d}{dy} \langle \sechL(x-y) p , \sechL(x-y) q \rangle 
 \\
&= -\langle \sechL'(x-y) p, \sechL(x-y) q \rangle 
- \langle \sechL(x-y) p, \sechL'(x-y) q \rangle,
\end{align*}  
and for notational simplicity we denote $\sechL(x-y)=\rho$ and $\sechL'(x-y)=\rho'$ such that $\mu=\langle \rho p, \rho q\rangle$ and $\mu'=-\langle\rho' p, \rho q\rangle-\langle\rho p, \rho'q\rangle$ in the following of the proof.
We expand $p=\lambda q+(p-\lambda q)$ with $|\lambda(y)|=1$ to  see that the difference $\bar\mu\mu'-\mu\bar\mu'$ is the summation of 
\begin{align*}
&\langle\rho(\overline{p-\lambda q}), \rho \bar q\rangle\mu'
-\bar\lambda\langle\rho\bar q, \rho\bar q\rangle
\bigl( \langle \rho'(p-\lambda q), \rho q\rangle
+\langle \rho(p-\lambda q), \rho' q\rangle\bigr)
\\
&-\langle\rho(p-\lambda q), \rho q\rangle\bar\mu'
+\lambda\langle\rho q, \rho q\rangle
\bigl( \langle \rho'(\overline{p-\lambda q}), \rho \bar q\rangle
+\langle \rho(\overline{p-\lambda q}), \rho'\bar q\rangle\bigr)
\end{align*}
and
\begin{align*}
&-\bar\lambda\langle\rho\bar q, \rho\bar q\rangle
\bigl( \lambda\langle\rho'q, \rho q\rangle
+\lambda\langle\rho'q, \rho q\rangle\bigr)
+\lambda\langle \rho q, \rho q\rangle
\bigl( \bar\lambda\langle \rho'\bar q, \rho\bar q\rangle
+\langle\rho\bar q, \rho'\bar q\rangle\bigr)
\\
&=-\|\rho q\|_{H^s}^22\Re\langle\rho'q, \rho q\rangle
+\|\rho q\|_{H^s}^22\Re\langle\rho'q, \rho q\rangle=0.
\end{align*}
Therefore
\begin{align*}
|\lambda_y|\leq \frac12\sum_{\rho_1,\rho_2,\rho_3,\rho_4\in\{\rho,\rho'\}}\frac{\|\rho_1(p-\lambda q)\|_{H^s}\|\rho_2 q\|_{H^s}\bigl(|\langle\rho_3 p,\rho_4 q\rangle|+|\langle\rho q, \rho q\rangle|\bigr)}{|\mu|^2},
\end{align*}
and by \eqref{lambdau-v:lower}, \eqref{mu:lower},
$$
|\lambda_y|\leq c \|\rho(p-\lambda q)\|_{H^s},
\hbox{ and hence }\|\lambda_y\|_{L^2}\leq cd^s(q,p).
$$ 
We easily obtain the claimed estimates for higher order derivatives.
\end{proof}

Next we study what happens when we modify the weight.
Let $\eta\in C^\infty_0([-1,1])$ with $\eta=1$ on $[-1/2,1/2]$ and  we define
\begin{align*}
\tilde\mu=\tilde\mu(y)=\langle \eta((\cdot-y)/L)p, \eta((\cdot-y)/L)q\rangle_{H^s},
\quad \tilde\lambda=\tilde\lambda(y)=\frac{\tilde\mu(y)}{|\tilde\mu(y)|}.
\end{align*}
\begin{lem} \label{lem:tildelambda}
Assume the same hypotheses as in Lemma \ref{lem:lambda'}, then
$$
\|\tilde\lambda-\lambda\|_{H^N}\leq c_Nd^0(q,p), \quad \forall N\in\N,
$$
if the righthand side is bounded by a constant depending on the energy.
\end{lem}

\begin{proof}
  Since
  \[ |\tilde \lambda - \lambda| = \left|\frac{|\tilde\mu(y)| \mu(y) - |\mu(y)| \tilde \mu(y) }{ |\mu(y) ||\tilde \mu(y)|} \right|, \]
  this amount to bounding the difference
  \[ A= |\tilde \mu(y) | \mu(y) - |\mu(y)| \tilde \mu(y). \] 
  This vanishes again for $ \lambda q$ and we can continue as in the proof of Lemma \ref{lem:lambda'} since
  \[ |A|\le A_1 +A_2 \]
  where
  \[
\begin{split} 
  A_1  = &  \Big| \bigl|\langle \eta (x-y) p , \eta(x-y) q \rangle\bigr|\langle \sech_L (x-y) p , \sech_L(x-y) q \rangle \\ & - \bigl|\langle \eta (x-y)\lambda q , \eta(x-y) q \rangle\bigr|
  \langle \sech_L (x-y) \lambda q , \sech_L(x-y) q \rangle \Big|
  \\ \le &    \bigl|\langle \eta (x-y) (p-\lambda q) , \eta(x-y) q \rangle\bigr|
  \bigl|\langle \sech_L (x-y) q , \sech_L(x-y) q \rangle \bigr|
  \\ & +  \bigl|\langle \eta (x-y) q , \eta(x-y) q \rangle\bigr|
  \bigl |\langle \sech_L (x-y) (p-\lambda q) , \sech_L(x-y) q \rangle\bigr|
\end{split} 
  \]
  \[
\begin{split} 
  A_2 = & \Big| |\langle \sech_L (x-y) p , \sech_L(x-y) q \rangle|\langle \eta (x-y) p , \eta(x-y) q \rangle\\ &  - |\langle \sech_L (x-y) \lambda q , \sech_L(x-y) q \rangle|\langle \eta (x-y) \lambda q , \eta(x-y) q \rangle \Big|.
  \end{split} \]
  
  Bounding the derivatives is done in the same fashion as for $\lambda$.

\end{proof} 

There exists $r<\frac{1}{32}E^s(q)/\varepsilon$ small enough such that for any $p\in B^s_r(q)$, we can construct a function $\theta=\theta(x)$ using  the function $\tilde\lambda=\tilde\lambda(y)$ as follows:
\begin{enumerate}
\item We choose a sequence $(a_n)_{n\in \Z}$ so that
  \[ e^{ia_n}  = \tilde \lambda(4nL) \]
  and
  \[ \sum_n |a_{n-1}-a_n|^2 <  CL d^s(q,p) <\frac12 \]
  where the latter is satisfied for small enough $r$. The sequence is unique up to the addition of a multiple of $2\pi$. 
  
\item We fix a smooth partition  of unity
  \[ \sum_{n} \rho((x-4Ln)/L) = 1 \hbox{ with }\, \rho=1 \, \hbox{ on }\, [-1,1]=\Supp(\eta) \] 
  and define  
  \[ \theta(x) = \sum a_n \rho((x-4Ln)/L) .\]
\item We define the map
  \[ p \to e^{-i\theta}p -q =:b.\] 
  \end{enumerate} 

This defines the map \eqref{analytic} in Theorem \ref{thm:analytic}:
$$
B^s_r(q)\ni p\mapsto ((a_n)_n, b)\in l^2_d\times\tilde H^s.
$$ 
Indeed, it suffices to show $b\in \tilde H^s$. Since
\begin{align*}
\|\sechL(x-y)b(x)\|_{H^s_x}
&\leq \|\sechL(x-y)((\chi(y))^{-1}p(x)-q(x))\|_{H^s_x}
\\
&\quad+\|\sechL(x-y)((\chi(y))^{-1}-(\tilde\chi(x))^{-1})p(x)\|_{H^s_x}
\\
&\quad+\|\sechL(x-y)((\tilde\chi(x))^{-1}-e^{-i\theta(x)})p(x)\|_{H^s_x},
\end{align*}
by Lemma \ref{lem:commute}, Lemma \ref{lem:lambda'} and Lemma \ref{lem:tildelambda}, we derive that
\begin{align*}
\|b\|_{H^s}^2\lesssim \int\|\sechL(x-y)b\|_{H^s_x}^2\dy\lesssim d^s(q,p).
\end{align*}
Furthermore, it is straightforward to calculate
\[
\begin{split} 
&\langle \eta((x-4nL)/L)b, \eta((x-4nL)/L)q\rangle
\\
= & 
e^{-ia_n} \langle \eta((x-4nL)/L) p, \eta((x-4nL)/L)q\rangle
-\langle \eta((x-4nL)/L)q, \eta((x-4nL)/L)q\rangle
\\ = &  |\tilde \mu(4nL)|-\|\eta((\cdot-4nL)/L)q\|_{H^s}^2 \in \R.
\end{split} 
\]

Let us consider the map
$$
l^2_d\times\tilde H^s\ni ((a_n)_n, b)\mapsto e^{i\theta}(q+b),
$$
where the function $\theta$ is constructed as above from $\theta(4n L)=a_n$.
Since
\begin{align*}
|e^{i\theta}(q+b)|^2-|e^{i\tilde\theta}(q+\tilde b)|^2
=|b|^2-|\tilde b|^2+2\Re\bigl[\bar q(b-\tilde b)],
\end{align*}
we derive that
\begin{align*}
\bigl\| |e^{i\theta}(q+b)|^2-|e^{i\tilde\theta}(q+\tilde b)|^2\bigr\|_{H^{s-1}}
\leq c \|b-\tilde b\|_{H^s}.
\end{align*}
Furthermore, we also derive (expanding the norm) 
\begin{align*}
&\int_{\R} \Bigl\|\sech(x-y) \Big( e^{i\tilde\theta(y)-i\theta(y)} e^{i\theta(x)}(q+b) - e^{i\tilde\theta(x)}(q+\tilde b)\Big)\Bigr\|_{H^s_x}^2 \dy
\\
&\leq c\bigl( \|b-\tilde b\|^2_{H^s}+\|\bigl((a_n)-(\tilde a_n)\bigr)_n\|^2_{l^2_d}\bigr).
\end{align*}

On the other side,   the map
$$
B_r^s(q)\ni p\mapsto \tilde\lambda(y)=\frac{\langle \eta((x-y)/L)p, \eta((x-y)/L)q\rangle_{H^s_x}}{|\cdot|}\in |D|^{-1}H^N
$$
and the map
$$
p\mapsto b=e^{-i\theta} p-q
$$
are Lipschitz continuous. This proves the biLipschitz continuity.

Finally, the following two maps describing the coordinate changes are smooth:
\begin{align*}
&((a_n)_n, b)\mapsto \tilde\lambda_1(y)
=\frac{\langle \eta((x-y)/L)(e^{i\theta(x)}(q+b)), \eta((x-y)/L)q_1\rangle_{H^s_x}}{|\cdot|}\in |D|^{-1}H^N,
\\
&((a_n)_n, b)\mapsto b_1=e^{i\theta-i\theta_1}(q+b) -q_1 .
\end{align*}

\end{proof}

\appendix
\section{Calculation of the quadratic term}\label{appendix}
\setcounter{equation}{0} 
We prove Lemma \ref{Lem:IA} here: We derive  the expansion \eqref{Expansion:lnT,T2} of $\ln\Tc^{-1}$ from the  expansion \eqref{Expansion:lnT} on the imaginary axis  \eqref{ImaginAxis}. 
 It suffices to show
\begin{equation}\label{PhiXY}
\Phi+T_2=\tilde T_2+\tilde T_3,
\quad \hbox{ when }(\lambda,z)=(i\sqrt{\tau^2/4-1}, i\tau/2),
\,\, \zeta=\lambda+z,
\, \tau\geq 2,
\end{equation}
where $\Phi$, $T_2=\<XY>$, $\tilde T_2$  are given in \eqref{Phi}, \eqref{S}, \eqref{T2} respectively:
{\small\begin{align*}
&\Phi:=-\frac{i}{2z}\int_{\R}\frac{(|q|^2-1)^2}{|q|^2-\zeta^2}\dx 
+\frac1{2z\zeta}\int_{\R} \frac{q'\bar q (|q|^2-1)}{(|q|^2-\zeta^2)} \dx,
\\
&\<XY>=\int_{x<y}  e^{\varphi(y)-\varphi(x)}q_3(x) q_2(y)  \dx \dy,
\\
&\tilde T_2(i\sigma)
 = \frac{1}{4z^2}\int_{x<y}e^{2iz(y-x)}
\Bigl( (|q|^2-1)(y)(|q|^2-1)(x)+ q'(y)\bar q'(x)\Bigr) \dx\dy
\\
& \quad -i\frac{z+\zeta}{4z^3\zeta^2}\int_{\R}\Im(q'\bar q)(|q|^2-1)\dx 
+\frac{1}{8z^3\zeta} 
\int_{x<y}e^{2iz(y-x)}\Bigl( q'(y) 
 \bar q'(x)- \bar q'(y)q'(x)\Bigr)dxdy.
\end{align*}}
Here, $\tilde T_3$ identifies the summation of the cubic terms in $\ln \Tc^{-1}$ and reads as the finite     linear combination of the integrals of type \eqref{ReT3}, that is,  of the cubic terms from the following set:
{\small\begin{equation}\label{H}\tag{H}\begin{split}
 \Bigl\{
& \int_{\R}\Bigl(\frac{|q|^2-1}{\tau^2}\Bigr)^2 h\dx,
\,\<XBY>,
\, \int_{x<y}e^{2iz(y-x)} h_1(y)\Bigl(\frac{|q|^2-1}{\tau^2}h_2\Bigr)(x)\dx\dy, 
\\
&  \int_{x<y}e^{2iz(y-x)}\Bigl(\frac{|q|^2-1}{\tau^2} h_1\Bigr)(y)h_2(x)\dx\dy,
\\
&\, \int_{x<y}e^{2iz(y-x)} h_1(y) \int^y_x\frac{q'\hbox{ or }\bar q'}{\tau} \dm h_2(x)\dx\dy\,\Big|\, h, h_1, h_2\in O\Bigr\}.
\end{split}\end{equation}   }
Here  $\<XBY>=\int_{x<y}(e^{\varphi(y)-\varphi(x)}-e^{2iz(y-x)})q_2(y)q_3(x)\dx\dy$ is defined  in \eqref{J} and the set $O=\bigl\{ P\cdot\frac1\tau (|q|^2-1), \, P\cdot \frac1\tau q', \, P\cdot\frac1\tau \bar q'\bigr\}$ (defined in \eqref{O}) where $P$  is   polynomial of form \eqref{P}: $P=P(\frac{1}{\omega^{-2}|q|^2+1}, \frac{1}{\tau}q, \frac{1}{\tau}\bar q, \frac{1}{\tau^2}(|q|^2-1))$. 

For notational simplicity,  we will always denote $H$ to be the finite summation of some   cubic terms from the above set \eqref{H}, which may change from line to line. 
Thus  the goal equality \eqref{PhiXY} reads as
{\small\begin{equation}\label{PhiH}
 \Phi+\int_{x<y}e^{2iz(y-x)}q_2(y)q_3(x)\dx\dy=\tilde T_2+H,
 \end{equation} }
and we are going to decompose the quantity $\Phi+\int_{x<y}e^{2iz(y-x)}q_2(y)q_3(x)\dx\dy$ into the quadratic and cubic terms in the following. 
We will use freely the equality in \eqref{RelationZeta}: $\frac{1}{|q|^2-\zeta^2}=-\frac{1}{2z\zeta}+\frac{|q|^2-1}{2z\zeta(|q|^2-\zeta^2)}$.

We can first rewrite $\Phi$ as the finite summation of quadratic and cubic terms:
{\small\begin{equation}\label{PhiIm}\begin{split}
&\Phi=\frac{i}{4z^2\zeta}\int_{\R}(|q|^2-1)^2\dx
-\frac{i}{4z^2\zeta}\int_{\R}\frac{(|q|^2-1)^3}{|q|^2-\zeta^2}\dx
\\
&\qquad-\frac{1}{4z^2\zeta^2}\int_{\R}q'\bar q(|q|^2-1)\dx
+\frac{1}{4z^2\zeta^2}
\int_{\R}\frac{q'\bar q (|q|^2-1)^2}{ |q|^2-\zeta^2}\dx
\\
&\quad=\frac{i}{4z^2\zeta}\int_{\R}(|q|^2-1)^2\dx
-\frac{1}{4z^2\zeta^2}\int_{\R}q'\bar q(|q|^2-1)\dx+H.
\end{split}\end{equation}
Recall  $q_2, q_3$  defined in \eqref{q1234} such that the product $q_2(y)q_3(x)$ reads as 
{\small\begin{align*}\label{Product:q2q3} 
q_2(y)q_3(x)= 
&\frac{q'(y)\bar q'(x) }{4z^2} 
- \frac{ q'(y)  }{4z^2 }
\frac{(|q|^2-1)\bar q'}{ |q|^2-\zeta^2}(x)
+  \frac{\zeta}{2z  }  
 \frac{ (|q|^2-1)q'}{ |q|^2-\zeta^2} (y) 
  \frac{ \bar q' }{ |q|^2-\zeta^2} (x)\notag
\\
& 
+ {i\zeta}\Bigl( \frac{  q' }{|q|^2-\zeta^2}(y) 
\frac{ (|q|^2-1)\bar q  }{|q|^2-\zeta^2}(x)
- \frac{(|q|^2-1)q }{ |q|^2-\zeta^2}(y)
 \frac{\bar q'  }{|q|^2-\zeta^2}(x)\Bigr)
\\
&+  \frac{(|q|^2-1)q }{ |q|^2-\zeta^2}(y) \frac{ (|q|^2-1)\bar q  }{ |q|^2-\zeta^2}(x).\notag
\end{align*} }
We decompose $\int_{x<y}e^{2iz(y-x)}q_2(y)q_3(x)\dx\dy$ into 
{\small\begin{equation}\label{XYH}\begin{split} 
& \int_{x<y}e^{2iz(y-x)}q_2(y)q_3(x)\dx\dy
=\frac{1}{4z^2}\int_{x<y}e^{2iz(y-x)} q'(y)\bar q'(x)  \dx\dy
+G+H,
\end{split}\end{equation}
 \begin{align*}
\hbox{with }&G=  \frac{i}{4z^2\zeta} 
\int_{x<y}e^{2iz(y-x)}\Bigl( q'(y) 
\bigl( (|q|^2-1)\bar q  \bigr)(x)
- \bigl( (|q|^2-1)q \bigr)(y)
 \bar q'  (x)\Bigr)\dx\dy
\\
&\qquad\qquad+  \frac{1}{4z^2\zeta^2} 
\int_{x<y}e^{2iz(y-x)}\bigl( (|q|^2-1)q \bigr)(y)
\bigl( (|q|^2-1)\bar q  \bigr)(x)\dx\dy.
\end{align*}}  
By virtue of $q(y)-q(x)=\int^y_x q'(m) dm,$ 
\begin{align*}
G=&\frac{i}{4z^2\zeta} 
\int_{x<y}e^{2iz(y-x)}\Bigl( (q'\bar q )(y) 
  (|q|^2-1)(x)
-   (|q|^2-1) (y)
(q \bar q'  )(x)\Bigr)\dx\dy
\\
&+  \frac{1}{4z^2\zeta^2} 
\int_{x<y}e^{2iz(y-x)} (|q|^2-1)(y)
\bigl( (|q|^2-1)|q|^2  \bigr)(x)\dx\dy
+H,
\end{align*}
and hence
\begin{align*}
G=&\frac{i}{4z^2\zeta} 
\int_{x<y}e^{2iz(y-x)}\Bigl( \Re(q'\bar q )(y) 
  (|q|^2-1)(x)
-   (|q|^2-1) (y)
\Re(q' \bar q)(x)\Bigr)\dx\dy
\\
&-\frac{1}{4z^2\zeta} 
\int_{x<y}e^{2iz(y-x)}\Bigl( \Im(q'\bar q )(y) 
  (|q|^2-1)(x)
+ (|q|^2-1) (y)
\Im(q'\bar q )(x)\Bigr)\dx\dy
\\
&+  \frac{1}{4z^2\zeta^2} 
\int_{x<y}e^{2iz(y-x)} (|q|^2-1)(y)
  (|q|^2-1)(x)\dx\dy
+H.
\end{align*}
Noticing $2\Re(q'\bar q )=(|q|^2-1)'$ and the integration by parts
  \begin{align*}
  \int_{x<y}e^{2iz(y-x)}g(y)h(x)\dx\dy
& =\frac{i}{2z}\bigl(\int_{\R}gh\dx- \int_{x<y}e^{2iz(y-x)}g(y)h'(x)\dx\dy\bigr)
 \\
 &
 =\frac{i}{2z}\bigl(\int_{\R}gh\dx+\int_{x<y}e^{2iz(y-x)}g'(y)h(x)\dx\dy\bigr), 
\end{align*}
we derive that
\begin{align*}
&G=\frac{1}{2z\zeta} 
\int_{x<y}e^{2iz(y-x)} (|q|^2-1) (y)
  (|q|^2-1)(x) \dx\dy
  -\frac{i}{4z^2\zeta}\int_{\R}(|q|^2-1)^2\dx
\\
&+\frac{i}{4z^3\zeta} 
\int_{x<y}e^{2iz(y-x)}\Bigl( \Im(q'\bar q )(y) 
\Re(q'\bar q)(x)
-\Re(q'\bar q) (y)
\Im(q'\bar q )(x)\Bigr)\dx\dy
\\
&
-\frac{i}{4z^3\zeta}\int_{\R}\Im(q'\bar q)(|q|^2-1)\dx+  \frac{1}{4z^2\zeta^2} 
\int_{x<y}e^{2iz(y-x)} (|q|^2-1)(y)
  (|q|^2-1)(x)\dx\dy
+H.
\end{align*}
Hence by $\frac{1}{2z\zeta}+\frac{1}{4z^2\zeta^2}=\frac{1}{4z^2}$
\begin{align*}
G=&\frac{1}{4z^2} 
\int_{x<y}e^{2iz(y-x)} (|q|^2-1) (y)
  (|q|^2-1)(x) \dx\dy
  -\frac{i}{4z^2\zeta}\int_{\R}(|q|^2-1)^2\dx
\\
&+\frac{1}{8z^3\zeta} 
\int_{x<y}e^{2iz(y-x)}\Bigl( (q'\bar q )(y) 
 (q\bar q')(x)
- (q\bar q') (y)
 (q'\bar q )(x)\Bigr)\dx\dy
\\
&
-\frac{i}{4z^3\zeta}\int_{\R}\Im(q'\bar q)(|q|^2-1)\dx +H,
\end{align*}
which, by virtue of $q(y)-q(x)=\int^y_x q'(m) dm$ and $|q|^2=(|q|^2-1)+1$ again, reads as 
\begin{align*}
&G=\frac{1}{4z^2} 
\int_{x<y}e^{2iz(y-x)} (|q|^2-1) (y)
  (|q|^2-1)(x) \dx\dy
  -\frac{i}{4z^2\zeta}\int_{\R}(|q|^2-1)^2\dx
\\
&+\frac{1}{8z^3\zeta} 
\int_{x<y}e^{2iz(y-x)}\Bigl( q'(y) 
 \bar q'(x)
- \bar q'(y)q'(x)\Bigr)\dx\dy 
-\frac{i}{4z^3\zeta}\int_{\R}\Im(q'\bar q)(|q|^2-1)\dx +H.
\end{align*}
To conclude, we arrive at \eqref{PhiH} and hence \eqref{PhiXY} by summing up  \eqref{PhiIm} and \eqref{XYH} (noticing again  $\int_{\R}\Re(q'\bar q)(|q|^2-1)\dx=0$).
}

\end{document}